\numberwithin{equation}{section}
\newenvironment{abs}{\textbf{Abstract}\mbox{  }}{ }
\newenvironment{key words}{\emph{\texttt{Keywords}}\mbox{  }}{ }
\newtheorem{theorem}{Theorem}[section]
\newtheorem{lemma}[theorem]{Lemma}
\newtheorem{proposition}[theorem]{Proposition}
\renewenvironment{proof}{\noindent{\textbf{Proof.}}}{\hfill$\Box$}
\newenvironment{altproof}[1]
{\addvspace{0.3cm}\noindent
{\textbf{Proof of Theorem {#1}}.}}
{\nopagebreak\mbox{}\hfill $\Box$\par\addvspace{0.5cm}}
\theoremstyle{remark}
\newtheorem{remark}[theorem]{Remark}
\theoremstyle{plain}
    \newcommand{\Rmnum}[1]{\expandafter\@slowromancap\romannumeral #1@}
\newcommand{\wt}{\widetilde}
\newcommand{\wh}{\widehat}
\newcommand{\ov}{\overline}
   \def\dist{\mathrm{dist}}
   \def\span{\mathrm{span}}
   \def\R{\mathbb{R}}
\newcommand{\cO}{{\mathcal O}}
\newcommand{\vphi}{\varphi}
\newcommand{\al}{\alpha}
\newcommand{\be}{\beta}
\newcommand{\ga}{\gamma}
\newcommand{\de}{\delta}
\newcommand{\ze}{\zeta}
\newcommand{\la}{\lambda}
\newcommand{\si}{\sigma}
\newcommand{\De}{\Delta}
\newcommand{\La}{\Lambda}
\newcommand{\Om}{\Omega}
\newcommand{\eps}{\varepsilon}
\newcommand{\weakto}{\rightharpoonup}
\newcommand{\pa}{\partial}
\def\Ker{\mathrm{Ker}}
\begin{document}

\title{\textbf{Multi-bubble nodal solutions to slightly subcritical elliptic problems with Hardy terms}}
\author{Thomas Bartsch, Qianqiao Guo\thanks{Q.G. was supported by the National Natural Science Foundation of China (Grant Nos.11001221,11271299) and the Fundamental Research Funds for the Central Universities (3102015ZY069).}}
\date{}
\maketitle

\noindent
\begin{abs}
The paper is concerned with the slightly subcritical elliptic problem with Hardy term
\[
\left\{
\begin{aligned}
-\De u-\mu\frac{u}{|x|^2} &= |u|^{2^{\ast}-2-\eps}u &&\quad \text{in } \Om, \\\
u &= 0&&\quad \text{on } \pa\Om,
\end{aligned}
\right.
\]
in a bounded domain $\Om\subset\R^N$ with $0\in\Om$, in dimensions $N\ge7$. We prove the existence of multi-bubble nodal solutions that blow up positively at the origin and negatively at a different point as $\eps\to0$ and $\mu=\eps^\al$ with $\al>\frac{N-4}{N-2}$. In the case of $\Om$ being a ball centered at the origin we can obtain solutions with up to $5$ bubbles of different signs. We also obtain nodal bubble tower solutions, i.e.~superpositions of bubbles of different signs, all blowing up at the origin but with different blow-up order. The asymptotic shape of the solutions is determined in detail.
\end{abs}

\vspace{.2cm}
\noindent
{\small\bf MSC 2010:} Primary: 35B44; Secondary: 35B33, 35J60, 35J20, 35J70

\vspace{.2cm}
\noindent
{\small\bf Key words:}
Hardy term, critical exponent, slightly subcritical problems, nodal solutions, multi-bubble solutions, bubble towers, singular perturbation methods

\section{\textbf{Introduction}\label{Section 1}}
The paper is concerned with the semilinear singular problem
\begin{equation}\label{pro}
\left\{
\begin{aligned}
-\De u-\mu\frac{u}{|x|^2} &= |u|^{2^{\ast}-2-\eps}u &&\quad \text{in }\Om, \\\
u &= 0 &&\quad \text{on }\pa \Om,
\end{aligned}
\right.
\end{equation}
where $\Om\subset\R^N$, $N\ge7$, is a smooth bounded domain with $0\in\Om$; $2^*:=\frac{2N}{N-2}$ is the critical Sobolev exponent. We study the existence of nodal (i.e.\ sign changing) solutions that have multiple blow ups as $0<\eps\to0$ and $\mu=\mu_0\eps^\al$, with $\mu_0>0$ and $\al>0$ constants.

The blow-up phenomenon for positive and for nodal solutions to problem \eqref{pro} has been studied extensively in the case $\mu=0$. It was proved in \cite{BrePe89PNDEA, FluWeip7MM, Han91AIHPAN, Rey89MM, Rey90JFA} that as $\eps\to0^+$, the positive solution $u_\eps$ blows up and concentrates at a critical point of the Robin's function of $\Om$. In \cite{BaLiRey95CVPDE, Rey91DIE}, the existence of positive solutions with multiple bubbles was considered. In convex domains a positive solution cannot have multiple bubbles, see \cite{GroTaka10JFA}. The existence of nodal solutions with $k$ bubbles at $k$ different points was proved in \cite{BarMiPis06CVPDE} in the case $k=2$, in \cite{BarDPis-prepr1} in the case $k=4$ when $\Om$ is convex and satisfies a certain symmetry, and in \cite{BarDPis-prepr2} in the case $k=3$ when $\Om$ is a ball. On the other hand, nodal solutions with tower of bubbles were obtained in \cite{MussoPis10JMPA, PisWe07AIHPA}. All these papers only treat the regular case $\mu=0$.

When $\mu\neq0$, the  Hardy potential $\frac{1}{|x|^2}$ cannot be regarded as a lower order perturbation because it has the same homogeneity as the Laplace operator and does not belong to the Kato class. This makes the analysis much more complicated compared with the case $\mu=0$. For the problem with Hardy type potentials and critical exponents, much attention has been paid to the existence of positive and nodal solutions, see e.g.\
\cite{CaoH04JDE, CaoP03JDE, EkeGhou02, FG, GY, GuoNiu08JDE, Jan99, RuWill03, Smets05TAMS, Terr96}. However, few results are known about the existence of positive or nodal solutions with multiple bubbles to the problem involving Hardy type potentials and critical exponents. We are only aware of the papers \cite{FelliPis06CPDE, FelliTer05CCM}, dealing with the problem
\begin{equation}\label{Pro-Felli}
\left\{
\begin{aligned}
&-\De u-\frac{\mu}{|x|^2}u = k(x)u^{2^*-1},\\
&u\in D^{1,2}(\R^N),\ \ u>0 \text{ in } \R^N\setminus\{0\};
\end{aligned}
\right.
\end{equation}
here $D^{1,2}(\R^N):=\{u\in L^{2^*}(\R^N)|~|\nabla u|\in L^2(\R^N)\}$. In \cite{FelliTer05CCM} the existence of positive bubble tower solutions to \eqref{Pro-Felli}, blowing up at the origin, was proved as $\epsilon\to0$, when $k(x)=1+\eps K(x)$ with $K(x)$ a continuous bounded function. These solutions, called fountain-like in \cite{FelliTer05CCM}, are superpositions of positive bubbles. In \cite{FelliPis06CPDE} the existence of a positive solution to \eqref{Pro-Felli} blowing up at a critical point of $k(x)$ was obtained as $\mu\to0^+$. In \cite{CaoPeng06AdM} Cao and Peng investigated the asymptotic behavior of positive solutions to \eqref{pro} in a ball.

In this paper we are interested in the existence of multi-bubble nodal solutions to problem \eqref{pro} as $\eps,\mu\to0$. Compared with \cite{FelliPis06CPDE, FelliTer05CCM} the location of the bubbles does not depend on the shape of a coefficient function $k(x)$ but on the subtle influence of the geometry of the domain. We obtain two types of solutions depending on the exponent $\al$ in the relation $\mu=\mu_0\eps^\al$ where $\mu_0>0$. If $\al>\frac{N-4}{N-2}$ we prove the existence of nodal solutions blowing up at different points, positively at the origin and negatively at other points. The bubble tower solutions exist for $\al=1$, that is, when $\mu$ has the same order as $\eps$ when $\eps\to0^+$. In that case, on any smooth bounded domain, we obtain nodal solutions that are superpositons of bubbles with different signs, all blowing up at the origin. The proofs are based on the Lyapunov-Schmidt reduction scheme.

In order to state our results we introduce some notation. By Hardy's inequality, the norm
\[
\|u\|_\mu:=\left(\int_{\Om}(|\nabla u|^2-\mu\frac{u^2}{|x|^2})dx\right)^{\frac12}
\]
is equivalent to the norm $\|u\|_0=\left(\int_{\Om}|\nabla u|^2dx\right)^{1/2}$ on $H_0^1(\Om)$ provided $0\le\mu<\ov{\mu}$. This will of course be the case for $\mu=\mu_0\eps^\al$ with $\eps>0$ small. As in \cite{FG} we write $H_\mu(\Om)$ for the Hilbert space consisting of $H^1_0(\Omega)$ functions with the inner product
\[
(u,v):=\int_{\Om}\left(\nabla u\nabla v-\mu\frac{uv}{|x|^2}\right)dx.
\]
It is known that the nonzero critical points of the energy functional
\[
J_\eps(u)
 := \frac12\int_{\Om}\left(|\nabla u|^2-\mu\frac{u^2}{|x|^2}\right)dx
     - \frac{1}{2^*-\eps}\int_{\Om}|u|^{2^*-\eps}dx
\]
defined on $H_\mu(\Om)$ are precisely the nontrivial weak solutions to problem \eqref{pro}.

Next we introduce two limiting problems. The first one is
\begin{equation}\label{limit-pro1}
\left\{
\begin{aligned}
-\De u &= |u|^{2^{\ast}-2}u &&\quad \text{in } \R^N, \\\
u &\to 0 &&\quad \text{as } |x|\to\infty.
\end{aligned}
\right.
\end{equation}
It is well known that the nontrivial least energy (positive) solutions to $(\ref{limit-pro1})$ are the instantons
\[
U_{\de,\ze} := C_0\left(\frac{\de}{\de^2+|x-\ze|^2}\right)^{\frac{N-2}{2}}
\]
with $\de>0$, $\ze\in \R^N$ and $C_0:=(N(N-2))^{\frac{N-2}{4}}$, cf.~\cite{Aub76,Tal76}. These
solutions minimize
\[
S_0
 := \min_{u\in D^{1,2}(\R^N)\setminus \{0\}}
         \frac{\int_{\R^N} |\nabla u|^2 dx}{(\int_{\R^N}|u|^{2^*} dx)^{2/{2^*}}}.
\]
Moreover there holds
$$
\int_{\R^N} |\nabla U_{\de,\ze}|^2dx = \int_{\R^N} |U_{\de,\ze}|^{2^*}dx = S_0^{\frac{N}{2}}.
$$

The second limiting problem is
\begin{equation}\label{limit-pro2}
\left\{
\begin{aligned}
-\De u-\mu\frac{u}{|x|^{2}} &= |u|^{2^{\ast}-2}u &&\quad \text{in } \R^N, \\
u &\to 0 &&\quad \text{as } |x|\to\infty.
\end{aligned}
\right.
\end{equation}
For $0<\mu<\ov{\mu}$ we know from \cite{CatW00, Terr96} that all positive solutions to
\eqref{limit-pro2} are given by
\[
V_\si = C_\mu\left(\frac{\si}{\si^2|x|^{\be_1}+|x|^{\be_2}}\right)^{\frac{N-2}{2}}
\]
with $\si>0$, $\be_1:=(\sqrt{\ov{\mu}}-\sqrt{\ov{\mu}-\mu})/\sqrt{\ov{\mu}}$, $\be_2:=(\sqrt{\ov{\mu}}+\sqrt{\ov{\mu}-\mu})/\sqrt{\ov{\mu}}$, and
$C_\mu:=\left(\frac{4N(\ov{\mu}-\mu)}{N-2}\right)^{\frac{N-2}{4}}$.
These solutions are minimizers of
\[
S_\mu
 := \min_{u\in D^{1,2}(\R^N)\setminus \{0\}}
     \frac{\int_{\R^N}(|\nabla u|^2-\mu\frac{u^2}{|x|^2})dx}
          {(\int_{\R^N}|u|^{2^*}dx)^{2/{2^*}}},
\]
and they satisfy
$$
\int_{\R^N} \left(|\nabla V_\si|^2-\mu\frac{|V_\si|^2}{|x|^2}\right)dx
 = \int_{\R^N} |V_\si|^{2^*}dx = S_\mu^{\frac{N}{2}}.
$$

The Green's function of the Dirichlet Laplacian can be written as
$G(x,y)=\frac{1}{|x-y|^{N-2}}-H(x,y)$, for $x,y\in\Om$, where $H$ is the regular part. These functions are symmetric: $G(x,y)=G(y,x)$ and $H(x,y)=H(y,x)$. We need the map
\[
\vphi(x):=H^{\frac12}(0,0)H^{\frac12}(x,x)+G(x,0).
\]
If the domain satisfies the symmetry condition
\begin{itemize}
\item[$(S_1)$] $\Om$ is invariant under the reflection $(x_1,x')\mapsto(x_1,-x')$, where $x_1\in\R$, $x'\in\R^{N-1}$,
\end{itemize}
then we define $a<0<b$ by $I:=\{(t,0,\ldots,0): a<t<b\}\subset\Om$ and $\pa I=\{(a,0,\ldots,0), (b,0,\ldots,0)\}\subset\pa\Om$. For $a\le s,t\le b$ we set
\[
g(t,s):=G((t,0,\ldots,0),(s,0,\ldots,0)),\quad h(t,s):=H((t,0,\ldots,0),(s,0,\ldots,0)),
\]
and
\[
\vphi(t):=\vphi(t,0,\ldots,0)=h^{\frac12}(0,0)h^{\frac12}(t,t)+g(t,0).
\]
Observe that $\vphi(t)\to\infty$ as $t\to a$ or $t\to b$. In particular, $\vphi$ has a minimum in $(a,b)$.

Now we state the main results of the paper. Throughout the paper let $\Om\subset\R^N$, $N\ge7$, be a smooth bounded domain. We begin with the existence of nodal solutions with bubbles concentrating at different points, one being the origin.

\begin{theorem}\label{theorem-2 bubbles}
Let $\mu=\mu_0\eps^\al$ with $\mu_0>0$ and $\al>\frac{N-4}{N-2}$ fixed.
\begin{itemize}
\item[a)] There exists $\eps_0>0$ such that for any $\eps\in(0,\eps_0)$, there exist a pair of solutions $\pm u_\eps$ to problem \eqref{pro} satisfying
 \begin{equation}\label{shape of solution-2 bubbles}
  u_\eps(x)
    = C_\mu\left(\frac{\si^\eps}{(\si^\eps)^2|x|^{\be_1}+|x|^{\be_2}}\right)^{\frac{N-2}{2}}-C_0\left(\frac{\de^\eps}{(\de^\eps)^2+|x-\xi^\eps|^2}\right)^{\frac{N-2}{2}}
   + o(1),
 \end{equation}
    where $\de^\eps=\la^\eps\eps^{\frac{1}{N-2}}$, $\si^\eps=\ov{\la}^\eps\eps^{\frac{1}{N-2}}$, and for some $\eta>0$ small enough, $|\xi^\eps|>\eta$, $\dist(\xi^\eps,\pa\Om)>\eta$,
    $\la^\eps$, $\ov{\la}^\eps\in(\eta,\frac1\eta)$. Moreover, $\xi^\eps\to \xi^*$ with
    $\vphi(\xi^*)=\min\limits_{\Om}\vphi$.
\item[b)] If $(S_1)$ holds there exist solutions as in a) with $\xi^\eps=(t^\eps,0,\cdots,0)$ and $t^\eps\to t^*$ as $\eps\to0$ where $t^*$ is a (local) minimum of $\vphi(t)$ in $(a,b)\setminus\{0\}$.
\end{itemize}
\end{theorem}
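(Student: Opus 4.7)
The plan is a Lyapunov--Schmidt reduction in which the approximate solution glues together a positive Hardy bubble $V_\si$ at the origin with a negative regular bubble $-U_{\de,\xi}$ at an interior point $\xi\in\Om$. I would fix a compact range of scaling parameters $(\la,\bar\la)\in[\eta,1/\eta]^2$ and a location $\xi$ in $\Om_\eta:=\{x\in\Om:|x|>\eta,\ \dist(x,\pa\Om)>\eta\}$, set $\de=\la\eps^{1/(N-2)}$, $\si=\bar\la\eps^{1/(N-2)}$, and denote by $PV_\si$, $PU_{\de,\xi}\in H_\mu(\Om)$ the orthogonal projections of $V_\si$, $U_{\de,\xi}$ with respect to the inner product $(\cdot,\cdot)$. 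The ansatz is
\[
W_\eps := PV_\si - PU_{\de,\xi},
\]
and a solution of \eqref{pro} is sought in the form $u=W_\eps+\phi$, where $\phi$ lies in the orthogonal complement $K_\eps^\perp$ (in $H_\mu(\Om)$) of the approximate kernel $K_\eps$ spanned by $\pa_{\bar\la}PV_\si$, $\pa_\la PU_{\de,\xi}$ and $\pa_{\xi_i}PU_{\de,\xi}$ for $i=1,\ldots,N$.

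For the linear theory, the non-degeneracy results of \cite{FelliPis06CPDE, FelliTer05CCM} for $V_\si$ together with the classical non-degeneracy of $U_{\de,\xi}$ imply, via a blow-up argument, that the linearization
\[
L_\eps\phi := \phi - \bigl(-\De - \tfrac{\mu}{|x|^2}\bigr)^{-1}\bigl((2^*-1-\eps)|W_\eps|^{2^*-2-\eps}\phi\bigr)
\]
is uniformly invertible on $K_\eps^\perp$ as $\eps\to 0$. Writing the equation $J_\eps'(W_\eps+\phi)=0$ in the schematic form $L_\eps\phi = N_\eps(\phi) + R_\eps$, where $R_\eps$ is the error $J_\eps'(W_\eps)$ projected onto $K_\eps^\perp$ and $N_\eps$ collects the nonlinear remainder, a contraction mapping argument would solve for $\phi=\phi_\eps(\la,\bar\la,\xi)$ of small size, depending smoothly on the parameters.

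The finite-dimensional reduction then expands $\tilde J_\eps(\la,\bar\la,\xi):=J_\eps(W_\eps+\phi_\eps)$ as $\eps\to 0$. Since the two bubbles concentrate at different points with disjoint concentration scales $\si$ and $\de$, the cross-interaction separates into a contribution from each bubble (each of order $\eps\log\frac{1}{\eps}$) plus an explicit coupling, and I expect an expansion of the schematic form
\[
\tilde J_\eps = c_0 + c_1\eps\log\tfrac{1}{\eps} + c_2\eps\,\Psi(\la,\bar\la,\xi) + o(\eps),
\]
where the coefficient of the interaction term in $\Psi$ involves $H(\xi,\xi)$, $H(0,0)$ and $G(\xi,0)$. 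After optimizing out the strictly convex, coercive dependence in $(\la,\bar\la)$, the surviving dependence on $\xi$ is, up to a positive constant, exactly $\vphi(\xi)^{(N-2)/2}$, which is why the geometry of the problem is encoded by $\vphi$. The restriction $\al>\frac{N-4}{N-2}$ is imposed precisely so that all terms produced by the Hardy perturbation $\mu/|x|^2$ acting on the regular bubble $PU_{\de,\xi}$, and by the subcritical correction on $V_\si$, are of strictly smaller order than the $\eps$-scale interaction: on the relevant range of parameters these remainders are $O(\mu\cdot\eps^{2/(N-2)})=O(\eps^{\al+2/(N-2)})$, and the inequality $\al+\tfrac{2}{N-2}>1$ is exactly the stated threshold.

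The main technical obstacle is the sharp asymptotic evaluation of the cross terms $\int_\Om V_\si^{2^*-1-\eps}PU_{\de,\xi}\,dx$ and the Hardy integral $\int_\Om\frac{V_\si\,PU_{\de,\xi}}{|x|^2}\,dx$, because $V_\si$ has the anisotropic Hardy singularity governed by the exponents $\be_1,\be_2$ near the origin. These must be estimated delicately on the annular regions $\{|x|\lesssim\si\}$, $\{\si\ll|x|\ll|\xi|\}$ and $\{|x|\sim|\xi|\}$, and it is here that the condition $\al>\frac{N-4}{N-2}$ is fixed. Once the expansion is in place, part (a) follows because $\vphi(x)\to\infty$ as $x\to\pa\Om$ and as $x\to 0$, so $\vphi$ attains its minimum at some $\xi^*\in\Om\setminus\{0\}$, and after solving out $(\la,\bar\la)$ this minimum produces a $C^0$-stable critical point of $\tilde J_\eps(\cdot,\cdot,\xi)$ in $\xi$, hence a solution. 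For part (b) the whole reduction is carried out in the fixed-point subspace $H_\mu(\Om)^{\Z_2}$ of functions symmetric under $(S_1)$; the reduced problem becomes one-dimensional on $(a,b)\setminus\{0\}$, and any local minimum $t^*$ of the one-variable function $\vphi(t)$ yields the claimed critical point by the same argument.
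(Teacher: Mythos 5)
Your proposal follows the paper's proof almost step for step: the same Lyapunov--Schmidt ansatz $PV_\si-PU_{\de,\xi}$, the same blow-up based invertibility of the linearized operator, the same energy expansion producing a finite-dimensional functional built from $H(0,0)$, $H(\xi,\xi)$ and $G(\xi,0)$, the same identification of $\al>\frac{N-4}{N-2}$ as the threshold via $\mu\de^2 = O(\eps^{\al+2/(N-2)})=o(\eps)$, and the same reduction to critical points of $\vphi$ (global minimum for (a), symmetric criticality on the axis for (b)). Two details are worth correcting so that you would not stumble if you wrote this out: the paper's projection $P$ is the \emph{Dirichlet} projection ($\De Pu=\De u$ in $\Om$, $Pu=0$ on $\pa\Om$), not the $(\cdot,\cdot)_\mu$-orthogonal one, and this choice is what makes $U_{\de,\xi}-PU_{\de,\xi}$ and $V_\si-PV_\si$ harmonic and hence expandable via the classical $H(\cdot,\cdot)$; and after eliminating $(\la,\bar\la)$ the reduced functional in this normalization is $b_2\ln\vphi(\xi)$ plus parameter-free constants, not $\vphi(\xi)^{(N-2)/2}$ -- the critical points agree, so your conclusion is unaffected, but the identity $(\la\bar\la)^{(N-2)/2}=\frac{b_2}{2b_1}\,\vphi(\xi)^{-1}$ is what actually appears and is needed if one checks nondegeneracy of the Hessian. (Also, the paper's expansion carries a parameter-independent term $-a_3\eps^\al$, which for $\al<1$ is larger than $\eps\ln(1/\eps)$; your schematic absorbs it silently, which is harmless for the critical-point argument but not literally $o(\eps)$.)
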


We can obtain more solutions if $\Om=B(0,1)$. More precisely, for $k=2,3$ we obtain the existence of solutions with $k+1$ bubbles, one positive and $k$ negative.

\begin{theorem}\label{theorem-ball-k bubbles}
Let $\Om=B(0,1)\subset\R^N$, $\mu=\mu_0\eps^\al$ with $\mu_0>0$ and $\al>\frac{N-4}{N-2}$ fixed. For $k=2$ and $N\ge7$, or $k=3$ and $N$ large enough, there exists $\eps_0>0$ such that for every $\eps\in(0,\eps_0)$, there exist two pairs of solutions $\pm u^1_\eps$,
$\pm u^2_\eps$ of problem \eqref{pro} satisfying
\begin{equation}\label{ball-shape of solution-k bubbles}
u^j_\eps(x)
 = C_\mu\left(\frac{\si_j^\eps}{(\si_j^\eps)^2|x|^{\be_1}
       + |x|^{\be_2}}\right)^{\frac{N-2}{2}}
    - C_0\sum_{i=1}^k\left(\frac{\de_j^\eps}{(\de_j^\eps)^2
       + |x-(\xi^\eps_j)_i|^2}\right)^{\frac{N-2}{2}}
    + o(1),
\end{equation}
where $\de_j^\eps = \la_j^\eps\eps^{\frac{1}{N-2}}$,
$(\xi^\eps_j)_i = (e^{2\pi i\sqrt{-1}/k}\wt{\xi}^\eps_j,0)$,
$\wt{\xi}^\eps_j \in B^{(2)} := \{x=(x_1,x_2,0,\cdots,0)\in B(0,1)\}$,
$\si_j^\eps = \ov{\la}_j^\eps\eps^{\frac{1}{N-2}}$,
and for some $\eta>0$ small enough, $\eta<|\wt{\xi}^\eps_j|<1-\eta$, $\la_j^\eps$,
$\ov{\la}_j^\eps\in(\eta,\frac{1}{\eta})$, $i=1,2,\dots,k$, $j=1,2$.
\end{theorem}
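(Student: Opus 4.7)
The strategy is to adapt the Lyapunov--Schmidt argument behind Theorem \ref{theorem-2 bubbles}, but to impose enough symmetry that the $k$ negative bubbles are forced onto a regular $k$-gon around the origin while the positive bubble $V_\si$ stays at the origin. Since $\Om=B(0,1)$, the operator $-\De-\mu/|x|^2$ is equivariant under every orthogonal transformation of $\R^N$, so in particular under $G:=\langle R_k\rangle\times O(N-2)$, where $R_k$ denotes rotation of order $k$ in the $(x_1,x_2)$-plane. I would work throughout in the fixed-point subspace $H_\mu^G(\Om)$, so that by the principle of symmetric criticality any critical point of $J_\eps|_{H_\mu^G}$ is a critical point of $J_\eps$ on all of $H_\mu(\Om)$. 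In $H_\mu^G$ a single point $\wt{\xi}\in B^{(2)}$ automatically determines the $k$-gon $\xi_l:=(R_k^l\wt{\xi},0)$ and produces the four-parameter approximating family
$$W_{\si,\de,\wt{\xi}}:=PV_\si-\sum_{l=0}^{k-1}PU_{\de,\xi_l}\in H_\mu^G(\Om),$$
where $P$ is the Hardy--Dirichlet projection onto $H_\mu(\Om)$.

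I would then execute the two standard steps of the reduction. First, invert the linearization of $J_\eps$ at $W_{\si,\de,\wt{\xi}}$ on the $G$-symmetric orthogonal complement of the approximate kernel spanned by $\partial_\si PV_\si$, $\partial_\de\sum_l PU_{\de,\xi_l}$ and the two $G$-symmetrized tangential derivatives $\partial_{\wt{\xi}_1}\sum_l PU_{\de,\xi_l}$, $\partial_{\wt{\xi}_2}\sum_l PU_{\de,\xi_l}$. The nondegeneracy of the Aubin--Talenti and Terracini bubbles in the limiting problems \eqref{limit-pro1}, \eqref{limit-pro2}, together with the near-disjointness of the supports of the positive and negative bubbles, permits a standard contraction-mapping argument yielding a correction $\phi=\phi(\eps,\si,\de,\wt{\xi})$ with $\|\phi\|_\mu=o(1)$, uniformly on admissible compact parameter sets. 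Second, I would compute the reduced energy $F_\eps(\si,\de,\wt{\xi}):=J_\eps(W+\phi)$ and extract its leading-order asymptotics. These should combine the one-bubble contributions already controlled in the proof of Theorem \ref{theorem-2 bubbles} --- namely $h(0,0)\si^{N-2}$, the $\eps\log\si$ and $\eps\log\de$ corrections from the subcritical nonlinearity, and a $\si^{(N-2)/2}\de^{(N-2)/2}\sum_l G(0,\xi_l)$ positive-to-negative cross-interaction --- with the $k$-bubble Bahri--Li--Rey piece
$$c\,\de^{N-2}\,\Psi_k(\wt{\xi}),\qquad \Psi_k(\wt{\xi}):=\sum_{l=0}^{k-1}H(\xi_l,\xi_l)-\sum_{l\ne l'}G(\xi_l,\xi_{l'}).$$
The hypothesis $\al>(N-4)/(N-2)$ keeps all Hardy-induced corrections to these interactions strictly subleading.

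After optimizing $F_\eps$ in $\si$ and $\de$, the problem reduces to finding critical points of an effective function $\Phi_k(\wt{\xi})$ on $B^{(2)}\setminus\{0\}$ which, by the $R_k$-symmetry together with the radial symmetry of the ball, depends only on $r=|\wt{\xi}|\in(0,1)$. One checks that $\Phi_k(r)\to+\infty$ as $r\to 0$ (the negative bubbles collapse onto the positive one and onto each other) and as $r\to 1$ (the Robin-type term $H(\xi_l,\xi_l)$ blows up), so $\Phi_k$ possesses at least one interior minimum; a mountain-pass argument in the radial variable, exploiting the barrier at the endpoints, then supplies a second critical point. The two critical radii produce the two geometrically distinct solutions $u^1_\eps, u^2_\eps$, and the sign symmetry of \eqref{pro} accounts for the $\pm$.

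The main obstacle is the quantitative control of the error terms, and this is precisely what distinguishes the cases $k=2$ and $k=3$. For $k=3$ the three negative bubbles are mutually separated only by distances of order $|\wt{\xi}|$, so the triangular overlap among them produces corrections whose order relative to the pairwise Green interaction depends on $N$; making the leading $\de^{N-2}\Psi_k$ piece dominate both these overlap errors and the Hardy corrections arising from $\mu=\mu_0\eps^\al$ is what necessitates the hypothesis ``$N$ large enough'' for $k=3$, whereas the antipodal configuration $k=2$ decouples cleanly for every $N\ge 7$. The technical heart of the proof is then to verify that the interior critical points of $\Phi_k$ persist as stable critical points of the full reduced functional $F_\eps$, uniformly as $\eps\to 0$.
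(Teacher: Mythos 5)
Your overall reduction framework (work in the $G$-fixed subspace, collapse the $k$ negative bubbles to a single radius $r=|\wt{\xi}|$, finish by finding critical points of a one-variable reduced energy) is the same scheme the paper uses. However, the decisive qualitative claim you make about that reduced function is incorrect, and as a result your mechanism for producing the two critical points does not work.

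You assert that, after optimizing in the concentration parameters, the effective function $\Phi_k(r)$ tends to $+\infty$ both as $r\to 0$ and as $r\to 1$, and then propose to extract an interior minimum plus a mountain-pass critical point. Two things go wrong. First, for $k\ge 2$ the optimization in $(\la_1,\ov{\la})$ is not possible on all of $(0,1)$: Lemma \ref{lemma-blowing up speed} shows that at a critical point all the $\la_i$ must coincide, and that the optimal scales exist only when the quantity $H(\xi_1,\xi_1)-(k-1)G(\xi_1,\xi_2)$ (for $k=2$) or $\ga_1(t)=H(\xi_1,\xi_1)-2G(\xi_1,\xi_2)$ (for $k=3$) is strictly positive. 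This sign condition cuts the admissible radii down to an interval $(t^*,1)$ with $t^*>0$. Second, as $t\to(t^*)^+$ one has $\ga_1(t)\to 0$, hence $\be_2\to 0$ and $\la_1(t)\to\infty$, so the reduced function $\nu_1(t)=2b_2-b_2\ln(\la_1^3(t)\ov{\la}(t))^{(N-2)/2}$ tends to $-\infty$, not $+\infty$. The function goes from $-\infty$ at $t^*$ to $+\infty$ at $1$, so its bare boundary behavior would produce no critical point at all. The paper obtains the two solutions by the separate computation $\nu_1'(\tfrac12)<0$: combined with the endpoint limits this forces a local maximum in $(t^*,\tfrac12)$ and a local minimum in $(\tfrac12,1)$. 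Your mountain-pass picture with two $+\infty$ barriers would, at best, yield a single minimum and cannot account for the pair $u^1_\eps,u^2_\eps$. Relatedly, your explanation of why $k=3$ needs $N$ large (``triangular overlap errors'') misidentifies the source of the restriction; in the actual proof, ``$N$ large'' is exactly what is needed to make the explicit sign condition $\iota_1(\tfrac12)=\ga'_1(\tfrac12)+2\al_2(\tfrac12)\tau'_1(\tfrac12)<0$ hold (numerics show it fails at $N=7$, cf. Remark \ref{rem:nonexist}), and has nothing to do with controlling error terms in the reduction.
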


We would like to point out that the idea of Theorem~\ref{theorem-ball-k bubbles} is not applicable for $k=4$; see Remark~\ref{rem:nonexist} and Proposition~\ref{k=5-impossible-proposition}. It seems very difficult to prove the existence of nodal solutions with the shape as in \eqref{ball-shape of solution-k bubbles} to problem \eqref{pro} for $k\ge4$. However, for $\Om=B(0,1)$, the existence of solutions with $5$ bubbles, $3$ being positive and $2$ being negative, can be proved.

\begin{theorem}\label{theorem-ball-k bubbles-add}
Let $\Om=B(0,1)\subset\R^N$, $N\ge7$, $\mu=\mu_0\eps^\al$ with $\mu_0>0$ and $\al>\frac{N-4}{N-2}$ fixed. Then there exists $\eps_0>0$ such that for any $\eps\in(0,\eps_0)$, there exist one pairs of $5$-bubble solutions $\pm u_\eps$ to problem \eqref{pro} satisfying
\[
u_\eps(x)
 = C_\mu\left(\frac{\si^\eps}{(\si^\eps)^2|x|^{\be_1}+|x|^{\be_2}}\right)^{\frac{N-2}{2}}
    + C_0\sum_i^{4}(-1)^i\left(\frac{\de^{i,\eps}}{(\de^{i,\eps})^2
       +|x-(\xi^\eps)_i|^2}\right)^{\frac{N-2}{2}}
    + o(1),
\]
where $\de^{i,\eps} = \la^{i,\eps}\eps^{\frac{1}{N-2}}$, $\la^{1,\eps} = \la^{3,\eps}$, $\la^{2,\eps} = \la^{4,\eps}$, $(\xi^\eps)_i = (e^{2\pi i\sqrt{-1}/k}\wt{\xi}^\eps,0)$, $\wt{\xi}^\eps \in B^{(2)}$, $\si^\eps = \ov{\la}^\eps\eps^{\frac{1}{N-2}}$,
and for some $\eta>0$ small enough, $\eta<|\wt{\xi}^\eps|<1-\eta$, $\la^{i,\eps},\ov{\la}^\eps\in(\eta,\frac{1}{\eta})$, $i=1,\dots,4$.
\end{theorem}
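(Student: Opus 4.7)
The plan is a Lyapunov--Schmidt finite-dimensional reduction carried out in a symmetry-restricted subspace, analogous in spirit to the argument behind Theorem~\ref{theorem-ball-k bubbles} but with \emph{two} independent concentration scales for the four off-center bubbles (one per diagonal of the square). The signs $(-1)^i$ force bubbles $1,3$ (diagonally opposite) to be negative and $2,4$ (the other diagonal) positive, while the central bubble, coming from \eqref{limit-pro2}, is positive. Let $G\subset O(N)$ be the subgroup generated by the rotation $R_\pi$ of the $(x_1,x_2)$-plane by angle $\pi$ together with the reflection across the line through $(\xi^\eps)_1$ and $(\xi^\eps)_3$. Then $G$ permutes the four off-center vertices while preserving their signs, and any $G$-symmetric Ansatz automatically satisfies $\la^{1,\eps}=\la^{3,\eps}$ and $\la^{2,\eps}=\la^{4,\eps}$. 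I would work in the subspace $H_\mu^G(\Om)\subset H_\mu(\Om)$ of $G$-invariant functions, so that after the reduction only four free parameters $(\ov\la,\la^1,\la^2,|\wt\xi|)$ remain.

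First, I would form the approximate solution
\[
W_\eps := PV_{\sigma^\eps} + \sum_{i=1}^{4} (-1)^i PU_{\de^{i,\eps},(\xi^\eps)_i} \in H_\mu^G(\Om),
\]
where $P$ is the $H_0^1(\Om)$-projection, $\sigma^\eps=\ov\la\,\eps^{1/(N-2)}$, and $\de^{i,\eps}=\la^{i,\eps}\eps^{1/(N-2)}$. Looking for $u=W_\eps+\phi$ with $\phi\in H_\mu^G(\Om)$, the relevant approximate-kernel modes of the linearization at $W_\eps$ reduce to those generated by differentiating the bubbles in $\ov\la$, $\la^1$, $\la^2$ and $|\wt\xi|$: the individual bubble translations in $\R^N$ and the $R_{\pi/2}$-rotation direction are all killed by the symmetry. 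Splitting off these four modes and inverting the resulting uniformly invertible linearized operator, using $\mu=\mu_0\eps^\al$ with $\al>(N-4)/(N-2)$ to control the Hardy perturbation exactly as in the proofs of Theorems~\ref{theorem-2 bubbles} and \ref{theorem-ball-k bubbles}, yields a remainder $\phi=\phi_\eps(\ov\la,\la^1,\la^2,\wt\xi)$ of small norm, uniformly on compact sets of admissible parameters.

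Next I would expand the reduced energy $\Phi_\eps(\ov\la,\la^1,\la^2,\wt\xi):=J_\eps(W_\eps+\phi_\eps)$ in powers of $\eps$. The leading constant is $\tfrac{1}{N}S_\mu^{N/2}+\tfrac{4}{N}S_0^{N/2}$, and the $\eps$-order correction assembles four competing contributions: self-interaction (Robin-type) terms governed by $\vphi(\wt\xi)$ and $H(0,0)$; nearest-neighbor interactions between adjacent square vertices (opposite signs); diagonal interactions between opposite vertices (same signs); and the interactions of each off-center bubble with the central $V$-bubble. A critical point of $\Phi_\eps$ in the open parameter region
\[
\Lambda := \bigl\{(\ov\la,\la^1,\la^2,|\wt\xi|) : \ov\la,\la^1,\la^2\in(\eta,1/\eta),\ |\wt\xi|\in(\eta,1-\eta)\bigr\}
\]
corresponds, via the reduction, to a genuine solution of \eqref{pro} of the claimed form.

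The main obstacle is exhibiting such a critical point \emph{with} $\la^1\ne\la^2$. On the further restricted slice $\la^1=\la^2$ one recovers the $\Z/4$-symmetric Ansatz which, as recorded in Remark~\ref{rem:nonexist} and Proposition~\ref{k=5-impossible-proposition}, cannot support a critical point: the $\la^1$- and $\la^2$-equations of the $\Z/4$-symmetric reduced energy cannot simultaneously vanish at positive scales. The entire point of allowing two different diagonal scales is to break this obstruction. I would produce the critical point by a min-max over $\Phi_\eps$: minimize in $|\wt\xi|$ (coercive near $\pa\Om$ because $\vphi\to\infty$ on $\pa I$), then obtain a saddle point in $(\ov\la,\la^1,\la^2)$ by linking the regimes $\la^1\ll\la^2$ and $\la^1\gg\la^2$, which forces the critical point off the diagonal $\la^1=\la^2$. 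The most delicate step is the explicit bookkeeping of the interaction coefficients --- in particular comparing the two same-sign diagonal interactions with the four opposite-sign edge interactions, and with the center-vertex coupling carrying the Hardy scale --- where the hypothesis $N\ge 7$ enters and which constitutes the heart of the argument.
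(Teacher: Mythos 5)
Your framework---a symmetry-restricted Lyapunov--Schmidt reduction with four free parameters $(\ov\la,\la_1,\la_2,|\wt\xi|)$, using a group that permutes the square vertices while preserving signs so that only $\la_1=\la_3$ and $\la_2=\la_4$ survive---is the right one, and you correctly note that the two diagonal scales must be allowed to differ because the $\Z/4$-symmetric slice $\la_1=\la_2$ is excluded by Proposition~\ref{k=5-impossible-proposition}. However, the step that actually produces the critical point is missing: the min-max/linking in $(\ov\la,\la_1,\la_2)$ you propose is not substantiated, and as stated it cannot work. The reduced functional $f_4(\la_1,\la_2,\ov\la,t)$ is \emph{not} bounded below in $\la$ precisely on the interval $t\in(0,t_1^*)$ where the paper ultimately finds the solution, since $\ga_3(t)=H(\xi_1,\xi_1)-G(\xi_1,\xi_3)<0$ there and so $f_4\to-\infty$ as $\la_1\to\infty$. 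Moreover, the admissible $t$-range is not all of $(\eta,1-\eta)$: positivity of $\ov\la^{N-2}=\frac{\ga_3(t)}{\ga_3(t)-2\tau_1^2(t)}\cdot\frac{b_2}{2b_1}$ forces $t\in(0,t_1^*)\cup(t_2^*,1)$, so ``minimizing over $|\wt\xi|$'' does not see a well-defined competitor set on the whole interval.

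What the paper does instead is explicit. It solves $\nabla_{\la_1,\la_2,\ov\la}f_4=0$ in closed form for each admissible $t$ (equations \eqref{equ-5-ball-k=5-theorem}--\eqref{equ-8-ball-k=5-theorem}); the first of these, $\la_2^{(N-2)/2}-\la_1^{(N-2)/2}=\frac{G(\xi_1,0)}{\ga_3(t)}\ov\la^{(N-2)/2}\ne0$, shows that the critical point equations themselves force $\la_1\ne\la_2$. The mechanism is the residual center-to-vertex coupling $4G(\xi_1,0)(\la_1^{(N-2)/2}-\la_2^{(N-2)/2})\ov\la^{(N-2)/2}$: on the slice $\la_1=\la_2$ the sign-alternating couplings of the four vertex bubbles with the central one cancel pairwise, and the surviving term enters the $\pa_{\la_1}$ and $\pa_{\la_2}$ equations with opposite signs, a structure your ``four competing contributions'' bookkeeping does not isolate. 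The paper then checks nondegeneracy of the $3\times3$ Hessian in $\la$ (its determinant has the sign of $\ga_3(t)$) and studies $\nu_2(t)=f_4(\la_1(t),\la_2(t),\ov\la(t),t)$, whose derivative vanishes exactly when an explicit $\iota_3(t)$ does; since $\iota_3(t)\to-\infty$ as $t\to0^+$ while $\iota_3(t_1^*)>0$, a zero exists in $(0,t_1^*)$. Without this computation, or an equally precise substitute, your proposal leaves the central difficulty open.
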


The assumption that $\al>\frac{N-4}{N-2}$ is critical to obtain the existence of nodal solutions with multiple bubbles concentrating at different points since it can be seen from the reduction procedure in Section~\ref{Section 3} that the reduced function has no critical point if $0\le\al\le\frac{N-4}{N-2}$. It is natural to ask, whether there exists $\wt{\mu}(\eps)>0$ for $\eps>0$ small, such that problem \eqref{pro} admits positive or nodal solutions with multiple bubbles concentrating at different points when $0\le\mu\le\wt{\mu}(\eps)$.

Now we state a result about the existence of nodal solutions that are towers of bubbles concentrating at the origin.

\begin{theorem}\label{theorem-tower of bubble}
Let $\mu=\mu_0\eps$ with $\mu_0>0$ fixed. For any given integer $k\ge0$ there exists $\eps_0>0$ such that for any $\eps\in(0,\eps_0)$, there exist a pair of solutions $\pm u_\eps$ to problem \eqref{pro} satisfying that, as $\eps\to0^+$,
\begin{eqnarray*}
u_\eps(x)
 = C_\mu(-1)^k\left(\frac{\si^\eps}{(\si^\eps)^2|x|^{\be_1}
     + |x|^{\be_2}}\right)^{\frac{N-2}{2}}
   + C_0\sum_{i=1}^{k}(-1)^{i-1}\left(\frac{\de_i^\eps}{(\de_i^\eps)^2
     +|x-\xi_i^\eps|^2}\right)^{\frac{N-2}{2}}
   + o(1)
\end{eqnarray*}
where $\de_i^\eps = \la_i^\eps\eps^{\frac{2i-1}{N-2}}$, $\xi_i^\eps = \de_i^\eps\ze_i^\eps$, $\ze_i^\eps \in \R^N$, $i=1,2,\dots,k$,
$\si^\eps = \ov{\la}^\eps\eps^{\frac{2(k+1)-1}{N-2}}$, and for some $\eta>0$ small enough,
$\la_i^\eps,\ov{\la}^\eps\in\left(\eta,\frac{1}{\eta}\right)$ and $|\ze_i^\eps|\le\frac{1}{\eta}$ for $i=1,\ldots,k$.
\end{theorem}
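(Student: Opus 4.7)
The plan is a Lyapunov--Schmidt reduction along the lines of the nodal-bubble-tower constructions in \cite{MussoPis10JMPA, PisWe07AIHPA}, modified so that the innermost (most concentrated) element of the tower is the Hardy solution $V_\sigma$ rather than a further standard instanton. The ansatz I would take is
\[
W_\eps = (-1)^k PV_{\sigma^\eps} + \sum_{i=1}^k (-1)^{i-1} PU_{\delta_i^\eps,\xi_i^\eps},
\]
where $PU_{\delta,\xi}$ and $PV_\sigma$ are the $H^1_0(\Om)$-projections of $U_{\delta,\xi}$ and $V_\sigma$, the parameters are $\delta_i = \lambda_i\eps^{(2i-1)/(N-2)}$, $\xi_i = \delta_i\zeta_i$, and $\sigma = \bar\lambda\eps^{(2k+1)/(N-2)}$, so that each consecutive pair differs in scale by the Musso--Pistoia factor $\eps^{2/(N-2)}$, including the jump from $\delta_k$ down to $\sigma$. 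The alternating signs are dictated by the requirement that consecutive bubbles attract.

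I would then write $u = W_\eps + \phi$ with $\phi$ in the $H_\mu(\Om)$-orthogonal complement $T_\eps^\perp$ of the tangent space $T_\eps = \mathrm{span}\{\partial_{\delta_i}PU_{\delta_i,\xi_i},\,\partial_{(\xi_i)_j}PU_{\delta_i,\xi_i},\,\partial_\sigma PV_\sigma\}$, and solve $\Pi^\perp L_\eps\phi = \Pi^\perp\bigl(R_\eps + N_\eps(\phi)\bigr)$ on this complement, where $L_\eps$ is the linearisation of $J_\eps'$ at $W_\eps$. Uniform invertibility of $L_\eps|_{T_\eps^\perp}$ follows from the standard non-degeneracy of the instantons together with the one-parameter non-degeneracy of $V_\sigma$ (extracted from the classification in \cite{CatW00, Terr96} along the lines of \cite{FelliTer05CCM}), combined with the asymptotic orthogonality of bubbles living at widely separated scales and the smallness of the Hardy perturbation $\mu_0\eps|x|^{-2}$ via Hardy's inequality. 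A contraction-mapping argument then yields $\phi = \phi(\lambda,\bar\lambda,\zeta)$ with explicit polynomial smallness in $\eps$ and $C^1$-dependence on the parameters.

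The core of the proof is the asymptotic expansion of the reduced functional $\tilde J_\eps(\lambda,\bar\lambda,\zeta) := J_\eps(W_\eps+\phi)$. Decomposing the energy into self-energies, nearest-neighbour $PU$--$PU$ interactions, the $PU_{\delta_k,\xi_k}$--$PV_\sigma$ interaction, the boundary correction from the regular part $H$ of Green's function, the Hardy contribution $-\mu_0\eps\int W^2/|x|^2$, and the slightly subcritical correction $\int|W|^{2^*}\log|W|$, I expect an expansion
\[
\tilde J_\eps = \frac{k}{N}S_0^{N/2} + \frac{1}{N}S_\mu^{N/2} + c_N\eps\,\Phi_k(\lambda,\bar\lambda,\zeta) + o(\eps),
\]
where $\Phi_k$ contains logarithmic terms in the ratios $\lambda_{i+1}/\lambda_i$ and $\bar\lambda/\lambda_k$ arising from the tower interactions, a linear term in $\log\lambda_i$ from the subcritical correction, a linear term in $\mu_0$ from the Hardy contribution, and a quadratic-in-$\zeta_i$ part from the Taylor expansion of $G$ and $H$ near the origin. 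The scale hierarchy $\delta_i\sim\eps^{(2i-1)/(N-2)}$, $\sigma\sim\eps^{(2k+1)/(N-2)}$ together with $\mu=\mu_0\eps$ is precisely the balance that brings all these contributions to the same $\eps$-order. The existence of an interior critical point of $\Phi_k$ in a compact region $(\lambda_i,\bar\lambda)\in[\eta,1/\eta]^{k+1}$, $|\zeta_i|\le 1/\eta$ is then obtained by an explicit optimisation (each ratio is fixed by a 1D critical-point equation in the corresponding log-term, and the $\zeta_i$ are adjusted to critical points of the Green's piece), and a standard stability argument transfers this critical point back to $\tilde J_\eps$.

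The main obstacle is the precise expansion of the coupling between the Hardy bubble $V_{\sigma^\eps}$ and the innermost instanton $PU_{\delta_k^\eps,\xi_k^\eps}$. Since $V_\sigma$ has a power-law behaviour at the origin governed by the $\mu$-dependent exponents $\be_1$ and $\be_2$, and since the scale ratio $\sigma/\delta_k \sim \eps^{2/(N-2)}$ is exactly the borderline rate for the tower estimates, one must expand $C_\mu$, $\be_1$, $\be_2$ in powers of $\mu_0\eps$ and integrate against $U_{\delta_k,\xi_k}^{2^*-1}$, carefully tracking both $\mu_0\eps$- and $\eps\log\eps$-corrections via a matched-asymptotic analysis. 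This is the step that ultimately produces the coupling between $\bar\lambda$ and $\lambda_k$ inside $\Phi_k$, and it is the only genuinely new computation compared with the purely regular case in \cite{MussoPis10JMPA, PisWe07AIHPA}; once it is established, the remaining interactions and the Brouwer-degree argument for the reduced critical point follow those references with only cosmetic changes.
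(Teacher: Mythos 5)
Your overall strategy — Lyapunov--Schmidt reduction with the nodal tower ansatz $V_{\eps,\la,\xi}=\sum_{i=1}^k(-1)^{i-1}PU_{\de_i,\xi_i}+(-1)^kPV_\si$, the scaling $\de_i\sim\eps^{(2i-1)/(N-2)}$, $\si\sim\eps^{(2k+1)/(N-2)}$, $\mu=\mu_0\eps$, the contraction argument for the error $\phi$, and the expansion of the reduced energy at order $\eps$ — is exactly the route taken in Section 4 of the paper, including the singling out of the $V_\si$--$PU_{\de_k,\xi_k}$ interaction as a new computation.

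There are, however, two substantive misreadings of the structure of the reduced functional that would block you from closing the final optimisation step. First, the tower interactions do \emph{not} contribute ``logarithmic terms in the ratios $\la_{i+1}/\la_i$'': Lemmas~\ref{e72-e78} and \ref{e79} show they contribute \emph{power-law} terms $b_2(\la_{i+1}/\la_i)^{(N-2)/2}h_1(\ze_i)$, and the only logarithm in $\psi$ is the single term $-b_4\ln(\la_1\cdots\la_{k+1})^{(N-2)/2}$ coming from the slightly subcritical correction $\frac{1}{2^*}\int|V|^{2^*}-\frac{1}{2^*-\eps}\int|V|^{2^*-\eps}$. It is precisely the interplay of the linear-in-$s_{i+1}$ tower term and the logarithm from the subcritical correction (after the substitution $s_1=\la_1^{(N-2)/2}$, $s_{i+1}=(\la_{i+1}/\la_i)^{(N-2)/2}$) that produces a unique interior critical point $\wh s(\ze)$, so getting the source and form of these pieces right is not cosmetic.

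Second, and more importantly, the $\ze_i$-dependence is \emph{not} a ``quadratic-in-$\ze_i$ part from the Taylor expansion of $G$ and $H$ near the origin,'' and the $\ze_i$ are \emph{not} adjusted to critical points of a Green's-function piece. The concentration points $\xi_i=\de_i\ze_i$ collapse to the origin at microscopic scale, so the domain geometry enters $\psi$ only through the constant $H(0,0)$ multiplying the outermost scale $\la_1^{N-2}$; all $\ze_i$-dependence lives in the scale-invariant tower interaction integral $h_1(\ze_i)=\int|y+\ze_i|^{2-N}(1+|y|^2)^{-(N+2)/2}$ and the Hardy integral $h_2(\ze_i)=\int|y+\ze_i|^{-2}(1+|y|^2)^{-(N-2)}$. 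After freezing $s=\wh s(\ze)$, the residual function of each $\ze_i$ is $g_i(\ze_i)=b_4(k+1-i)\ln h_1(\ze_i)-b_3h_2(\ze_i)$ with $b_3=\tfrac12C_0^2\mu_0$; the paper's computation of the Hessian at $\ze_i=0$ shows that the term producing a nondegenerate interior minimum there is the Hardy contribution $-b_3h_2(\ze_i)$. In other words, the scaling $\mu=\mu_0\eps$ is what pins $\ze_i$ to the origin, not the Green's function. Without identifying this mechanism, your final step — a stable interior critical point in $\ze$ — does not go through, since $h_1$ alone has a \emph{maximum} at $\ze_i=0$ and no obvious Green's-function gradient to compensate.
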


We assume $N\ge7$ in this paper for technical reasons and in order to not to make the presentation too heavy. The results can be extended to the case $N=6$. For $N\le5$, there would be technical difficulties.

The paper is organized as follows. In Section~2, we give some notations and preliminary results. Section~3 is devoted to the proofs of Theorems~\ref{theorem-2 bubbles} and \ref{theorem-ball-k bubbles}, that is, the existence of nodal solutions with multiple bubbles blowing up at different points. The proof of Theorem~\ref{theorem-tower of bubble}, the existence of nodal bubble tower solutions, is given in Section 4. At last, some useful technical lemmas are collected in the appendices.

\section{\textbf{Notations and preliminary results}\label{Section 2}}

Throughout this paper, positive constants will be denoted by $C, c$.

As in \cite{FelliPis06CPDE} let $\iota^*:L^{2N/(N+2)}(\Om)\to H_\mu(\Om)$ be the adjoint operator of the inclusion $\iota:H_\mu(\Om)\to L^{2N/(N-2)}(\Om)$, that is,
\begin{equation}\label{adjoint operator}
\iota^*(u) = v\qquad\Longleftrightarrow\qquad
 (v,\phi)=\int_{\Om}u(x)\phi(x)dx,\quad\text{for all }\phi\in H_\mu(\Om).
\end{equation}
This is continuous, i.e., there exists $c>0$ such that
\begin{equation}\label{ineq-adjoint operator}
\|\iota^*(u)\|_{\mu}\le c\|u\|_{2N/(N+2)}.
\end{equation}
Then problem \eqref{pro} is equivalent to the fixed point problem
\[
u=\iota^*(f_\eps(u)), u\in H_\mu(\Om),
\]
where $f_\eps(s)=|s|^{2^*-2-\eps}s$.

To continue, we show an eigenvalue problem first.
\begin{proposition}\label{proposition-eigenvalue}
Let $\La_i$, $i=1,2,\dots$, be the eigenvalues of
\begin{equation}\label{pro-eigenvalue}
\begin{cases}
-\De u-\mu\frac{u}{|x|^2} = \La|V_\si|^{2^{\ast}-2}u &\quad\text{in } \R^N,\\
|u|\to0 &\quad\text{as }|x|\to+\infty
\end{cases}
\end{equation}
in increasing order. Then $\La_1=1$ with eigenfunction $V_\si$, $\La_2=2^*-1$ with eigenfunction $\frac{\pa V_\si}{\pa \si}$.
\end{proposition}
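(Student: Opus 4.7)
The plan is to verify the two stated eigenpairs by direct substitution and then argue that no other eigenvalues intrude below $2^*-1$. Since $V_\sigma$ solves \eqref{limit-pro2}, it is immediately an eigenfunction of \eqref{pro-eigenvalue} with $\Lambda=1$. Differentiating \eqref{limit-pro2} in the dilation parameter $\sigma$ and using that $\partial_\sigma$ commutes with $-\Delta-\mu|\cdot|^{-2}$, I obtain
$$-\Delta(\partial_\sigma V_\sigma) - \mu\frac{\partial_\sigma V_\sigma}{|x|^2} = (2^*-1)V_\sigma^{2^*-2}\partial_\sigma V_\sigma,$$
so $\partial_\sigma V_\sigma$ is an eigenfunction with eigenvalue $2^*-1$; the decay at infinity is inherited from the explicit formula for $V_\sigma$.

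To identify $\Lambda_1=1$, I use the variational characterization as the infimum of the Rayleigh quotient on $D^{1,2}(\R^N)$ with weight $V_\sigma^{2^*-2}$. Testing with $V_\sigma$ gives value $1$, and since $V_\sigma>0$ and $V_\sigma^{2^*-2}$ is a strictly positive weight, a Perron--Frobenius-type argument applied to the self-adjoint problem in the weighted Hilbert space forces $V_\sigma$ to be the ground state and $\Lambda_1$ to be simple, yielding $\Lambda_1=1$.

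For $\Lambda_2$, the plan is to decompose $D^{1,2}(\R^N)$ into spherical-harmonic sectors $H^{(\ell)}$, $\ell\ge 0$; since both the Hardy potential and the weight $V_\sigma^{2^*-2}$ are radial, each sector is invariant under \eqref{pro-eigenvalue}. In the radial sector the problem reduces to a Sturm--Liouville ODE, and an explicit computation from the formula for $V_\sigma$ gives
$$\partial_\sigma V_\sigma \;=\; V_\sigma\cdot\frac{N-2}{2\sigma}\cdot\frac{|x|^{\beta_2}-\sigma^2|x|^{\beta_1}}{\sigma^2|x|^{\beta_1}+|x|^{\beta_2}},$$
which vanishes exactly once on $(0,\infty)$, namely at $|x|^{\beta_2-\beta_1}=\sigma^2$ (using $\beta_2>\beta_1$, which follows from $\mu<\ov\mu$). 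By Sturm oscillation theory, $\partial_\sigma V_\sigma$ is therefore the second radial eigenfunction, with eigenvalue $2^*-1$.

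The main obstacle is ruling out eigenvalues in $(1,2^*-1)$ coming from the non-radial sectors $\ell\ge 1$. Because the Hardy term breaks translation invariance, the usual argument producing $\ell=1$ eigenfunctions with eigenvalue $2^*-1$ from spatial derivatives of $V_\sigma$ (as in the $\mu=0$ setting) is unavailable. Instead, in the $\ell$-th sector the effective one-dimensional operator carries an extra centrifugal potential $\ell(\ell+N-2)/|x|^2$, and I would estimate the corresponding weighted Rayleigh quotient sector by sector, showing that the lowest eigenvalue in each $\ell\ge 1$ sector strictly exceeds $2^*-1$; combined with the radial analysis above this pins down $\Lambda_2=2^*-1$. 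Should this direct sector-by-sector estimate prove cumbersome, an alternative is to invoke the analogous spectral statement for the linearized Hardy problem established in \cite{FelliPis06CPDE, FelliTer05CCM}.
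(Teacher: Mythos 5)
Your identification of the two eigenpairs and the radial Sturm--Liouville analysis (one sign change of $\partial_\sigma V_\sigma$ at $|x|^{\beta_2-\beta_1}=\sigma^2$) is correct and matches the paper, but the proof is incomplete at exactly the step you flag as ``the main obstacle'': ruling out eigenvalues in $[1,2^*-1]$ in the sectors $\ell\ge1$. You propose to ``estimate the corresponding weighted Rayleigh quotient sector by sector, showing that the lowest eigenvalue in each $\ell\ge1$ sector strictly exceeds $2^*-1$,'' but you do not carry out that estimate, and it is not obviously routine because the denominator $\int V_\sigma^{2^*-2}|u|^2$ involves a decaying weight, so the centrifugal term $\ell(\ell+N-2)/r^2$ does not simply push the quotient up in a uniform way. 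The alternative you mention --- citing \cite{FelliPis06CPDE, FelliTer05CCM} --- is also not a proof; those papers address related but different spectral questions, and the paper itself cites \cite{WangWill03JFA} for the relevant pattern of argument.

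The paper closes this gap with a short integration-by-parts identity rather than a variational bound. Setting $\varphi_i(r)=\int_{S^{N-1}}u(r,\theta)\psi_i(\theta)\,d\theta$ for the $i$-th spherical harmonic (eigenvalue $\tau_i$), one checks that $\varphi_i$ satisfies $\De\varphi_i=\big(\tfrac{\tau_i}{r^2}-\tfrac{\mu}{r^2}-\La V_\sigma^{2^*-2}\big)\varphi_i$, and then pairs this equation against $\partial_r V_\sigma$ over $B_R(0)$, integrating by parts and using the radial ODE for $V_\sigma$. After cancellation one is left with
\[
0=\int_{B_R(0)}\frac{N-1-\tau_i}{r^2}\varphi_i\,\partial_r V_\sigma
+\big(\La-(2^*-1)\big)V_\sigma^{2^*-2}\,\partial_r V_\sigma\,\varphi_i
+\frac{2\mu V_\sigma}{r^3}\varphi_i
+\int_{\partial B_R(0)}\Big(\partial_r V_\sigma\,\partial_r\varphi_i-\varphi_i\,\partial_r^2 V_\sigma\Big).
\]
Taking $R$ to be the first zero of $\varphi_i$ (WLOG $\varphi_i>0$ on $(0,R)$, so $\partial_r\varphi_i(R)\le0$), and using $\tau_i\ge N-1$ for $i\ge1$, $\partial_r V_\sigma<0$, $\La\le2^*-1$, $\mu>0$, $V_\sigma>0$, every term on the right is nonnegative, and the third one is strictly positive unless $\varphi_i\equiv0$. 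This forces $\varphi_i\equiv0$, i.e.\ radiality, without ever estimating the weighted Rayleigh quotient. Note that it is precisely the Hardy term $\tfrac{2\mu V_\sigma}{r^3}\varphi_i$ that provides the strict positivity; this is what replaces the $\ell=1$ translation modes that one has in the $\mu=0$ case. Your plan was structurally right, but the non-radial exclusion is a genuine step that must be proven, and the sector-by-sector Rayleigh estimate you outlined is harder to implement than the paper's pointwise identity.
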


\begin{proof}
Direct computations give that $V_\si$ and $\frac{\pa V_\si}{\pa \si}$ are eigenfunctions corresponding to $1$ and $2^*-1$, respectively. Now as in \cite{WangWill03JFA}, it is enough to prove that the eigenfunction $u$ corresponding to the eigenvalue $\La\le 2^*-1$ has to be radial.

Denote by $\psi_i$, $i=0,1,2,\dots$, the sequence of spherical harmonics, which are eigenfunctions of the Laplace-Beltrami operator on $S^{N-1}$:
$$
-\De_{S^{N-1}}\psi_i=\tau_i\psi_i.
$$
It is well known that $\tau_0=0$, $\tau_1,\dots,\tau_N=N-1$, $\tau_{N+1}>\tau_N$. We prove that for every $i\ge1$,
$$
\int_{S^{N-1}}u(r,\theta)\psi_i(\theta)d\theta=0.
$$

Setting $\vphi_i(r) = \int_{S^{N-1}}u(r,\theta)\psi_i(\theta)d\theta$ we have:
\begin{eqnarray*}
\De\vphi_i&=&\De_r \vphi_i=\int_{S^{N-1}}\De_r u(r,\theta)\psi_i(\theta)d\theta\\
&=& -\int_{S^{N-1}}\frac{\De_\theta u(r,\theta)}{r^2}\psi_i(\theta)d\theta
    - \int_{S^{N-1}}\left(\frac{\mu u(r,\theta)}{r^2}
       + \La V_\si^{2^*-2}u(r,\theta)\right)\psi_i(\theta)d\theta\\
&=& \int_{S^{N-1}}\frac{\tau_i u(r,\theta)}{r^2}\psi_i(\theta)d\theta
    - \int_{S^{N-1}}\left(\frac{\mu}{r^2}
      + \La V_\si^{2^*-2}\right)u(r,\theta)\psi_i(\theta) d\theta\\
&=& \left(\frac{\tau_i}{r^2}-(\frac{\mu}{r^2}+\La V_\si^{2^*-2})\right)\vphi_i(r).
\end{eqnarray*}
This implies for any $R>0$:
\begin{eqnarray*}
0
 &=& \int_{B_R(0)}\De\vphi_i\frac{\pa V_\si}{\pa r}
      +\left(\left(\frac{\mu}{r^2}+\La V_\si^{2^*-2}\right)
         -\frac{\tau_i}{r^2}\right)\vphi_i\frac{\pa V_\si}{\pa r}\\
 &=& \int_{B_R(0)}\vphi_i\De\left(\frac{\pa V_\si}{\pa r}\right)
      + \left(\left(\frac{\mu}{r^2}+\La V_\si^{2^*-2}\right)
          -\frac{\tau_i}{r^2}\right)\vphi_i\frac{\pa V_\si}{\pa r}
      + \int_{\pa B_R(0)}\left(\frac{\pa V_\si}{\pa r}\cdot\frac{\pa \vphi_i}{\pa r}
      - \vphi_i\frac{\pa^2 V_\si}{\pa r^2}\right)\\
&=& \int_{B_R(0)}\frac{N-1}{r^2}\vphi_i\frac{\pa V_\si}{\pa r}
      + \vphi_i\frac{\pa}{\pa r}\left(-\mu\frac{V_\si}{r^2}-V_\si^{2^*-1}\right)
      + \left(\left(\frac{\mu}{r^2}+\La V_\si^{2^*-2}\right)
         - \frac{\tau_i}{r^2}\right)\vphi_i\frac{\pa V_\si}{\pa r}\\
&& +\int_{\pa B_R(0)}\left(\frac{\pa V_\si}{\pa r}\cdot\frac{\pa\vphi_i}{\pa r}
      -\vphi_i\frac{\pa^2 V_\si}{\pa r^2}\right)\\
&=& \int_{B_R(0)}\frac{N-1-\tau_i}{r^2}\vphi_i\frac{\pa V_\si}{\pa r}
     + (\La-(2^*-1))V_\si^{2^*-2}\frac{\pa V_\si}{\pa r}\vphi_i
     + \frac{2\mu V_\si}{r^3}\vphi_i\\
&& +\int_{\pa B_R(0)}\left(\frac{\pa V_\si}{\pa r}\cdot\frac{\pa \vphi_i}{\pa r}
     -\vphi_i\frac{\pa^2 V_\si}{\pa r^2}\right).
\end{eqnarray*}
Now let $R$ be the first zero of $\vphi_i$; $R:=+\infty$ if $\vphi_i$ is never zero. Without loss of generality we assume $\vphi_i(r)>0$ for $r\in(0,R)$. Then
$\frac{\pa\vphi_i}{\pa r}(R)\le0$, and we finish the proof.
\end{proof}

Let us define the projection $P:H^1(\R^N)\to H_0^1(\Om)$, that is, $\De Pu=\De u$ in $\Om$, $Pu=0$ on $\pa\Om$.

\begin{proposition}\label{proposition-projection estimate}
Let $0\in\Om$ be a smooth bounded domain. Denote $\vphi_\si:=V_\si-PV_\si$. Then
\begin{equation}\label{equ-add1-projection estimate}
0\le\vphi_\si\le V_\si,\quad\text{where }
\vphi_\si(x)
 = C_\mu\ov{d}^{\sqrt{\ov{\mu}}-\sqrt{\ov{\mu}-\mu}}(x)H(0,x)\si^{\frac{N-2}{2}}+\hbar_\si;
\end{equation}
here $d_{\inf}\le \ov{d}\le d_{\sup}$, $d_{\inf}=\dist(0,\pa\Om)=\inf\{|x|:x\in\pa\Om\}$, $d_{\sup}=\sup\{|x|:x\in\pa\Om\}$, and $\hbar_\si$ satisfies the uniform estimates
\begin{equation}\label{equ-2-projection estimate}
\hbar_\si=O(\si^{\frac{N+2}{2}}),\quad\frac{\pa \hbar_\si}{\pa \si}=O(\si^{\frac{N}{2}}).
\end{equation}
\end{proposition}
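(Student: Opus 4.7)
The overall strategy is the standard one for projections of bubbles: identify $\vphi_\si$ as the harmonic function in $\Om$ with boundary trace $V_\si|_{\pa\Om}$, Taylor expand this trace for small $\si$, and match with the regular part $H(0,\cdot)$, which is the harmonic function whose trace on $\pa\Om$ is $|x|^{-(N-2)}$.

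The sign bounds $0\le\vphi_\si\le V_\si$ come from the maximum principle. Since $\De\vphi_\si=0$ in $\Om$ with $\vphi_\si=V_\si>0$ on $\pa\Om$, we get $\vphi_\si\ge0$. The identity $-\De V_\si=\mu V_\si/|x|^2+V_\si^{2^*-1}>0$, valid on all of $\R^N$, shows that $-\De(PV_\si)>0$ in $\Om$ with zero Dirichlet data; hence $PV_\si\ge0$ and therefore $\vphi_\si=V_\si-PV_\si\le V_\si$.

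For the expansion, Taylor expand $V_\si$ on $\pa\Om$. For $x\in\pa\Om$ the distance $|x|$ lies in $[d_{\inf},d_{\sup}]$, so
\[
V_\si(x) = \frac{C_\mu\si^{(N-2)/2}}{|x|^{\be_2(N-2)/2}}\bigl(1+\si^2|x|^{\be_1-\be_2}\bigr)^{-(N-2)/2} = \frac{C_\mu\si^{(N-2)/2}}{|x|^{\be_2(N-2)/2}}+O(\si^{(N+2)/2}).
\]
Using $\sqrt{\ov{\mu}}=(N-2)/2$ one checks $\be_2(N-2)/2=\sqrt{\ov{\mu}}+\sqrt{\ov{\mu}-\mu}$, whence $(N-2)-\be_2(N-2)/2=\sqrt{\ov{\mu}}-\sqrt{\ov{\mu}-\mu}$ and consequently $|x|^{-\be_2(N-2)/2}=|x|^{\sqrt{\ov{\mu}}-\sqrt{\ov{\mu}-\mu}}H(0,x)$ on $\pa\Om$. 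Let $\tilde w$ denote the harmonic extension to $\Om$ of this leading boundary term. Because $|x|^{\sqrt{\ov{\mu}}-\sqrt{\ov{\mu}-\mu}}\in[d_{\inf}^{\sqrt{\ov{\mu}}-\sqrt{\ov{\mu}-\mu}},d_{\sup}^{\sqrt{\ov{\mu}}-\sqrt{\ov{\mu}-\mu}}]$ on $\pa\Om$ and both $d_{\inf}^{\sqrt{\ov{\mu}}-\sqrt{\ov{\mu}-\mu}}H(0,\cdot)$ and $d_{\sup}^{\sqrt{\ov{\mu}}-\sqrt{\ov{\mu}-\mu}}H(0,\cdot)$ are harmonic in $\Om$, the maximum principle gives the sandwich
\[
C_\mu\si^{(N-2)/2}d_{\inf}^{\sqrt{\ov{\mu}}-\sqrt{\ov{\mu}-\mu}}H(0,x)\le\tilde w(x)\le C_\mu\si^{(N-2)/2}d_{\sup}^{\sqrt{\ov{\mu}}-\sqrt{\ov{\mu}-\mu}}H(0,x)
\]
in $\Om$. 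Since $H(0,\cdot)>0$ in $\Om$, we may solve this pointwise for a function $\ov{d}(x)\in[d_{\inf},d_{\sup}]$ with $\tilde w(x)=C_\mu\si^{(N-2)/2}\ov{d}(x)^{\sqrt{\ov{\mu}}-\sqrt{\ov{\mu}-\mu}}H(0,x)$. Setting $\hbar_\si:=\vphi_\si-\tilde w$ yields a harmonic function in $\Om$ whose boundary trace is $O(\si^{(N+2)/2})$ by the expansion above, and one final application of the maximum principle delivers the uniform estimate $\hbar_\si=O(\si^{(N+2)/2})$.

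The derivative bound for $\pa\hbar_\si/\pa\si$ is obtained by the same scheme after one differentiation in $\si$: since $\pa/\pa\si$ commutes with the harmonic extension operator, $\pa\hbar_\si/\pa\si$ is harmonic in $\Om$ with boundary data $(\pa V_\si/\pa\si-\pa\tilde w/\pa\si)|_{\pa\Om}$, and an explicit Taylor expansion of $\pa V_\si/\pa\si$ on $\pa\Om$ shows this remainder is $O(\si^{N/2})$. The main technical point throughout is the exponent identity $\be_2(N-2)/2=\sqrt{\ov{\mu}}+\sqrt{\ov{\mu}-\mu}$, together with the harmonic sandwich, which manufactures the geometric factor $\ov{d}(x)^{\sqrt{\ov{\mu}}-\sqrt{\ov{\mu}-\mu}}$; once these are in place the remainder of the argument is routine Taylor expansion and the maximum principle.
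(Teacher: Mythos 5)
Your proof is correct and follows the same strategy as the paper: the maximum principle for the sign bounds, a Taylor expansion of $V_\si$ on $\pa\Om$ combined with the exponent identity $\be_2(N-2)/2=\sqrt{\ov\mu}+\sqrt{\ov\mu-\mu}$ to match the leading boundary trace with a multiple of $H(0,\cdot)$, and the maximum principle once more to bound the remainder $\hbar_\si$. The only cosmetic difference is that you introduce and sandwich the intermediate harmonic extension $\tilde w$ rather than comparing $\vphi_\si$ directly against $C_\mu d_{\inf}^{\sqrt{\ov\mu}-\sqrt{\ov\mu-\mu}}H(0,\cdot)\si^{\frac{N-2}{2}}$ and $C_\mu d_{\sup}^{\sqrt{\ov\mu}-\sqrt{\ov\mu-\mu}}H(0,\cdot)\si^{\frac{N-2}{2}}$ on $\pa\Om$, which makes $\ov d(\cdot)$ $\si$-independent; the substance of the argument is identical.
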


\begin{proof}
It is easy to see that $\vphi_\si$ satisfies
\[
\begin{cases}
\De \vphi_\si(x)=0 \quad& \text{in} ~\Om\backslash\{0\},\\
\vphi_\si(x)=V_\si(x)=C_\mu(\frac{\si}{\si^2
|x|^{\be_1}
+|x|^{\be_2}})^{\frac{N-2}{2}}\quad&  \text{on}~\pa \Om.
\end{cases}
\]
Then the first part of (\ref{equ-add1-projection estimate}) holds by the maximum principle.

Consider the function $H$ satisfying
\[
\begin{cases}
\De H(0,x)=0 \quad& \text{in} ~\Om\backslash\{0\},\\
H(0,x)=\frac{1}{|x|^{N-2}} \quad& \text{on }\pa\Om.
\end{cases}
\]
Notice that on $\pa \Om$,
\[
\vphi_\si-C_\mu d_{\inf}^{\sqrt{\ov{\mu}}-\sqrt{\ov{\mu}-\mu}}H(0,x)\si^{\frac{N-2}{2}}
 = C_\mu \si^{\frac{N-2}{2}}[\frac{1}{(\si^2|x|^{\be_1}+|x|^{\be_2})^{\frac{N-2}{2}}}
   -\frac{d_{\inf}^{\sqrt{\ov{\mu}}-\sqrt{\ov{\mu}-\mu}}}{|x|^{N-2}}]
 \ge O(\si^{\frac{N+2}{2}}),
\]
and
\[
\vphi_\si-C_\mu d_{\sup}^{\sqrt{\ov{\mu}}-\sqrt{\ov{\mu}-\mu}}H(0,x)\si^{\frac{N-2}{2}}
 = C_\mu \si^{\frac{N-2}{2}}[\frac{1}{(\si^2|x|^{\be_1}+|x|^{\be_2})^{\frac{N-2}{2}}}
   -\frac{d_{\sup}^{\sqrt{\ov{\mu}}-\sqrt{\ov{\mu}-\mu}}}{|x|^{N-2}}]
 \le O(\si^{\frac{N+2}{2}}).
\]
Then the maximum principle and direct computations yield the second part of \eqref{equ-add1-projection estimate} and \eqref{equ-2-projection estimate}.
\end{proof}

\begin{remark}
\begin{itemize}
\item[a)] If $\mu\to0^+,$ then
\begin{equation}\label{equ-1-projection estimate}
\vphi_\si(x) = C_0H(0,x)\si^{\frac{N-2}{2}}+O(\mu\si^{\frac{N-2}{2}})+\hbar_\si.
\end{equation}
\item[b)] Let us recall the similar results for $U_{\de,\xi}$ obtained in \cite{Rey90JFA}, that is
\begin{equation}\label{equ-3-projection estimate}
0 \le \vphi_{\de,\xi}:=U_{\de,\xi}-PU_{\de,\xi}\le U_{\de,\xi},~\vphi_{\de,\xi}
  = C_0 H(\xi,\cdot)\de^{\frac{N-2}{2}}+\hbar_{\de,\xi},
\end{equation}
where $\hbar_{\de,\xi}=O(\de^{\frac{N+2}{2}})$.
\end{itemize}
\end{remark}

\section{\textbf{Solutions with multiple bubbles concentrating at different points \label{Section 3}}}

\subsection{\textbf{The finite dimensional reduction}\label{Subsection 3.1}}

We introduce some notation. Fix an integer $k\ge0$. For $\la=(\la_1,\la_2,\dots,\la_{k},\ov{\la})\in \R_+^{k+1}$ and $\xi=(\xi_1,\xi_2,\dots,\xi_{k})\in\Om^{k}$ we define
\[
W_{\eps,\la,\xi}
 :=\sum_{i=1}^{k}\Ker\left(-\De-(2^*-1)U_{\de_i,\xi_i}^{2^*-2}\right)
     + \Ker\left(-\De-\frac{\mu}{|x|^2}-(2^*-1)V_\si^{2^*-2}\right),
\]
where $\de_i=\la_i\eps^{\frac{1}{N-2}}$, $\si=\ov{\la}\eps^{\frac{1}{N-2}}$. From \cite{BianEg91JFA}, the kernel of the operator $-\De-(2^*-1)U_{\de_i,\xi_i}^{2^*-2}$ on $L^2(\R^N)$ has dimension $N+1$ and is spanned by $\frac{\pa U_{\de_i,\xi_i}}{\pa \de_i}$, $\frac{\pa U_{\de_i,\xi_i}}{\pa (\xi_i)^j}$, $j=1,2,\dots,N$, where $(\xi_i)^j$ is the $j-$th component of $\xi_i$. Combining this with Proposition~\ref{proposition-eigenvalue}, we have
\[
W_{\eps,\la,\xi}
 = \span\left\{\Psi_i^j,\ \Psi_i^0,\ \ov{\Psi},\ i=1,2,\dots,k,\ j=1,2,\dots,N\right\},
\]
where for $i=1,2,\dots,k$ and $j=1,2,\dots,N$:
$$
\Psi_i^j:=\frac{\pa U_{\de_i,\xi_i}}{\pa (\xi_i)^j},\quad
\Psi_i^0:=\frac{\pa U_{\de_i,\xi_i}}{\pa \de_i},\quad
\ov{\Psi}:=\frac{\pa V_{\si}}{\pa \si}.
$$
For $\eta\in(0,1)$ we define
\[
\begin{aligned}
\cO_\eta
 &:=\big\{(\la,\xi)\in\R_+^{k+1}\times \Om^{k}: \la_i\in(\eta,\eta^{-1}),\ov{\la}\in(\eta,\eta^{-1}),\ \dist(\xi_i,\pa\Om)>\eta,\\
 &\hspace{2cm}
 |\xi_i|>\eta,\ |\xi_{i_1}-\xi_{i_2}|>\eta,\ i,i_1,i_2=1,2,\dots,k,\ i_1\neq i_2\big\}.
\end{aligned}
\]

Let us introduce the spaces
\[
K_{\eps,\la,\xi}:=P W_{\eps,\la,\xi},
\]
and
\[
K^\bot_{\eps,\la,\xi}
 :=\{\phi\in H_\mu(\Om):(\phi,P\Psi)=0,\text{ for all }\Psi\in W_{\eps,\la,\xi}\},
\]
and the $(\cdot,\cdot)$-orthogonal projections
\[
\Pi_{\eps,\la,\xi} := H_\mu(\Om)\to K_{\eps,\la,\xi},
\]
and
\[
\Pi^\bot_{\eps,\la,\xi} := Id-\Pi_{\eps,\la,\xi}:H_\mu(\Om)\to K^\bot_{\eps,\la,\xi}.
\]

Solving problem \eqref{pro} is equivalent to finding $\eta>0$, $\eps>0$, $(\la,\xi)\in\cO_\eta$ and $\phi_{\eps,\la,\xi}\in K^\bot_{\eps,\la,\xi}$ such that:
\begin{equation}\label{main-equality Com-1}
\Pi^\bot_{\eps,\la,\xi}\left(V_{\eps,\la,\xi} + \phi_{\eps,\la,\xi}
 - \iota^*(f_\eps(V_{\eps,\la,\xi} + \phi_{\eps,\la,\xi}))\right)
= 0,
\end{equation}
and
\begin{equation} \label{main-equality-1}
\Pi_{\eps,\la,\xi}\left(V_{\eps,\la,\xi} + \phi_{\eps,\la,\xi} - \iota^*(f_\eps(V_{\eps,\la,\xi}
 + \phi_{\eps,\la,\xi}))\right)
= 0,
\end{equation}
where
\begin{equation}\label{shape of V-1}
V_{\eps,\la,\xi}=-\sum\limits_{i=1}^{k}PU_{\de_i,\xi_i}+PV_\si
\end{equation}
or
\begin{equation}\label{shape of V-add1}
V_{\eps,\la,\xi}=\sum\limits_{i=1}^{k}(-1)^iPU_{\de_i,\xi_i}+PV_\si.
\end{equation}
In the rest of this section, we only consider $V_{\eps,\la,\xi}$ as in \eqref{shape of V-1} because the argument for the solutions of the form \eqref{shape of V-add1} is similar.

We prove \eqref{main-equality Com-1} first. Let us introduce the operator $L_{\eps,\la,\xi}:K^\bot_{\eps,\la,\xi}\to K^\bot_{\eps,\la,\xi}$ defined by
\[
L_{\eps,\la,\xi}(\phi)=\phi-\Pi^\bot_{\eps,\la,\xi}\iota^*(f'_0(V_{\eps,\la,\xi})\phi).
\]

\begin{proposition}\label{proposition-operator L-1}
For any $\eta>0$, there exist $\eps_0>0$ and $c>0$ such that for every $(\la,\xi)\in\cO_\eta$ and for every $\eps\in(0,\eps_0)$:
\[
\|L_{\eps,\la,\xi}(\phi)\|_{\mu}\ge c\|\phi\|_\mu, \quad\text{for all }\phi\in K^\bot_{\eps,\la,\xi}.
\]
In particular, $L_{\eps,\la,\xi}$ is invertible with continuous inverse.
\end{proposition}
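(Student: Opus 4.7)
The proof is by contradiction in the standard Lyapunov--Schmidt vein, with two different bubble scales to track: the $k$ regular instantons $U_{\de_i,\xi_i}$ away from the origin and the single Hardy bubble $V_\si$ at the origin. Suppose the claim fails. Then there exist sequences $\eps_n\to 0^+$, $(\la_n,\xi_n)\in\cO_\eta$ with $(\la_n,\xi_n)\to(\la_\infty,\xi_\infty)\in\ov{\cO_{\eta/2}}$, and $\phi_n\in K^\bot_{\eps_n,\la_n,\xi_n}$ satisfying $\|\phi_n\|_\mu=1$ and $h_n:=L_{\eps_n,\la_n,\xi_n}(\phi_n)$ with $\|h_n\|_\mu\to 0$. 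Unwinding the definitions of $L$ and $\Pi^\bot$, there exist real coefficients $c_{i,n}^j,\ov c_n$ with
\[
\phi_n-\iota^*\bigl(f'_0(V_{\eps_n,\la_n,\xi_n})\phi_n\bigr)
 = h_n+\sum_{i=1}^k\sum_{j=0}^N c_{i,n}^j\,P\Psi_i^j+\ov c_n\,P\ov\Psi.
\]
Testing this identity against each generator of $K_{\eps_n,\la_n,\xi_n}$ and using almost-orthogonality of the Gram matrix of the $P\Psi$'s (its diagonal uniformly positive, off-diagonal $o(1)$; cf.~\cite{BaLiRey95CVPDE,Rey90JFA}), together with $\phi_n\perp K_{\eps_n,\la_n,\xi_n}$ and $\|\phi_n\|_\mu=1$, forces all the $c_{i,n}^j$ and $\ov c_n$ to tend to zero.

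Next, zoom in at each concentration site. For $i=1,\dots,k$ set $\wh\phi_{i,n}(y):=\de_{i,n}^{(N-2)/2}\phi_n(\de_{i,n}y+\xi_{i,n})$, and at the origin $\wt\phi_n(y):=\si_n^{(N-2)/2}\phi_n(\si_n y)$. Extended by zero, each family is bounded in $D^{1,2}(\R^N)$, so along a subsequence $\wh\phi_{i,n}\weakto\wh\phi_{i,\infty}$ and $\wt\phi_n\weakto\wt\phi_\infty$. Because $|\xi_{i,n}|\ge\eta$, the Hardy term is negligible on each outer rescaling, so passing to the limit yields
\[
-\De\wh\phi_{i,\infty}=(2^*-1)U_{\la_{i,\infty},0}^{2^*-2}\,\wh\phi_{i,\infty}\quad\text{in }\R^N.
\]
For the central profile, using $\mu_n=\mu_0\eps_n^\al\to 0$, $\be_1\to 0$, $\be_2\to 2$, and $C_\mu\to C_0$ (so that $\wt V_n(y):=\si_n^{(N-2)/2}V_{\si_n}(\si_n y)\to U_{\ov\la_\infty,0}(y)$ uniformly on compact subsets of $\R^N\setminus\{0\}$), and Hardy's inequality to pass to the limit in the term $\mu_n|y|^{-2}\wt\phi_n$, one obtains
\[
-\De\wt\phi_\infty=(2^*-1)U_{\ov\la_\infty,0}^{2^*-2}\,\wt\phi_\infty\quad\text{in }\R^N.
\]
The orthogonalities $\phi_n\perp P\Psi_i^j$ and $\phi_n\perp P\ov\Psi$ transplant, after rescaling, to $(\wh\phi_{i,\infty},\pa_\la U)=(\wh\phi_{i,\infty},\pa_{\xi^j}U)=0$ and $(\wt\phi_\infty,\pa_{\ov\la}U_{\ov\la_\infty,0})=0$.

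By the classical Bianchi--Egnell non-degeneracy of the instanton, each $\wh\phi_{i,\infty}$ lies in $\span\{\pa_\la U,\pa_{\xi^j}U\}$, and the orthogonalities force $\wh\phi_{i,\infty}\equiv 0$. For the central bubble, Proposition~\ref{proposition-eigenvalue} applied to the pre-limit operator $-\De-\mu_n|\cdot|^{-2}-(2^*-1)\wt V_n^{2^*-2}$ — whose relevant eigenspace is one-dimensional and radial for each $\mu_n>0$ — combined with $(\wt\phi_n,\pa_{\ov\la}\wt V_n)=0$ yields $\wt\phi_\infty\equiv 0$ after careful passage to the limit. A standard concentration-compactness decomposition (cf.~\cite{MussoPis10JMPA}) then shows $\|\phi_n\|_{L^{2^*}(\Om)}\to 0$, and inserting this into $\phi_n=\iota^*(f'_0(V_{\eps_n,\la_n,\xi_n})\phi_n)+o(1)$ via \eqref{ineq-adjoint operator} gives $\|\phi_n\|_\mu\to 0$, contradicting $\|\phi_n\|_\mu=1$. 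Invertibility of $L_{\eps,\la,\xi}$ with uniform inverse-bound then follows at once, since $L_{\eps,\la,\xi}$ is a compact perturbation of the identity. I expect the main technical obstacle to be precisely this last step at the central bubble: transporting the radial/one-dimensional constraint supplied by Proposition~\ref{proposition-eigenvalue} through the limit $\mu_n\to 0$, so as to exclude the $N$ translation modes that would otherwise populate the kernel of the limiting regular instanton equation.
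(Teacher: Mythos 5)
Your proposal follows the same contradiction-and-rescaling scheme as the paper: assume $\|\phi^n\|_\mu=1$ while $\|L_{\eps^n,\la^n,\xi^n}(\phi^n)\|_\mu\to0$, show the projection coefficients vanish, zoom in at each concentration site (the rescalings by $\de_{i,n}$ and $\si_n$ you propose are equivalent, up to the bounded factors $\la_i^n,\ov\la^n$, to the paper's uniform $\eps^{1/(N-2)}$ rescaling with a fixed cutoff), identify the limiting linearized equations, kill the weak limits by orthogonality, and conclude via a Rellich-type argument. So the route is the paper's.

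The one substantive remark is that you have correctly singled out the spot where the paper's proof is thinnest. At the outer sites $\xi_i\ne 0$ the orthogonality conditions supply all $N+1$ constraints $(\phi,P\Psi_i^j)=0$, $j=0,\dots,N$, because the outer bubbles carry both scale and translation parameters; these transfer to $\wh\phi_{i,\infty}\perp\Psi_{\la_i,0}^j$ for all $j$ and Bianchi--Egnell then gives $\wh\phi_{i,\infty}=0$. At the origin, by contrast, $K^\bot$ enforces only the single constraint $(\phi,P\ov\Psi)=0$, since the kernel of $-\De-\mu|x|^{-2}-(2^*-1)V_\si^{2^*-2}$ is one-dimensional and radial (Proposition~\ref{proposition-eigenvalue}); after rescaling this gives only $\wt\phi_\infty\perp\pa_{\ov\la}U_{\ov\la,0}$. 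The limiting linearized operator $-\De-(2^*-1)U_{\ov\la,0}^{2^*-2}$ has an $(N+1)$-dimensional kernel, and the $N$ translation modes $\pa_{x^j}U_{\ov\la,0}$ are not a priori excluded by that single orthogonality. The paper's equation \eqref{equ-16-proposition-operator L-1} asserts orthogonality for all $j=0,\dots,N$ ``similarly to \eqref{equ-11-proposition-operator L-1}'', but that analogy is imperfect precisely because there is no translation parameter and hence no corresponding orthogonality at the center. To make this step rigorous one must use the one-dimensionality/radiality of the second eigenspace of the pre-limit operator quantitatively: the eigenvalues associated to the translation modes are separated from $2^*-1$ by a gap of order $\mu_n=\mu_0\eps_n^\al$, so one has to track the size of the error term against $\mu_n$ rather than just pass softly to the limit. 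Your proposal names this obstacle explicitly, which is the right instinct; it does not resolve it, but neither, in detail, does the paper. The remaining steps — the concentration-compactness conclusion and the Fredholm argument for invertibility — are standard and correct as you describe them.
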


\begin{proof}
We argue by contradiction, following the same line as in \cite{MussoPis02IUMJ}. Suppose there exist $\eta>0$, sequences $\eps^n>0$, $(\la^n,\xi^n)\in\cO_\eta$, $\phi^n\in H_\mu(\Om)$ such that $\eps^n\to0$,
$\la^n=(\la_1^n,\dots,\la_k^n,\ov{\la}^n) \to (\la_1,\dots\la_k,\ov{\la})$, $\xi^n=(\xi_1^n,\dots,\xi_k^n) \to (\xi_1,\dots\xi_k)$, as $n\to\infty$, and
\begin{equation}\label{equ-1-proposition-operator L-1}
\phi^n\in K^\bot_{\eps^n,\la^n,\xi^n},\|\phi^n\|_\mu=1,
\end{equation}
\begin{equation}\label{equ-2-proposition-operator L-1}
L_{\eps^n,\la^n,\xi^n}(\phi^n)=h^n, \text{with}~\|h^n\|_\mu\to 0.
\end{equation}
Thus we have
\begin{equation}\label{equ-3-proposition-operator L-1}
\phi^n-\iota^*(f'_0(V_{\eps^n,\la^n,\xi^n})\phi^n)
 = h^n-\Pi_{\eps^n,\la^n,\xi^n}(\iota^*(f'_0(V_{\eps^n,\la^n,\xi^n})\phi^n)).
\end{equation}
Setting
\[
\de_i^n = \la_i^n\eps^{\frac{1}{N-2}},\quad \si^n = \ov{\la}^n\eps^{\frac{1}{N-2}}
\]
and
\[
(\Psi_i^j)_n := \frac{\pa U_{\de_i^n,\xi_i^n}}{\pa (\xi_i^n)^j}\text{ for } j=1,2,\dots,N, \quad
(\Psi_i^0)_n:=\frac{\pa U_{\de_i^n,\xi_i^n}}{\pa \de_i^n},\quad
(\ov{\Psi})_n:=\frac{\pa V_{\si^n}}{\pa \si^n},
\]
where $(\xi_i^n)^j$ is the $j-$th component of $\xi_i^n$, we obtain
\[
w^n := -\Pi_{\eps^n,\la^n,\xi^n}(\iota^*(f'_0(V_{\eps^n,\la^n,\xi^n})\phi^n))
 = \sum\limits_{i=1}^{k}\sum_{j=0}^N c_{i,j}^nP(\Psi_i^j)_n+c_0^n P(\ov{\Psi})_n
\]
for some coefficients $c_{i,j}^n, c_0^n$. Now we argue in three steps.

\emph{Step 1.} We prove
\begin{equation}\label{equ-wn-proposition-operator L-1}
 \lim_{n\to\infty}\|w^n\|_\mu=0.
\end{equation}

Multiplying (\ref{equ-3-proposition-operator L-1}) by $\De P(\Psi_l^h)_n+\mu\frac{P(\Psi_l^h)_n}{|x|^2}$, we get
\[
\begin{aligned}
&\int_\Om \phi^n\left(\De P(\Psi_l^h)_n+\mu\frac{P(\Psi_l^h)_n}{|x|^2}\right)
  - \int_\Om\iota^*(f'_0(V_{\eps^n,\la^n,\xi^n})\phi^n)
      \left(\De P(\Psi_l^h)_n + \mu\frac{P(\Psi_l^h)_n}{|x|^2}\right)\\
&\hspace{1cm}
= -\int_\Om h^n\left(-\De P(\Psi_l^h)_n-\mu\frac{P(\Psi_l^h)_n}{|x|^2}\right)
      + \int_\Om w^n\left(\De P(\Psi_l^h)_n+\mu\frac{P(\Psi_l^h)_n}{|x|^2}\right)
\end{aligned}
\]
and then
\[
\begin{aligned}
&\sum_{i=1}^{k}\sum_{j=0}^N c_{i,j}^n\big(P(\Psi_i^j)_n,P(\Psi_l^h)_n\big)
       + c_{0}^n\big(P(\ov{\Psi})_n,P(\Psi_l^h)_n\big)\\
&\hspace{1cm}
=\big(\phi^n,P(\Psi_l^h)_n\big)
    - \big(\iota^*(f'_0(V_{\eps^n,\la^n,\xi^n})\phi^n),P(\Psi_l^h)_n\big)
    - \big(h^n,P(\Psi_l^h)_n\big).
\end{aligned}
\]
From Lemma \ref{e56-e62} we deduce:
\begin{equation}\label{equ-4-proposition-operator L-1a}
c_{l,h}^n\wt{c}_{l,h}^n\frac{1}{(\de_l^n)^2}+o\left(\frac{1}{(\de_l^n)^2}\right)
 = -\big(\iota^*(f'_0(V_{\eps^n,\la^n,\xi^n})\phi^n),P(\Psi_l^h)_n\big),
\end{equation}
where $\wt{c}_{l,h}^n>0$ is a constant.

Proposition \ref{proposition-projection estimate} implies
\begin{eqnarray*}
0&=&(\phi^n,P(\Psi_l^h)_n)=\int_\Om\nabla\phi^n\nabla P(\Psi_l^h)_n-\mu\frac{\phi^nP(\Psi_l^h)_n}{|x|^2}\\
&=&\int_\Om\nabla\phi^n\nabla (\Psi_l^h)_n-\mu\frac{\phi^n(\Psi_l^h)_n}{|x|^2}+o(1)\\
&=&\int_\Om f'_0(U_{\de_l^n,\xi_l^n})(\Psi_l^h)_n\phi^n+o(1),
\end{eqnarray*}
and then
\[
\begin{aligned}
&-(\iota^*(f'_0(V_{\eps^n,\la^n,\xi^n})\phi^n),P(\Psi_l^h)_n)
  = -\int_\Om f'_0(V_{\eps^n,\la^n,\xi^n})\phi^nP(\Psi_l^h)_n\\
&\hspace{1cm}
 \le \left|\int_\Om \big(f'_0(V_{\eps^n,\la^n,\xi^n})
         - f'_0(U_{\de_l^n,\xi_l^n})\big)\phi^n(\Psi_l^h)_n\right|
 + \left|\int_\Om f'_0(V_{\eps^n,\la^n,\xi^n})\phi^n\big(P(\Psi_l^h)_n
         - (\Psi_l^h)_n\big)\right|
 + o(1)\\
&\hspace{1cm}
 = o(1)
\end{aligned}
\]
by Lemma \ref{e50-e51} and Lemma \ref{e83-85-Lemma A}.

Combining the above inequality with \eqref{equ-4-proposition-operator L-1a} yields $c_{l,h}^n\to0$ as $n\to\infty$. Similar arguments show that $c_0^n\to0$ as $n\to\infty$, and  $\lim\limits_{n\to\infty}\|w^n\|_\mu=0$ follows.

\emph{Step 2.} Let $\chi:\R^N\to [0,1]$ be a smooth cut-off function, such that $\chi(x)=1$ if $|x|\le\eta/4$, $\chi(x)=0$ if $|x|\ge\eta/2$, and $|\nabla\chi(x)| \le \frac{C}{\eta}$. We set
\[
\phi_i^n(x)
 := \big((\eps^n)^{\al_1}\big)^{\frac{N-2}{2}}\phi^n\big((\eps^n)^{\al_1}x+\xi_i^n\big)
            \chi\big((\eps^n)^{\al_1}x\big),
     \quad x\in\Om_i^n:=\frac{\Om-\xi_i^n}{(\eps^n)^{\al_1}},\ i=1,\dots,k,
\]
and
\[
\phi_0^n(x)
 := \big((\eps^n)^{\al_2}\big)^{\frac{N-2}{2}}\phi^n\big((\eps^n)^{\al_2}x\big)
           \chi\big((\eps^n)^{\al_2}x\big),
     \quad x\in\Om_0^n:=\frac{\Om}{(\eps^n)^{\al_2}},
\]
where $\al_1,\al_2$ are positive constants which will be determined later. Since $\phi_i^n$ is bounded in $D^{1,2}(\R^N)$, we may assume, up to a subsequence,
\[
\phi_i^n\weakto \phi_i^\infty \quad \text{weakly in } D^{1,2}(\R^N),\ i=0,1,2,\dots,k.
\]
Now we claim that
\begin{equation}\label{equ-6-proposition-operator L-1}
\phi_i^\infty(x)=0,\quad i=0,1,\dots,k.
\end{equation}

Firstly we prove \eqref{equ-6-proposition-operator L-1} for $i=1,\dots,k$. Notice that $\big|\nabla\chi\big((\eps^n)^{\al_1}x\big)\big|
 = \big|(\eps^n)^{\al_1}\nabla\chi(\cdot)\big| \le \frac{C(\eps^n)^{\al_1}}{\eta}=o(1)$.
Thus we have for any $\psi\in C_0^\infty(\R^N)$:
\begin{equation}\label{equ-7-proposition-operator L-1}
\big((\eps^n)^{\al_1}\big)^{\frac{N-2}{2}}
 \int_{\Om_i^n}\nabla\chi\big((\eps^n)^{\al_1}x\big)
  \big(\phi^n((\eps^n)^{\al_1}x+\xi_i^n)\nabla\psi
        - \psi\nabla\phi^n((\eps^n)^{\al_1}x+\xi_i^n)\big)
 = o(1).
\end{equation}
On the other hand, taking $\al_1=\frac{1}{N-2}$ and noticing $N\ge7$, we get:
\begin{equation}\label{equ-8-proposition-operator L-1}
\big((\eps^n)^{\al_1}\big)^{\frac{2-N}{2}}\mu\int_{\Om}
 \frac{\iota^*\big(f'_0(V_{\eps^n,\la^n,\xi^n}(y))\phi^n(y)\big)
        \chi(y-\xi_i^n)\psi\left(\frac{y-\xi_i^n}{(\eps^n)^{\al_1}}\right)}{|y|^2}
 = o(1).
\end{equation}

By \eqref{equ-7-proposition-operator L-1}, \eqref{equ-3-proposition-operator L-1}, \eqref{equ-2-proposition-operator L-1}, \eqref{equ-wn-proposition-operator L-1}, \eqref{equ-8-proposition-operator L-1} and \eqref{equ-3-projection estimate}, we have for any $\psi\in C_0^\infty(\R^N)$:
\begin{equation}\label{equ-9-proposition-operator L-1}
\begin{aligned}
&\int_{\Om_i^n}\nabla\phi_i^n\nabla\psi\\
&\hspace{1cm}
 = \big((\eps^n)^{\al_1}\big)^{\frac{N-2}{2}}
      \int_{\Om_i^n}\Big(\nabla\phi^n\big((\eps^n)^{\al_1}x+\xi_i^n\big)
                    \nabla(\chi((\eps^n)^{\al_1}x)\psi)\\
&\hspace{2cm}
         +\nabla\chi\big((\eps^n)^{\al_1}x\big)
         \big(\phi^n\big((\eps^n)^{\al_1}x+\xi_i^n\big)\nabla\psi
              -\psi\nabla\phi^n((\eps^n)^{\al_1}x+\xi_i^n)\big)\Big)\\
&\hspace{1cm}
 = \big((\eps^n)^{\al_1}\big)^{\frac{N-2}{2}}
      \int_{\Om_i^n}\nabla\phi^n\big((\eps^n)^{\al_1}x+\xi_i^n\big)
            \nabla\big(\chi((\eps^n)^{\al_1}x)\psi\big)+o(1)\\
&\hspace{1cm}
 = \big((\eps^n)^{\al_1}\big)^{\frac{N-2}{2}}
     \int_{\Om_i^n}\nabla\iota^*
         \big(f'_0\big(V_{\eps^n,\la^n,\xi^n}((\eps^n)^{\al_1}x+\xi_i^n)\big)
          \phi^n\big((\eps^n)^{\al_1}x+\xi_i^n\big)\big)
          \nabla\big(\chi((\eps^n)^{\al_1}x)\psi\big)\\
&\hspace{2cm}
      + \big((\eps^n)^{\al_1}\big)^{\frac{N-2}{2}}
         \int_{\Om_i^n}\nabla h_n((\eps^n)^{\al_1}x+\xi_i^n)
               \nabla\big(\chi((\eps^n)^{\al_1}x)\psi\big)\\
&\hspace{2cm}
      + \big((\eps^n)^{\al_1}\big)^{\frac{N-2}{2}}
      \int_{\Om_i^n}\nabla w_n\big((\eps^n)^{\al_1}x+\xi_i^n\big)
              \nabla(\chi((\eps^n)^{\al_1}x)\psi)
      + o(1)\\
&\hspace{1cm}
 = \big((\eps^n)^{\al_1}\big)^{\frac{N-2}{2}}
    \int_{\Om_i^n}\nabla\iota^*
      \big(f'_0\big(V_{\eps^n,\la^n,\xi^n}((\eps^n)^{\al_1}x+\xi_i^n)\big)
          \phi^n\big((\eps^n)^{\al_1}x+\xi_i^n\big)\big)
          \nabla\big(\chi((\eps^n)^{\al_1}x)\psi\big)\\
&\hspace{2cm}
      +o(1)\\
&\hspace{1cm}
 = \big((\eps^n)^{\al_1}\big)^{\frac{2-N}{2}}
    \int_{\Om}\nabla\iota^*\big(f'_0(V_{\eps^n,\la^n,\xi^n}(y))\phi^n(y)\big)
       \nabla\big(\chi(y-\xi_i^n)\psi(\frac{y-\xi_i^n}{(\eps^n)^{\al_1}})\big)
     + o(1)\\
&\hspace{1cm}
 = \big((\eps^n)^{\al_1}\big)^{\frac{2-N}{2}}
    \int_{\Om}f'_0\big(V_{\eps^n,\la^n,\xi^n}(y)\big)\phi^n(y)\chi(y-\xi_i^n)
         \psi\left(\frac{y-\xi_i^n}{(\eps^n)^{\al_1}}\right) + o(1)\\
&\hspace{1cm}
 = \big((\eps^n)^{\al_1}\big)^{\frac{2-N}{2}}
    \int_{|y-\xi_i^n|\le\eta/2}
       f'_0\left(-\sum_{j=1}^{k}PU_{\de_j^n,\xi_j^n}(y)+P V_{\si^n}(y)\right)
           \phi^n(y)\chi(y-\xi_i^n)\psi\left(\frac{y-\xi_i^n}{(\eps^n)^{\al_1}}\right)\\
&\hspace{2cm}
      +o(1)\\
&\hspace{1cm}
 = \big((\eps^n)^{\al_1}\big)^{\frac{2-N}{2}}
     \int_{|y-\xi_i^n|\le\eta/2}
        f'_0\big(U_{\de_i^n,\xi_i^n}(y)\big)\phi^n(y)\chi(y-\xi_i^n)
             \psi(\frac{y-\xi_i^n}{(\eps^n)^{\al_1}})
        + o(1)\\
&\hspace{1cm}
 = \int_{|(\eps^n)^{\al_1}x|\le\eta/2}
     f'_0\big(U_{\la_i^n,0}(x)\big)\big((\eps^n)^{\al_1}\big)^{\frac{N-2}{2}}
      \phi^n\big((\eps^n)^{\al_1}x+\xi_i^n\big)\chi\big((\eps^n)^{\al_1}x\big)\psi(x)
   + o(1)\\
&\hspace{1cm}
 = \int_{\R^N}f'_0(U_{\la_i,0}(x))\phi_i^\infty(x)\psi(x) + o(1).
\end{aligned}
\end{equation}
Therefore $\phi_i^\infty$ is a weak solution of
\begin{equation}\label{equ-10-proposition-operator L-1}
-\De \phi_i^\infty = f'_0(U_{\la_i,0})\phi_i^\infty\quad \text{in } D^{1,2}(\R^N).
\end{equation}

In order to continue we denote $\Psi_{\la_i,0}^j:=\frac{\pa U_{\la_i,0}}{\pa x^j}$ for $j=1,\dots,N$, and $\Psi_{\la_i,0}^0:=\frac{\pa U_{\la_i,0}}{\pa \la_i}$. Now we claim that
\begin{equation}\label{equ-11-proposition-operator L-1}
\int_{\R^N}\nabla\phi_i^\infty(x)\nabla\Psi_{\la_i,0}^j(x)=0,\ j=0,1,\dots, N.
\end{equation}
In fact,
\begin{eqnarray}\label{equ-12-proposition-operator L-1}
&&\left|\int_{\Om_i^n}f'_0\big(U_{\la_i^n,0}(x)\big)\phi_i^n(x)\Psi_{\la_i^n,0}^j(x)\right|\\
&&\hspace{.3cm}
 = \left|\int_{\Om_i^n}f'_0\big(U_{\la_i^n,0}(x)\big)
    \big((\eps^n)^{\al_1}\big)^{\frac{N-2}{2}}
    \phi^n\big((\eps^n)^{\al_1}x+\xi_i^n\big)\chi((\eps^n)^{\al_1}x)
    \Psi_{\la_i^n,0}^j(x)\right|\nonumber\\
&&\hspace{.3cm}
 = \left|\int_{(\eps^n)^{-\al_1}\Om}
     f'_0\left(U_{\la_i^n,\frac{\xi_i^n}{(\eps^n)^{\al_1}}}(x)\right)
     \big((\eps^n)^{\al_1}\big)^{\frac{N-2}{2}}
     \phi^n\big((\eps^n)^{\al_1}x\big)\chi\big((\eps^n)^{\al_1}x-\xi_i^n\big)
     \Psi_{\la_i^n,0}^j\big(x-\frac{\xi_i^n}{(\eps^n)^{\al_1}}\big)\right|.\nonumber
\end{eqnarray}
Noticing that
\[
\begin{aligned}
&\int_{(\eps^n)^{-\al_1}\Om}f'_0\left(U_{\la_i^n,\frac{\xi_i^n}{(\eps^n)^{\al_1}}}(x)\right)
        \big((\eps^n)^{\al_1}\big)^{\frac{N-2}{2}}
        \phi^n\big((\eps^n)^{\al_1}x\big)
        \Psi_{\la_i^n,0}^j\big(x-\frac{\xi_i^n}{(\eps^n)^{\al_1}}\big)\\
&\hspace{1cm}
 = -(\eps^n)^{\al_1}\int_{\Om}f'_0\big(U_{\de_i^n,\xi_i^n}(y)\big)\phi^n(y)
     \big(\Psi_i^j\big)_n(y)
 = o(1),
\end{aligned}
\]
then
\[
\begin{aligned}
&\eqref{equ-12-proposition-operator L-1}\\
&\hspace{.3cm}
 = \left|\int_{(\eps^n)^{-\al_1}\Om}
     f'_0\left(U_{\la_i^n,\frac{\xi_i^n}{(\eps^n)^{\al_1}}}(x)\right)
     \big((\eps^n)^{\al_1}\big)^{\frac{N-2}{2}}
     \phi^n\big((\eps^n)^{\al_1}x\big)\big(\chi((\eps^n)^{\al_1}x-\xi_i^n)-1\big)
     \Psi_{\la_i^n,0}^j\left(x-\frac{\xi_i^n}{(\eps^n)^{\al_1}}\right)\right|\\
&\hspace{1.3cm}
    +o(1)\\
&\hspace{.3cm}
 \le \left|\int_{\left|x-\frac{\xi_i^n}{(\eps^n)^{\al_1}}\right|\ge\frac{\eta/4}{(\eps^n)^{\al_1}}}
       f'_0\left(U_{\la_i^n,\frac{\xi_i^n}{(\eps^n)^{\al_1}}}(x)\right)
       \big((\eps^n)^{\al_1}\big)^{\frac{N-2}{2}}
       \phi^n\big((\eps^n)^{\al_1}x\big)
       \Psi_{\la_i^n,0}^j\left(x-\frac{\xi_i^n}{(\eps^n)^{\al_1}}\right)\right|\\
&\hspace{1.3cm}
    +o(1)\\
&\hspace{.3cm}
 \le C\|\phi^n\|_{\frac{2N}{N-2}}
    \left(
    \int_{\left|x-\frac{\xi_i^n}{(\eps^n)^{\al_1}}\right|\ge\frac{\eta/4}{(\eps^n)^{\al_1}}}
     \left(U_{\la_i^n,\frac{\xi_i^n}{(\eps^n)^{\al_1}}}(x)\right)^{\frac{2N}{N-2}}\right)^{\frac2N}\\
&\hspace{1.3cm}
    \times\left(
     \int_{\left|x-\frac{\xi_i^n}{(\eps^n)^{\al_1}}\right|\ge\frac{\eta/4}{(\eps^n)^{\al_1}}}
     \left(\Psi_{\la_i^n,0}^j\left(x-\frac{\xi_i^n}{(\eps^n)^{\al_1}}\right)
      \right)^{\frac{2N}{N-2}}\right)^{\frac{N-2}{2N}}\\
&\hspace{.3cm}
 = o(1).
\end{aligned}
\]
Therefore \eqref{equ-11-proposition-operator L-1} holds. Using this and
\eqref{equ-10-proposition-operator L-1} we conclude that \eqref{equ-6-proposition-operator L-1} holds for $i=1,\dots,k$.

Now we turn to the proof of $\phi_0^\infty=0$. Setting $\al_2=\frac{1}{N-2}$ we obtain as in \eqref{equ-9-proposition-operator L-1}:
\[
\begin{aligned}
&\int_{\Om_0^n}\nabla\phi_0^n\nabla\psi
 = \big((\eps^n)^{\al_2}\big)^{\frac{2-N}{2}}\int_{\Om}
    f'_0\big(V_{\eps^n,\la^n,\xi^n}(y)\big)\phi^n(y)\chi(y)\psi\big((\eps^n)^{-\al_2}y\big)
     + o(1)\\
&\hspace{1cm}
 = \big((\eps^n)^{\al_2}\big)^{\frac{2-N}{2}}\int_{|y|\le\eta/2}
    f'_0\left(-\sum_{j=1}^{k}PU_{\de_j^n,\xi_j^n}(y)+PV_{\si^n}(y)\right)
    \phi^n(y)\chi(y)\psi((\eps^n)^{-\al_2}y) + o(1)\\
&\hspace{1cm}
 = \big((\eps^n)^{\al_2}\big)^{\frac{2-N}{2}}\int_{|y|\le\eta/2}
    f'_0\big(V_{\si^n}(y)\big)\phi^n(y)\chi(y)\psi((\eps^n)^{-\al_2}y) + o(1)\\
&\hspace{1cm}
 = \int_{|(\eps^n)^{\al_2}x|\le\eta/2}
    f'_0\big(U_{\ov{\la}^n,0}(x)\big)\big((\eps^n)^{\al_2}\big)^{\frac{N-2}{2}}
    \phi^n\big((\eps^n)^{\al_2}x\big)\chi\big((\eps^n)^{\al_2}x\big)\psi(x) + o(1)\\
&\hspace{1cm}
 = \int_{\R^N}f'_0\big(U_{\ov{\la},0}\big)\phi_0^\infty\psi(x) + o(1).
\end{aligned}
\]
Therefore $\phi_0^\infty$ is a weak solution of
\begin{equation}\label{equ-15-proposition-operator L-1}
-\De \phi_0^\infty=f'_0(U_{\ov{\la},0})\phi_0^\infty,\quad \text{in } D^{1,2}(\R^N).
\end{equation}
Similarly to \eqref{equ-11-proposition-operator L-1} there holds
\begin{equation}\label{equ-16-proposition-operator L-1}
\int_{\R^N}\nabla\phi_0^\infty(x)\nabla\Psi_{\ov{\la},0}^j(x)=0,
\quad\text{for } j=0,1,\dots, N,
\end{equation}
where $\Psi_{\ov{\la},0}^j:=\frac{\pa U_{\ov{\la},0}}{\pa x^j}$, for $j=1,\dots,N$, and $\Psi_{\ov{\la},0}^0:=\frac{\pa U_{\ov{\la},0}}{\pa \ov{\la}}$. This shows that $\phi_0^\infty=0$ as claimed.

\emph{Step 3.} We obtain a contradiction.

First we claim that
\begin{equation}\label{equ-17-proposition-operator L-1}
\lim_{n\to\infty}\int_{\Om}f'_0\big(V_{\eps^n,\la^n,\xi^n}(y)\big)(\phi^n(y))^2 = 0.
\end{equation}
In fact, \eqref{equ-1-projection estimate} and \eqref{equ-3-projection estimate} imply:
\[
\int_{\Om}
f'_0(V_{\eps^n,\la^n,\xi^n}(y))(\phi^n(y))^2
 =\int_{B(0,\frac{\eta}{4})\cup\bigcup\limits_{i=1}^{k} B(\xi_i,\frac{\eta}{4})}
f'_0\left(-\sum_{j=1}^{k}U_{\de_j^n,\xi_j^n}(y)+V_{\si^n}(y)\right)(\phi^n(y))^2 + o(1).
\]
Notice that $f'_0(U_{\la_i^n,0})\in L^{\frac{N}{2}}({\R^N})$ and \eqref{equ-6-proposition-operator L-1} imply
\begin{equation}\label{equ-18-proposition-operator L-1}
\begin{aligned}
\int_{B(\xi_i,\frac{\eta}{4})}
    f'_0\left(-\sum_{j=1}^{k}U_{\de_j^n,\xi_j^n}(y)+V_{\si^n}(y)\right)\big(\phi^n(y)\big)^2
 &= \int_{B(\xi_i,\frac{\eta}{4})}f'_0\big(U_{\de_i^n,\xi_i^n}(y)\big)(\phi^n(y))^2+o(1)\\
 &= \int_{|(\eps^n)^{\al_1}x|\le\frac{\eta}{4}}f'_0\big(U_{\la_i^n,0}(x)\big)(\phi_i^n(x))^2
           + o(1)\\
 &= o(1).
\end{aligned}
\end{equation}
Similarly we obtain:
\begin{equation}\label{equ-19-proposition-operator L-1}
\int_{B(0,\frac{\eta}{4})}
     f'_0\left(-\sum_{j=1}^{k}U_{\de_j^n,\xi_j^n}(y)+V_{\si^n}(y)\right)(\phi^n(y))^2
 = o(1).
\end{equation}
Now we obtain \eqref{equ-17-proposition-operator L-1} from
\eqref{equ-18-proposition-operator L-1} and \eqref{equ-19-proposition-operator L-1}.

On the other hand, \eqref{equ-3-proposition-operator L-1},
\eqref{equ-2-proposition-operator L-1}, and \eqref{equ-wn-proposition-operator L-1} imply:
\[
\begin{aligned}
\int_\Om |\nabla\phi^n|^2
 &= \int_\Om \nabla \iota^*(f'_0(V_{\eps^n,\la^n,\xi^n})\phi^n)\nabla\phi^n
        +\int_\Om \nabla h^n\nabla\phi^n+\int_\Om \nabla w^n\nabla\phi^n\\
 &= \int_\Om \nabla \iota^*\big(f'_0(V_{\eps^n,\la^n,\xi^n})\phi^n\big)\nabla\phi^n
    -\mu\int_\Om\frac{\iota^*(f'_0(V_{\eps^n,\la^n,\xi^n})\phi^n)\phi^n}{|x|^2}
    + o(1)\\
 &= \int_{\Om}f'_0\big(V_{\eps^n,\la^n,\xi^n}(y)\big)(\phi^n(y))^2 + o(1),
\end{aligned}
\]
which contradicts \eqref{equ-17-proposition-operator L-1} using \eqref{equ-1-proposition-operator L-1}.
\end{proof}

\begin{proposition}\label{proposition-estimate of error-1}
For every $\eta>0$ there exist $\eps_0>0$ and $c_0>0$ with the following property: for every $(\la,\xi)\in\cO_\eta$ and for every $\eps\in(0,\eps_0)$ there exists a unique solution $\phi_{\eps,\la,\xi}\in K^\bot_{\eps,\la,\xi}$ of equation \eqref{main-equality Com-1} satisfying
\begin{equation}\label{inequality-estimate of error-1}
\|\phi_{\eps,\la,\xi}\|_{\mu}
 \le c_0\left(\eps^{\frac{N+2}{2(N-2)}}+\eps^{\frac{1+2\al}{4}}\right),
\end{equation}
and $\Phi_\eps:\cO_\eta\to K^\bot_{\eps,\la,\xi}$ defined by $\Phi_\eps(\la,\xi):=\phi_{\eps,\la,\xi}$ is $C^1$.
\end{proposition}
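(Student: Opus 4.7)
The plan is to recast equation \eqref{main-equality Com-1} as a fixed point problem and apply the Banach contraction principle on a small ball in $K^\bot_{\eps,\la,\xi}$. Since $\Pi^\bot_{\eps,\la,\xi}\phi=\phi$ for $\phi\in K^\bot_{\eps,\la,\xi}$, equation \eqref{main-equality Com-1} is equivalent to
\[
L_{\eps,\la,\xi}(\phi)
 = \Pi^\bot_{\eps,\la,\xi}\bigl(\iota^*(f_\eps(V_{\eps,\la,\xi}))-V_{\eps,\la,\xi}\bigr)
   +\Pi^\bot_{\eps,\la,\xi}\iota^*\bigl(R_{\eps,\la,\xi}(\phi)\bigr),
\]
with $R_{\eps,\la,\xi}(\phi):=f_\eps(V_{\eps,\la,\xi}+\phi)-f_\eps(V_{\eps,\la,\xi})-f'_0(V_{\eps,\la,\xi})\phi$. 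By Proposition~\ref{proposition-operator L-1}, $L_{\eps,\la,\xi}$ is invertible with inverse of norm uniformly bounded on $\cO_\eta$, so the equation takes the form $\phi=T_{\eps,\la,\xi}(\phi)$, where $T_{\eps,\la,\xi}$ is obtained by composing $L_{\eps,\la,\xi}^{-1}$ with the right-hand side. I look for a fixed point in the ball $\cB_{\rho_\eps}:=\{\phi\in K^\bot_{\eps,\la,\xi}:\|\phi\|_\mu\le c_0\rho_\eps\}$ with $\rho_\eps:=\eps^{(N+2)/(2(N-2))}+\eps^{(1+2\al)/4}$.

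The heart of the argument is the error estimate
\[
\mathcal{E}_\eps:=\|\iota^*(f_\eps(V_{\eps,\la,\xi}))-V_{\eps,\la,\xi}\|_\mu\le C\rho_\eps.
\]
Using \eqref{ineq-adjoint operator} and duality, this reduces to an $L^{2N/(N+2)}(\Om)$ bound on $-\De V_{\eps,\la,\xi}-\mu V_{\eps,\la,\xi}/|x|^2-f_\eps(V_{\eps,\la,\xi})$. Decomposing $V_{\eps,\la,\xi}=-\sum_i PU_{\de_i,\xi_i}+PV_\si$ and exploiting that $U_{\de_i,\xi_i}$ and $V_\si$ solve the limit problems \eqref{limit-pro1} and \eqref{limit-pro2} respectively, the expression splits into four pieces: (i) the subcritical discrepancy $|V|^{2^*-2-\eps}V-|V|^{2^*-2}V$, producing the $\eps^{(N+2)/(2(N-2))}$ term after rescaling $y=\de_i z$ or $y=\si z$; (ii) the bubble interaction error, of strictly higher order thanks to the separation conditions built into $\cO_\eta$; (iii) the Hardy term $\mu PU_{\de_i,\xi_i}/|x|^2$, which, since $\mu=\mu_0\eps^\al$ and the $PU_{\de_i,\xi_i}$ concentrate away from the origin, yields exactly the $\eps^{(1+2\al)/4}$ contribution; and (iv) the projection corrections $\vphi_\si$ and $\vphi_{\de_i,\xi_i}$, controlled by Proposition~\ref{proposition-projection estimate} and \eqref{equ-3-projection estimate}. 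The required integral computations are of the same kind as those used in the proof of Proposition~\ref{proposition-operator L-1} and rely on the technical lemmas in the appendix.

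For the nonlinear remainder, since $N\ge 7$ forces $2^*-2=4/(N-2)<1$, the H\"older-type pointwise bound $|f_\eps(a+b)-f_\eps(a)-f'_\eps(a)b|\le C|b|^{2^*-1-\eps}$, combined with $|f'_\eps(a)-f'_0(a)|\le C\eps|a|^{2^*-2-\eps}(1+|\log|a||)$, H\"older, and Sobolev embeddings yields
\[
\|\iota^*(R_{\eps,\la,\xi}(\phi))\|_\mu\le C\bigl(\|\phi\|_\mu^{2^*-1-\eps}+\eps\|\phi\|_\mu\bigr),
\]
together with the analogous Lipschitz bound on differences. Combined with $\mathcal{E}_\eps\le C\rho_\eps$, this makes $T_{\eps,\la,\xi}$ a strict contraction on $\cB_{\rho_\eps}$ for $c_0$ large and $\eps$ small, giving the unique $\phi_{\eps,\la,\xi}$. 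The $C^1$ dependence of $\Phi_\eps$ on $(\la,\xi)$ follows from the implicit function theorem applied to $F(\phi,\la,\xi):=\phi-T_{\eps,\la,\xi}(\phi)$, using invertibility of $D_\phi F=\mathrm{Id}-DT_{\eps,\la,\xi}$ at the solution and the smooth dependence of $V_{\eps,\la,\xi}$ and $\Pi^\bot_{\eps,\la,\xi}$ on the parameters. The main difficulty lies in the estimate of $\mathcal{E}_\eps$, specifically the careful tracking of the Hardy perturbation on the regular bubbles $PU_{\de_i,\xi_i}$ and its delicate balance with the subcritical perturbation; this is precisely what forces the threshold $\al>(N-4)/(N-2)$ at the level of the reduced functional in the next subsection.
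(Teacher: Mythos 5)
Your overall plan matches the paper's: recast \eqref{main-equality Com-1} as a fixed point equation for
$T_{\eps,\la,\xi}(\phi)=L^{-1}_{\eps,\la,\xi}\Pi^\bot_{\eps,\la,\xi}\bigl(\iota^*(f_\eps(V_{\eps,\la,\xi}+\phi)-f'_0(V_{\eps,\la,\xi})\phi)-V_{\eps,\la,\xi}\bigr)$,
use the invertibility of $L_{\eps,\la,\xi}$ from Proposition~\ref{proposition-operator L-1}, bound the error term and the nonlinear remainder, and close by Banach's fixed point theorem plus the implicit function theorem for the $C^1$ dependence. The paper's version of the error estimate inserts the intermediate term $\iota^*\bigl(-\sum_i f_0(U_{\de_i,\xi_i})+f_0(V_\si)\bigr)$ and splits by the triangle inequality, then handles the two halves with Lemma~\ref{e53-e55} (an $L^{2N/(N+2)}$ estimate) and Lemma~\ref{(M17)-Lemma A} (a direct $H_\mu$ energy argument), respectively.

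Where you go wrong is in the attribution of the $\eps^{(1+2\al)/4}$ term. You assign it to the Hardy contribution $\mu PU_{\de_i,\xi_i}/|x|^2$, citing that the $PU_{\de_i,\xi_i}$ ``concentrate away from the origin.'' That reasoning is backwards. Precisely because $|\xi_i|>\eta$, the weight $1/|x|^2$ is bounded where $PU_{\de_i,\xi_i}$ is large, and the paper's Lemma~\ref{(M17)-Lemma A} shows $\|\iota^*(f_0(U_{\de_i,\xi_i}))-PU_{\de_i,\xi_i}\|_\mu=O(\mu\de_i)\sim\eps^{\al+\frac{1}{N-2}}$, which under the hypothesis $\al>\frac{N-4}{N-2}$ is of \emph{strictly smaller} order than $\eps^{(1+2\al)/4}$ and is absorbed. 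The dominant Hardy-related error comes from the bubble at the origin: $\|\iota^*(f_0(V_\si))-PV_\si\|_\mu$. The point is that $\iota^*$ inverts the full Hardy operator $-\De-\mu/|x|^2$, while $P$ is the \emph{harmonic} projection; since $V_\si$ solves $-\De V_\si-\mu V_\si/|x|^2 =V_\si^{2^*-1}$, one has $-\De PV_\si=\mu V_\si/|x|^2+V_\si^{2^*-1}$ and the discrepancy $-\De PV_\si-\mu PV_\si/|x|^2-V_\si^{2^*-1}=\mu\vphi_\si/|x|^2$ is a genuinely singular term supported near $0$. Lemma~\ref{(M17)-Lemma A} bounds this by $O\bigl((\mu\si^{(N-2)/2})^{1/2}\bigr)\sim\eps^{(1+2\al)/4}$, which is exactly the second term in \eqref{inequality-estimate of error-1}.

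A secondary remark: the pure $L^{2N/(N+2)}$-duality reduction you sketch is legitimate, since $-\De V_{\eps,\la,\xi}-\mu V_{\eps,\la,\xi}/|x|^2\in L^{2N/(N+2)}$ for $N\ge7$ once you observe $\vphi_\si$ is bounded near the origin. But carried through, it would yield $\|\iota^*(f_0(V_\si))-PV_\si\|_\mu\lesssim\|\mu\vphi_\si/|x|^2\|_{2N/(N+2)}\sim\mu\si^{(N-2)/2}\sim\eps^{\al+\frac12}$, which is \emph{smaller} than $\eps^{(1+2\al)/4}$, not equal to it. So your own method would not produce the $\eps^{(1+2\al)/4}$ you announce; it would produce something better, which still proves the upper bound \eqref{inequality-estimate of error-1} trivially but contradicts the heuristic you offered. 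Either way, the claim that the $PU_{\de_i,\xi_i}$ Hardy term is responsible for the exponent $(1+2\al)/4$, and the explanation offered for it, are both incorrect and should be fixed.
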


\begin{proof} As in \cite{BarMiPis06CVPDE} solving (\ref{main-equality Com-1}) is equivalent to finding a fixed point of the operator
$T_{\eps,\la,\xi}:K^\bot_{\eps,\la,\xi}\to K^\bot_{\eps,\la,\xi}$ defined by
\[
T_{\eps,\la,\xi}(\phi)
 = L^{-1}_{\eps,\la,\xi}\Pi^\bot_{\eps,\la,\xi}
    \big(\iota^*\big(f_\eps(V_{\eps,\la,\xi}+\phi) - f'_0(V_{\eps,\la,\xi})\phi\big)
    - V_{\eps,\la,\xi}\big).
\]
We claim that $T_{\eps,\la,\xi}$ is a contraction mapping.

First of all, Proposition \ref{proposition-operator L-1}, Lemma \ref{(M17)-Lemma A} and \eqref{ineq-adjoint operator} imply
\begin{equation}\label{equ-2-proposition-estimate of error-1}
\begin{aligned}
&\|T_{\eps,\la,\xi}(\phi)\|_\mu
 \le C \|\iota^*(f_\eps(V_{\eps,\la,\xi}+\phi)-f'_0(V_{\eps,\la,\xi})\phi)
       - V_{\eps,\la,\xi}\|_\mu\\
&\hspace{1cm}
 \le C\left(\left\|\iota^*\left(f_\eps(V_{\eps,\la,\xi}+\phi)-f'_0(V_{\eps,\la,\xi})\phi
     -\left(-\sum_{i=1}^kf_0(U_{\de_i,\xi_i})+f_0(V_\si)\right)\right)\right\|_\mu\right.\\
&\hspace{2cm}
  + \left.\left\|\iota^*\left(-\sum_{i=1}^kf_0(U_{\de_i,\xi_i})+f_0(V_\si)\right)
     -V_{\eps,\la,\xi}\right\|_\mu\right)\\
&\hspace{1cm}
 \le C\left(\left\|f_\eps(V_{\eps,\la,\xi}+\phi)-f'_0(V_{\eps,\la,\xi})\phi
      -\left(-\sum_{i=1}^kf_0(U_{\de_i,\xi_i})+f_0(V_\si)\right)\right\|_{2N/(N+2)}\right.\\
&\hspace{2cm}
  + \left.
     \sum_{i=1}^kO(\mu\de_i)+O\left((\mu\si^{\frac{N-2}{2}})^{\frac{1}{2}}\right)\right)\\
&\hspace{1cm}
 \le C\big\|f_\eps(V_{\eps,\la,\xi}+\phi) - f_\eps(V_{\eps,\la,\xi})
       - f'_\eps(V_{\eps,\la,\xi})\phi\big\|_{2N/(N+2)}
     + C\big\|(f'_\eps(V_{\eps,\la,\xi})-f'_0(V_{\eps,\la,\xi}))\phi\big\|_{2N/(N+2)}\\
&\hspace{2cm}
     + C\big\|f_\eps(V_{\eps,\la,\xi})-f_0(V_{\eps,\la,\xi})\big\|_{2N/(N+2)}\\
&\hspace{2cm}
     + C\left\|f_0(V_{\eps,\la,\xi})
     - \left(-\sum_{i=1}^{k}f_0(U_{\de_i,\xi_i})+f_0(V_\si)\right)\right\|_{2N/(N+2)}\\
&\hspace{2cm}
     + \sum_{i=1}^{k}O(\mu\de_i) + O\left((\mu\si^{\frac{N-2}{2}})^{\frac{1}{2}}\right).
\end{aligned}
\end{equation}
By using Lemma \eqref{e53-e55} and noticing that
\begin{eqnarray*}
\|f_\eps(V_{\eps,\la,\xi}+\phi)-f_\eps(V_{\eps,\la,\xi})-f'_\eps(V_{\eps,\la,\xi})\phi\|_{2N/(N+2)}\le C\|\phi\|_\mu^{2^*-1},
\end{eqnarray*}
we deduce
\[
\begin{aligned}
\|T_{\eps,\la,\xi}(\phi)\|_\mu
 &\le C\|\phi\|_\mu^{2^*-1} + C\eps\|\phi\|_\mu + C\eps + O(\si^{\frac{N+2}{2}})
       + \sum_{i=1}^{k}O\big(\de_i^{\frac{N+2}{2}}\big)
       +\sum_{i=1}^{k}O(\mu\de_i) + O\big((\mu\si^{\frac{N-2}{2}})^{\frac{1}{2}}\big)\\
 &= C\|\phi\|_\mu^{2^*-1} + C\eps\|\phi\|_\mu + O\big(\eps^{\frac{N+2}{2(N-2)}}\big)
     + O\big(\eps^{\frac{1+2\al}{4}}\big).
\end{aligned}
\]
The remaining argument can be obtained by standard arguments, see e.g.~\cite{BarMiPis06CVPDE}.
\end{proof}

Now we consider the reduced functional
\[
I_\eps(\la,\xi)=J_\eps(V_{\eps,\la,\xi}+\phi_{\eps,\la,\xi}).
\]

\begin{proposition}\label{proposition-reducement-1}
Let $(\la^0,\xi^0)$ with $\la^0=(\la_1^0,\dots,\la_{k}^0,\ov{\la}^0)$ and $\xi^0=(\xi_1^0,\xi_2^0,\dots,\xi_{k}^0)$ be a critical point of $I_\eps(\la,\xi)$. Then there exists a family of solutions to problem \eqref{pro} of the form
\[
u_\eps=V_{\eps,\la,\xi}+\phi_{\eps,\la,\xi}.
\]
\end{proposition}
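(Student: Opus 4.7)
The plan is to show that at any critical point $(\la^0,\xi^0)\in\cO_\eta$ of the reduced functional $I_\eps$, the tangential equation \eqref{main-equality-1} is automatically satisfied. Together with the orthogonal equation \eqref{main-equality Com-1}, which is supplied by Proposition~\ref{proposition-estimate of error-1}, this produces a genuine weak solution $u_\eps=V_{\eps,\la^0,\xi^0}+\phi_{\eps,\la^0,\xi^0}$ of \eqref{pro}.

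First I would introduce the residual
\[
N_\eps(\la,\xi):=V_{\eps,\la,\xi}+\phi_{\eps,\la,\xi}-\iota^*\big(f_\eps(V_{\eps,\la,\xi}+\phi_{\eps,\la,\xi})\big).
\]
By \eqref{main-equality Com-1}, $N_\eps\in K_{\eps,\la,\xi}$ and admits the expansion
\[
N_\eps=\sum_{i=1}^{k}\sum_{j=0}^{N}c_{ij}(\eps,\la,\xi)\,P\Psi_i^j+c_0(\eps,\la,\xi)\,P\ov{\Psi}.
\]
Therefore it suffices to prove that every coefficient $c_{ij}$ and $c_0$ vanishes at $(\la^0,\xi^0)$.

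The second step is to compute $\nabla I_\eps$. A direct calculation gives $J_\eps'(u)[v]=(u-\iota^*(f_\eps(u)),v)$ for every $u,v\in H_\mu(\Om)$, so choosing $u=V_{\eps,\la,\xi}+\phi_{\eps,\la,\xi}$ yields $J_\eps'(u)[v]=(N_\eps,v)$. Letting $\tau$ stand for any of the parameters $\la_i$, $\ov\la$ or $(\xi_i)^j$, the chain rule applied to $I_\eps(\la,\xi)=J_\eps(V_{\eps,\la,\xi}+\phi_{\eps,\la,\xi})$ gives
\[
\frac{\pa I_\eps}{\pa \tau}(\la,\xi)=\Big(N_\eps,\frac{\pa V_{\eps,\la,\xi}}{\pa \tau}+\frac{\pa \phi_{\eps,\la,\xi}}{\pa \tau}\Big).
\]
The derivative $\pa_\tau V_{\eps,\la,\xi}$ is one of $-\eps^{1/(N-2)}P\Psi_i^0$, $\eps^{1/(N-2)}P\ov\Psi$ or $-P\Psi_i^j$, hence lies in $K_{\eps,\la,\xi}$. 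For the remainder $(N_\eps,\pa_\tau\phi_{\eps,\la,\xi})$, I would differentiate the orthogonality relations $(\phi_{\eps,\la,\xi},P\Psi)=0$ in $\tau$ to obtain $(\pa_\tau\phi_{\eps,\la,\xi},P\Psi)=-(\phi_{\eps,\la,\xi},\pa_\tau P\Psi)$, which is $o(1)$ in view of the a priori bound \eqref{inequality-estimate of error-1}. Thus the contribution of $\pa_\tau\phi_{\eps,\la,\xi}$ is a small correction to the leading term $(N_\eps,\pa_\tau V_{\eps,\la,\xi})$.

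Imposing $\pa_\tau I_\eps(\la^0,\xi^0)=0$ for every $\tau$ then yields a linear system in the unknowns $(c_{ij},c_0)$ whose matrix is assembled from the pairings $(P\Psi_i^j,P\Psi_l^m)$, $(P\Psi_i^j,P\ov\Psi)$ and $(P\ov\Psi,P\ov\Psi)$, perturbed by $o(1)$ errors. The asymptotic estimates recalled in Lemma~\ref{e56-e62} --- already used in the proof of Proposition~\ref{proposition-operator L-1} --- show that, after rescaling by the natural factors $\de_i^{-2}$ and $\si^{-2}$, this matrix is diagonally dominant with a uniform positive lower bound on its smallest eigenvalue for $\eps$ sufficiently small. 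Inverting it forces $c_{ij}=0$ and $c_0=0$, so $N_\eps(\la^0,\xi^0)=0$ and Proposition~\ref{proposition-reducement-1} follows. The main technical obstacle is precisely this bookkeeping: the different powers of $\eps^{1/(N-2)}$ arising from $\la$-derivatives versus $\xi$-derivatives, the off-diagonal pairings between distinct bubbles centered at well-separated points, and the $K$-components of $\pa_\tau\phi_{\eps,\la,\xi}$ must all be shown to be strictly subdominant to the diagonal entries of the pairing matrix.
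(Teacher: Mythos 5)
Your proof is correct and follows essentially the same route as the paper: expand the residual $N_\eps$ in the basis $\{P\Psi_i^j, P\ov\Psi\}$, express $\pa_\tau I_\eps$ as $(N_\eps,\pa_\tau V+\pa_\tau\phi)$, control $(N_\eps,\pa_\tau\phi)$ by differentiating the orthogonality constraints, and use the pairing asymptotics of Lemma~\ref{e56-e62} to conclude that the coefficients vanish. The only cosmetic difference is that you present the final step as inverting a (rescaled) diagonally dominant linear system, whereas the paper solves for the coefficients one derivative direction at a time; the underlying estimates are identical.
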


\begin{proof}
It is enough to prove that \eqref{main-equality-1} holds. Let $\pa_s$ denote one of $\pa_{\la_i}$, $\pa_{\ov{\la}}$, $\pa_{(\xi_i)^j}$, $i=1,\dots,k$, $j=1,\dots,N$. As in \cite{MussoPis10JMPA}, equation \eqref{main-equality Com-1} implies:
\[
\begin{aligned}
\pa_s I_\eps(\la,\xi)
 &= J'_\eps(V_{\eps,\la,\xi}+\phi_{\eps,\la,\xi})
     (\pa_s V_{\eps,\la,\xi} + \pa_s\phi_{\eps,\la,\xi})\\
 &= \sum_{i=1}^{k}\sum_{j=0}^N c_{i,j}
     \big(P\Psi_i^j,\pa_s V_{\eps,\la,\xi}+\pa_s \phi_{\eps,\la,\xi}\big)
    +c_0\big(P\ov{\Psi},\pa_s V_{\eps,\la,\xi}+\pa_s \phi_{\eps,\la,\xi}\big).
\end{aligned}
\]
Now it remains to prove that $c_{i,j}=0$ for $i=1,\dots,k$ and $j=0,\dots,N$, and $c_0=0$, provided $\eps>0$ is small enough.

If $(\la,\xi)$ is a critical point of $I_\eps(\la,\xi)$, then
\begin{equation}\label{equ2-proposition-reducement-1}
\sum_{i=1}^{k}\sum_{j=0}^N c_{i,j}(P\Psi_i^j,\pa_s V_{\eps,\la,\xi}+\pa_s \phi_{\eps,\la,\xi})
  + c_0(P\ov{\Psi},\pa_s V_{\eps,\la,\xi}+\pa_s \phi_{\eps,\la,\xi})
= 0.
\end{equation}
Observe that
\begin{equation}\label{equ3-proposition-reducement-1}
\pa_{\la_i} V_{\eps,\la,\xi} = -\eps^{\frac{1}{N-2}}P\Psi_i^0,\quad
\pa_{\ov{\la}}V_{\eps,\la,\xi}=\eps^{\frac{1}{N-2}}P\ov{\Psi},\quad
\pa_{(\xi_i)^j} V_{\eps,\la,\xi}=-P\Psi_i^j,\quad j=1,\dots,N.
\end{equation}
On the other hand, $(P\Psi_i^j,\phi_{\eps,\la,\xi})=0$ for $j=0,1,\dots,N$, Proposition \ref{proposition-estimate of error-1} and Lemma \ref{e63-68} imply
\[
(P\Psi_i^j,\pa_s\phi_{\eps,\la,\xi})=-(\pa_s P\Psi_i^j,\phi_{\eps,\la,\xi})=O(\|\pa_s P\Psi_i^j\|_\mu \|\phi_{\eps,\la,\xi}\|_\mu)=o(\|\pa_s P\Psi_i^j\|_\mu)=o(\de_i^{-2}).
\]
Similarly,
\[
(P\ov{\Psi},\pa_s\phi_{\eps,\la,\xi})=o(\|\pa_s P\ov{\Psi}\|_\mu)=o(\eps^{\frac{1}{N-2}}\si^{-2}).
\]
Now Lemma \ref{e56-e62}, \eqref{equ2-proposition-reducement-1} and \eqref{equ3-proposition-reducement-1} yield
\[
\begin{aligned}
0 &= \sum_{i=1}^k\sum_{j=0}^N c_{i,j}(P\Psi_i^j,\pa_{\ov{\la}} V_{\eps,\la,\xi})
      + c_0(P\ov{\Psi},\pa_{\ov{\la}}V_{\eps,\la,\xi})
      + o\big(\eps^{\frac{1}{N-2}}\si^{-2}\big)\\
 &= \eps^{\frac{1}{N-2}}\left(\sum_{i=1}^k\sum_{j=0}^N
        c_{i,j}(P\Psi_i^j,P\ov{\Psi})+c_0(P\ov{\Psi},P\ov{\Psi})\right)
      +o\big(\eps^{\frac{1}{N-2}}\si^{-2}\big)\\
 &= c_0\wt{c}_0\eps^{\frac{1}{N-2}}\si^{-2}\big(1+o(1)\big),
\end{aligned}
\]
which implies $c_0=0$. Similar arguments show that $c_{i,j}=0$ for $i=1,\dots,k$, $j=0,1,\dots,N$.
\end{proof}

\subsection{\textbf{Proofs of Theorems \ref{theorem-2 bubbles} and \ref{theorem-ball-k bubbles}\label{Subsection 3.2}}}

In this part, we consider $V_{\eps,\la,\xi}=-\sum\limits_{i=1}^{k}PU_{\de_i,\xi_i}+PV_\si$. The reduced energy is expanded as follows.

\begin{lemma}\label{lemma-expansion of J-1}
For $\eps\to0^+$ there holds
\begin{equation}\label{equ-expansion of J-1}
I_\eps(\la,\xi) = a_1+a_2\eps-a_3\eps^\al-a_4\eps\ln \eps+\psi(\la,\xi)\eps+o(\eps)
\end{equation}
$C^1$-uniformly with respect to $(\la,\xi)$ in compact sets of
$\cO_\eta$. The constants are given by $a_1=\frac{1}{N}(k+1)S_0^{\frac{N}{2}}$,
$a_2 = \frac{(k+1)}{2^*}\int_{\R^N} U_{1,0}^{2^*}\ln U_{1,0} -
 \frac{k+1}{(2^*)^2}S_0^{\frac{N}2}$,
$a_3=\frac{1}{2} S_0^{\frac{N-2}{2}}\ov{S}\mu_0$, and
$a_4=\frac{k+1}{2 \cdot2^*}\int_{\R^N} U_{1,0}^{2^*}$.
The function $\psi$ is given by
\[
\begin{aligned}
\psi(\la,\xi)
 &= b_1(H(0,0)\ov{\la}^{N-2} + \sum_{i=1}^kH(\xi_i,\xi_i)\la_i^{N-2}
    + 2\sum_{i=1}^kG(\xi_i,0)\la_i^{\frac{N-2}{2}}\ov{\la}^{\frac{N-2}{2}}\\
 &\hspace{2cm}
   - 2\sum_{i,j=1,i<j}^kG(\xi_i,\xi_j)\la_i^{\frac{N-2}{2}}\la_j^{\frac{N-2}{2}})
   - b_2\ln(\la_1\la_2\dots\la_{k}\ov{\la})^{\frac{N-2}{2}},
\end{aligned}
\]
with $b_1=\frac{1}{2}C_0\int_{\R^N}U_{1,0}^{2^*-1}$ and
$b_2=\frac{1}{2^*}\int_{\R^N} U_{1,0}^{2^*}$.
\end{lemma}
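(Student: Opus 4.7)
The plan is to first reduce the computation of $I_\eps(\la,\xi)$ to that of $J_\eps(V_{\eps,\la,\xi})$ and then expand the latter term by term. Since $V_{\eps,\la,\xi}+\phi_{\eps,\la,\xi}$ satisfies \eqref{main-equality Com-1} and $\phi_{\eps,\la,\xi}\in K^\bot_{\eps,\la,\xi}$, a Taylor expansion around $V_{\eps,\la,\xi}$ combined with Proposition~\ref{proposition-estimate of error-1} shows
\[
I_\eps(\la,\xi) = J_\eps(V_{\eps,\la,\xi})
  + O\!\left(\|\phi_{\eps,\la,\xi}\|_\mu^2\right)
 = J_\eps(V_{\eps,\la,\xi}) + o(\eps),
\]
since $\|\phi_{\eps,\la,\xi}\|_\mu^2=o(\eps)$ under the assumption $\al>\frac{N-4}{N-2}$. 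A similar argument, differentiating in $(\la,\xi)$, gives the $C^1$-convergence on compact subsets of $\cO_\eta$.

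Next I would split $J_\eps(V_{\eps,\la,\xi})$ into the quadratic piece and the nonlinear piece. For the quadratic piece, integration by parts together with the equations $-\De(PU_{\de_i,\xi_i})=U_{\de_i,\xi_i}^{2^*-1}$ in $\Om$ and $-\De(PV_\si)=V_\si^{2^*-1}+\mu V_\si/|x|^2$ in $\Om$ reduces everything to integrals of the form $\int U_{\de_i,\xi_i}^{2^*-1}PU_{\de_j,\xi_j}$, $\int U_{\de_i,\xi_i}^{2^*-1}PV_\si$, and $\int V_\si^{2^*-1}PU_{\de_i,\xi_i}$. Using Proposition~\ref{proposition-projection estimate} and \eqref{equ-3-projection estimate}, each of these expands in terms of the Green's function $G(\xi_i,\xi_j)$, $G(\xi_i,0)$, and the Robin terms $H(\xi_i,\xi_i)$, $H(0,0)$, producing the $\la_i^{(N-2)/2}$ and $\ov\la^{(N-2)/2}$ factors that appear in $\psi(\la,\xi)$. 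The Hardy term $-\mu\int V_{\eps,\la,\xi}^2/|x|^2\,dx$, after isolating the leading contribution coming from the bubble at the origin, produces the $-a_3\eps^\al$ correction through the standard expansion $S_\mu^{N/2}=S_0^{N/2}-\tfrac{N}{2}S_0^{(N-2)/2}\ov S\mu+o(\mu)$, where $\ov S$ absorbs the limiting integral of $V_\si^2/|x|^2$.

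For the nonlinear piece $\tfrac{1}{2^*-\eps}\int_\Om|V_{\eps,\la,\xi}|^{2^*-\eps}$, I would write $|V|^{2^*-\eps}=|V|^{2^*}e^{-\eps\ln|V|}=|V|^{2^*}(1-\eps\ln|V|+O(\eps^2\ln^2|V|))$ and also expand $\tfrac{1}{2^*-\eps}=\tfrac{1}{2^*}+\tfrac{\eps}{(2^*)^2}+O(\eps^2)$. The leading term $\tfrac{1}{2^*}\int|V|^{2^*}$, expanded via the same Green/Robin identities, combines with the quadratic part to reconstruct $a_1+\psi(\la,\xi)\eps$, while the logarithm produces the $-a_4\eps\ln\eps$ contribution after the change of variables $x=\de_i y+\xi_i$ (respectively $x=\si y$), since $\ln U_{\de_i,\xi_i}=\ln U_{1,0}((x-\xi_i)/\de_i)-\tfrac{N-2}{2}\ln\de_i$ and $\ln\de_i=\tfrac{1}{N-2}\ln\eps+\ln\la_i$. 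The $\ln\la_i$ and $\ln\ov\la$ contributions gather into the $-b_2\ln(\la_1\cdots\la_k\ov\la)^{(N-2)/2}$ term of $\psi$, while $\tfrac{1}{2^*}\int U_{1,0}^{2^*}\ln U_{1,0}$ together with the $\tfrac{1}{(2^*)^2}$ correction yields the coefficient $a_2$.

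The main technical obstacle is keeping track of the cross interactions between the central Hardy bubble $PV_\si$ and the moving instantons $PU_{\de_i,\xi_i}$. Because $\mu\to 0$, one cannot directly use estimates uniform in $\mu$; the key is that in the remainder $o(\eps)$ these interactions can be bounded using the decay of $V_\si$ (which, for $\mu=\mu_0\eps^\al$, degenerates to the decay of $U_{\si,0}$) together with the separation $|\xi_i|>\eta$ and $\dist(\xi_i,\pa\Om)>\eta$. The cross integrals $\int U_{\de_i,\xi_i}^{2^*-1}V_\si$ then produce the $G(\xi_i,0)\la_i^{(N-2)/2}\ov\la^{(N-2)/2}$ terms in $\psi$, while higher-order contributions from the Hardy correction are absorbed into $o(\eps)$ precisely because $\al>\frac{N-4}{N-2}$. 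Finally, differentiating these expansions in $(\la,\xi)$ gives the $C^1$ statement, the error terms being controlled by the pointwise derivative estimates on $V_\si$ and $\hbar_\si$ in \eqref{equ-2-projection estimate}.
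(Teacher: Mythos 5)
Your proposal follows essentially the same route as the paper: reduce $I_\eps$ to $J_\eps(V_{\eps,\la,\xi})$ via Proposition~\ref{proposition-estimate of error-1}, split $J_\eps(V_{\eps,\la,\xi})$ into the quadratic Dirichlet--Hardy part and the $|V|^{2^*-\eps}$ part, expand each by the Green/Robin asymptotics of Proposition~\ref{proposition-projection estimate} together with the $S_\mu$, $C_\mu$ expansions of Lemma~\ref{e44-e48}, and extract the $\eps\ln\eps$ term from $\ln\de_i=\tfrac{1}{N-2}\ln\eps+\ln\la_i$. The one place to be a little more careful is the claim $I_\eps=J_\eps(V_{\eps,\la,\xi})+O(\|\phi_{\eps,\la,\xi}\|_\mu^2)$: bounding the first-order term $J'_\eps(V_{\eps,\la,\xi})[\phi_{\eps,\la,\xi}]$ requires noting that the error $\|V_{\eps,\la,\xi}-\iota^*(f_\eps(V_{\eps,\la,\xi}))\|_\mu$ has the same size as $\|\phi_{\eps,\la,\xi}\|_\mu$ (this is what the contraction estimate in Proposition~\ref{proposition-estimate of error-1} actually provides), which is exactly what the paper invokes.
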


\begin{proof} Observe that
\begin{eqnarray}\label{equ1-1-lemma-expansion of J-1}
J_\eps(V_{\eps,\la,\xi})
&=&\frac{1}{2}\int_{\Om}(|\nabla
V_{\eps,\la,\xi}|^2-\mu\frac{|V_{\eps,\la,\xi}|^2}{|x|^2})\\
\label{equ1-2-lemma-expansion of J-1}
&&-\frac{1}{2^*}\int_{\Om}|V_{\eps,\la,\xi}|^{2^*}\\
\label{equ1-3-lemma-expansion of J-1}
&&+(\frac{1}{2^*}\int_{\Om}|V_{\eps,\la,\xi}|^{2^*}
-\frac{1}{2^*-\eps}\int_{\Om}|V_{\eps,\la,\xi}|^{2^*-\eps}).
\end{eqnarray}
By Lemma \ref{e12,e24,e30,e34}, Lemma \ref{e44-e48}, and noticing $\mu=\mu_0\eps^\al$, $\eps\to0^+$, we obtain
\begin{eqnarray*}
\eqref{equ1-1-lemma-expansion of J-1}
&=& \frac12\int_{\Om}\left(|\nabla P V_\si|^2-\mu\frac{|P V_\si|^2}{|x|^2}\right)
     + \sum_{i=1}^k\left(|\nabla PU_{\de_i,\xi_i}|^2
          - \mu\frac{|PU_{\de_i,\xi_i}|^2}{|x|^2}\right)\\
&& -\sum_{i=1}^k\int_{\Om}\left(\nabla P V_\si\nabla PU_{\de_i,\xi_i}
         -\mu\frac{P V_\si PU_{\de_i,\xi_i}}{|x|^2}\right)\\
&& +\sum_{i,j=1,i<j}^k\int_\Om\left(\nabla PU_{\de_i,\xi_i}\nabla PU_{\de_j,\xi_j}
         -\mu\frac{PU_{\de_i,\xi_i}PU_{\de_j,\xi_j}}{|x|^2}\right)\\
&=& \frac12(k+1)S_0^{\frac{N}{2}}-\frac{N}{4} S_0^{\frac{N-2}{2}}\ov{S}\mu_0\eps^\al
     +\frac12C_0 \int_{\R^N}U_{1,0}^{2^*-1}
         \left(-H(0,0)\si^{N-2}-\sum_{i=1}^kH(\xi_i,\xi_i)\de_i^{N-2}\right.\\
&& \left. -2\sum_{i=1}^k\si^{\frac{N-2}{2}}\de_i^{\frac{N-2}{2}}G(\xi_i,0)
          +2\sum_{i,j=1,i<j}^kG(\xi_i,\xi_j)\de_i^{\frac{N-2}{2}}\de_j^{\frac{N-2}{2}}\right)
          +o(\eps).
\end{eqnarray*}

By Lemma \ref{e4,e41} and Lemma \ref{e44-e48}, and again observing $\mu=\mu_0\eps^\al$, $\eps\to0^+$ we obtain:
\begin{eqnarray*}
(\ref{equ1-2-lemma-expansion of J-1})
&=& -\frac1{2^*}(k+1)S_0^{\frac{N}{2}}+\frac{N-2}{4} S_0^{\frac{N-2}{2}}\ov{S}\mu_0\eps^\al
     +C_0 \int_{\R^N}U_{1,0}^{2^*-1}\left(H(0,0)\si^{N-2}
     +\sum_{i=1}^kH(\xi_i,\xi_i)\de_i^{N-2}\right.\\
&&\left. +2\sum_{i=1}^k\si^{\frac{N-2}{2}}\de_i^{\frac{N-2}{2}}G(\xi_i,0)
         -2\sum_{i,j=1,i<j}^kG(\xi_i,\xi_j)\de_i^{\frac{N-2}{2}}\de_j^{\frac{N-2}{2}}\right)
         +o(\eps).
\end{eqnarray*}
Next Lemma \ref{e4,e41}, Lemma \ref{e42} and Lemma \ref{e44-e48} yield:
\begin{eqnarray*}
\eqref{equ1-3-lemma-expansion of J-1}
&=& -\frac{\eps}{(2^*)^2}\int_{\Om}|V_{\eps,\la,\xi}|^{2^*}
    +\frac{\eps}{2^*}\int_{\Om}|V_{\eps,\la,\xi}|^{2^*}\ln|V_{\eps,\la,\xi}| + o(\eps)\\
&=& -\frac{\eps}{(2^*)^2}(k+1)S_0^{\frac{N}{2}} +
     \frac{\eps}{2^*}(-\frac{N-2}{2}\ln\si\cdot\int_{\R^N} V_1^{2^*}
     -\frac{N-2}{2}\ln(\de_1 \dots\de_{k})\cdot\int_{\R^N} U_{1,0}^{2^*}\\
&&  +\int_{\R^N} V_1^{2^*}\ln V_1 + k\int_{\R^N} U_{1,0}^{2^*}\ln U_{1,0}) + o(\eps)\\
&=& -\frac{\eps}{(2^*)^2}(k+1)S_0^{\frac{N}{2}}
    -\frac{(N-2)\eps}{2\cdot2^*}\int_{\R^N} U_{1,0}^{2^*}\cdot\ln(\de_1\dots\de_{k}\si)\\
&& +\frac{(k+1)\eps}{2^*}\int_{\R^N} U_{1,0}^{2^*}\ln U_{1,0} + o(\eps).
\end{eqnarray*}

Arguing similarly to Lemma 6.1 in \cite{MussoPis10JMPA}, we deduce from Proposition
\ref{proposition-estimate of error-1}, \eqref{equ-1-projection estimate}, \eqref{equ-3-projection estimate}, and Lemma \ref{e53-e55}, that
\begin{equation}\label{equ1-4-lemma-expansion of J-1}
\begin{aligned}
J_\eps(V_{\eps,\la,\xi}+\phi_{\eps,\la,\xi})-J_\eps(V_{\eps,\la,\xi})
 &=\frac12\|\phi_{\eps,\la,\xi}\|_\mu^2
    +\int_\Om(\nabla V_{\eps,\la,\xi}\nabla\phi_{\eps,\la,\xi}
    -\mu\frac{V_{\eps,\la,\xi}\phi_{\eps,\la,\xi}}{|x|^2})\\
 &\hspace{1cm}
  -\frac{1}{2^*-\eps}(\int_{\Om}|V_{\eps,\la,\xi}+\phi_{\eps,\la,\xi}|^{2^*-\eps}
    - |V_{\eps,\la,\xi}|^{2^*-\eps})\\
 &= o(\eps).
\end{aligned}
\end{equation}

Now \eqref{equ1-1-lemma-expansion of J-1}, \eqref{equ1-2-lemma-expansion of J-1}, \eqref{equ1-3-lemma-expansion of J-1} and \eqref{equ1-4-lemma-expansion of J-1} imply \eqref{equ-expansion of J-1}. Actually,  \eqref{equ-expansion of J-1}) also holds $C^1$-uniformly with respect to $(\la,\xi)$ in compact sets of $\cO_\eta$; see for
example \cite[Lemma~7.1]{MussoPis10JMPA}. We omit the details here.
\end{proof}

\begin{altproof}{\ref{theorem-2 bubbles}}
The reduced function $\psi(\la,\xi)$ from Lemma \ref{lemma-expansion of J-1} becomes (here $k=1$):
\[
\psi(\la,\xi)
 = b_1\left(H(0,0)\ov{\la}^{N-2} + H(\xi_1,\xi_1)\la_1^{N-2}
    + 2G(\xi_1,0)\la_1^{\frac{N-2}{2}}\ov{\la}^{\frac{N-2}{2}}\right)
    - b_2\ln(\la_1\ov{\la})^{\frac{N-2}{2}}.
\]
Now the first part is almost the same as the one in \cite[Theorem~1]{BarMiPis06CVPDE}. We therefore omit it here.

Now we prove the second part. The symmetry assumption $(S_1)$ and the principle of symmetric criticality (see e.g.~\cite[Lemma 2.4]{BarDPis-prepr1}) allow us to consider the constrained function of $\psi(\la,\xi)$ as follows:
\[
\ov{\psi}(\la,t)
 = b_1\left(h(0,0)\ov{\la}^{N-2} + h(t,t)\la_1^{N-2} +
            2g(t,0)\la_1^{\frac{N-2}{2}}\ov{\la}^{\frac{N-2}{2}}\right)
   - b_2\ln(\la_1\ov{\la})^{\frac{N-2}{2}}.
\]
For $t\in(a,b)\setminus\{0\}$, let
\begin{equation}\label{equ-1-proof-theorem-2 bubbles}
\frac{\pa\ov{\psi}(\la,t)}{\pa\la_1}
 = (N-2)b_1\left(h(t,t)\la_1^{N-3}
        + g(t,0)\la_1^{\frac{N-4}{2}}\ov{\la}^{\frac{N-2}{2}}\right)
    -\frac{(N-2)b_2}{2\la_1}=0,
\end{equation}
and
\begin{equation}\label{equ-2-proof-theorem-2 bubbles}
\frac{\pa\ov{\psi}(\la,t)}{\pa\ov{\la}}
 =(N-2)b_1\left(h(0,0)\ov{\la}^{N-3}
        + g(t,0)\la_1^{\frac{N-2}{2}}\ov{\la}^{\frac{N-4}{2}}\right)
   - \frac{(N-2)b_2}{2\ov{\la}}
 = 0.
\end{equation}
Then it is easy to obtain a unique $\la_1(t)>0$ and a unique $\ov{\la}(t)>0$ with
\begin{equation}\label{equ-3-proof-theorem-2 bubbles}
(\la_1(t))^{\frac{N-2}{2}}
 = \sqrt{\frac{1}{h(t,t)+g(t,0)(\frac{h(t,t)}{h(0,0)})^{\frac{1}{2}}}\cdot\frac{b_2}{2b_1}}
\end{equation}
and
\begin{equation}\label{equ-4-proof-theorem-2 bubbles}
(\ov{\la}(t))^{\frac{N-2}{2}}
 = \sqrt{\frac{1}{h(0,0)+g(t,0)(\frac{h(0,0)}{h(t,t)})^{\frac{1}{2}}}\cdot\frac{b_2}{2b_1}}.
\end{equation}
Now an easy computation using \eqref{equ-1-proof-theorem-2 bubbles} and \eqref{equ-2-proof-theorem-2 bubbles} shows that:
\begin{eqnarray*}
\frac{\pa^2\ov{\psi}(\la,t)}{\pa\la_1^2}\Big|_{\la_1=\la_1(t),\ov{\la}=\ov{\la}(t)}
&=& (N-2)b_1\left((N-3)h(t,t)\la_1^{N-4}
            + \frac{N-4}{2}g(t,0)\la_1^{\frac{N-6}{2}}\ov{\la}^{\frac{N-2}{2}}\right)
      + \frac{(N-2)b_2}{2\la_1^2}\\
&=& (N-2)b_1\left((N-2)h(t,t)\la_1^{N-4}
      +\frac{N-2}{2}g(t,0)\la_1^{\frac{N-6}{2}}\ov{\la}^{\frac{N-2}{2}}\right),
\end{eqnarray*}
and
\begin{eqnarray*}
\frac{\pa^2\ov{\psi}(\la,t)}{\pa\ov{\la}^2}\Big|_{\la_1=\la_1(t),\ov{\la}=\ov{\la}(t)}
&=& (N-2)b_1\left((N-3)h(0,0)\ov{\la}^{N-4}
           +\frac{N-4}{2}g(t,0)\la_1^{\frac{N-2}{2}}\ov{\la}^{\frac{N-6}{2}}\right)
      +\frac{(N-2)b_2}{2\ov{\la}^2}\\
&=& (N-2)b_1\left((N-2)h(0,0)\ov{\la}^{N-4}
           +\frac{N-2}{2}g(t,0)\la_1^{\frac{N-2}{2}}\ov{\la}^{\frac{N-6}{2}}\right),\\
\frac{\pa^2\ov{\psi}(\la,t)}{\pa\ov{\la}\pa\la_1}\Big|_{\la_1=\la_1(t),\ov{\la}=\ov{\la}(t)}
&=& \frac{(N-2)^2}{2}b_1g(t,0)\la_1^{\frac{N-4}{2}}\ov{\la}^{\frac{N-4}{2}}.
\end{eqnarray*}
It follows that the Hessian matrix $D^2_{\la_1,\ov{\la}}\ov{\psi}(\la,t)|_{\la_1=\la_1(t),\ov{\la}=\ov{\la}(t)}$
is positively definite and therefore nondegenerate.

Now we consider the reduced function
\[
\nu(t)
 :=\ov{\psi}(\la_1(t),\ov{\la}(t),t)
 = b_2-b_2\ln\big(\la_1(t)\ov{\la}(t)\big)^{\frac{N-2}{2}}
 = b_2-b_2\ln\frac{b_2}{2b_1} + b_2\ln\vphi(t),
\]
where we used \eqref{equ-1-proof-theorem-2 bubbles}, \eqref{equ-2-proof-theorem-2 bubbles}, \eqref{equ-3-proof-theorem-2 bubbles}, and \eqref{equ-4-proof-theorem-2 bubbles}. Now observe that
\[
\nu'(t) = b_2\frac{\vphi'(t)}{\vphi(t)},\quad
\nu''(t) = b_2\frac{\vphi''(t)\vphi(t)-(\vphi'(t))^2}{\vphi^2(t)},
\]
so, using that $t^*$ is a (local) minimum of $\vphi(t)$ in $(a,b)\backslash\{0\}$, then it is also a (local) minimum, and therefore a nondegenerate critical point of $\nu(t)$.
\end{altproof}

When $\Om=B(0,1)$, we consider the existence of solutions with $k+1$ bubbles, one positive and $k$ negative for $k=2,3$. We also show that the idea for $k=2,3$ is not applicable for $k=4$. Notice first that the principle of symmetric criticality (see e.g.~\cite[Lemma 2.4]{BarDPis-prepr1}) allows us to constrain the problem on
$B^{(2)}=\{x=(x_1,x_2,0,\dots,0): x\in B(0,1)\}$
and then to place the bubbles at
\begin{eqnarray}\label{equ-1-lemma blowing up speed}
\xi_i, \xi_{i+1}\in B^{(2)}, \quad
\xi_{i+1}=e^{2\pi \sqrt{-1}/k} \xi_i \text{ for } i=1,\dots,k-1.
\end{eqnarray}
Here we used complex coordinates in $B^{(2)} \subset \R^2\times\{0\}$.

For $x,y,z\in\Om,x\neq y\neq z$, let
\begin{eqnarray*}
\al_1(x,y)&=&\frac{-G(x,0)+\sqrt{G^2(x,0)+4H(0,0)(H(x,x)-G(x,y))}}{2H(0,0)},\\
\be_1(x,y)&=&H(x,x)-G(x,y)+G(x,0)\al_1(x,y),\\
\al_2(x,y)&=&\frac{-2G(x,0)+\sqrt{4G^2(x,0)+4H(0,0)(H(x,x)-2G(x,y))}}{2H(0,0)},\\
\be_2(x,y)&=&H(x,x)-2G(x,y)+G(x,0)\al_2(x,y),\\
\al_3(x,y,z)&=&\frac{-3G(x,0)+\sqrt{9G^2(x,0)+4H(0,0)(H(x,x)-2G(x,y)-G(x,z))}}{2H(0,0)},\\
\be_3(x,y,z)&=&H(x,x)-2G(x,y)-G(x,z)+G(x,0)\al_3(x,y,z).
\end{eqnarray*}
Then we have the following lemma.
\begin{lemma}\label{lemma-blowing up speed}
Let $\Om=B(0,1)$ and $k=2,3,4$. If $(\la,\xi)=(\la_1,\dots,\la_{k},\ov{\la},\xi_1,\dots,\xi_{k})$ is a critical point of $\psi(\la,\xi)$ such that \eqref{equ-1-lemma blowing up speed} holds, then
\begin{equation}\label{equ-2-add-lemma blowing up speed}
\la_1=\ldots=\la_{k}.
\end{equation}
Moreover we have for $k=2$:
\begin{equation}\label{equ-2-add1-lemma blowing up speed}
\ov{\la}^{\frac{N-2}{2}}=\al_1(\xi_1,\xi_2)\la_1^{\frac{N-2}{2}},\quad
\la_1^{\frac{N-2}{2}}=\sqrt{\frac{1}{\be_1(\xi_1,\xi_2)}\cdot\frac{b_2}{2b_1}},\quad
H(\xi_1,\xi_1)-G(\xi_1,\xi_2)>0;
\end{equation}
for $k=3$:
\begin{equation}\label{equ-2-add2-lemma blowing up speed}
\ov{\la}^{\frac{N-2}{2}}=\al_2(\xi_1,\xi_2)\la_1^{\frac{N-2}{2}},\quad
\la_1^{\frac{N-2}{2}}=\sqrt{\frac{1}{\be_2(\xi_1,\xi_2)}\cdot\frac{b_2}{2b_1}},\quad H(\xi_1,\xi_1)-2G(\xi_1,\xi_2)>0;
\end{equation}
and for $k=4$:
\begin{equation}\label{equ-2-add3-lemma blowing up speed}
\ov{\la}^{\frac{N-2}{2}}=\al_3(\xi_1,\xi_2,\xi_3)\la_1^{\frac{N-2}{2}},\
\la_1^{\frac{N-2}{2}}=\sqrt{\frac{1}{\be_3(\xi_1,\xi_2,\xi_3)}\cdot\frac{b_2}{2b_1}},\
H(\xi_1,\xi_1)-2G(\xi_1,\xi_2)-G(\xi_1,\xi_3)>0.
\end{equation}
\end{lemma}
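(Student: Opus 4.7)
The plan is to pass to the variables $\Lambda_i := \la_i^{(N-2)/2}$ and $\ov\Lambda := \ov\la^{(N-2)/2}$, which convert the critical point conditions $\pa_{\la_i}\psi = 0$ and $\pa_{\ov\la}\psi = 0$ into the polynomial system
\[
E_i := \Lambda_i\Big(H(\xi_i,\xi_i)\Lambda_i + G(\xi_i,0)\ov\Lambda - \sum_{j\neq i} G(\xi_i,\xi_j)\Lambda_j\Big) = c, \quad
\ov E := \ov\Lambda\Big(H(0,0)\ov\Lambda + \sum_{j=1}^k G(\xi_j,0)\Lambda_j\Big) = c,
\]
with $c := b_2/(2b_1) > 0$. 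Since $\Om = B(0,1)$ is rotation-invariant and the configuration $\{\xi_i\}$ is cyclic, $H(\xi_i,\xi_i) =: H$ and $G(\xi_i,0) =: G_0$ are $i$-independent, while $G(\xi_i,\xi_j)$ depends only on $|i-j|\bmod k$. I write $G_a := G(\xi_1,\xi_2)$ for the adjacent value and, for $k=4$, $G_d := G(\xi_1,\xi_3)$ for the diagonal value.

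The key observation is that $E_i - E_j$ factors as $(\Lambda_i - \Lambda_j)M_{ij} = 0$, and the goal is to show $M_{ij} > 0$. For $k=2$ a direct subtraction yields $M_{12} = H(\Lambda_1+\Lambda_2)+G_0\ov\Lambda > 0$. For $k=3$ one gets $M_{ij} = H(\Lambda_i+\Lambda_j) + G_0\ov\Lambda - G_a \Lambda_\ell$, where $\ell$ is the remaining index; from $E_i = c>0$ we have $H\Lambda_i + G_0\ov\Lambda - G_a\Lambda_\ell > G_a\Lambda_j > 0$, so $M_{ij} > H\Lambda_j > 0$ and $\Lambda_1 = \Lambda_2 = \Lambda_3$ follows.

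The case $k=4$ is the main obstacle. First, $E_1-E_3$ (resp.\ $E_2-E_4$) annihilates the diagonal Green's terms and gives
\[
(\Lambda_1-\Lambda_3)\big[H(\Lambda_1+\Lambda_3)+G_0\ov\Lambda - G_a(\Lambda_2+\Lambda_4)\big]=0;
\]
the bracket is $>\Lambda_3(H+G_d)>0$ by $E_1>0$, so $\Lambda_1=\Lambda_3=:A$ and $\Lambda_2=\Lambda_4=:B$. The residual subtraction $E_1-E_2$ produces $(A-B)\big[(H-G_d)(A+B)+G_0\ov\Lambda\big]=0$, whose bracket is \emph{not} obviously sign-definite since $H-G_d$ may be negative. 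To rule out $A\neq B$ I substitute the forced identity $G_0\ov\Lambda = (G_d-H)(A+B)$ into $E_1+E_2=2c$; after cancellations this collapses to $AB(G_d - H - 2G_a) = c > 0$, forcing $G(\xi_1,-\xi_1) > H(\xi_1,\xi_1) + 2G(\xi_1,i\xi_1)$. The hard part is then the explicit check that this inequality \emph{fails} on $B(0,1)$ for every $|\xi_1|\in(0,1)$. Using the Kelvin formulas
\[
H(x,x) = c_N(1-|x|^2)^{-(N-2)}, \quad
G(x,y) = c_N\Big[|x-y|^{-(N-2)} - (|x|^2|y|^2 - 2x\cdot y + 1)^{-(N-2)/2}\Big],
\]
one reduces the comparison to an elementary inequality in $s := |\xi_1|^2 \in (0,1)$; the AM--GM bound $(1-s)^{-(N-2)}+(1+s)^{-(N-2)} \geq 2(1-s^2)^{-(N-2)/2} > 2(1+s^2)^{-(N-2)/2}$ together with $2^{1-(N-2)/2}>2^{-(N-2)}$ produce a strict excess on the right-hand side, yielding a contradiction. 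Hence $A=B$ and $\la_1=\cdots=\la_4$.

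With $\Lambda_1=\cdots=\Lambda_k=:\Lambda$ established, the system reduces to the pair
\[
\Lambda\big[(H-\Sigma_k)\Lambda+G_0\ov\Lambda\big] = c, \qquad
\ov\Lambda\big[H(0,0)\ov\Lambda+kG_0\Lambda\big] = c,
\]
where $\Sigma_k := \sum_{j\neq 1} G(\xi_1,\xi_j)$ equals $G_a$, $2G_a$, $2G_a+G_d$ for $k=2,3,4$ respectively. Setting $\al := \ov\Lambda/\Lambda$ and eliminating $c$ gives the quadratic $H(0,0)\al^2 + (k-1)G_0\al - (H-\Sigma_k) = 0$; its positive root is exactly $\al_{k-1}$ as defined before the lemma, and the first equation then yields $\la_1^{(N-2)/2}=\Lambda=\sqrt{c/\be_{k-1}}$ with $\be_{k-1} = H-\Sigma_k+G_0\al_{k-1}$. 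Finally, positivity of $\al_{k-1}$ (equivalent to $\ov\la>0$) forces the stated inequalities $H(\xi_1,\xi_1) - G(\xi_1,\xi_2) > 0$, resp.\ $H-2G(\xi_1,\xi_2) > 0$, resp.\ $H-2G(\xi_1,\xi_2)-G(\xi_1,\xi_3) > 0$, completing \eqref{equ-2-add1-lemma blowing up speed}--\eqref{equ-2-add3-lemma blowing up speed}.
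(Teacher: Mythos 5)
Your proposal follows essentially the same line as the paper's proof: pass to $\Lambda_i=\la_i^{(N-2)/2}$, rewrite the critical-point equations as $E_i=\ov E=c$, factor the pairwise differences $E_i-E_j=(\Lambda_i-\Lambda_j)M_{ij}$, and show $M_{ij}\neq0$. For $k=2$ and $k=3$ your sign arguments for $M_{ij}$ are small variants of the paper's (the paper multiplies $M_{ij}$ by $\Lambda_i$ and substitutes $E_i=c$; you bound $M_{ij}$ from below using $E_i>0$ — same estimate in different clothes). For $k=4$ you also follow the paper's decomposition $E_1-E_3$, $E_2-E_4$ to get $\Lambda_1=\Lambda_3=A$, $\Lambda_2=\Lambda_4=B$, and then $E_1-E_2$. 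The final elimination (setting $\al=\ov\Lambda/\Lambda$, solving the quadratic, reading off $\al_{k-1}$, $\be_{k-1}$, and the positivity constraint) is identical to the paper's.

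The place where you go beyond the paper's write-up is the last step of the $k=4$ case. The paper simply states that substituting $\Lambda_1=\Lambda_3$, $\Lambda_2=\Lambda_4$ into $E_1$ and $E_2$ "gives $\la_1=\la_2$", but as you correctly notice, the factorization $(A-B)\bigl[(H-G_d)(A+B)+G_0\ov\Lambda\bigr]=0$ does not automatically kill the bracket, because $H-G_d$ can be negative on the ball. Your extra step — showing that the alternative $A\neq B$ forces $AB\,(G_d-H-2G_a)=c>0$, and then proving $H+2G_a-G_d>0$ for all $t\in(0,1)$ via the rearrangement
\[
H+2G_a-G_d
=\Bigl[\tfrac{1}{(1-t^2)^{m}}+\tfrac{1}{(1+t^2)^{m}}-\tfrac{2}{(1+t^4)^{m/2}}\Bigr]
+\tfrac{1}{t^{m}}\Bigl[\tfrac{2}{2^{m/2}}-\tfrac{1}{2^{m}}\Bigr],\qquad m=N-2,
\]
each bracket being positive by AM--GM and by $2^{1-m/2}>2^{-m}$ respectively — is exactly the verification the paper's "same argument as $k=3$" secretly requires. (In the $k=3$ case the analogous coefficient is $H+G_a$, which is trivially positive; for $k=4$ it becomes $H-G_d+2G_a$, which is not.) So your proof is correct and, for $k=4$, it makes explicit and rigorous a positivity check that the paper glosses over. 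The remaining quibble is purely presentational: the sentence "together with $2^{1-(N-2)/2}>2^{-(N-2)}$ produce a strict excess" should be unpacked into the two-bracket display above so a reader can see exactly which terms are matched against which.
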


\begin{proof}
If $k=2$ and $(\la,\xi)$ is a critical point of $\psi(\la,\xi)$, then the equations $\frac{\pa\psi(\la,\xi)}{\pa \ov{\la}}
 = \frac{\pa\psi(\la,\xi)}{\pa \la_1}
 = \frac{\pa\psi(\la,\xi)}{\pa \la_2} = 0$,
imply
\begin{eqnarray}\label{equ-2-lemma blowing up speed}
H(0,0)\ov{\la}^{N-2} + G(\xi_1,0)\la_1^{\frac{N-2}{2}}\ov{\la}^{\frac{N-2}{2}}
   +G(\xi_2,0)\la_2^{\frac{N-2}{2}}\ov{\la}^{\frac{N-2}{2}}
 = \frac{b_2}{2b_1},\\
\label{equ-3-lemma blowing up speed}
H(\xi_1,\xi_1)\la_1^{N-2} + G(\xi_1,0)\la_1^{\frac{N-2}{2}}\ov{\la}^{\frac{N-2}{2}}
   -G(\xi_1,\xi_2)\la_1^{\frac{N-2}{2}}\la_2^{\frac{N-2}{2}}
 = \frac{b_2}{2b_1},\\
\label{equ-4-lemma blowing up speed}
H(\xi_2,\xi_2)\la_2^{N-2}+G(\xi_2,0)\la_2^{\frac{N-2}{2}}\ov{\la}^{\frac{N-2}{2}}
-G(\xi_1,\xi_2)\la_1^{\frac{N-2}{2}}\la_2^{\frac{N-2}{2}}
 = \frac{b_2}{2b_1}.
\end{eqnarray}
Notice that \eqref{equ-1-lemma blowing up speed} yields
\[
H(\xi_1,\xi_1)=H(\xi_2,\xi_2), G(\xi_1,0)=G(\xi_2,0),
\]
so \eqref{equ-3-lemma blowing up speed} and \eqref{equ-4-lemma blowing up speed} imply
\[
\left(\la_2^{\frac{N-2}{2}}-\la_1^{\frac{N-2}{2}}\right)
   \left(H(\xi_1,\xi_1)\left(\la_2^{\frac{N-2}{2}}+\la_1^{\frac{N-2}{2}}\right)
   + G(\xi_1,0)\ov{\la}^{\frac{N-2}{2}}\right)
 = 0.
\]
Since $H(\xi_1,\xi_1)>0$, $G(\xi_1,0)>0$, $\la_1>0$, $\la_2>0$, and $\ov{\la}>0$, we obtain $\la_1=\la_2$, and \eqref{equ-2-add1-lemma blowing up speed} follows by an easy computation.

If $k=3$ and $(\la,\xi)$ is a critical point of $\psi(\la,\xi)$, then
\begin{eqnarray}\label{equ-5-lemma blowing up speed}
H(0,0)\ov{\la}^{N-2} + G(\xi_1,0)\la_1^{\frac{N-2}{2}}\ov{\la}^{\frac{N-2}{2}}
    + G(\xi_2,0)\la_2^{\frac{N-2}{2}}\ov{\la}^{\frac{N-2}{2}}
    + G(\xi_3,0)\la_3^{\frac{N-2}{2}}\ov{\la}^{\frac{N-2}{2}}
 = \frac{b_2}{2b_1},\\
\label{equ-6-lemma blowing up speed}
H(\xi_1,\xi_1)\la_1^{N-2} + G(\xi_1,0)\la_1^{\frac{N-2}{2}}\ov{\la}^{\frac{N-2}{2}}
    - G(\xi_1,\xi_2)\la_1^{\frac{N-2}{2}}\la_2^{\frac{N-2}{2}}
    - G(\xi_1,\xi_3)\la_1^{\frac{N-2}{2}}\la_3^{\frac{N-2}{2}}
 = \frac{b_2}{2b_1},\\
\label{equ-7-lemma blowing up speed}
H(\xi_2,\xi_2)\la_2^{N-2} + G(\xi_2,0)\la_2^{\frac{N-2}{2}}\ov{\la}^{\frac{N-2}{2}}
    - G(\xi_1,\xi_2)\la_1^{\frac{N-2}{2}}\la_2^{\frac{N-2}{2}}
    - G(\xi_2,\xi_3)\la_2^{\frac{N-2}{2}}\la_3^{\frac{N-2}{2}}
 = \frac{b_2}{2b_1},\\
\label{equ-8-lemma blowing up speed}
H(\xi_3,\xi_3)\la_3^{N-2} + G(\xi_3,0)\la_3^{\frac{N-2}{2}}\ov{\la}^{\frac{N-2}{2}}
    - G(\xi_1,\xi_3)\la_1^{\frac{N-2}{2}}\la_3^{\frac{N-2}{2}}
    - G(\xi_2,\xi_3)\la_2^{\frac{N-2}{2}}\la_3^{\frac{N-2}{2}}
 = \frac{b_2}{2b_1}.
\end{eqnarray}
The ansatz \eqref{equ-1-lemma blowing up speed} gives
\[
H(\xi_1,\xi_1) = H(\xi_2,\xi_2)=H(\xi_3,\xi_3),\
G(\xi_1,0) = G(\xi_2,0)=G(\xi_3,0),\
G(\xi_1,\xi_2) = G(\xi_2,\xi_3)=G(\xi_1,\xi_3).
\]
We can see from (\ref{equ-6-lemma blowing up speed}) and
(\ref{equ-7-lemma blowing up speed}) that
\[
\left(\la_2^{\frac{N-2}{2}}-\la_1^{\frac{N-2}{2}}\right)
   (H(\xi_1,\xi_1)(\la_2^{\frac{N-2}{2}}
   + \la_1^{\frac{N-2}{2}}) + G(\xi_1,0)\ov{\la}^{\frac{N-2}{2}}
   - G(\xi_1,\xi_2)\la_3^{\frac{N-2}{2}})
 = 0.
\]
Assume that
\begin{equation}\label{equ-9-lemma blowing up speed}
H(\xi_1,\xi_1)\left(\la_2^{\frac{N-2}{2}} + \la_1^{\frac{N-2}{2}}\right)
   + G(\xi_1,0)\ov{\la}^{\frac{N-2}{2}} - G(\xi_1,\xi_2)\la_3^{\frac{N-2}{2}}
 = 0.
\end{equation}
Multiplying this by $\la_1^{\frac{N-2}{2}}$ and combing it with
\eqref{equ-6-lemma blowing up speed}, we obtain
\[
-H(\xi_1,\xi_1)\la_1^{\frac{N-2}{2}}\la_2^{\frac{N-2}{2}}
   - G(\xi_1,\xi_2)\la_1^{\frac{N-2}{2}}\la_2^{\frac{N-2}{2}} - \frac{b_2}{2b_1}
 = 0
\]
which is obviously impossible. Therefore $\la_2=\la_1$, and similarly $\la_3=\la_1$. Finally, (\ref{equ-2-add2-lemma blowing up speed}) can be computed directly.

If $k=4$ and $(\la,\xi)$ is a critical point of $\psi(\la,\xi)$, then
\begin{eqnarray}\label{equ-10-lemma blowing up speed}
H(0,0)\ov{\la}^{N-2}+\sum_{i=1}^4G(\xi_i,0)\la_1^{\frac{N-2}{2}}\ov{\la}^{\frac{N-2}{2}}
 = \frac{b_2}{2b_1},\\
\label{equ-11-lemma blowing up speed}
H(\xi_1,\xi_1)\la_1^{N-2}+G(\xi_1,0)\la_1^{\frac{N-2}{2}}\ov{\la}^{\frac{N-2}{2}}
-\sum_{i\neq1}G(\xi_1,\xi_i)\la_1^{\frac{N-2}{2}}\la_i^{\frac{N-2}{2}}
 = \frac{b_2}{2b_1},\\
\label{equ-12-lemma blowing up speed}
H(\xi_2,\xi_2)\la_2^{N-2}+G(\xi_2,0)\la_2^{\frac{N-2}{2}}\ov{\la}^{\frac{N-2}{2}}
-\sum_{i\neq2}G(\xi_2,\xi_i)\la_2^{\frac{N-2}{2}}\la_i^{\frac{N-2}{2}}
 = \frac{b_2}{2b_1},\\
\label{equ-13-lemma blowing up speed}
H(\xi_3,\xi_3)\la_3^{N-2}+G(\xi_3,0)\la_3^{\frac{N-2}{2}}\ov{\la}^{\frac{N-2}{2}}
-\sum_{i\neq3}G(\xi_3,\xi_i)\la_3^{\frac{N-2}{2}}\la_i^{\frac{N-2}{2}}
 = \frac{b_2}{2b_1},\\
\label{equ-14-lemma blowing up speed}
H(\xi_4,\xi_4)\la_4^{N-2}+G(\xi_4,0)\la_4^{\frac{N-2}{2}}\ov{\la}^{\frac{N-2}{2}}
-\sum\limits_{i\neq4}G(\xi_4,\xi_i)\la_4^{\frac{N-2}{2}}\la_i^{\frac{N-2}{2}}
 = \frac{b_2}{2b_1}.
\end{eqnarray}
The ansatz \eqref{equ-1-lemma blowing up speed} gives
\[
H(\xi_1,\xi_1) = H(\xi_2,\xi_2) = H(\xi_3,\xi_3)=H(\xi_4,\xi_4),\quad
G(\xi_1,0) = G(\xi_2,0) = G(\xi_3,0) = G(\xi_4,0),
\]
and
\[
G(\xi_1,\xi_2) = G(\xi_2,\xi_3) = G(\xi_3,\xi_4) = G(\xi_4,\xi_1),\quad
G(\xi_1,\xi_3) = G(\xi_2,\xi_4).
\]
Then using the same argument as the one in case $k=3$, \eqref{equ-11-lemma blowing up speed} and \eqref{equ-13-lemma blowing up speed} imply $\la_1=\la_3$. Similarly we obtain
$\la_2=\la_4$. Substituting this into \eqref{equ-11-lemma blowing up speed} and \eqref{equ-12-lemma blowing up speed}, we then get $\la_1=\la_2$. At last, \eqref{equ-2-add3-lemma blowing up speed} follows by direct computation.
\end{proof}

\begin{altproof}{\ref{theorem-ball-k bubbles}}
For $k=2$, by \eqref{equ-1-lemma blowing up speed} we can assume $\xi_1=-\xi_2=(t,0,\dots,0)$, $0<t<1$. Then by \eqref{equ-2-add-lemma blowing up speed} in Lemma \ref{lemma-blowing up speed}, the reduced function $\psi(\la,t)$ becomes
\[
f_1(\la_1,\ov{\la},t)
 = b_1\left(h(0,0)\ov{\la}^{N-2} + 2h(t,t)\la_1^{N-2}
        + 4g(t,0)\la_1^{\frac{N-2}{2}}\ov{\la}^{\frac{N-2}{2}} - 2g(t,-t)\la_1^{N-2}\right)
   - b_2\ln\left(\la_1^2\ov{\la}\right)^{\frac{N-2}{2}}.
\]
The remaining argument is the same as the one in \cite{BarDPis-prepr2}.

Now we consider the case $k=3$. As a consequence of \eqref{equ-1-lemma blowing up speed} we may assume
$\xi_1=(t,0,\ldots,0)$, $\xi_2=\left(-\frac{t}{2},\frac{\sqrt{3}t}{2},0,\ldots,0\right)$, $\xi_3=\left(-\frac{t}{2},-\frac{\sqrt{3}t}{2},0,\ldots,0\right),$ $0<t<1$. Then Lemma~\ref{lemma-blowing up speed} allows us to consider the function
\[
\begin{aligned}
f_2(\la_1,\ov{\la},t)
 &= b_1\left(H(0,0)\ov{\la}^{N-2} + 3H(\xi_1,\xi_1)\la_1^{N-2}
        + 6G(\xi_1,0)\la_1^{\frac{N-2}{2}}\ov{\la}^{\frac{N-2}{2}}
        - 6G(\xi_1,\xi_2)\la_1^{N-2}\right)\\
 &\hspace{1cm}
   - b_2\ln\left(\la_1^3\ov{\la}\right)^{\frac{N-2}{2}}.
\end{aligned}
\]
Setting
\[
\ga_1(t)
 := H(\xi_1,\xi_1)-2G(\xi_1,\xi_2)
   = \frac{1}{(1-t^2)^{N-2}}-\frac{2}{(\sqrt{3}t)^{N-2}}+\frac{2}{(t^4+t^2+1)^{\frac{N-2}{2}}}
\]
and
\[
\tau_1(t) := G(\xi_1,0) = \frac{1}{t^{N-2}}-1,
\]
a direct computation shows that $\ga'_1(t)>0$, $\ga_1(t)\to -\infty$ as $t\to 0^+$, and $\ga_1(\frac12)>0$. Thus there exists $t^*\in(0,\frac{1}{2})$ such that
\begin{equation}\label{equ-1-proof-theorem-ball-k bubbles}
\ga_1(t^*)=0,\quad\ga_1(t)>0 \text{ for all } t\in(t^*,1).
\end{equation}
Then for $t\in(t^*,1)$ there exist unique $\la_1(t)$, $\ov{\la}(t)$ such that
\begin{equation}\label{equ-2-proof-theorem-ball-k bubbles}
\frac{\pa f_2(\la_1,\ov{\la},t)}{\pa\la_1} = 0,\quad
\frac{\pa f_2(\la_1,\ov{\la},t)}{\pa\ov{\la}} = 0,
\end{equation}
where $\la_1(t), \ov{\la}(t)$ are from \eqref{equ-2-add2-lemma blowing up speed}. Moreover, a direct computation shows that
\begin{eqnarray*}
\frac{\pa^2 f_2(\la_1(t),\ov{\la}(t),t)}{\pa\la_1^2}
&=& 3(N-2)b_1\left((N-3)\ga_1(t)\la_1^{N-4}
       +\frac{N-4}{2}\tau_1(t)\la_1^{\frac{N-6}{2}}\ov{\la}^{\frac{N-2}{2}}\right)
     +\frac{3(N-2)b_2}{2\la_1^2}\\
&=& 3(N-2)b_1\left((N-2)\ga_1(t)\la_1^{N-4}
         + \frac{N-2}{2}\tau_1(t)\la_1^{\frac{N-6}{2}}\ov{\la}^{\frac{N-2}{2}}\right),\\
\frac{\pa^2 f_2(\la_1(t),\ov{\la}(t),t)}{\pa\ov{\la}^2}
&=& (N-2)b_1\left((N-3)H(0,0)\ov{\la}^{N-4}
        + \frac{3(N-4)}{2}\tau_1(t)\la_1^{\frac{N-2}{2}}\ov{\la}^{\frac{N-6}{2}}\right)
     + \frac{(N-2)b_2}{2\ov{\la}^2}\\
&=& (N-2)b_1\left((N-2)H(0,0)\ov{\la}^{N-4}
        +\frac{3(N-2)}{2}\tau_1(t)\la_1^{\frac{N-2}{2}}\ov{\la}^{\frac{N-6}{2}}\right),\\
\frac{\pa^2 f_2(\la_1(t),\ov{\la}(t),t)}{\pa\ov{\la}\pa\la_1}
&=& \frac{3(N-2)^2}{2}b_1\tau_1(t)\la_1^{\frac{N-4}{2}}\ov{\la}^{\frac{N-4}{2}}.
\end{eqnarray*}
It follows that the Hessian matrix
$D^2_{\la_1,\ov{\la}}f_2(\la_1(t),\ov{\la}(t),t)$ is positively definite and therefore nondegenerate. Then it is enough to consider the function
\[
\nu_1(t)
 := f_2\left(\la_1(t),\ov{\la}(t),t\right)
  = 2b_2-b_2\ln\left(\la_1^3(t)\ov{\la}(t)\right)^{\frac{N-2}{2}}.
\]
As in \cite[(3.4)]{BarDPis-prepr2} there holds
\begin{equation}
 \lim_{t\to (t^*)^+}\nu_1(t)=-\infty\quad\text{and}\quad
\lim_{t\to 1^-}\nu_1(t)=+\infty.
\end{equation}
Now we prove $\nu_1'(\frac{1}{2})<0$ for $N$ large. Setting
\[
\al_2(t):=\al_2(\xi_1,\xi_2)=-\tau_1(t)+\sqrt{\tau_1^2(t)+\ga_1(t)},
\]
where we used $H(0,0)=1$, we obtain
\[
\nu_1'(t)
 = \frac{\pa f_2(\la_1(t),\ov{\la}(t),t)}{\pa t}
 = 3b_1\left(\ga'_1(t)+2\al_2(t)\tau'_1(t)\right)\la_1^{N-2}.
\]
Then by letting $\iota_1(t):=\ga'_1(t)+2\al_2(t)\tau'_1(t)$, we need to show $\iota_1\left(\frac12\right)<0$ for $N$ large enough. Since $\frac{\ga_1(\frac12)}{\tau_1^2(\frac12)}<1$ for $N$ large we see as in \cite[(3.9)]{BarDPis-prepr2} that
\[
\iota_1({\textstyle\frac12})
 \le \ga'_1({\textstyle\frac12})
      + \frac{4\ga_1(\frac12)}{5\tau_1(\frac{1}{2})}\tau'_1({\textstyle\frac12}).
\]
A direct computation gives for $N$ large:
\begin{eqnarray*}
\ga'_1({\textstyle\frac12})
 &=& (N-2)\left(({\textstyle\frac43})^{N-1} + 4({\textstyle\frac{2}{\sqrt{3}}})^{N-2}
      - \frac{\frac{3}{2}}{(\frac{1}{16}+\frac{1}{4}+1)^{\frac{N}{2}}}\right)
  < \frac{11(N-2)}{10}({\textstyle\frac43})^{N-1},\\
\tau'_1({\textstyle\frac12})
 &=& -(N-2)2^{N-1},\\
\frac{\ga_1(\frac12)}{\tau_1(\frac12)}
 &=& \frac{(\frac{4}{3})^{N-2}-2(\frac{2}{\sqrt{3}})^{N-2}
      + \frac{2}{(\frac{1}{16}+\frac{1}{4}+1)^{\frac{N-2}{2}}}}{2^{N-2}-1}
  > \frac{11}{12}\cdot\frac{(\frac{4}{3})^{N-2}}{2^{N-2}},\\
\end{eqnarray*}
which yield $\iota_1(\frac{1}{2})<0$ for $N$ large enough. Then we have $t_1,t_2\in(t^*,1)$, $t_1\neq t_2$ such that $\nu_1'(t_1)=0$, $\nu_1'(t_2)=0$, and we conclude as in \cite{BarDPis-prepr2}.
\end{altproof}

\begin{remark}\label{rem:nonexist}
a) For $k=3$, $N=7$, numerical computations show that one cannot find $t_0\in(t^*,1)$ such that $\nu_1'(t_0)=0$. Therefore we can only consider $N$ large enough in this case.

b) For $k=4$, the idea above also cannot give the existence of solutions with $k+1$ bubbles, one positive at the origin and $k$ negative. In fact, following the above idea, with the ansatz \eqref{equ-1-lemma blowing up speed} we may assume $\xi_1=(t,0,\dots,0)$, $\xi_2=(0,t,0,\dots,0)$, $\xi_3=(-t,0,\dots,0)$, and $\xi_4=(0,-t,\dots,0)$,
$0<t<1$. As a consequence of Lemma~\ref{lemma-blowing up speed} we need to consider the function
\[
\begin{aligned}
f_3(\la_1,\ov{\la},t)
 &= b_1\left(H(0,0)\ov{\la}^{N-2}+4H(\xi_1,\xi_1)\la_1^{N-2}+8G(\xi_1,0)\la_1^{\frac{N-2}{2}}\ov{\la}^{\frac{N-2}{2}}\right.
\\
&\hspace{1cm}
  \left.  -8G(\xi_1,\xi_2)\la_1^{N-2}-4G(\xi_1,\xi_3)\la_1^{N-2} \right)-b_2\ln(\la_1^4\ov{\la})^{\frac{N-2}{2}}.
  \end{aligned}
\]

Now let $\tau_1(t)$ be the same as above and define
\begin{eqnarray*}
\ga_2(t)
 &:=& H(\xi_1,\xi_1)-2G(\xi_1,\xi_2)-G(\xi_1,\xi_3)\\
 &=& \frac{1}{(1-t^2)^{N-2}} - \frac{2}{(\sqrt{2}t)^{N-2}} + \frac{2}{(t^4+1)^{\frac{N-2}{2}}}
      - \frac{1}{(2t)^{N-2}}+\frac{1}{(t^2+1)^{N-2}}.
\end{eqnarray*}
A direct computation shows that
\[
\ga'_2(t)
 = (N-2)\left(\frac{2t}{(1-t^2)^{N-1}} + \frac{2}{(\sqrt{2})^{N-2}t^{N-1}}
    - \frac{4t^3}{(t^4+1)^{\frac{N}{2}}} + \frac{1}{2^{N-2}t^{N-1}}
    - \frac{2t}{(t^2+1)^{N-1}}\right)
 > 0.
\]
Clearly $\ga_2(t)\to -\infty$ as $t\to 0^+$, and $\ga_2\left(\frac{1}{\sqrt{2}}\right)>0$. Then there exists $t^*\in(0,\frac{1}{\sqrt{2}})$ such that
\begin{equation}\label{equ-1-proof-theorem-ball-k bubbles-2}
\ga_2(t^*)=0,\ga_2(t)>0,~~ \forall~ t\in(t^*,1).
\end{equation}
Set $\iota_2(t):=\ga'_2(t)+2\al_3(t)\tau'_1(t)$, where
$\al_3(t):=\al_3(\xi_1,\xi_2,\xi_3)=\frac{-3\tau_1(t)+\sqrt{9\tau_1^2(t)+4\ga_2(t)}}{2}$. If we can prove that $\iota_2(t_0)=0$ for some $t_0\in(t^*,1)$, then problem \eqref{pro} admits a solution with $5$ bubbles, one positive at the origin and $4$ negative. But the following proposition shows that $t_0$ does not exist.
\end{remark}

\begin{proposition}\label{k=5-impossible-proposition}
For any $t\in(t^*,1), N\ge7$, it always holds that $\iota_2(t)>0$.
\end{proposition}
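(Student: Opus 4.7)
My plan is to reduce the inequality $\iota_2(t)>0$ to a pointwise inequality by bounding $\alpha_3$ from above using the elementary estimate $\sqrt{A^2+B}\le A+B/(2A)$ (valid for $A,B>0$), and then to verify the reduced inequality on $(t^*,1)$ via a term-by-term analysis. Since $\gamma_2(t)>0$ on $(t^*,1)$ by \eqref{equ-1-proof-theorem-ball-k bubbles-2} and $\tau_1(t)=1/t^{N-2}-1>0$, taking $A=3\tau_1(t)$ and $B=4\gamma_2(t)$ yields
\[
2\alpha_3(t)=-3\tau_1(t)+\sqrt{9\tau_1^2(t)+4\gamma_2(t)}\le\frac{2\gamma_2(t)}{3\tau_1(t)}.
\]
Combined with $\tau_1'(t)=-(N-2)/t^{N-1}<0$, this reduces $\iota_2(t)>0$ to the pointwise inequality
\[
3(t-t^{N-1})\gamma_2'(t)>2(N-2)\gamma_2(t),\qquad t\in(t^*,1).
\]

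To verify the reduced inequality, I would substitute the explicit formulas for $\gamma_2$ and $\gamma_2'$. On the right-hand side the negative contributions $-2/(\sqrt{2}\,t)^{N-2}$ and $-1/(2t)^{N-2}$ in $\gamma_2$ can only help and may be dropped. The principal balance is between $3(t-t^{N-1})\cdot\frac{2(N-2)t}{(1-t^2)^{N-1}}$ on the left and $\frac{2(N-2)}{(1-t^2)^{N-2}}$ on the right, whose difference equals $\frac{2(N-2)}{(1-t^2)^{N-1}}(4t^2-1-3t^N)$. A preliminary check shows that $\gamma_2(1/2)<0$ for every $N\ge 7$ (since $2\sqrt{2}^{\,N-2}>(4/3)^{N-2}$ dominates the other summands there), so $t^*>\tfrac12$; together with the elementary bound $3t^N<4t^2-1$ for $t$ bounded away from $1$, this handles the principal pair on $(t^*,1-\eta]$. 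The remaining positive subleading pieces $2/(t^4+1)^{(N-2)/2}$ and $1/(t^2+1)^{N-2}$, as well as the negative contributions $-4t^3/(t^4+1)^{N/2}$ and $-2t/(t^2+1)^{N-1}$ in $\gamma_2'$, decay exponentially in $N$ compared to the principal terms and can be absorbed easily.

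Near $t=1$, where $4t^2-1-3t^N\to 0$ and the crude bound above degrades, I would instead use the asymptotic expansions $t-t^{N-1}\sim (N-2)(1-t)$ and $(1-t^2)\sim 2(1-t)$ to compute
\[
\frac{3(t-t^{N-1})\gamma_2'(t)}{2(N-2)\gamma_2(t)}\longrightarrow\frac{3(N-2)}{2}\ge\frac{15}{2}\qquad\text{as }\ t\to 1^-,
\]
which gives a substantial margin in the boundary regime. The main obstacle will be verifying the reduced inequality uniformly in the intermediate range, where no single term dominates; I expect one can split $(t^*,1)$ into finitely many subintervals and use the monotonicity of each individual summand together with $N\ge 7$ to conclude.
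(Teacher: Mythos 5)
Your approach is a genuinely different route from the paper's. The paper keeps the square root in $\al_3$ exact, crudely bounds $\ga_2,\ga_2',\tau_1$ and then squares both sides to arrive at the closed-form target $(T^N+3T)(1-t^{N-2})^2>1$ with $T=t^2/(1-t^2)$; you instead bound $2\al_3\le 2\ga_2/(3\tau_1)$ via $\sqrt{A^2+B}\le A+B/(2A)$, which cleanly reduces the problem to $3(t-t^{N-1})\ga_2'(t)>2(N-2)\ga_2(t)$. The reduction is correct: $\tau_1'/\tau_1=-(N-2)/(t-t^{N-1})$ gives exactly your inequality, and your bound on $\al_3$ is tight precisely where it matters, near $t=t^*$ where $\ga_2\to 0^+$. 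Your asymptotic limit $3(N-2)/2$ for the ratio as $t\to 1^-$ is also correct.

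However, there is a concrete quantitative gap that would sink the execution as sketched. You use $\ga_2(1/2)<0$ to conclude $t^*>\tfrac12$, and then invoke $3t^N<4t^2-1$ for the "principal balance." But $4t^2-1-3t^N$ is \emph{negative} at $t=\tfrac12$ (it equals $-3/2^N$) and remains negative on a nontrivial interval $(\tfrac12,t_0)$, with $t_0\approx 0.507$ when $N=7$. Equivalently, after applying the crude bounds $\ga_2\le(1-t^2)^{-(N-2)}$ and $\ga_2'\ge(N-2)\cdot 2t/(1-t^2)^{N-1}$, your reduced inequality becomes $3T(1-t^{N-2})>1$, and this fails for $t$ just above $\tfrac12$. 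So $t^*>\tfrac12$ alone does not close the argument. The paper needs, and proves, the sharper bound $t^*>\tfrac{\sqrt{6}-\sqrt{2}}{2}\approx 0.518$ (by checking $\ga_2\big(\tfrac{\sqrt{6}-\sqrt{2}}{2}\big)<0$), which is precisely calibrated to cross $t_0$; your weaker bound is not enough. Relatedly, the claim that the negative pieces of $\ga_2$ "can only help and may be dropped" is exactly what fails near $t^*$: there $\ga_2$ is near zero because those negative pieces nearly cancel the leading term, so dropping them destroys the argument. Either prove the sharper $t^*>\tfrac{\sqrt{6}-\sqrt{2}}{2}$ (as the paper does), or do not discard the negative terms of $\ga_2$ in the range close to $t^*$. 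Once that is fixed, and the transition between your "bounded away from $1$" estimate and the $t\to 1^-$ asymptotics is made quantitative, the approach should go through.
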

\begin{proof}
We first show that  $t^*>\frac{\sqrt{6}-\sqrt{2}}{2}$, where $t^*$ is from  \eqref{equ-1-proof-theorem-ball-k bubbles-2}. In order to see that we prove
$\ga_2\left(\frac{\sqrt{6}-\sqrt{2}}{2}\right) < 0$. Since $2^{2/5}\cdot2(\frac{\sqrt{6}-\sqrt{2}}{2})^2<1<(\frac{\sqrt{6}-\sqrt{2}}{2})^4+1$, we have
\[
\frac{1}{(\sqrt{2}\cdot\frac{\sqrt{6}-\sqrt{2}}{2})^{N-2}}
 > \frac{2}{((\frac{\sqrt{6}-\sqrt{2}}{2})^4+1)^{\frac{N-2}{2}}},
\quad\text{for all } N\ge7.
\]
On the other hand, it is easy to see that
\[
\frac{1}{(1-(\frac{\sqrt{6}-\sqrt{2}}{2})^2)^{N-2}}
 = \frac{1}{(\sqrt{2}(\frac{\sqrt{6}-\sqrt{2}}{2}))^{N-2}}, \quad
\frac{1}{(2(\frac{\sqrt{6}-\sqrt{2}}{2}))^{N-2}}
 > \frac{1}{((\frac{\sqrt{6}-\sqrt{2}}{2})^2+1)^{N-2}},
\]
and we conclude $\ga_2(\frac{\sqrt{6}-\sqrt{2}}{2}) < 0$.

Now we prove $\iota_2(t)>0$ for $t\in(t^*,1)\subset(\frac{\sqrt{6}-\sqrt{2}}{2},1)$. It
is easy to see that for $t\in\left(\frac{\sqrt{6}-\sqrt{2}}{2},1\right)$ there holds
\[
\ga'_2(t) \ge (N-2)\cdot\frac{2t}{(1-t^2)^{N-1}},\quad
\ga_2(t) \le \frac{1}{(1-t^2)^{N-2}}.
\]
Then for all $t\in (t^*,1)$ and $N\ge7$:
\begin{eqnarray*}
\frac{\iota_2(t)}{N-2}
 &\ge& \frac{2t}{(1-t^2)^{N-1}}
       - 3\left(\frac{1}{t^{N-2}}-1\right)\cdot\frac{1}{t^{N-1}}
       \left(\sqrt{1+\frac{4\cdot\frac{1}{(1-t^2)^{N-2}}}{9(\frac{1}{t^{N-2}}-1)^2}}-1\right)\\
&\ge& \frac{2t}{(1-t^2)^{N-1}} - \frac{3}{t^{2N-3}}\cdot
      \left(\sqrt{1+\frac{4}{9}(\frac{t^2}{1-t^2})^{N-2}\cdot\frac{1}{(1-t^{N-2})^2}}-1\right).
\end{eqnarray*}
Setting $T:=\frac{t^2}{1-t^2}$ it is enough to prove that
\[
\frac23\cdot T^{N-1}+1
 > \sqrt{1+\frac{4}{9}\cdot T^{N-2}\cdot\frac{1}{(1-t^{N-2})^2}},
\]
which is equivalent to
\begin{equation}\label{equ-3-k=5-impossible-proposition}
(T^N+3T)\cdot(1-t^{N-2})^2>1.
\end{equation}
It is easy to see that
\begin{equation}\label{equ-4-k=5-impossible-proposition}
3T\cdot(1-t^{N-2})^2 \ge 3(1-t^5)^2>1
\quad\text{if } t\in{\textstyle[\frac{1}{\sqrt{2}},\frac45)}
\end{equation}
and
\begin{equation}\label{equ-5-k=5-impossible-proposition}
T^N\cdot(1-t^{N-2})^2
 \ge T^N\cdot(1-t)^2=\frac{t^4}{(1+t)^2}\cdot T^{N-2}
 > \frac{(\frac{4}{5})^4}{4}(\frac{(\frac{4}{5})^2}{1-(\frac{4}{5})^2})^5
 > 1
\text{ if } t\in[\frac{4}{5},1).
\end{equation}

Now it is left to prove (\ref{equ-3-k=5-impossible-proposition}) for $t\in\left(t^*,\frac{1}{\sqrt{2}}\right)$. First of all, if
$t\in\left(\frac{\sqrt{6}-\sqrt{2}}{2},\frac{1}{\sqrt{2}}\right)$, then $T\in\left(\frac{\sqrt{3}-1}{2},1\right)$. Setting
\[
f(T):=3T(1-t^{N-2})^2=3T(1-(\frac{T}{1+T})^{\frac{N-2}{2}})^2,
\]
a direct computation shows that
\begin{eqnarray*}
f'(T)
 &=& \left(1-\left(\frac{T}{1+T}\right)^{\frac{N-2}{2}}\right)
          \left(3-3\left(\frac{T}{1+T}\right)^{\frac{N-2}{2}}
      - 3(N-2)\left(\frac{T}{1+T}\right)^{\frac{N-2}{2}}\frac{1}{1+T}\right)\\
&\ge& \left(1-\left(\frac12\right)^{\frac{N-2}{2}}\right)
       \left(3-3\left(\frac12\right)^{\frac{N-2}{2}}
       -3(N-2)\left(\frac12\right)^{\frac{N-2}{2}}\frac{1}{1+\frac{\sqrt{3}-1}{2}}\right)\\
&\ge& \left(1-\left(\frac12\right)^{\frac{N-2}{2}}\right)
      \left(3-3\left(\frac12\right)^{\frac52}
       -15\left(\frac12\right)^{\frac52}\frac{1}{1+\frac{\sqrt{3}-1}{2}}\right)
 > 0,
\end{eqnarray*}
where in the second inequality we use the fact that $3-3(\frac{1}{2})^{\frac{N-2}{2}}-3(N-2)(\frac{1}{2})^{\frac{N-2}{2}}\frac{1}{1+\frac{\sqrt{3}-1}{2}}$ is increasing in $N$. Now we conclude that
\begin{eqnarray}\label{equ-6-k=5-impossible-proposition}
f(T)
 > 3\cdot\frac{\sqrt{3}-1}{2}
   \left(1-\left(\frac{\frac{\sqrt{3}-1}{2}}{1+\frac{\sqrt{3}-1}{2}}\right)^{\frac52}\right)^2
 > 1\quad\text{for all } N\ge7.
\end{eqnarray}
Combining  \eqref{equ-3-k=5-impossible-proposition}, \eqref{equ-4-k=5-impossible-proposition}, \eqref{equ-5-k=5-impossible-proposition}, and \eqref{equ-6-k=5-impossible-proposition}, the proof is finished.
\end{proof}

\begin{remark} Let $\mu=\mu_0\eps^\al$, $\mu_0>0$, $\al>\frac{N-4}{N-2}$, $N$ large enough. For $k=2,3$, we can also consider smooth bounded domains with the following symmetry condition.
\begin{itemize}
\item[$(S_2)$] If $(\wt{x},x')\in\Om\subset\R^2\times\R^{N-2}$ then
$(e^{2\pi\sqrt{-1}/k}\wt{x},x')\in\Om$, and $(\wt{x},-x')\in\Om$.
\end{itemize}

We assume further that

\begin{itemize}
\item[$(S_3)$] The map $\vphi_k:\Om^{(2)}:=\{\wt{x}\in\R^2:(\wt{x},0)\in\Om\}\to\R$, defined by
  \begin{eqnarray*}
  \vphi_k(\wt{x}):=\frac{\al_{k-1}\left((\wt{x},0),(e^{2\pi\sqrt{-1}/k}\wt{x},0)\right)}
           {\be^{\frac{k+1}{2}}_{k-1}\left((\wt{x},0),(e^{2\pi\sqrt{-1}/k}\wt{x},0)\right)},
\end{eqnarray*}
admits a nondegenerate critical point at $\wt{\xi}^*\in\Om^{(2)}$ such that
    \[
    H\left((\wt{\xi}^*,0),(\wt{\xi}^*,0)\right)
     - (k-1)G\left((\wt{\xi}^*,0),(-\wt{\xi}^*,0)\right)
     > 0.
    \]
\end{itemize}
Then for $k=2,3$, and using the same methods as in Theorem \ref{theorem-ball-k bubbles}, one can show that there exists $\eps_0>0$ such that for any $\eps\in(0,\eps_0)$, there exists a pair of solutions $\pm u_\eps$ to problem \eqref{pro} satisfying
\[
u_\eps(x)
 = C_\mu\left(\frac{\si^\eps}{(\si^\eps)^2|x|^{\be_1}+|x|^{\be_2}}\right)^{\frac{N-2}{2}}
 -C_0\sum_i^k(\frac{\de^\eps}{(\de^\eps)^2 + |x-\xi^\eps_i|^2})^{\frac{N-2}{2}}
   + o(1),
\]
where $\de^\eps=\la^\eps\eps^{\frac{1}{N-2}}$,
$\xi^\eps_i=(e^{2\pi i\sqrt{-1}/k}\wt{\xi}^\eps,0)$, $\wt{\xi}^\eps\in\Om^{(2)}$,
$\si^\eps=\ov{\la}^\eps\eps^{\frac{1}{N-2}}$,
and for some $\eta>0$ small enough, $\eta<|\wt{\xi}^\eps|<1-\eta$,
$\la^\eps,\ov{\la}^\eps\in(\eta,\frac{1}{\eta})$. Moreover
$\wt{\xi}^\eps\to\wt{\xi}^*$ as $\eps\to0$.

The question is what kind of domain, besides $B(0,1)$, satisfies the assumption $(S_3)$?
\end{remark}

\subsection{\textbf{Proof of Theorem \ref{theorem-ball-k bubbles-add}\label{Subsection 3.3}}}

In this part, we turn to solutions of the form $V_{\eps,\la,\xi}=\sum\limits_{i=1}^{k}(-1)^iPU_{\de_i,\xi_i}+PV_\si$. Then the reduced function in Lemma \ref{lemma-expansion
of J-1} becomes
\[
\begin{aligned}
\wt{\psi}(\la,\xi)
 &= b_1\left(H(0,0)\ov{\la}^{N-2}+\sum_{i=1}^kH(\xi_i,\xi_i)\la_i^{N-2}
    + 2\sum_{i=1}^k(-1)^{i-1}G(\xi_i,0)\la_i^{\frac{N-2}{2}}\ov{\la}^{\frac{N-2}{2}}\right.
\\
&\hspace{1cm}
 \left.
  +2\sum_{i,j=1,i<j}^k(-1)^{i+j-1}G(\xi_i,\xi_j)\la_i^{\frac{N-2}2}\la_j^{\frac{N-2}2}\right)
          -b_2\ln(\la_1\la_2\dots\la_{k}\ov{\la})^{\frac{N-2}{2}},
\end{aligned}
\]
where $b_1, b_2$ are as in Lemma~\ref{lemma-expansion of J-1}.

\begin{altproof}{\ref{theorem-ball-k bubbles-add}}
Here we have $k=4$. Due to the ansatz \eqref{equ-1-lemma blowing up speed} we may assume
$\xi_1=(t,0,\dots,0)$, $\xi_2=(0,t,0,\dots,0)$, $\xi_3=(-t,0,0,\dots,0)$, $\xi_4=(0,-t,0,\dots,0)$, $0<t<1$. It is obvious that
\[
H(\xi_1,\xi_1)=H(\xi_2,\xi_2)=H(\xi_3,\xi_3)=H(\xi_4,\xi_4),\quad
G(\xi_1,0)=G(\xi_2,0)=G(\xi_3,0)=G(\xi_4,0),
\]
and
\[
G(\xi_1,\xi_2)=G(\xi_2,\xi_3)=G(\xi_3,\xi_4)=G(\xi_4,\xi_1),\quad
G(\xi_1,\xi_3)=G(\xi_2,\xi_4).
\]

As in the proof of Lemma \ref{lemma-blowing up speed} we have
\[
\la_1=\la_3, \la_2=\la_4,
\]
which allows us to consider the function
\begin{eqnarray*}
f_4(\la_1,\la_2,\ov{\la},t)
&=& b_1\left(H(0,0)\ov{\la}^{N-2} + 2H(\xi_1,\xi_1)(\la_1^{N-2}+\la_2^{N-2})
    +4G(\xi_1,0)\left(\la_1^{\frac{N-2}{2}}-\la_2^{\frac{N-2}{2}}\right)
       \ov{\la}^{\frac{N-2}{2}}\right.\\
&&\left.
    + 8G(\xi_1,\xi_2)\la_1^{\frac{N-2}{2}}\la_2^{\frac{N-2}{2}}
    - 2G(\xi_1,\xi_3)\la_1^{N-2}-2G(\xi_1,\xi_3)\la_2^{N-2}\right)
    - b_2\ln\left(\la_1^2\la_2^2\ov{\la}\right)^{\frac{N-2}{2}}.
\end{eqnarray*}

Now suppose $\nabla_{\la_1,\la_2,\ov{\la}} f_4(\la_1,\la_2,\ov{\la},t)=0$. Then we have
\begin{eqnarray}\label{equ-2-ball-k=5-theorem}
H(0,0)\ov{\la}^{N-2} + 2G(\xi_1,0)(\la_1^{\frac{N-2}{2}}
    -\la_2^{\frac{N-2}{2}})\ov{\la}^{\frac{N-2}{2}}
&=&\frac{b_2}{2b_1},\\
\label{equ-3-ball-k=5-theorem}
(H(\xi_1,\xi_1) - G(\xi_1,\xi_3))\la_1^{N-2}
    + G(\xi_1,0)\la_1^{\frac{N-2}{2}}\ov{\la}^{\frac{N-2}{2}}
    + 2G(\xi_1,\xi_2)\la_1^{\frac{N-2}{2}}\la_2^{\frac{N-2}{2}}
&=& \frac{b_2}{2b_1},\\
\label{equ-4-ball-k=5-theorem}
(H(\xi_1,\xi_1) - G(\xi_1,\xi_3))\la_2^{N-2}
    - G(\xi_1,0)\la_2^{\frac{N-2}{2}}\ov{\la}^{\frac{N-2}{2}}
    + 2G(\xi_1,\xi_2)\la_1^{\frac{N-2}{2}}\la_2^{\frac{N-2}{2}}
&=& \frac{b_2}{2b_1}.
\end{eqnarray}
From \eqref{equ-3-ball-k=5-theorem} and  \eqref{equ-4-ball-k=5-theorem} we deduce
\begin{equation}\label{equ-5-ball-k=5-theorem}
\la_2^{\frac{N-2}{2}}-\la_1^{\frac{N-2}{2}}
 = \frac{G(\xi_1,0)}{H(\xi_1,\xi_1)-G(\xi_1,\xi_3)}\ov{\la}^{\frac{N-2}{2}},
\end{equation}
which combined with \eqref{equ-2-ball-k=5-theorem} implies:
\begin{equation}\label{equ-6-ball-k=5-theorem}
\ov{\la}^{N-2}
 = \frac{H(\xi_1,\xi_1)-G(\xi_1,\xi_3)}
     {H(\xi_1,\xi_1)-G(\xi_1,\xi_3)-2G^2(\xi_1,0)}\cdot\frac{b_2}{2b_1}.
\end{equation}
As a consequence of \eqref{equ-5-ball-k=5-theorem} we get
\[
\la_1^{N-2}+\la_2^{N-2}-2\la_1^{\frac{N-2}{2}}\la_2^{\frac{N-2}{2}}
 = \left(\frac{G(\xi_1,0)}{H(\xi_1,\xi_1)-G(\xi_1,\xi_3)}\right)^2\ov{\la}^{N-2},
\]
and then \eqref{equ-3-ball-k=5-theorem} and \eqref{equ-4-ball-k=5-theorem} yield:
\begin{equation}\label{equ-7-ball-k=5-theorem}
\la_1^{\frac{N-2}{2}}\la_2^{\frac{N-2}{2}}
 = \frac{1}{H(\xi_1,\xi_1)-G(\xi_1,\xi_3)+2G(\xi_1,\xi_2)}\cdot\frac{b_2}{2b_1}
\end{equation}
and
\begin{equation}\label{equ-8-ball-k=5-theorem}
\la_1^{N-2}+\la_2^{N-2}
 = \frac{1}{H(\xi_1,\xi_1) - G(\xi_1,\xi_3)+2G(\xi_1,\xi_2)}\cdot\frac{b_2}{b_1}
    +\left(\frac{G(\xi_1,0)}{H(\xi_1,\xi_1)-G(\xi_1,\xi_3)}\right)^2\ov{\la}^{N-2}.
\end{equation}

Let $\tau_1(t)$ be as above, and set
\[
\ga_3(t)
 := H(\xi_1,\xi_1)-G(\xi_1,\xi_3)
  = \frac{1}{(1-t^2)^{N-2}}-\frac{1}{(2t)^{N-2}}+\frac{1}{(t^2+1)^{N-2}}\\
\]
and
\[
\ga_4(t) := G(\xi_1,\xi_2) = \frac{1}{(\sqrt{2}t)^{N-2}}-\frac{1}{(t^4+1)^{\frac{N-2}{2}}}.
\]
A direct computation shows that $\ga'_3(t)>0$, $\ga_3(t)\to -\infty$ as $t\to 0^+$, $\ga_3(t)\to +\infty$ as $t\to 1^-$, and $\ga_3(\frac{1}{2})>0$. Thus there exists $t_1^*\in(0,\frac{1}{2})$ such that
\[
\ga_3(t_1^*)=0,\quad\ga_3(t)<0 \text{ for all } t\in(0,t_1^*).
\]
On the other hand, $(\ga_3(t)-2\tau_1^2(t))'>0$, $\ga_3(t)-2\tau_1^2(t)\to -\infty$ as $t\to0^+$, $\ga_3(t)-2\tau_1^2(t)\to +\infty$ as $t\to 1^-$, and $\ga_3(\frac{1}{2})-2\tau_1^2(\frac{1}{2})<0$. Thus there exists $t_2^*\in(\frac{1}{2},1)$ such that
\[
\ga_3(t_2^*)-2\tau_1^2(t_2^*)=0,\quad\ga_3(t)-2\tau_1^2(t)>0 \text{ for all } t\in(t_2^*,1).
\]
It follows that for every $t\in(0,t_1^*)\cup(t_2^*,1)$ there exist unique $\la_1(t)$, $\la_2(t)$, $\ov{\la}(t)$ such that
\[
\nabla_{\la_1,\la_2,\ov{\la}} f_4(\la_1(t), \la_2(t),\ov{\la}(t),t)=0,
\]
where $\la_1(t), \la_2(t),\ov{\la}(t)$ satisfy \eqref{equ-5-ball-k=5-theorem}, \eqref{equ-6-ball-k=5-theorem}, \eqref{equ-7-ball-k=5-theorem} and  \eqref{equ-8-ball-k=5-theorem}.
Moreover, a direct computation using  \eqref{equ-2-ball-k=5-theorem}, \eqref{equ-3-ball-k=5-theorem}, and \eqref{equ-4-ball-k=5-theorem} shows that
\[
\begin{aligned}
\frac{\pa^2 f_4(\la_1,\la_2,\ov{\la},t)}{\pa\la_1^2}
 &= (N-2)b_1\left(2(N-3)\ga_3(t)\la_1^{N-4}
     +(N-4)\tau_1(t)\la_1^{\frac{N-6}{2}}\ov{\la}^{\frac{N-2}{2}}\right.\\
 &\hspace{1cm}
     \left. + 2(N-4)\ga_4(t)\la_1^{\frac{N-6}{2}}\la_2^{\frac{N-2}{2}}\right)
       +\frac{(N-2)b_2}{\la_1^2}\\
 &= (N-2)^2b_1\left(2\ga_3(t)\la_1^{N-4}
     + \tau_1(t)\la_1^{\frac{N-6}{2}}\ov{\la}^{\frac{N-2}{2}}
     + 2\ga_4(t)\la_1^{\frac{N-6}{2}}\la_2^{\frac{N-2}{2}}\right),
\end{aligned}
\]
\[
\begin{aligned}
\frac{\pa^2 f_4(\la_1,\la_2,\ov{\la},t)}{\pa\la_2^2}
 &= (N-2)b_1\left(2(N-3)\ga_3(t)\la_2^{N-4}
     - (N-4)\tau_1(t)\la_2^{\frac{N-6}{2}}\ov{\la}^{\frac{N-2}{2}}\right.\\
 &\hspace{1cm}
     \left. + 2(N-4)\ga_4(t)\la_1^{\frac{N-6}{2}}\la_2^{\frac{N-2}{2}}\right)
       + \frac{(N-2)b_2}{\la_2^2}\\
 &= (N-2)^2b_1\left(2\ga_3(t)\la_2^{N-4}
     - \tau_1(t)\la_2^{\frac{N-6}{2}}\ov{\la}^{\frac{N-2}{2}}
     + 2\ga_4(t)\la_1^{\frac{N-2}{2}}\la_2^{\frac{N-6}{2}}\right),
\end{aligned}
\]
\[
\begin{aligned}
\frac{\pa^2 f_4(\la_1,\la_2,\ov{\la},t)}{\pa\ov{\la}^2}
 &= (N-2)b_1\left((N-3)H(0,0)\ov{\la}^{N-4} + (N-4)\tau_1(t)(\la_1^{\frac{N-2}{2}}
      - \la_2^{\frac{N-2}{2}})\ov{\la}^{\frac{N-6}{2}}\right)\\
 &\hspace{1cm}
      + \frac{(N-2)b_2}{2\ov{\la}^2}\\
 &= (N-2)^2b_1\left(H(0,0)\ov{\la}^{N-4} + \tau_1(t)(\la_1^{\frac{N-2}{2}}
      - \la_2^{\frac{N-2}{2}})\ov{\la}^{\frac{N-6}{2}}\right),
\end{aligned}
\]
\[
\frac{\pa^2 f_4(\la_1,\la_2,\ov{\la},t)}{\pa\ov{\la}\pa\la_1}
 = (N-2)^2b_1\tau_1(t)\la_1^{\frac{N-4}{2}}\ov{\la}^{\frac{N-4}{2}},
\]
\[
\frac{\pa^2
 f_4(\la_1,\la_2,\ov{\la},t)}{\pa\ov{\la}\pa
\la_2}=-(N-2)^2b_1\tau_1(t)\la_2^{\frac{N-4}{2}}\ov{\la}^{\frac{N-4}{2}},
\]
\[
\frac{\pa^2 f_4(\la_1,\la_2,\ov{\la},t)}{\pa\la_1\pa\la_2}
 = 2(N-2)^2b_1\ga_4(t)\la_1^{\frac{N-4}{2}}\la_2^{\frac{N-4}{2}}.
\]
For simplicity, we introduce the notation
\[
X:=\ov{\la}^{\frac{N-2}{2}},\quad Y:=\la_1^{\frac{N-2}{2}},\quad Z:=\la_2^{\frac{N-2}{2}}.
\]
In order to show that the Hessian matrix
$D^2_{\la_1,\la_2,\ov{\la}}f_4(\la_1,\la_2,\ov{\la},t)$ is nondegenerate for any $t\in(0,t_1^*)\cup(t_2^*,1)$, it suffices to show that the matrix
\[
\left(
\begin{array}{ccc}
X+\tau_1(t)(Y-Z) & \tau_1(t)Y^\frac{2}{N-2}X^{\frac{N-4}{N-2}}
 & -\tau_1(t)Z^\frac{2}{N-2}X^{\frac{N-4}{N-2}}\\
\tau_1(t)Y^{\frac{N-4}{N-2}}X^\frac{2}{N-2}  & 2\ga_3(t)Y+\tau_1(t)X+2\ga_4(t)Z
 & 2\ga_4(t)Y^{\frac{N-4}{N-2}}Z^\frac{2}{N-2}\\
-\tau_1(t)Z^{\frac{N-4}{N-2}}X^\frac{2}{N-2} & 2\ga_4(t)Y^\frac{2}{N-2}Z^{\frac{N-4}{N-2}}
 & 2\ga_3(t)Z-\tau_1(t)X+2\ga_4(t)Y \\
\end{array}
\right)
\]
is nondegenerate. Using \eqref{equ-2-ball-k=5-theorem}, \eqref{equ-3-ball-k=5-theorem} and \eqref{equ-4-ball-k=5-theorem} this is equivalent to show that the matrix
\[
\left(
\begin{array}{ccc}
\frac{X}{2}+\frac{b_2}{4b_1}\cdot\frac{1}{X}& \tau_1(t)Y^\frac{2}{N-2}X^{\frac{N-4}{N-2}} & -\tau_1(t)Z^\frac{2}{N-2}X^{\frac{N-4}{N-2}}\\
\tau_1(t)Y^{\frac{N-4}{N-2}}X^\frac{2}{N-2}  & \ga_3(t)Y+\frac{b_2}{2b_1}\cdot\frac{1}{Y} & 2\ga_4(t)Y^{\frac{N-4}{N-2}}Z^\frac{2}{N-2}\\
-\tau_1(t)Z^{\frac{N-4}{N-2}}X^\frac{2}{N-2} & 2\ga_4(t)Y^\frac{2}{N-2}Z^{\frac{N-4}{N-2}} & \ga_3(t)Z+\frac{b_2}{2b_1}\cdot\frac{1}{Z} \\
\end{array}
\right)
\]
is nondegenerate. A direct computation, using \eqref{equ-7-ball-k=5-theorem}, shows that the determinant of the above matrix has the same sign as $\ga_3(t)$, and hence is nondegenerate.

Now in order to finish the proof, we look for $t_0\in(0,t_1^*)$ such that $\nu_2'(t_0)=0$, where
\[
\nu_2(t):=f_4\big(\la_1(t),\la_2(t),\ov{\la}(t),t\big).
\]
Observe that
\[
\begin{aligned}
\nu_2'(t) &= \frac{\pa f_4(\la_1(t),\la_2(t),\ov{\la}(t),t)}{\pa t}\\
 &= 2b_1\left(\ga'_3(t)\left(\la_1^{N-2} + \la_2^{N-2}\right)
    + 2\tau'_1(t)\left(\la_1^{\frac{N-2}2}-\la_2^{\frac{N-2}2}\right)\ov{\la}^{\frac{N-2}2}
    + 4\ga'_4(t)\la_1^{\frac{N-2}{2}}\la_2^{\frac{N-2}{2}}\right)
\end{aligned}
\]
where $\la_1,\la_2,\ov{\la}$ satisfy \eqref{equ-5-ball-k=5-theorem}, \eqref{equ-6-ball-k=5-theorem}, \eqref{equ-7-ball-k=5-theorem} and \eqref{equ-8-ball-k=5-theorem}. Therefore, $\nu_2'(t)=0$ for $t\in(0,t_1^*)$ is equivalent to
\[
\begin{aligned}
\iota_3(t)
 &:= \ga'_3(t)\big(2\ga_3(t)(\ga_3(t) - 2\tau_1^2(t)) + \tau_1^2(t)(\ga_3(t)+2\ga_4(t))\big)
       - 2\tau'_1(t)\tau_1(t)\ga_3(t)\big(\ga_3(t)+2\ga_4(t)\big)\\
 &\hspace{1cm}
       + 4\ga'_4(t)\ga_3(t)\big(\ga_3(t)-2\tau_1^2(t)\big)\\
 &= 0.
\end{aligned}
\]
It is easy to check that $\iota_3(t)\to-\infty$ as $t\to 0^+$ and $\iota_3(t_1^*)>0$ since $\ga'_3(t_1^*)>0$, $\ga_4(t_1^*)>0$ and $\ga_3(t_1^*)=0$. Hence there exists $t_0\in(0,t_1^*)$ such that $\iota_3(t_0)=0$, which finishes the proof.
\end{altproof}

\begin{remark}
It seems that there also should exist $t_0\in (t_2^*,1)$ such that $\iota_3(t_0)=0$. This is not considered here because the computations get enormous.
\end{remark}

\section{\textbf{Solutions with tower of bubbles concentrating at the origin}\label{Section 4}}

In this section we prove Theorem~\ref{theorem-tower of bubble} where $\al=1$. We use the same notations in similar settings as Section \ref{Section 3}.

\subsection{\textbf{The finite dimensional reduction}\label{Subsection 4.1}}
We fix an integer $k\ge0$. For $\la = (\la_1,\ldots,\la_k,\ov{\la})\in \R^{k+1}$ we set $\de_i = \la_i\eps^{\frac{2i-1}{N-2}}$, for $i=1,\ldots,k,$ and consider
$\ze = (\ze_1,\ldots,\ze_k)\in(\R^N)^k$ such that
\[
\xi = (\xi_1,\ldots,\xi_{k})=(\de_1\ze_1,\ldots,\de_k\ze_{k}) \in \Om^k.
\]
We also set $\si=\ov{\la}\eps^{\frac{2(k+1)-1}{N-2}}$. Now we define for $\eta\in(0,1)$:
\[
\cO_\eta
 = \left\{(\la,\ze)\in\R_+^{k+1}\times (\R^N)^{k}:
         \la_i\in(\eta,\eta^{-1}),\ \ov{\la}\in(\eta,\eta^{-1}),\ |\ze_i|\le\frac1\eta,\
         i=1,\dots,k\right\}.
\]
We also recall the sets $W_{\eps,\la,\xi}$, $K_{\eps,\la,\xi}$, $K^\bot_{\eps,\la,\xi}$, and the projections $\Pi_{\eps,\la,\xi}$, $\Pi^\bot_{\eps,\la,\xi}$ as in Section~\ref{Section 3}. Now we want to find $\eta>0$ ,$\eps>0$, $(\la,\ze)\in\cO_\eta$ and
$\phi_{\eps,\la,\xi}\in K^\bot_{\eps,\la,\xi}$ such that:
\begin{eqnarray}
\label{main-equality Com-2}
\Pi^\bot_{\eps,\la,\xi}\left(V_{\eps,\la,\xi} + \phi_{\eps,\la,\xi}
      - \iota^*(f_\eps(V_{\eps,\la,\xi} + \phi_{\eps,\la,\xi}))\right)
 =0,\\
\label{main-equality-2}
\Pi_{\eps,\la,\xi}\left(V_{\eps,\la,\xi} + \phi_{\eps,\la,\xi}
      - \iota^*(f_\eps(V_{\eps,\la,\xi} + \phi_{\eps,\la,\xi}))\right)
 = 0,
\end{eqnarray}
where
\begin{equation}\label{shape of V-2}
V_{\eps,\la,\xi}=\sum_{i=1}^{k}(-1)^{i-1}PU_{\de_i,\xi_i}+(-1)^k PV_\si.
\end{equation}
Next we take $\rho>0$ small enough and let
\begin{equation}\label{shape of domain Ai-2}
A_{k+1}:=B(0,\sqrt{\de_{k+1}\de_k}),\quad A_i:=B(0,\sqrt{\de_i\de_{i-1}})\setminus B(0,\sqrt{\de_i\de_{i+1}})\ \text{ for } i=1,\ldots,k;
\end{equation}
here $\de_0=\frac{\rho^2}{\de_1}$, $\de_{k+1}=\si$; cf.~ \cite{MussoPis10JMPA}.
Finally recall the operator $L_{\eps,\la,\xi}$ from Section~\ref{Section 3}.

Now we first solve \eqref{main-equality Com-2}.

\begin{proposition}\label{proposition-operator L-2}
For any $\eta>0$, there exist $\eps_0>0$ and $c>0$ such that for every $(\la,\ze)\in\cO_\eta$, and for every $\eps\in(0,\eps_0)$:
\begin{equation}\label{inequality operator L-2}
\|L_{\eps,\la,\xi}(\phi)\|_{\mu}\ge c\|\phi\|_\mu
\quad \text{for all } \phi\in K^\bot_{\eps,\la,\xi}.
\end{equation}
Consequently, $L_{\eps,\la,\xi}$ is invertible with continuous inverse.
\end{proposition}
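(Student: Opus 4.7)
The plan is to adapt the contradiction argument of Proposition \ref{proposition-operator L-1}. The only essentially new feature is that the $k+1$ bubbles now all concentrate at the single point $0$, but at geometrically separated scales $\de_1\gg\de_2\gg\ldots\gg\de_k\gg\si$. The annular decomposition $\{A_i\}_{i=1}^{k+1}$ in \eqref{shape of domain Ai-2} is designed precisely to isolate each bubble from its neighbors: on $A_i$ the sum $V_{\eps,\la,\xi}$ is dominated by the $i$-th summand.

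Assuming \eqref{inequality operator L-2} fails, extract sequences $\eps^n\to 0$, $(\la^n,\ze^n)\to(\la,\ze)\in\cO_\eta$, and $\phi^n\in K^\bot_{\eps^n,\la^n,\xi^n}$ with $\|\phi^n\|_\mu=1$ and $L_{\eps^n,\la^n,\xi^n}(\phi^n)=h^n$, $\|h^n\|_\mu\to 0$. Writing
\[
\phi^n-\iota^*(f'_0(V_{\eps^n,\la^n,\xi^n})\phi^n)=h^n-w^n
\]
with $w^n\in K_{\eps^n,\la^n,\xi^n}$, I first show $\|w^n\|_\mu\to 0$. Expanding $w^n$ in the basis $\{P(\Psi_i^j)_n,P(\ov\Psi)_n\}$, testing against each basis element, and combining $(\phi^n,P(\Psi_l^h)_n)=(\phi^n,P(\ov\Psi)_n)=0$ with the appendix estimates (the tower analogs of Lemmas \ref{e56-e62}, \ref{e50-e51}, \ref{e83-85-Lemma A}) yields that every coefficient of $w^n$ tends to zero.

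Next, for each $i=1,\ldots,k$ set
\[
\phi_i^n(x):=(\eps^n)^{\frac{2i-1}{2}}\,\phi^n\bigl((\eps^n)^{\frac{2i-1}{N-2}}x+\xi_i^n\bigr)\,\chi\bigl((\eps^n)^{\frac{2i-1}{N-2}}x\bigr),
\]
and analogously define $\phi_0^n$ at the origin with scaling $(\eps^n)^{\frac{2k+1}{N-2}}$, matching the scale of $\si^n$. Each $\phi_i^n$ is bounded in $D^{1,2}(\R^N)$ and admits a weak limit $\phi_i^\infty$. Because $\mu^n=\mu_0\eps^n\to 0$, the rescaled Hardy term disappears in the limit (even around $V_{\si^n}$, which after rescaling converges to the standard instanton $U_{\ov\la,0}$ since $C_\mu\to C_0$, $\be_1\to 0$, $\be_2\to 2$). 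Repeating the computation leading to \eqref{equ-9-proposition-operator L-1} shows that $\phi_i^\infty$ solves $-\De\phi_i^\infty=f'_0(U_{\la_i,0})\phi_i^\infty$ in $D^{1,2}(\R^N)$ for $i=1,\ldots,k$ (respectively $-\De\phi_0^\infty=f'_0(U_{\ov\la,0})\phi_0^\infty$), and the orthogonality inherited from $\phi^n\in K^\bot_{\eps^n,\la^n,\xi^n}$ passes to the limit to force $\phi_i^\infty\equiv 0$ for every $i$.

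To reach a contradiction, test the equation for $\phi^n$ against itself to obtain $\|\phi^n\|_\mu^2=\int_{\Om}f'_0(V_{\eps^n,\la^n,\xi^n})(\phi^n)^2+o(1)$, and split the integral over $A_1,\dots,A_{k+1}$ together with the outer region $\Om\setminus B(0,\sqrt{\de_0\de_1})$. On each $A_i$, the scale gap $\de_i/\de_{i+1}=(\eps^n)^{-2/(N-2)}\to\infty$ implies that $V_{\eps^n,\la^n,\xi^n}$ restricted to $A_i$ is well approximated by its $i$-th bubble alone; rescaling, $\phi_i^\infty\equiv 0$, and the $L^{N/2}$-integrability of $f'_0(U_{\la_i,0})$ then give $\int_{A_i}f'_0(V_{\eps^n,\la^n,\xi^n})(\phi^n)^2\to 0$, while the outer region contributes $o(1)$ by standard decay estimates. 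Summing contradicts $\|\phi^n\|_\mu=1$. The main obstacle is exactly this localization step: verifying that, on each $A_i$, the contributions of the other $k$ bubbles to $f'_0(V_{\eps^n,\la^n,\xi^n})$ are negligible requires careful pointwise bounds for $PU_{\de_j,\xi_j}$ and $PV_{\si^n}$ outside their natural regions of influence, which rests entirely on the geometric gap between consecutive scales and on the decay of the instantons.
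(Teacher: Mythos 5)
Your overall roadmap (contradiction, three steps, rescaling to get a Palais--Smale-type splitting, then the final contradiction via $\|\phi^n\|_\mu^2=\int f'_0(V)(\phi^n)^2+o(1)$) matches the paper, and you correctly observe both that the Hardy term vanishes in the limit when $\mu=\mu_0\eps\to0$ and that the fountain $V_\si$ rescales to a standard instanton. However, there is a genuine gap in Step~2: your rescaled functions $\phi_i^n$ are built with the \emph{fixed-radius} cut-off $\chi$ of Proposition~\ref{proposition-operator L-1} (supported on a ball of radius $\eta/2$ around $\xi_i^n$), whereas the paper uses the \emph{annular} cut-offs $\chi_i^n$ adapted to the scales, supported between radii $\tfrac12\sqrt{\de_i^n\de_{i+1}^n}$ and $2\sqrt{\de_i^n\de_{i-1}^n}$. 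In the multi-point case this distinction is harmless because the concentration points $\xi_1,\ldots,\xi_k,0$ stay a fixed distance $\ge\eta$ apart, so the fixed cut-off isolates the $i$-th bubble; in the tower case all bubbles collapse to $0$ at rates $\de_1\gg\de_2\gg\cdots\gg\si$, and a ball of fixed radius $\eta/2$ around $\xi_i^n$ (which itself tends to $0$) still contains all the inner bubbles $U_{\de_j,\xi_j}$, $j>i$, as well as $PV_{\si^n}$.

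Concretely, the line in \eqref{equ-9-proposition-operator L-1} where $f'_0(V_{\eps^n,\la^n,\xi^n})$ is replaced by $f'_0(U_{\de_i^n,\xi_i^n})$ inside the cut-off is justified in Proposition~\ref{proposition-operator L-1} precisely because, on $\{|y-\xi_i^n|\le\eta/2\}$, the remaining summands of $V$ are uniformly bounded; this is false here: near $\xi_j^n$ (with $j>i$), one has $f'_0(U_{\de_j^n,\xi_j^n})\sim(\de_j^n)^{-2}\to\infty$, inside the support of your cut-off. So ``repeating the computation leading to \eqref{equ-9-proposition-operator L-1}'' is not automatic. One \emph{could} try to salvage your definition by estimating, via H\"older on $B(\xi_j^n,C\de_j^n)$, that the inner-bubble contribution to the rescaled weak formulation is $O\bigl((\de_j^n/\de_i^n)^{(N-2)/2}\bigr)=o(1)$ — but you do not carry this out, you only remark in the final paragraph that ``careful pointwise bounds'' are needed. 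The paper instead sidesteps the issue entirely: the annular cut-off $\chi_i^n$ excises the balls $B(\xi_j^n,C\de_j^n)$ for $j\ne i$ from the support, so that $V_{\eps^n,\la^n,\xi^n}$ is genuinely dominated by the single bubble $U_{\de_i^n,\xi_i^n}$ wherever $\chi_i^n\ne0$, and the Musso--Pistoia argument proceeds cleanly. (A minor, non-problematic difference: you re-center the rescaling at $\xi_i^n$, the paper re-centers at the origin so the limit profile is $U_{1,\ze_i}$ rather than $U_{\la_i,0}$.) To close the gap you should either supply the missing inner-bubble estimate in the weak formulation, or, more in line with the paper, replace $\chi$ by the scale-adapted annular cut-offs $\chi_i^n$ so that the replacement step in \eqref{equ-9-proposition-operator L-1} goes through unchanged.
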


\begin{proof}
Arguing by contradiction, we assume that there exist $\eta>0$, sequences
$\eps^n>0$, $(\la^n,\ze^n)\in\cO_\eta$, $\phi^n\in H_\mu(\Om)$ with
$\eps^n\to0$, $\la_i^n\to\la_i$, $\ov{\la}^n\to\ov{\la}$, $\ze_i^n\to\ze_i$, as $n\to\infty$
and such that
\begin{equation}\label{equ-1-proposition-operator L-2}
\phi^n\in K^\bot_{\eps^n,\la^n,\xi^n},\quad \|\phi^n\|_\mu=1,
\end{equation}
and
\begin{equation}\label{equ-2-proposition-operator L-2}
L_{\eps^n,\la^n,\xi^n}(\phi^n)=h^n\ \text{ with } \|h^n\|_\mu\to 0;
\end{equation}
here $\la^n=(\la_1^{n},\ldots,\la_{k}^{n},\ov{\la}^n)$, $\ze^n=(\ze_1^{n},\ldots,\ze_{k}^{n})$, $\xi^n = (\xi_1^n,\ldots,\xi_{k}^n) = (\de_1^n\ze_1^n,\de_2^n\ze_2^n,\dots,\de_k^n\ze_{k}^n)
 \in\Om^{k}$, $\de_i^n=\la_i^n\eps^{\frac{2i-1}{N-2}}$ for $i=1,2,\dots,k$,
$\si^n=\ov{\la}^n\eps^{\frac{2(k+1)-1}{N-2}}$. We need the sets
\[
\textstyle
A_{k+1}^n:=B\left(0,\sqrt{\de_{k+1}^n\de_k^n}\right),\ \
A_i^n := B\left(0,\sqrt{\de_i^n\de_{i-1}^n}\right)\setminus
            B\left(0,\sqrt{\de_i^n\de_{i+1}^n}\right),\ i=1,2,\dots,k,
\]
where $\de_0^n:=\frac{\rho^2}{\de_1^n}$, $\de_{k+1}^n:=\si^n$.

Thus we have:
\begin{equation}\label{equ-3-proposition-operator L-2}
\phi^n-\iota^*\left(f'_0(V_{\eps^n,\la^n,\xi^n})\phi^n\right)
 = h^n-\Pi_{\eps^n,\la^n,\xi^n}\left(\iota^*(f'_0(V_{\eps^n,\la^n,\xi^n})\phi^n)\right).
\end{equation}
Then we obtain as in Proposition \ref{proposition-operator L-1}
\[
w^n
 := -\Pi_{\eps^n,\la^n,\xi^n}(\iota^*(f'_0(V_{\eps^n,\la^n,\xi^n})\phi^n))
  = \sum_{i=1}^{k}\sum_{j=0}^N c_{i,j}^nP(\Psi_i^j)_n+c_0^n P(\ov{\Psi})_n
\]
for some coefficients $c_{i,j}^n$, $c_0^n$, where $(\Psi_i^j)_n$, $j=1,\ldots,N,(\Psi_i^0)_n$,
and $(\ov{\Psi})_n$ are defined as in the proof of Proposition~\ref{proposition-operator L-1}.

\emph{Step 1.} We claim that
\begin{equation}\label{equ-wn-proposition-operator L-2}
 \lim_{n\to\infty}\|w^n\|_\mu=0.
\end{equation}
Multiplying \eqref{equ-3-proposition-operator L-2} by
$\De P(\Psi_l^h)_n + \mu\frac{P(\Psi_l^h)_n}{|x|^2}$, using Lemma~\ref{e56-e62-Lemma B},
Lemma~\ref{e50-e51}, Lemma~\ref{e83-85-Lemma B}, and arguing as in the proof of Proposition~\ref{proposition-operator L-1}, we deduce $c_{l,h}^n\to0$, for $l=1,\dots,k$, $h=0,1,\ldots,N$, and  $c_0^n\to0$, as $n\to\infty$.
The claim $\lim\limits_{n\to\infty}\|w^n\|_\mu=0$ follows.

\emph{Step 2.} As in \cite{MussoPis10JMPA}, we use cut-off functions $\chi_i^n$, $i=1,\dots,k+1$, with the properties
\[
\left\{
\begin{aligned}
&\chi_i^n(x)=1
 \quad \text{if }\sqrt{\de_i^n\de_{i+1}^n} \le |x|\le\sqrt{\de_i^n\de_{i-1}^n}; \\
&\chi_i^n(x)=0
 \quad \text{if }|x| \le \frac{\sqrt{\de_i^n\de_{i+1}^n}}{2}
        \text{ or } |x| \ge 2\sqrt{\de_i^n\de_{i-1}^n};\\
&|\nabla\chi_i^n(x)| \le \frac{1}{\sqrt{\de_i^n\de_{i-1}^n}}
\quad\text{and} |\nabla^2\chi_i^n(x)| \le \frac{4}{\de_i^n\de_{i-1}^n},
\end{aligned}
\right.
\]
for $i=1,\dots,k$, and
\[
\left\{
\begin{aligned}
&\chi_{k+1}^n(x)=1, \quad \text{if } |x|\le\sqrt{\de_{k+1}^n\de_{k}^n}; \\
&\chi_{k+1}^n(x)=0, \quad \text{if } |x|\ge2\sqrt{\de_{k+1}^n\de_{k}^n};\\
&|\nabla\chi_{k+1}^n(x)|\le\frac{1}{\sqrt{\de_{k+1}^n\de_{k}^n}},
\quad\text{and}\quad
|\nabla^2\chi_{k+1}^n(x)|\le \frac{4}{\de_{k+1}^n\de_{k}^n}.
\end{aligned}
\right.
\]
The function $\phi_i^n$ defined by
\[
\phi_i^n(y):=(\de_i^n)^{\frac{N-2}{2}}\phi^n(\de_i^n y)\chi_i^n(\de_i^n y),
\quad\text{for } y\in \Om_i^n:=\frac{\Om}{\de_i^n},\ i=1,\dots, k+1.
\]
is bounded in $D^{1,2}(\R^N)$. Therefore we may assume, up to a subsequence,
\[
\phi_i^n \weakto \phi_i^\infty\
\text{ weakly in } D^{1,2}(\R^N),\ i=1,2,\dots,k+1.
\]
Now we prove
\begin{equation}\label{equ-6-proposition-operator L-2}
\phi_i^\infty=0\quad\text{for } i=1,\dots,k+1.
\end{equation}
As in Proposition \ref{proposition-operator L-1}, using
\eqref{equ-3-proposition-operator L-2}, \eqref{equ-2-proposition-operator L-2},
\eqref{equ-wn-proposition-operator L-2}, we have for any $\psi\in C_0^\infty(\R^N)$, and for $i=1,\dots,k$:
\[
\begin{aligned}
\int_{\Om_i^n}\nabla\phi_i^n(y)\nabla\psi(y)
 &= (\de_i^n)^{\frac{2-N}{2}}\int_{\Om}
       \nabla\iota^*\left(f'_0(V_{\eps^n,\la^n,\xi^n}(x))\phi^n(x)\right)
       \nabla\left(\chi_i^n(x)\psi\left(\frac{x}{\de_i^n}\right)\right)
     + o(1)\\
 &= (\de_i^n)^{\frac{2-N}{2}}\int_{\Om}
      f'_0\left(V_{\eps^n,\la^n,\xi^n}(x)\right)
      \phi^n(x)\chi_i^n(x)\psi\left(\frac{x}{\de_i^n}\right)
      + o(1)\\
 &= (\de_i^n)^{2}\int_{\Om_i^n}
      f'_0\left(V_{\eps^n,\la^n,\xi^n}(\de_i^n y)\right) \phi_i^n(y)\psi(y) + o(1)\\
 &= \int_{\R^N}f'_0\left(U_{1,\ze_i}(y)\right) \phi_i^\infty(y)\psi(y) + o(1).
\end{aligned}
\]
Hence $\phi_i^\infty$ is a weak solution of
\[
-\De\phi_i^\infty=f'_0(U_{1,\ze_i})\phi_i^\infty,~
\text{in}~D^{1,2}(\R^N).
\]
Setting $\Psi_{1,\ze_i}^j:=\frac{\pa U_{1,\ze_i}}{\pa (\ze_i)^j}$, for $j=1,\dots,N$, and $\Psi_{1,\ze_i}^0:=\frac{\pa U_{\de,\ze_i}}{\pa \de}|_{\de=1}$ we obtain as in
\cite[Lemma 3.1]{MussoPis10JMPA}:
\[
\int_{\R^N}\nabla\phi_i^\infty(x)\nabla\Psi_{1,\ze_i}^j(x)=0,~
j=0,1,2,\dots, N,~ i=1,2,\dots,k.
\]
Then (\ref{equ-6-proposition-operator L-2}) holds for $i=1,\ldots,k$. The proof of $\phi_{k+1}^\infty=0$ is similar.

\emph{Step 3.} A contradiction arises as in Proposition \ref{proposition-operator L-1} and
\cite{MussoPis02IUMJ}.
\end{proof}

\begin{proposition}\label{proposition-estimate of error-2}
For any $\eta>0$, there exist $\eps_0>0$, $c_0>0$ such that for every $(\la,\ze)\in\cO_\eta$ and every $\eps\in(0,\eps_0),$ there exists a unique solution
$\phi_{\eps,\la,\xi}\in K^\bot_{\eps,\la,\xi}$ of equation \eqref{main-equality Com-2}. Moreover, we have
\begin{equation}\label{inequality-estimate of error-2}
\|\phi_{\eps,\la,\xi}\|_{\mu}\le
c_0(\eps^{\frac{N+2}{2(N-2)}}+\eps^{\frac{2k+3}{4}}),
\end{equation}
and the map $\Phi_\eps:\cO_\eta\to K^\bot_{\eps,\la,\xi}$ defined by $\Phi_\eps(\la,\xi):=\phi_{\eps,\la,\xi}$ is of class $C^1$.
\end{proposition}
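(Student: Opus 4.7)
The plan is to follow the template of Proposition~\ref{proposition-estimate of error-1}: rewrite \eqref{main-equality Com-2} as the fixed point problem $\phi=T_{\eps,\la,\xi}(\phi)$ with
\[
T_{\eps,\la,\xi}(\phi):=L^{-1}_{\eps,\la,\xi}\,\Pi^\bot_{\eps,\la,\xi}\bigl(\iota^*(f_\eps(V_{\eps,\la,\xi}+\phi)-f'_0(V_{\eps,\la,\xi})\phi)-V_{\eps,\la,\xi}\bigr),
\]
posed on the closed ball of radius $c_0(\eps^{(N+2)/(2(N-2))}+\eps^{(2k+3)/4})$ in $K^\bot_{\eps,\la,\xi}$. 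Proposition~\ref{proposition-operator L-2} provides uniform invertibility of $L_{\eps,\la,\xi}$ with norm bound independent of $(\la,\ze)\in\cO_\eta$ and small $\eps$, while continuity \eqref{ineq-adjoint operator} of $\iota^*$ reduces everything to $L^{2N/(N+2)}$-estimates of the expression inside $\iota^*$, together with the $H_\mu$-defect $\|\iota^*(\sum_{i=1}^k(-1)^{i-1}f_0(U_{\de_i,\xi_i})+(-1)^k f_0(V_\si))-V_{\eps,\la,\xi}\|_\mu$.

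I would then split the bound on $\|T_{\eps,\la,\xi}(\phi)\|_\mu$ into four standard pieces, exactly as in \eqref{equ-2-proposition-estimate of error-1}: the superlinear remainder $\|f_\eps(V+\phi)-f_\eps(V)-f'_\eps(V)\phi\|_{2N/(N+2)}\lesssim\|\phi\|_\mu^{2^*-1}$; the small-$\eps$ shifts $\|(f'_\eps(V)-f'_0(V))\phi\|_{2N/(N+2)}\lesssim\eps\|\phi\|_\mu$ and $\|f_\eps(V)-f_0(V)\|_{2N/(N+2)}\lesssim\eps$; the bubble--bubble interaction $\|f_0(V_{\eps,\la,\xi})-\sum_{i=1}^k(-1)^{i-1}f_0(U_{\de_i,\xi_i})-(-1)^k f_0(V_\si)\|_{2N/(N+2)}$, which the appendix lemmas (Lemma~\ref{e53-e55} in particular) bound through the successive ratios $\de_{i+1}/\de_i=O(\eps^{2/(N-2)})$; and finally the projection defect, which by Proposition~\ref{proposition-projection estimate} and \eqref{equ-3-projection estimate} is controlled by $\sum_i O(\mu\de_i)+O((\mu\si^{(N-2)/2})^{1/2})$. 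Since $\mu=\mu_0\eps$ and $\si=\ov{\la}\eps^{(2k+1)/(N-2)}$ one obtains $(\mu\si^{(N-2)/2})^{1/2}=O(\eps^{(2k+3)/4})$, accounting for the second exponent in \eqref{inequality-estimate of error-2}; the first exponent comes from $\de_1^{(N+2)/2}=O(\eps^{(N+2)/(2(N-2))})$, the dominant bubble-interaction contribution from the largest bubble at the bottom of the tower.

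Combining these bounds yields $\|T_{\eps,\la,\xi}(\phi)\|_\mu\le C\|\phi\|_\mu^{2^*-1}+C\eps\|\phi\|_\mu+O(\eps^{(N+2)/(2(N-2))})+O(\eps^{(2k+3)/4})$, and an analogous computation for $\|T_{\eps,\la,\xi}(\phi_1)-T_{\eps,\la,\xi}(\phi_2)\|_\mu$ makes $T_{\eps,\la,\xi}$ a contraction on the chosen ball, delivering the unique fixed point $\phi_{\eps,\la,\xi}$ satisfying \eqref{inequality-estimate of error-2}. The $C^1$ smoothness of $(\la,\ze)\mapsto\phi_{\eps,\la,\xi}$ follows from the implicit function theorem applied to the $C^1$ map $F(\la,\ze,\phi):=\phi-T_{\eps,\la,\xi}(\phi)$; its derivative in $\phi$ is invertible by a further invocation of Proposition~\ref{proposition-operator L-2}. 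The main technical obstacle is the bubble-interaction piece: one must verify that in the tower geometry the cross-interactions between consecutive levels, and between the topmost bubble and the Hardy-corrected $V_\si$, stay below $\eps^{(N+2)/(2(N-2))}$ rather than being inflated by the shrinking intermediate scales $\sqrt{\de_i\de_{i\pm1}}$ that appear in the splitting \eqref{shape of domain Ai-2}, which is why a careful use of the appendix estimates on each annulus $A_i$ is required.
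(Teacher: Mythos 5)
Your proposal is correct and follows essentially the same route as the paper: define $T_{\eps,\la,\xi}$, split the estimate into the superlinear remainder, the two $\eps$-shift terms, the bubble-interaction term, and the projection defect, then run a contraction mapping argument on a ball of the stated radius, with $C^1$-dependence from the implicit function theorem. One small slip: the bubble-interaction and projection-defect estimates you need in the tower geometry are the Appendix~B lemmas (Lemma~\ref{e53-e55-B} and Lemma~\ref{(M17)-Lemma B}), not Lemma~\ref{e53-e55} and Proposition~\ref{proposition-projection estimate} from the separated-bubble setting; the Appendix~A version decomposes $\Om$ into disjoint balls around the $\xi_i$, which fails when all bubbles concentrate at $0$, whereas the Appendix~B version works on the annuli $A_i$ from \eqref{shape of domain Ai-2} --- exactly the technical obstacle you flag at the end, so you have the right intuition, just the wrong citation.
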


\begin{proof}  As in \cite{BarMiPis06CVPDE}, we define the operator $T_{\eps,\la,\xi}:K^\bot_{\eps,\la,\xi}\to K^\bot_{\eps,\la,\xi}$ by
\[
T_{\eps,\la,\xi}(\phi)=L^{-1}_{\eps,\la,\xi}\Pi^\bot_{\eps,\la,\xi}
(\iota^*(f_\eps(V_{\eps,\la,\xi}+\phi)-f'_0(V_{\eps,\la,\xi})\phi)-V_{\eps,\la,\xi}).
\]
Now we prove that $T_{\eps,\la,\xi}$ is a contraction mapping. Proposition~\ref{proposition-operator L-2}, \eqref{ineq-adjoint operator} and Lemma~\ref{(M17)-Lemma B} imply as in \eqref{equ-2-proposition-estimate of error-1}:
\[
\begin{aligned}
\|T_{\eps,\la,\xi}(\phi)\|_\mu
 &\le C\|f_\eps(V_{\eps,\la,\xi}+\phi)-f_\eps(V_{\eps,\la,\xi})
          -f'_\eps(V_{\eps,\la,\xi})\phi\|_{2N/(N+2)}\\\nonumber
&\hspace{1cm}
   + C\|(f'_\eps(V_{\eps,\la,\xi})-f'_0(V_{\eps,\la,\xi}))\phi\|_{2N/(N+2)}\\\nonumber
&\hspace{1cm}
   + C\|f_\eps(V_{\eps,\la,\xi})-f_0(V_{\eps,\la,\xi})\|_{2N/(N+2)}\\\nonumber
&\hspace{1cm}
   + C\left\|f_0(V_{\eps,\la,\xi})
        -\left(\sum_{i=1}^{k}(-1)^{i-1}f_0(U_{\de_i,\xi_i})
           +(-1)^kf_0(V_\si)\right)\right\|_{2N/(N+2)}\\\nonumber
&\hspace{1cm}
   + \sum_{i=1}^{k}O(\mu\de_i)+O\left(\left(\mu\si^{\frac{N-2}{2}}\right)^{\frac12}\right).
\end{aligned}
\]
Using Lemma (\ref{e53-e55-B}) and observing that
\[
\|f_\eps(V_{\eps,\la,\xi}+\phi) - f_\eps(V_{\eps,\la,\xi}) -
   f'_\eps(V_{\eps,\la,\xi})\phi\|_{2N/(N+2)}
\le C\|\phi\|_\mu^{2^*-1},
\]
we have
\[
\begin{aligned}
\|T_{\eps,\la,\xi}(\phi)\|_\mu
 &\le C\|\phi\|_\mu^{2^*-1}+C\eps\|\phi\|_\mu
       + C\eps+O\left(\eps^{\frac{N+2}{2(N-2)}}\right)+\sum_{i=1}^k O(\mu\de_i)
       + O\left(\left(\mu\si^{\frac{N-2}{2}}\right)^{\frac12}\right)\\
 &= C\|\phi\|_\mu^{2^*-1} + C\eps\|\phi\|_\mu + O\left(\eps^{\frac{N+2}{2(N-2)}}\right)
     + O\left(\eps^{\frac{2k+3}{4}}\right).
\end{aligned}
\]
The remaining part of the argument is standard.
\end{proof}

For $\la=(\la_1,\ldots,\la_{k},\ov{\la})$ and $\ze=(\ze_1,\ldots,\ze_{k})$ we now consider the reduced functional
\[
I_\eps(\la,\ze)=J_\eps(V_{\eps,\la,\xi}+\phi_{\eps,\la,\xi}).
\]

\begin{proposition}\label{proposition-reducement-2}
If $(\la^0,\ze^0)$ is a critical point of $I_\eps$ then there exists a family of solutions $u_\eps$ to problem \eqref{pro} having the shape
\begin{equation}
u_\eps(x)=V_{\eps,\la,\xi}+\phi_{\eps,\la,\xi},
\end{equation}
where $V_{\eps,\la,\xi}$ is the one stated in (\ref{shape of V-2}).
\end{proposition}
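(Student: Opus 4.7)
The strategy mirrors Proposition~\ref{proposition-reducement-1}: it suffices to show that the Lagrange multipliers appearing when we expand the derivative of $I_\eps$ must vanish at a critical point, so that equation~\eqref{main-equality-2} is automatic and $u_\eps=V_{\eps,\la,\xi}+\phi_{\eps,\la,\xi}$ solves \eqref{pro}. The new features compared with Section~\ref{Section 3} are only notational: the variables are $\la=(\la_1,\dots,\la_k,\ov{\la})$ and $\ze=(\ze_1,\dots,\ze_k)$, with $\de_i=\la_i\eps^{(2i-1)/(N-2)}$, $\si=\ov{\la}\eps^{(2k+1)/(N-2)}$ and $\xi_i=\de_i\ze_i$, so the orders of the relevant inner products scale with $i$ in a tower fashion.

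First I would let $\pa_s$ stand for any of $\pa_{\la_i}$, $\pa_{\ov{\la}}$, $\pa_{(\ze_i)^j}$. Using \eqref{main-equality Com-2} together with the standard identity from \cite{MussoPis10JMPA},
\[
\pa_s I_\eps(\la,\ze)
 = \sum_{i=1}^{k}\sum_{j=0}^{N} c_{i,j}\bigl(P\Psi_i^j,\pa_s V_{\eps,\la,\xi}+\pa_s\phi_{\eps,\la,\xi}\bigr)
   + c_0\bigl(P\ov{\Psi},\pa_s V_{\eps,\la,\xi}+\pa_s\phi_{\eps,\la,\xi}\bigr),
\]
so that at a critical point the left-hand side vanishes. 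Next I would record the explicit formulas for $\pa_s V_{\eps,\la,\xi}$ via the chain rule. Since $\xi_i=\de_i\ze_i$, one has
\[
\pa_{\la_i} V_{\eps,\la,\xi}
 = (-1)^{i}\eps^{\frac{2i-1}{N-2}}\bigl(P\Psi_i^0+\langle\ze_i,\nabla_{\xi_i}\rangle P\Psi_i^\cdot\bigr),\ \
\pa_{\ov{\la}} V_{\eps,\la,\xi}
 = (-1)^{k}\eps^{\frac{2k+1}{N-2}}P\ov{\Psi},\ \
\pa_{(\ze_i)^j} V_{\eps,\la,\xi}
 = (-1)^{i}\de_i P\Psi_i^j,
\]
and analogous expressions; the precise coefficients are immaterial as long as one of the $P\Psi$'s appears to leading order.

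The term $(P\Psi_i^j,\pa_s\phi_{\eps,\la,\xi})$ is controlled by differentiating the orthogonality $(\phi_{\eps,\la,\xi},P\Psi_i^j)=0$, which gives $(P\Psi_i^j,\pa_s\phi_{\eps,\la,\xi})=-(\pa_s P\Psi_i^j,\phi_{\eps,\la,\xi})$. Using Proposition~\ref{proposition-estimate of error-2} together with the analogues of Lemma~\ref{e63-68} valid in the tower setting, this is $o(\|\pa_s P\Psi_i^j\|_\mu)$; the same bound holds with $P\ov{\Psi}$ in place of $P\Psi_i^j$. The plan is then to test the identity $\pa_s I_\eps=0$ successively against $\pa_{\ov{\la}}$ and each $\pa_{\la_i}$, $\pa_{(\ze_i)^j}$. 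Invoking the asymptotic expansions of the Gram matrix $(P\Psi_i^j,P\Psi_l^h)$, $(P\Psi_i^j,P\ov{\Psi})$ and $(P\ov{\Psi},P\ov{\Psi})$ from Lemma~\ref{e56-e62-Lemma B}, and exploiting the fact that the different scales $\de_1\gg\de_2\gg\dots\gg\de_k\gg\si$ make this Gram matrix strictly diagonally dominant (after multiplication by the proper diagonal rescaling), one obtains a linear system with an invertible coefficient matrix whose right-hand side is $o(1)$ times the diagonal entries.

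The main (though essentially bookkeeping) obstacle is to verify this diagonal dominance: the off-diagonal pairings $(P\Psi_i^j,P\Psi_l^h)$ with $i\neq l$ decay like a positive power of $\de_{\max(i,l)}/\de_{\min(i,l)}=o(1)$, while the diagonal entries behave like $\de_i^{-2}$ (resp.\ $\eps^{(2k+1)/(N-2)}\si^{-2}$ for $\ov{\la}$). Once this is established, testing first against $\pa_{\ov{\la}}$ yields $c_0\,\wt{c}_0\eps^{(2k+1)/(N-2)}\si^{-2}(1+o(1))=0$, hence $c_0=0$; proceeding by induction on $i$ from $i=k$ down to $i=1$ (so that already-known vanishing kills the off-diagonal contributions from smaller bubbles) gives $c_{i,j}=0$ for all $i,j$. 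This forces $\Pi_{\eps,\la,\xi}\bigl(V_{\eps,\la,\xi}+\phi_{\eps,\la,\xi}-\iota^*(f_\eps(V_{\eps,\la,\xi}+\phi_{\eps,\la,\xi}))\bigr)=0$, which together with \eqref{main-equality Com-2} is exactly \eqref{pro}. \hfill$\Box$
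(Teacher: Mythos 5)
Your proposal is correct and follows exactly the route the paper intends: the paper simply says the argument is ``similarly to the one of Proposition~\ref{proposition-reducement-1}'', and you have fleshed out precisely that — expand $\pa_s I_\eps$ via equation~\eqref{main-equality Com-2}, compute $\pa_s V_{\eps,\la,\xi}$ by the chain rule through $\de_i=\la_i\eps^{(2i-1)/(N-2)}$ and $\xi_i=\de_i\ze_i$, control $(P\Psi_i^j,\pa_s\phi_{\eps,\la,\xi})$ by differentiating the orthogonality relation, and conclude the vanishing of the Lagrange multipliers $c_{i,j},c_0$ from the near-diagonal structure of the Gram matrix supplied by Lemma~\ref{e56-e62-Lemma B}. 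The only slips are immaterial: the signs should read $(-1)^{i-1}$ rather than $(-1)^i$ in $\pa_{\la_i}V_{\eps,\la,\xi}$ and $\pa_{(\ze_i)^j}V_{\eps,\la,\xi}$, consistent with $V_{\eps,\la,\xi}=\sum_{i=1}^k(-1)^{i-1}PU_{\de_i,\xi_i}+(-1)^kPV_\si$, and you implicitly appeal to tower analogues of Lemma~\ref{e63-68} (bounds on $\|\pa_s P\Psi_i^j\|_\mu$ and $\|\pa_{\ov\la}P\ov\Psi\|_\mu$), which indeed hold by the same proof with the rescaled $\de_i,\si$; neither affects the validity of the argument.
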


\begin{proof}
We omit the proof because it is similarly to the one of Proposition~\ref{proposition-reducement-1}.
\end{proof}

\subsection{\textbf{Proof of Theorem \ref{theorem-tower of bubble}}\label{Subsection 4.2}}
For convenience, we use the notation $\la_{k+1}:=\ov{\la}$ in this subsection.

\begin{lemma}\label{lemma-expansion of J-2}
For $\eps\to0^+$, there holds
\begin{eqnarray}\label{equ-expansion of J-2}
I_\eps(\la,\ze)&=&a_1+a_2\eps-a_3\eps\ln
\eps+\psi(\la,\ze)\eps+o(\eps)
\end{eqnarray}
$C^1$-uniformly with respect to $(\la,\ze)$ in compact sets of $\cO_\eta$. The constants are given by $a_1=\frac{k+1}{N}S_0^{\frac{N}{2}}$,
$a_2 = \frac{(k+1)}{2^*}\int_{\R^N} U_{1,0}^{2^*}\ln U_{1,0}
 -\frac{k+1}{(2^*)^2}S_0^{\frac{N}{2}}-\frac{1}{2}
S_0^{\frac{N-2}{2}}\ov{S}\mu_0$, and
$a_3=\frac{(k+1)^2}{2\cdot2^*}\int_{\R^N} U_{1,0}^{2^*}$.
The function $\psi$ is given by
\[
\psi(\la,\ze)
 = b_1\la_{1}^{N-2} +\sum_{i=1}^{k}b_2(\frac{\la_{i+1}}{\la_i})^{\frac{N-2}{2}}h_1(\ze_i)
    -\sum_{i=1}^{k}b_3h_2(\ze_i)-b_4\ln(\la_1\dots\la_{k+1})^{\frac{N-2}{2}},\\
\]
with $b_1=\frac{1}{2}C_0\int_{\R^N}U_{1,0}^{2^*-1}$, $b_2=C_0^{2^*}$,
$b_3=\frac{1}{2}C_0^2\mu_0,~ b_4=\frac{1}{2^*}\int_{\R^N}U_{1,0}^{2^*}$, and
\[
h_1(\ze_i) = \int_{\R^N}\frac{1}{|y+\ze_i|^{N-2}(1+|y|^2)^{\frac{N+2}{2}}},\quad
h_2(\ze_i) = \int_{\R^N}\frac{1}{|y+\ze_i|^2(1+|y|^2)^{N-2}}.
\]
\end{lemma}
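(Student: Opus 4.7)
The plan mirrors the proof of Lemma~\ref{lemma-expansion of J-1}, adapted to the bubble-tower geometry: the scales $\de_i=\la_i\eps^{(2i-1)/(N-2)}$ and $\si=\ov\la\eps^{(2k+1)/(N-2)}$ form a decreasing geometric-type sequence in $\eps$, the concentration points $\xi_i=\de_i\ze_i$ all collapse to the origin, and $\mu=\mu_0\eps$. First I split
\[
J_\eps(V_{\eps,\la,\xi})
 = \frac12\int_\Om|\nabla V_{\eps,\la,\xi}|^2
  -\frac{\mu}{2}\int_\Om\frac{V_{\eps,\la,\xi}^2}{|x|^2}
  -\frac{1}{2^*}\int_\Om|V_{\eps,\la,\xi}|^{2^*}
  +\left(\frac{1}{2^*}\int_\Om|V_{\eps,\la,\xi}|^{2^*}
  -\frac{1}{2^*-\eps}\int_\Om|V_{\eps,\la,\xi}|^{2^*-\eps}\right)
\]
as in \eqref{equ1-1-lemma-expansion of J-1}--\eqref{equ1-3-lemma-expansion of J-1}, and expand each summand using Proposition~\ref{proposition-projection estimate} together with the bubble-tower analogues of the appendix estimates employed in Lemma~\ref{lemma-expansion of J-1}.

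For the Dirichlet and $\frac{1}{2^*}\int|V|^{2^*}$ parts, the diagonal contributions assemble into $\frac{k+1}{N}S_0^{N/2}$ (that is, $a_1$); the $PV_\si$ contribution is actually $\frac{1}{N}S_\mu^{N/2}$, and the Taylor expansion $S_\mu^{N/2}=S_0^{N/2}-\frac{N}{2}S_0^{(N-2)/2}\ov S\mu+o(\mu)$ produces the $-\frac12 S_0^{(N-2)/2}\ov S\mu_0$ summand inside $a_2$. Self-energy corrections $\frac{1}{2}\|PU_{\de_i,\xi_i}\|_0^2-\frac{1}{N}S_0^{N/2}$ are of order $\de_i^{N-2}=\la_i^{N-2}\eps^{2i-1}$, so only $i=1$ survives at order $\eps$, yielding (via Proposition~\ref{proposition-projection estimate} and $\xi_1\to 0$) the $b_1\la_1^{N-2}$ summand of $\psi$. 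The cross-pairings $\int U_i^{2^*-1}PU_j$ (produced by integration by parts via the equation for $PU_j$) are evaluated by the blow-up $y=(x-\xi_i)/\de_i$: since the scale-ratios $\de_{i\pm 1}/\de_i$ are of order $\eps^{2/(N-2)}$, only the consecutive pairs $(i,i+1)$ contribute at order $\eps$, with leading value $C_0^{2^*}(\de_{i+1}/\de_i)^{(N-2)/2}h_1(\ze_i)+o(\eps)=\eps b_2(\la_{i+1}/\la_i)^{(N-2)/2}h_1(\ze_i)+o(\eps)$.

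The Hardy term is handled by the same substitution: for each $i$,
\[
-\frac{\mu}{2}\int_\Om\frac{PU_{\de_i,\xi_i}^2}{|x|^2}\,dx = -\eps\,b_3\,h_2(\ze_i)+o(\eps),
\]
the assumption $N\ge 7$ ensuring convergence of $h_2$; mixed Hardy pairings across different scales and the residual Hardy contribution of $PV_\si$ beyond what is already built into $S_\mu^{N/2}$ are $o(\eps)$. For the subcritical correction I Taylor expand $|V|^{2^*-\eps}=|V|^{2^*}(1-\eps\ln|V|+O(\eps^2))$, split the integral by the annuli $A_i$ from \eqref{shape of domain Ai-2}, and on each annulus apply the blow-up to obtain
\[
\int_{A_i}|V|^{2^*}\ln|V| = -\frac{N-2}{2}\ln\de_i\int_{\R^N}U_{1,0}^{2^*}+\int_{\R^N}U_{1,0}^{2^*}\ln U_{1,0}+o(1).
\]
Summing over $i=1,\dots,k+1$ and using $\sum_{i=1}^{k+1}(2i-1)=(k+1)^2$ produces the coefficient $a_3=\frac{(k+1)^2}{2\cdot 2^*}\int_{\R^N} U_{1,0}^{2^*}$ in front of $-\eps\ln\eps$, plus the remaining constants of $a_2$ and the $-b_4\ln(\la_1\cdots\la_{k+1})^{(N-2)/2}$ summand of $\psi$. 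The estimate $J_\eps(V_{\eps,\la,\xi}+\phi_{\eps,\la,\xi})-J_\eps(V_{\eps,\la,\xi})=o(\eps)$ follows as in \eqref{equ1-4-lemma-expansion of J-1} from the bound in Proposition~\ref{proposition-estimate of error-2}, and the $C^1$-uniformity on compact sets of $\cO_\eta$ is obtained by differentiating the above identities in $(\la,\ze)$ and invoking the derivative counterparts of these estimates, as in \cite[Lemma~7.1]{MussoPis10JMPA}.

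The main obstacle is to prove that among all pairwise bubble interactions in the tower only the consecutive-scale pairs $(i,i+1)$ produce nonvanishing order-$\eps$ contributions, that these contributions assemble precisely into $h_1(\ze_i)$ through the blow-up substitutions, and that the Hardy term simultaneously produces exactly the $h_2(\ze_i)$ integrals at the scale $\mu=\mu_0\eps$. This is the bubble-tower analogue of the Green's-function computation performed in Lemma~\ref{lemma-expansion of J-1}, and has to be executed $C^1$-uniformly on compact subsets of $\cO_\eta$, which forces a careful and synchronised bookkeeping of the remainder terms throughout.
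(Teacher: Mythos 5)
Your proposal is correct and follows essentially the same route as the paper: the identical three-way split of $J_\eps(V_{\eps,\la,\xi})$, blow-up on the annuli $A_i$ to isolate the order-$\eps$ pieces (self-energy of $PU_{\de_1,\xi_1}$, consecutive cross-pairs, the Hardy integrals $h_2(\ze_i)$, and the $\ln\de_i$ / $\ln\si$ logarithmic contributions with $\sum_{i=1}^{k+1}(2i-1)=(k+1)^2$), and the same $o(\eps)$ control of $J_\eps(V+\phi)-J_\eps(V)$ via Proposition~\ref{proposition-estimate of error-2}. One small bookkeeping slip worth flagging: the $b_1\la_1^{N-2}$ and $b_2(\la_{i+1}/\la_i)^{(N-2)/2}h_1(\ze_i)$ terms do not come from the Dirichlet quadratic form alone — the Dirichlet cross-pairings give integrands of the form $\frac{1}{(1+|y|^2)^{(N+2)/2}(1+|\ze_i|^2)^{(N-2)/2}}$ (see Lemma~\ref{e72-e78}), and it is only after combining with the expansion of $-\frac{1}{2^*}\int|V|^{2^*}$ from Lemma~\ref{e79} that the partial cancellations produce the integral $h_1(\ze_i)=\int\frac{1}{|y|^{N-2}(1+|y-\ze_i|^2)^{(N+2)/2}}$ and the clean $b_1\la_1^{N-2}$ coefficient; but this rearrangement is cosmetic and the final bookkeeping you describe matches the paper.
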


\begin{proof} Observe that
\begin{eqnarray}\label{equ1-1-lemma-expansion of J-2}
J_\eps(V_{\eps,\la,\xi})
&=&\frac{1}{2}\int_{\Om}\left(|\nabla V_{\eps,\la,\xi}|^2
    -\mu\frac{|V_{\eps,\la,\xi}|^2}{|x|^2}\right)\\
\label{equ1-2-lemma-expansion of J-2}
&&-\frac{1}{2^*}\int_{\Om}|V_{\eps,\la,\xi}|^{2^*}\\
\label{equ1-3-lemma-expansion of J-2}
&&+\left(\frac{1}{2^*}\int_{\Om}|V_{\eps,\la,\xi}|^{2^*}
-\frac{1}{2^*-\eps}\int_{\Om}|V_{\eps,\la,\xi}|^{2^*-\eps}\right).
\end{eqnarray}
For $k\ge1$ Lemma~\ref{e72-e78} and Lemma~\ref{e44-e48} yield
\[
\begin{aligned}
\eqref{equ1-1-lemma-expansion of J-2}
&=\frac{1}{2}(k+1) S_0^{\frac{N}{2}}-\frac{N}{4} S_0^{\frac{N-2}{2}}\ov{S}\mu_0\eps
    -\frac12 C_0^{2^*}H(0,0)\la_1^{N-2}\int_{\R^N}\frac{1}{(1+|z|^2)^{\frac{N+2}{2}}}
       \cdot\eps\\
&\hspace{1cm}
 -\frac12\sum_{i=1}^{k}\mu_0 C_0^2\int_{\R^N}\frac{1}{|y|^2(1+|y-\ze_i|^2)^{N-2}}\cdot\eps\\
&\hspace{1cm}
 -C_0^{2^*}\left(\frac{\la_{k+1}}{\la_k}\right)^{\frac{N-2}{2}}
   \int_{\R^N}\frac{1}{(1+|y|^2)^{\frac{N+2}{2}}}\cdot\frac{1}{(1+|\ze_k|^2)^{\frac{N-2}{2}}}
     \cdot\eps\\
&\hspace{1cm}
 -\sum_{i=1}^{k-1}C_0^{2^*}\left(\frac{\la_{i+1}}{\la_i}\right)^{\frac{N-2}{2}}
    \int_{\R^N}\frac{1}{(1+|y|^2)^{\frac{N+2}{2}}}\cdot\frac{1}{(1+|\ze_i|^2)^{\frac{N-2}{2}}}
        \cdot\eps + o(\eps).
\end{aligned}
\]
From Lemma \ref{e79} and Lemma \ref{e44-e48} we deduce:
\[
\begin{aligned}
\eqref{equ1-2-lemma-expansion of J-2}
&=-\frac{1}{2^*}(k+1) S_0^{\frac{N}{2}} + \frac{N-2}{4} S_0^{\frac{N-2}{2}}\ov{S}\mu_0\eps
   + C_0^{2^*}H(0,0)\la_1^{N-2}\int_{\R^N} \frac{1}{(1+|z|^2)^{\frac{N+2}{2}}}\cdot\eps\\
&\hspace{1cm}
 +C_0^{2^*}\sum_{i=1}^{k}\left(\frac{\la_{i+1}}{\la_i}\right)^{\frac{N-2}{2}}
    \int_{\R^N}\frac{1}{|y|^{N-2}(1+|y-\ze_i|^2)^{\frac{N+2}{2}}}\cdot\eps\\
&\hspace{1cm}
 +C_0^{2^*}\sum_{i=1}^{k}(\frac{\la_{i+1}}{\la_i})^{\frac{N-2}{2}}
   \int_{\R^N}\frac{1}{(1+|y|^2)^{\frac{N+2}{2}}}\frac{1}{(1+|\ze_i|^2)^{\frac{N-2}{2}}}
     \cdot\eps + o(\eps).
\end{aligned}
\]
By Lemma \ref{e82} and Lemma \ref{e44-e48},
\[
\begin{aligned}
\eqref{equ1-3-lemma-expansion of J-2}
&= -\frac{\eps}{(2^*)^2}(k+1)S_0^{\frac{N}{2}}
   - \frac{(N-2)\eps}{2\cdot2^*}\int_{\R^N} U_{1,0}^{2^*}\cdot\ln(\de_1\dots\de_k\si)\\
&\hspace{1cm}
   + \frac{(k+1)\eps}{2^*}\int_{\R^N} U_{1,0}^{2^*}\ln U_{1,0}+o(\eps)\\
&= - \frac{\eps}{(2^*)^2}(k+1)S_0^{\frac{N}{2}}
   - \frac{(N-2)\eps}{2\cdot2^*}
     \int_{\R^N} U_{1,0}^{2^*}\cdot\ln(\la_1\dots\la_k\ov{\la})\\
&\hspace{1cm}
   - \frac{(k+1)^2}{2\cdot2^*}\int_{\R^N} U_{1,0}^{2^*}\cdot\eps\ln\eps
   + \frac{(k+1)}{2^*}\int_{\R^N} U_{1,0}^{2^*}\ln U_{1,0}\cdot\eps + o(\eps).
\end{aligned}
\]
Using Proposition \ref{proposition-estimate of error-2}, \eqref{equ-1-projection estimate}, \eqref{equ-3-projection estimate}), Lemma~\ref{e53-e55-B}, we get:
\begin{eqnarray}\label{equ1-4-lemma-expansion of J-2}
J_\eps(V_{\eps,\la,\xi}+\phi_{\eps,\la,\xi})-J_\eps(V_{\eps,\la,\xi})=o(\eps).
\end{eqnarray}
Now we conclude the proof for $k\ge1$ by \eqref{equ1-1-lemma-expansion of J-2}, \eqref{equ1-2-lemma-expansion of J-2},\eqref{equ1-3-lemma-expansion of J-2}, and \eqref{equ1-4-lemma-expansion of J-2}.

The case $k=0$ can be easily dealt with using \eqref{equ1-Lemma A-e12,e24,e30,e34}, \eqref{equ1-Lemma A-e4,e41} and \eqref{equ1-4-lemma-expansion of J-2}. Observe here that \eqref{equ-expansion of J-2} holds $C^1$-uniformly with respect to $(\la,\ze)$ in compact sets of $\cO_\eta$; see \cite[Lemma~7.1]{MussoPis10JMPA}.
\end{proof}


\begin{altproof}{\ref{theorem-tower of bubble}}
By the change of variables
\[
\la_1^{\frac{N-2}{2}}
 = s_1,\quad \left(\frac{\la_2}{\la_1}\right)^{\frac{N-2}{2}}=s_2,\ \ldots,\
    \left(\frac{\la_{k+1}}{\la_k}\right)^{\frac{N-2}{2}}=s_{k+1},
\]
$\psi(\la,\ze)$ can be rewritten as
\[
\wh{\psi}(s,\ze)
 = b_1 s_{1}^2+\sum_{i=1}^{k}b_2 s_{i+1} h_1(\ze_i)-\sum_{i=1}^{k}b_3 h_2(\ze_i)
    - b_4\ln(s_1^{k+1}s_2^{k}\dots s_{k+1}),
\]
where $s=(s_1,s_2,\ldots,s_{k+1})$.

Suppose $\frac{\wh{\psi}(s,\ze)}{\pa{s_i}}=0$ for $i=1,\ldots,k+1,$ and let $\wh{s}(\ze)=(\wh{s}_1(\ze),\ldots,\wh{s}_{k+1}(\ze))$ be the corresponding critical point. Then
\[
\wh{s}_1 = \sqrt{\frac{(k+1)b_4}{2b_1}},\quad
\wh{s}_2=\frac{k b_4}{b_2h_1(\ze_1)},\ \dots,\ \wh{s}_{k+1}=\frac{b_4}{b_2h_1(\ze_{k})},
\]
and it is easy to show that $\wh{s}(\ze)$ is non-degenerate. Plugging these into $\wh{\psi}(s,\ze)$ gives
\begin{equation}
\begin{aligned}
\wh{\psi}(\wh{s}(\ze),\ze)
 &= \frac{(k+1)^2b_4}{2}-\sum_{i=1}^{k}b_3 h_2(\ze_i)
    -b_4(\frac{k+1}{2}\ln\frac{(k+1)b_4}{2b_1}\\
 &\hspace{1cm}
    + \sum_{i=1}^k i\ln\frac{i b_4}{b_2})+\sum_{i=1}^k b_4(k+1-i)\ln h_1(\ze_{i})\\
 &= C_1+\sum_{i=1}^k g_i(\ze_i),
\end{aligned}
\end{equation}
where
\[
C_1
 = \frac{(k+1)^2b_4}{2}
   - b_4\left(\frac{k+1}{2}\ln\frac{(k+1)b_4}{2b_1}+\sum_{i=1}^k i\ln\frac{i b_4}{b_2}\right),
\]
and
\[
g_i(\ze_i)
 = b_4(k+1-i)\ln \int_{\R^N}\frac{1}{|y+\ze_i|^{N-2}(1+|y|^2)^{\frac{N+2}{2}}}
   - b_3 \int_{\R^N}\frac{1}{|y+\ze_i|^2(1+|y|^2)^{N-2}}.
\]
A direct computation shows that $\ze_i=0$ is a critical point of $g_i(\ze_i)$ such that
\[
\frac{\pa^2 g_i(\ze_i)}{\pa \ze_i^j\pa\ze_i^l}|_{\ze_i=0} = 0 \text{ if } j\neq l;
\]
and
\[
\frac{\pa^2 g_i(\ze_i)}{\pa (\ze_i^j)^2}|_{\ze_i=0}
 = \frac{2N-8}{N}\int_{\R^N}\frac{b_3}{|y|^4(1+|y|^2)^{N-2}} > 0.
\]
Consequently $\ze_i=0$ is a nondegenerate local minimum of $g_i$, hence $\ze=0$ is a $C^1$-stable critical point of $\wh{\psi}(\wh{s}(\ze),\ze)$. In particular, small $C^1$-perturbations of  $\wh{\psi}(\wh{s}(\ze),\ze)$ still have a critical point, close to $0$. Thus we conclude the proof.
\end{altproof}

\noindent\textbf{Acknowledgements}. The authors would like to thank Prof. Daomin Cao for many helpful discussions during the preparation of this paper. This work was carried out while Qianqiao Guo was visiting Justus-Liebig-Universit\"{a}t Gie{\ss}en, to which he would like to
express his gratitude for their warm hospitality.

\appendix
\section{\textbf{Appendix}\label{Appendix A}}

In this part, we give the lemmas used in Section \ref{Section 3}.

 Take $0<\eta<\min\{|\xi_i|, \text{dist}(\xi_i,\pa \Om), |\xi_{i_1}-\xi_{i_2}|, i, i_1,i_2=1,2,\dots,k\}$. Similarly to Lemma A.5 in \cite{MussoPis10JMPA}, we obtain the following.
\begin{lemma}\label{e56-e62}
For $i,l=1,2,\dots,k,$ and $j,h=0,1,\dots,N$, it holds
\begin{eqnarray}\label{equ1-Lemma A-e56-e62}
(P\ov{\Psi},P\ov{\Psi})&=&\wt{c}_0\frac{1}{\si^2}+o(\frac{1}{\si^2});\\\label{equ2-Lemma
A-e56-e62}
(P\ov{\Psi},P\Psi_i^j)&=&o(\frac{1}{\si^2})(\text{and}~o(\frac{1}{\de_i^2}));\\\label{equ3-Lemma
A-e56-e62}
(P\Psi_i^j,P\Psi_i^j)&=&\wt{c}_{i,j}\frac{1}{\de_i^2}+o(\frac{1}{\de_i^2});\\\label{equ4-Lemma
A-e56-e62}
(P\Psi_i^j,P\Psi_l^h)&=&o(\frac{1}{\de_i^2})(\text{and}~o(\frac{1}{\de_l^2}))~~~
\text{if}~i\neq l~ \text{or}~ j\neq h,
\end{eqnarray}
where $\wt{c}_0>0, \wt{c}_{i,j}>0$ are constants.
\end{lemma}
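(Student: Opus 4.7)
The plan is to follow the template from \cite[Lemma~A.5]{MussoPis10JMPA}, but with careful bookkeeping of the Hardy-type corrections. The starting point for every inner product is the integration-by-parts identity, which for any $\Psi,\Phi$ among our test functions gives
\[
(P\Psi,P\Phi)
 = \int_\Omega(-\Delta P\Psi)P\Phi\,dx-\mu\int_\Omega\frac{P\Psi\cdot P\Phi}{|x|^2}\,dx
 = \int_\Omega(-\Delta\Psi)P\Phi\,dx-\mu\int_\Omega\frac{P\Psi\cdot P\Phi}{|x|^2}\,dx,
\]
since $\Delta P\Psi=\Delta\Psi$ on $\Omega$. I would then exploit the eigenvalue equations that $\Psi_i^j$ and $\ov\Psi$ satisfy on $\R^N$, obtained by differentiating the limiting equations: $-\Delta\Psi_i^j=(2^*-1)U_{\de_i,\xi_i}^{2^*-2}\Psi_i^j$, and, from Proposition~\ref{proposition-eigenvalue}, $-\Delta\ov\Psi-\mu\ov\Psi/|x|^2=(2^*-1)V_\sigma^{2^*-2}\ov\Psi$. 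This step is what makes the Hardy and non-Hardy ``sectors'' almost decouple.

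For \eqref{equ1-Lemma A-e56-e62}, plugging in yields
\[
(P\ov\Psi,P\ov\Psi)
 = (2^*-1)\int_\Omega V_\sigma^{2^*-2}\ov\Psi\,P\ov\Psi\,dx
     + \mu\int_\Omega\frac{(\ov\Psi-P\ov\Psi)P\ov\Psi}{|x|^2}\,dx.
\]
Writing $P\ov\Psi=\ov\Psi-\vphi$ with $\vphi$ controlled via the analogue of Proposition~\ref{proposition-projection estimate} for $\partial_\sigma V_\sigma$, the first integral equals $(2^*-1)\int_{\R^N}V_\sigma^{2^*-2}\ov\Psi^2\,dx$ plus an error of order $\sigma^{N-2}$, while the Hardy correction is $O(\mu\sigma^{-2})=o(\sigma^{-2})$ since $\mu=\mu_0\eps^\al\to 0$. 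The scaling relation $V_\sigma(x)=\sigma^{-a(N-2)/2}V_1(\sigma^{-a}x)$ with $a=\sqrt{\ov\mu/(\ov\mu-\mu)}$ shows that the leading integral equals $\wt c_0/\sigma^2$ with $\wt c_0:=\|\partial_\sigma V_\sigma|_{\sigma=1}\|_\mu^2/(2^*-1)>0$, giving \eqref{equ1-Lemma A-e56-e62}. The diagonal identity \eqref{equ3-Lemma A-e56-e62} is obtained identically: $(2^*-1)\int_{\R^N}U_{\de_i,\xi_i}^{2^*-2}(\Psi_i^j)^2\,dx=\wt c_{i,j}/\de_i^2$ by the standard Aubin--Talenti scaling, while the Hardy term $\mu\int(P\Psi_i^j)^2/|x|^2$ is negligible because $\Psi_i^j$ concentrates at $\xi_i$ with $|\xi_i|>\eta$, so this contribution is of order $\mu\de_i^{-2}\cdot \de_i^{N-2}\cdot\eta^{-2}=o(\de_i^{-2})$.

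For the off-diagonal estimates \eqref{equ2-Lemma A-e56-e62} and \eqref{equ4-Lemma A-e56-e62} I would exploit that the two bubbles involved concentrate at separated points (or at the same point with different modes). In $(P\ov\Psi,P\Psi_i^j)$ the integrand $V_\sigma^{2^*-2}\ov\Psi\,P\Psi_i^j$ pairs a function concentrated at $0$ with one concentrated at $\xi_i$ (with $|\xi_i|>\eta$), so by splitting $\Omega$ into neighborhoods of $0$ and $\xi_i$ and using the pointwise decay bounds on $V_\sigma$, $\ov\Psi$ and $\Psi_i^j$ together with H\"older, the integral is $O(\sigma^{(N-2)/2}\de_i^{(N-2)/2})=o(\sigma^{-2}+\de_i^{-2})$. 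For $(P\Psi_i^j,P\Psi_l^h)$ with $i\neq l$ the bubbles sit at different points, and the classical Bianchi--Egnell/Rey interaction estimate yields the required lower order bound; when $i=l$ but $j\neq h$ one uses the orthogonality $\int_{\R^N}U_{\de_i,\xi_i}^{2^*-2}\Psi_i^j\Psi_i^h=0$ between distinct modes (so only boundary/projection corrections remain).

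The main technical obstacle is controlling the cross term between the Hardy bubble $PV_\sigma$ at the origin and the standard bubbles $PU_{\de_i,\xi_i}$ at $\xi_i$, because the two projection error functions have different decay rates: $\vphi_\sigma$ has the ``twisted'' leading behavior $C_\mu\ov d^{\sqrt{\ov\mu}-\sqrt{\ov\mu-\mu}}H(0,\cdot)\sigma^{(N-2)/2}$ from Proposition~\ref{proposition-projection estimate}, whereas $\vphi_{\de_i,\xi_i}$ is the standard $C_0H(\xi_i,\cdot)\de_i^{(N-2)/2}$. Verifying that the mismatched exponents still give $o(\sigma^{-2})$ and $o(\de_i^{-2})$ will require using $\mu=\mu_0\eps^\al\to 0$ (so that $\sqrt{\ov\mu}-\sqrt{\ov\mu-\mu}\to 0$) and the hypothesis $N\ge 7$ which ensures enough integrability; but all estimates remain uniform on compact subsets of $\cO_\eta$ as required.
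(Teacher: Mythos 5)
Your argument is correct and follows essentially the same route as the paper: integration by parts with $\Delta P\Psi=\Delta\Psi$ on $\Om$, the eigenvalue equations from Proposition~\ref{proposition-eigenvalue} and from Bianchi--Egnell, the projection estimates of Proposition~\ref{proposition-projection estimate}, and the anisotropic rescaling $x\mapsto\sigma^{-\sqrt{\ov\mu/(\ov\mu-\mu)}}x$ to extract the $\sigma^{-2}$ order. Your explicit treatment of the Hardy correction $\mu\int_\Om(P\Psi_i^j)^2/|x|^2$ in \eqref{equ3-Lemma A-e56-e62}--\eqref{equ4-Lemma A-e56-e62} (negligible since $|\xi_i|>\eta$) is precisely the small addendum needed to transport \cite[Lemma~A.5]{MussoPis10JMPA} from the $\mu=0$ setting, which the paper invokes without comment; the remaining quibbles (your stated formula for $\wt{c}_0$ is off by a factor, and some intermediate error exponents such as $O(\mu\sigma^{-2})$ are cruder than needed) do not affect the $o(\sigma^{-2})$, $o(\de_i^{-2})$ conclusions.
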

\begin{proof} We only prove (\ref{equ1-Lemma A-e56-e62}) and (\ref{equ2-Lemma A-e56-e62}) as $j=0$. (\ref{equ2-Lemma A-e56-e62}) as $j\neq0$ is similar. (\ref{equ3-Lemma A-e56-e62}) and (\ref{equ4-Lemma A-e56-e62}) are from Lemma A.5 in \cite{MussoPis10JMPA}.

To prove (\ref{equ1-Lemma A-e56-e62}), noticing that $\ov{\Psi}$ is an eigenfunction to (\ref{pro-eigenvalue}) with $\La=2^*-1$, by Proposition (\ref{proposition-projection estimate}), we have
\begin{eqnarray*}
&&(P\ov{\Psi},P\ov{\Psi})=\int_\Om|\nabla P\ov{\Psi}|^2-\mu\frac{|P\ov{\Psi}|^2}{|x|^2}\\
&=&\int_\Om\nabla \ov{\Psi}\nabla P\ov{\Psi}-\mu\frac{\ov{\Psi}P\ov{\Psi}}{|x|^2}-\mu\frac{(P\ov{\Psi}-\ov{\Psi})P\ov{\Psi}}{|x|^2}\\
&=&(2^*-1)\int_\Om V_\si^{2^*-2} \ov{\Psi}^2-(2^*-1)\int_\Om V_\si^{2^*-2} \ov{\Psi}(\ov{\Psi}-P\ov{\Psi})-\mu\frac{(P\ov{\Psi}-\ov{\Psi})P\ov{\Psi}}{|x|^2}\\
&=&(2^*-1)\int_\Om V_\si^{2^*-2} \ov{\Psi}^2+O(\si^{\frac{N-2}{2}})\\
&=&\frac{(N^2-4) C_\mu^{2^*}}{4}\int_\Om\frac{\si^2}{({\si^2|x|^{\be_1}}
+|x|^{\be_2})^2}\cdot\si^{N-4}\frac{(|x|^{\be_2}-\si^2|x|^{\be_1})^2}{(\si^2
|x|^{\be_1}+|x|^{\be_2})^N}+O(\si^{\frac{N-2}{2}})\\
&=&\frac{(N^2-4) C_\mu^{2^*}}{4}\int_{\frac{\Om}{\si^{\frac{\sqrt{\ov{\mu}}}{\sqrt{\ov{\mu}-\mu}}}}}\frac{1}{\si^2}
\frac{(|y|^{\be_2}-|y|^{\be_1})^2}{(|y|^{\be_1}+|y|^{\be_2})^{2+N}}+O(\si^{\frac{N-2}{2}})~~~(x=\si^{\frac{\sqrt{\ov{\mu}}}{\sqrt{\ov{\mu}-\mu}}}y)\\
&=&\frac{(N^2-4) C_\mu^{2^*}}{4}\int_{\R^N}\frac{1}{\si^2}\frac{(|y|^{\be_2}-|y|^{\be_1})^2}{(|y|^{\be_1}+|y|^{\be_2})^{2+N}}+o(\frac{1}{\si^2})\\
&=&\wt{c}_0\frac{1}{\si^2}+o(\frac{1}{\si^2}),
\end{eqnarray*}
for a positive constant $\wt{c}_0.$
Similarly,
\begin{eqnarray*}
&&(P\ov{\Psi},P\Psi_i^0)=\int_\Om\nabla P\ov{\Psi}\cdot \nabla P\Psi_i^0-\mu\frac{P\ov{\Psi}\cdot P\Psi_i^0}{|x|^2}\\
&=&\int_\Om\nabla \ov{\Psi}\nabla P\Psi_i^0-\mu\frac{\ov{\Psi}P\Psi_i^0}{|x|^2}-\mu\frac{(P\ov{\Psi}-\ov{\Psi})P\Psi_i^0}{|x|^2}\\
&=&(2^*-1)\int_\Om V_\si^{2^*-2} \ov{\Psi}\Psi_i^0+O(\si^{\frac{N-2}{2}})\\
&=&C_0 C_\mu^{2^*-1}(2^*-1)\int_\Om\frac{\si^2}{({\si^2|x|^{\be_1}}
+|x|^{\be_2})^2}\cdot\frac{N-2}{2}\si^{\frac{N-4}{2}}\frac{(|x|^{\be_2}-\si^2|x|^{\be_1})}{(\si^2
|x|^{\be_1}+|x|^{\be_2})^{\frac{N}{2}}}\\
&&\cdot\frac{N-2}{2}\de_i^{\frac{N-4}{2}}\frac{(|x-\xi_i|^2-\de_i^2)}{(\de_i^2+|x-\xi_i|^2)^{\frac{N}{2}}}
+O(\si^{\frac{N-2}{2}})\\
&=&o(\frac{1}{\si^2})(\text{and}~o(\frac{1}{\de_i^2})).
\end{eqnarray*}
\end{proof}

\begin{lemma}\label{e50-e51}
For $i=1,2,\dots,k,$ and $j=0,1,\dots,N$, it holds
\begin{eqnarray}\label{equ-Lemma A-e50}
\|P\Psi_i^j-\Psi_i^j\|_{2N/(N-2)}&=&\begin{cases}
O(\de_i^{\frac{N-2}{2}})\quad& \text{if} ~~j=1,2,\dots,N,\\
O(\de_i^{\frac{N-4}{2}})\quad& \text{if} ~~j=0;
\end{cases}\\\label{equ-Lemma A-e51}
\|P\ov{\Psi}-\ov{\Psi}\|_{2N/(N-2)}&=&O(\si^{\frac{N-4}{2}}).
\end{eqnarray}
\end{lemma}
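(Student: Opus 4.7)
The plan is to reduce both estimates to a uniform (pointwise) bound on $\partial\Omega$ via harmonicity and the maximum principle, and then convert the $L^{\infty}$-bound to the $L^{2N/(N-2)}$-bound using the boundedness of $\Omega$. Concretely, let $\Psi$ stand for either $\Psi_i^j$ or $\ov{\Psi}$. By definition of the projection, $\Delta(\Psi-P\Psi)=0$ in $\Omega$ and $\Psi-P\Psi=\Psi$ on $\pa\Om$; the maximum principle then yields
\[
\|\Psi-P\Psi\|_{L^{\infty}(\Om)}\le\|\Psi\|_{L^{\infty}(\pa\Om)},
\]
and since $\Om$ is bounded,
\[
\|\Psi-P\Psi\|_{2N/(N-2)}\le|\Om|^{(N-2)/(2N)}\|\Psi\|_{L^{\infty}(\pa\Om)}.
\]
So everything reduces to bounding $\|\Psi\|_{L^{\infty}(\pa\Om)}$.

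Next I would compute $\Psi|_{\pa\Om}$ explicitly. Direct differentiation of the instanton $U_{\de_i,\xi_i}$ gives
\[
\Psi_i^j(x)=(N-2)C_0\,\de_i^{\frac{N-2}{2}}\,\frac{x^j-\xi_i^j}{(\de_i^2+|x-\xi_i|^2)^{N/2}}\quad(j=1,\dots,N),
\]
\[
\Psi_i^0(x)=\frac{(N-2)C_0}{2}\,\de_i^{\frac{N-4}{2}}\,\frac{|x-\xi_i|^2-\de_i^2}{(\de_i^2+|x-\xi_i|^2)^{N/2}}.
\]
On $\pa\Om$ one has $|x-\xi_i|\ge\dist(\xi_i,\pa\Om)>\eta$, so the denominator is bounded below by $\eta^N$ for $\de_i$ small. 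This immediately delivers
\[
\|\Psi_i^j\|_{L^{\infty}(\pa\Om)}\le C\,\de_i^{\frac{N-2}{2}}\quad(j=1,\dots,N),\qquad \|\Psi_i^0\|_{L^{\infty}(\pa\Om)}\le C\,\de_i^{\frac{N-4}{2}},
\]
which together with the harmonic-extension bound proves \eqref{equ-Lemma A-e50}.

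For $\ov{\Psi}=\pa V_\si/\pa\si$, differentiating $V_\si$ gives
\[
\ov{\Psi}(x)=\frac{(N-2)C_\mu}{2}\,\si^{\frac{N-4}{2}}\,\frac{|x|^{\be_2}-\si^2|x|^{\be_1}}{(\si^2|x|^{\be_1}+|x|^{\be_2})^{N/2}}.
\]
Since $0\in\Om$, on $\pa\Om$ we have $|x|\ge d_{\inf}>0$; hence the denominator is bounded below by $d_{\inf}^{\be_2N/2}/2$ (and the numerator is uniformly bounded) for $\si$ small, giving $\|\ov{\Psi}\|_{L^{\infty}(\pa\Om)}\le C\,\si^{\frac{N-4}{2}}$. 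Combined with the harmonic-extension bound this proves \eqref{equ-Lemma A-e51}.

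There is no real obstacle: the only nontrivial point is the separation of scales on $\pa\Om$ (that $\xi_i$ stays away from $\pa\Om$, and that $0$ does so as well), which is built into the class $\cO_\eta$. The passage from the pointwise estimate to the $L^{2N/(N-2)}$ estimate is free because $\Om$ is bounded, and no refinement (e.g.\ via the expansion of $\vphi_\si$ from Proposition~\ref{proposition-projection estimate}) is needed to match the stated exponents.
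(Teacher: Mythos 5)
Your argument is correct, and the underlying tool is the same as the one behind the paper's references: the difference $\Psi - P\Psi$ is harmonic in $\Om$ with boundary data $\Psi|_{\pa\Om}$, so the maximum principle gives an $L^\infty$ bound, and boundedness of $\Om$ turns that into the $L^{2N/(N-2)}$ bound. Your explicit formulas on $\pa\Om$, combined with the separation of scales encoded in $\cO_\eta$, deliver exactly the stated orders. What you do differently is bypass the Green's-function expansion of the projection error (as in Proposition~\ref{proposition-projection estimate} and Lemma~B.4 of Musso--Pistoia, to which the paper defers): instead of differentiating the refined asymptotics of $\vphi_\si$ --- which requires tracking the $\si$-dependence of $\ov{d}$ and $\hbar_\si$ --- you take a crude $L^\infty$ bound that is already sufficient here. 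That is cleaner and self-contained; the expansion route buys the precise leading coefficient of the projection error, which is needed in the energy expansions elsewhere in the paper but is overkill for this lemma. One small point worth spelling out: for $\ov{\Psi}$ the function $\ov{\Psi}-P\ov{\Psi}$ is \emph{a priori} only smooth in $\Om\setminus\{0\}$ because $V_\si$ is singular at the origin, but since the definition of $P$ gives $\De(\ov{\Psi}-P\ov{\Psi})=0$ distributionally in all of $\Om$, Weyl's lemma shows the singularity is removable and the maximum principle applies on all of $\Om$, as you use it.
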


\begin{proof}
(\ref{equ-Lemma A-e50}) can be proved as Lemma B.4 in \cite{MussoPis02IUMJ}. (\ref{equ-Lemma A-e51}) can be obtained similarly by using Proposition \ref{proposition-projection estimate}.
\end{proof}

\begin{lemma}\label{e83-85-Lemma A}
\begin{eqnarray}\label{equ-Lemma A-e83-e84}
&&\|(f'_0(-\sum\limits_{i=1}^{k}PU_{\de_i,\xi_i}+P V_\si)-f'_0(U_{\de_l,\xi_l}))\Psi_l^h\|_{2N/(N+2)}\\\nonumber
&\le&\begin{cases}
O(\si^{\frac{N-2}{2}})+\sum\limits_{i=1}^{k}O(\de_i^{\frac{N-2}{2}})\quad& \text{if} ~~h=1,2,\dots,N,\\
O(\si^{\frac{N-2}{2}})+\sum\limits_{i=1,i\neq l}^{k}O(\de_i^{\frac{N-2}{2}})+O(\de_l^{\frac{N-4}{2}})\quad& \text{if} ~~h=0;
\end{cases}\\\label{equ-Lemma A-e85}
&&\|(f'_0(-\sum\limits_{i=1}^{k}PU_{\de_i,\xi_i}+P V_\si)-f'_0(V_\si))\ov{\Psi}\|_{2N/(N+2)}\\\nonumber
&\le&O(\si^{\frac{N-4}{2}})+\sum\limits_{i=1}^{k}O(\de_i^{\frac{N-2}{2}}).
\end{eqnarray}
\end{lemma}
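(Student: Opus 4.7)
The plan is to decompose $V_{\eps,\la,\xi} = -U_{\de_l,\xi_l} + R_l$, where
\[
R_l := \vphi_{\de_l,\xi_l} + PV_\si - \sum_{i \ne l} PU_{\de_i,\xi_i},
\quad \vphi_{\de_l,\xi_l} := U_{\de_l,\xi_l} - PU_{\de_l,\xi_l},
\]
collects the regular part of $U_{\de_l,\xi_l}$, the projected singular bubble $PV_\si$, and all other projected bubbles. Since $f'_0(s) = (2^*-1)|s|^{2^*-2}$ is a power function with exponent $p := 2^*-2 = 4/(N-2) \in (0,1)$ for $N \ge 7$, I would apply pointwise the elementary inequality
\[
\bigl||a+b|^p - |a|^p\bigr|
\le C\min\bigl(|b|^p,\ |a|^{p-1}|b|\bigr)
\]
with $a = -U_{\de_l,\xi_l}$ and $b = R_l$, choosing the $|a|^{p-1}|b|$ branch where $|U_{\de_l,\xi_l}|$ dominates and the $|b|^p$ branch elsewhere.

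Next, partition $\Om = \Om_l \cup \Om_0 \cup \bigcup_{i \ne l} \Om_i \cup \Om_{\mathrm{far}}$ with $\Om_l := B(\xi_l,\eta/2)$, $\Om_0 := B(0,\eta/2)$, $\Om_i := B(\xi_i,\eta/2)$, and estimate the $L^{2N/(N+2)}$ norm on each piece. On $\Om_l$, Proposition \ref{proposition-projection estimate} and (\ref{equ-3-projection estimate}) give $|\vphi_{\de_l,\xi_l}| \le C\de_l^{(N-2)/2}$, $|PU_{\de_i,\xi_i}| \le C\de_i^{(N-2)/2}$ for $i \ne l$ (since $|x-\xi_i| \ge \eta/2$), and $|PV_\si| \le C\si^{(N-2)/2}$ (since $|x| \ge \eta/2$); then Hölder with conjugate pair $(N/2, 2^*)$, combined with the scale-invariant bounds $\|U_{\de_i,\xi_i}^p\|_{L^{N/2}} = O(1)$ and $\|\Psi_l^h\|_{L^{2^*}} = O(\de_l^{-1})$, converts these into contributions $O(\de_i^{(N-2)/2}) + O(\si^{(N-2)/2})$. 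On $\Om_i$ with $i \ne l$, the piece $-PU_{\de_i,\xi_i}$ dominates $V$, so $|f'_0(V)| \le C U_{\de_i,\xi_i}^{p}$ has bounded $L^{N/2}$-norm, while $\Psi_l^h$ restricted there is uniformly $O(\de_l^{(N-2)/2})$; similarly on $\Om_0$ using $V_\si$ in place of $U_{\de_i,\xi_i}$. On $\Om_{\mathrm{far}}$ all bubble profiles are uniformly small and the contribution is absorbed.

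The additional term $O(\de_l^{(N-4)/2})$ in the $h=0$ case arises from the near-$\xi_l$ interaction between $\vphi_{\de_l,\xi_l}$ and $\Psi_l^0 = \pa U_{\de_l,\xi_l}/\pa \de_l$: unlike $\Psi_l^h$ with $h\neq0$, which is antisymmetric in $(x-\xi_l)^h$ and therefore pairs with the essentially constant leading term $C_0 H(\xi_l,\xi_l)\de_l^{(N-2)/2}$ of $\vphi_{\de_l,\xi_l}$ to give a smaller integral, $\Psi_l^0$ has no such parity, leaving a residue of order $\de_l^{(N-4)/2}$ after integration over $\Om_l$. The second estimate follows by exactly the same template, with $\xi_l \leftrightarrow 0$, $U_{\de_l,\xi_l} \leftrightarrow V_\si$, $\vphi_{\de_l,\xi_l} \leftrightarrow \vphi_\si$, and $\Psi_l^h \leftrightarrow \ov\Psi = \pa V_\si/\pa\si$, with the $O(\si^{(N-4)/2})$ term playing the role of $O(\de_l^{(N-4)/2})$.

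The main obstacle will be bookkeeping the separation between the $h=0$ and $h \ne 0$ cases: a uniform use of the $|b|^p$ branch of the pointwise inequality on $\Om_l$ would produce a common bound that fails to distinguish them, so one must either mix the two branches across the core $\{|x-\xi_l| \lesssim \de_l\}$ and the outer shell of $\Om_l$, or directly expand $V_{\eps,\la,\xi}$ to one higher order in $R_l$ to extract the leading contribution of $\vphi_{\de_l,\xi_l}\cdot\Psi_l^0$. The remaining regions $\Om_i$, $\Om_0$, and $\Om_{\mathrm{far}}$ do not contribute to this extra term, so all the delicate analysis is concentrated in $\Om_l$ (respectively $\Om_0$ for the second estimate).
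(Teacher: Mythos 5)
Your overall scaffolding (decompose $V_{\eps,\la,\xi}=-U_{\de_l,\xi_l}+R_l$, use a pointwise power-function inequality, partition $\Om$ into a ball around $\xi_l$, balls around the other concentration points, and a far region) matches the paper's strategy. But the explanation of where the extra $O(\de_l^{(N-4)/2})$ term comes from in the $h=0$ case is wrong, and in a way that would derail the proof.

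You attribute it to a parity cancellation in the near-$\xi_l$ region: $\Psi_l^h$ ($h\neq0$) is odd in $(x-\xi_l)^h$ and therefore "pairs with the essentially constant leading term of $\vphi_{\de_l,\xi_l}$ to give a smaller integral," while $\Psi_l^0$ has no such parity. This argument cannot apply here, because the quantity being bounded is an $L^{2N/(N+2)}$ \emph{norm}; the absolute value inside $\int|\cdots|^{2N/(N+2)}$ destroys any sign cancellation. Odd and even integrands with the same pointwise magnitude produce exactly the same $L^p$ norm, so parity plays no role. In fact, in the paper's proof the contribution from $B(\xi_l,\eta/2)$ is the \emph{same} for all $h=0,1,\dots,N$, namely $O(\si^{(N-2)/2})+\sum_i O(\de_i^{(N-2)/2})$.

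The true source of the $h=0$ anomaly is the opposite of what you claim: it comes from the regions \emph{away} from $\xi_l$, i.e.\ from $B(\xi_i,\eta/2)$ ($i\neq l$), $B(0,\eta/2)$, and $\Om\setminus\bigcup B$. There $|x-\xi_l|\gtrsim\eta$, and the eigenfunctions scale differently: $|\Psi_l^h(x)|\lesssim\de_l^{(N-2)/2}$ for $h\ge1$ (one factor of $|x-\xi_l|$ in the numerator is killed by a corresponding factor in the denominator), while $|\Psi_l^0(x)|\lesssim\de_l^{(N-4)/2}$, one power of $\de_l$ larger. Feeding these uniform bounds into $\|f'_0(PU_{\de_i,\xi_i})\Psi_l^h\|_{2N/(N+2),B(\xi_i,\eta/2)}$, etc., yields $O(\de_l^{(N-2)/2})$ vs.\ $O(\de_l^{(N-4)/2})$ precisely as stated. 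Your sentence "the remaining regions $\Om_i$, $\Om_0$, and $\Om_{\mathrm{far}}$ do not contribute to this extra term, so all the delicate analysis is concentrated in $\Om_l$" is therefore exactly reversed; if you followed that plan you would never produce the $h=0$ case of the estimate. The fix is to drop the parity heuristic, note that $\Psi_l^0$ and $\Psi_l^j$ decay at different rates in $\de_l$ off the concentration core, and track that difference through the outer regions.
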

\begin{proof} We only prove (\ref{equ-Lemma A-e83-e84}).
\begin{eqnarray*}
&&\int_\Om |(f'_0(-\sum\limits_{i=1}^{k}PU_{\de_i,\xi_i}+P V_\si)-f'_0(U_{\de_l,\xi_l}))\Psi_l^h|^{2N/(N+2)}\\
&=&\int_{B(\xi_l,\frac{\eta}{2})} |(f'_0(-\sum\limits_{i=1}^{k}PU_{\de_i,\xi_i}+P V_\si)-f'_0(U_{\de_l,\xi_l}))\Psi_l^h|^{2N/(N+2)}\\
&&+\int_{B(0,\frac{\eta}{2})\bigcup\bigcup\limits_{i=1,i\neq l}^{k} B(\xi_i,\frac{\eta}{2})} |(f'_0(-\sum\limits_{i=1}^{k}PU_{\de_i,\xi_i}+P V_\si)-f'_0(U_{\de_l,\xi_l}))\Psi_l^h|^{2N/(N+2)}\\
&&+\int_{\Om\backslash (B(0,\frac{\eta}{2})\bigcup\bigcup\limits_{i=1}^{k} B(\xi_i,\frac{\eta}{2}))} |(f'_0(-\sum\limits_{i=1}^{k}PU_{\de_i,\xi_i}+P V_\si)-f'_0(U_{\de_l,\xi_l}))\Psi_l^h|^{2N/(N+2)}.
\end{eqnarray*}
First of all, by \eqref{equ-add1-projection estimate}, (\ref{equ-3-projection estimate}),
\begin{eqnarray*}
&&\int_{B(\xi_l,\frac{\eta}{2})} |(f'_0(-\sum\limits_{i=1}^{k}PU_{\de_i,\xi_i}+P V_\si)-f'_0(U_{\de_l,\xi_l}))\Psi_l^h|^{2N/(N+2)}\\
&\le&\int_{B(\xi_l,\frac{\eta}{2})} |(f'_0(PU_{\de_l,\xi_l})-f'_0(U_{\de_l,\xi_l}))\Psi_l^h|^{2N/(N+2)}+O(\si^{\frac{N(N-2)}{N+2}})+\sum\limits_{i=1,i\neq l}^{k}O(\de_i^{\frac{N(N-2)}{N+2}})\\
&\le&O(\si^{\frac{N(N-2)}{N+2}})+\sum\limits_{i=1}^{k}O(\de_i^{\frac{N(N-2)}{N+2}}).
\end{eqnarray*}
For $i\neq l$, we have
\begin{eqnarray*}
&&\int_{B(\xi_i,\frac{\eta}{2})} |(f'_0(-\sum\limits_{i=1}^{k}PU_{\de_i,\xi_i}+P V_\si)-f'_0(U_{\de_l,\xi_l}))\Psi_l^h|^{2N/(N+2)}\\
&=&\int_{B(\xi_i,\frac{\eta}{2})} |(f'_0(PU_{\de_i,\xi_i})+O(\si^{\frac{N-2}{2}})+\sum\limits_{j=1,j\neq i,j\neq l}^{k}O(\de_j^{\frac{N-2}{2}})+O(\de_l^{2}))\Psi_l^h|^{2N/(N+2)}\\
&=&\begin{cases}
O(\de_l^{\frac{N(N-2)}{N+2}})\quad& \text{if} ~~h=1,2,\dots,N,\\
O(\de_l^{\frac{N(N-4)}{N+2}})\quad& \text{if} ~~h=0.
\end{cases}
\end{eqnarray*}
At last,
\begin{eqnarray*}
&&\int_{\Om\backslash B(0,\frac{\eta}{2})\bigcup\limits_{i=1}^{k} B(\xi_i,\frac{\eta}{2})} |(f'_0(-\sum\limits_{i=1}^{k}PU_{\de_i,\xi_i}+P V_\si)-f'_0(U_{\de_l,\xi_l}))\Psi_l^h|^{2N/(N+2)}\\
&\le&\begin{cases}
O(\de_l^{\frac{N(N-2)}{N+2}})(O(\si^{\frac{4N}{N+2}})+\sum\limits_{i=1}^{k}O(\de_i^{\frac{4N}{N+2}}))\quad& \text{if} ~~h=1,2,\dots,N,\\
O(\de_l^{\frac{N(N-4)}{N+2}})(O(\si^{\frac{4N}{N+2}})+\sum\limits_{i=1}^{k}O(\de_i^{\frac{4N}{N+2}}))\quad& \text{if} ~~h=0.
\end{cases}
\end{eqnarray*}
Then (\ref{equ-Lemma A-e83-e84}) follows.
\end{proof}

\begin{lemma}\label{(M17)-Lemma A}
\begin{equation}\label{equ-Lemma A-(M17)}
\|\iota^*(-\sum\limits_{i=1}^{k}f_0(U_{\de_i,\xi_i})+f_0(V_\si))-V_{\eps,\la,\xi}\|_\mu
\le
\sum\limits_{i=1}^{k}O(\mu\de_i)+O((\mu\si^{\frac{N-2}{2}})^{\frac{1}{2}}).
\end{equation}
\end{lemma}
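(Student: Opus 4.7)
My plan is to exploit the linearity of both $\iota^*$ and the projection $P$ and split the difference into bubble-by-bubble contributions:
\[
\iota^*\left(-\sum_{i=1}^{k}f_0(U_{\de_i,\xi_i})+f_0(V_\si)\right)-V_{\eps,\la,\xi}
 = \sum_{i=1}^{k}\bigl[PU_{\de_i,\xi_i}-\iota^*(f_0(U_{\de_i,\xi_i}))\bigr]
  + \bigl[\iota^*(f_0(V_\si))-PV_\si\bigr].
\]
Each summand can be rewritten, via the equations satisfied by the projections and by $\iota^*$, as the image under $\iota^*$ of a source of the form $\mu w/|x|^2$. Duality $\|\iota^*(g)\|_\mu=\sup_{\|v\|_\mu=1}\int gv$, Cauchy--Schwarz, and Hardy's inequality $\|v/|x|\|_2\le C\|v\|_\mu$ (which holds since $\mu<\ov{\mu}$) combine into the master estimate
\[
\|\iota^*(\mu w/|x|^2)\|_\mu\le C\mu\,\|w/|x|\|_2.
\]

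For the $U$-bubble piece, since $-\Delta PU_{\de_i,\xi_i}=f_0(U_{\de_i,\xi_i})$ in $\Om$ while $\iota^*(f_0(U_{\de_i,\xi_i}))$ solves $(-\Delta-\mu/|x|^2)[\,\cdot\,]=f_0(U_{\de_i,\xi_i})$, subtracting yields
\[
PU_{\de_i,\xi_i}-\iota^*(f_0(U_{\de_i,\xi_i}))=-\iota^*\left(\mu\,PU_{\de_i,\xi_i}/|x|^2\right).
\]
Using $PU_{\de_i,\xi_i}\le U_{\de_i,\xi_i}$ and splitting $\int_\Om U_{\de_i,\xi_i}^2/|x|^2$ into a neighborhood of $\xi_i$ (where $|x|^{-2}\lesssim\eta^{-2}$ and the Talenti integral gives $\int U_{\de_i,\xi_i}^2\,dx=O(\de_i^2)$ for $N\ge5$) and a neighborhood of $0$ (where $U_{\de_i,\xi_i}(x)\le U_{\de_i,\xi_i}(0)=O(\de_i^{(N-2)/2})$, so the contribution is $O(\de_i^{N-2})$), I would conclude $\|U_{\de_i,\xi_i}/|x|\|_2=O(\de_i)$, hence $\|PU_{\de_i,\xi_i}-\iota^*(f_0(U_{\de_i,\xi_i}))\|_\mu=O(\mu\de_i)$.

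For the $V_\si$ piece, the identities $-\Delta V_\si-\mu V_\si/|x|^2=f_0(V_\si)$ on $\R^N$ and $\Delta PV_\si=\Delta V_\si$ on $\Om$ give
\[
-\Delta PV_\si-\mu\,PV_\si/|x|^2=f_0(V_\si)+\mu\vphi_\si/|x|^2,
\]
so $\iota^*(f_0(V_\si))-PV_\si=-\iota^*(\mu\vphi_\si/|x|^2)$, bounded by $C\mu\|\vphi_\si/|x|\|_2$. The key input is Proposition~\ref{proposition-projection estimate}, which provides $\vphi_\si(x)=C_\mu\ov{d}^{\sqrt{\ov{\mu}}-\sqrt{\ov{\mu}-\mu}}(x)H(0,x)\si^{(N-2)/2}+\hbar_\si$ with $\hbar_\si=O(\si^{(N+2)/2})$. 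Since $\sqrt{\ov{\mu}}-\sqrt{\ov{\mu}-\mu}=O(\mu)$ keeps $\ov{d}^{\,\cdot\,}$ bounded, and $H(0,\cdot)$ is harmonic in $\Om$ with boundary values $1/|y|^{N-2}\le d_{\inf}^{-(N-2)}$, the maximum principle makes $H(0,\cdot)$ uniformly bounded on $\Om$. Hence $\vphi_\si(x)=O(\si^{(N-2)/2})$ pointwise, and since $\int_\Om|x|^{-2}\,dx<\infty$ for $N\ge 3$, I would obtain $\|\vphi_\si/|x|\|_2=O(\si^{(N-2)/2})$ and therefore $\|\iota^*(f_0(V_\si))-PV_\si\|_\mu=O(\mu\si^{(N-2)/2})$. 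Because $\mu\si^{(N-2)/2}\to 0$ as $\eps\to 0$, for $\eps$ small we have $\mu\si^{(N-2)/2}\le(\mu\si^{(N-2)/2})^{1/2}$, which combined with the $U$-estimate yields the stated bound.

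The main obstacle is the sharp pointwise control of $\vphi_\si$. The naive majorization $\vphi_\si\le V_\si$ is useless, since $V_\si$ is singular at the origin and its weighted $L^2$ norm $\|V_\si/|x|\|_2$ is of the wrong order. The refined harmonic-projection formula of Proposition~\ref{proposition-projection estimate}, which encodes the behavior at $0$ entirely into the explicit bounded profile $H(0,\cdot)$, is precisely what makes $\|\vphi_\si/|x|\|_2$ behave like $\si^{(N-2)/2}$ and drives the final estimate.
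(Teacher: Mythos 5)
Your proof is correct and follows essentially the same route as the paper: decompose bubble by bubble, recognize that each difference $PU_{\de_i,\xi_i}-\iota^*(f_0(U_{\de_i,\xi_i}))$ (respectively $\iota^*(f_0(V_\si))-PV_\si$) equals $\iota^*$ applied to a $\mu$-weighted source, and close with Cauchy--Schwarz plus Hardy's inequality together with the size estimates $\|U_{\de_i,\xi_i}/|x|\|_2=O(\de_i)$ and $\|\vphi_\si/|x|\|_2=O(\si^{(N-2)/2})$ (the latter via Proposition~\ref{proposition-projection estimate}). You even obtain the slightly sharper bound $O(\mu\si^{(N-2)/2})$ for the $V_\si$-piece where the paper only records the cruder $O((\mu\si^{(N-2)/2})^{1/2})$; since $\mu\si^{(N-2)/2}\to0$ this is consistent with, and implies, the stated estimate.
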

\begin{proof} By Definition \eqref{adjoint operator}, there holds
\begin{eqnarray}\label{equ1-Lemma A-(M17)}
&&\int_\Om \nabla \iota^*(f_0(V_\si))\nabla (\iota^*(f_0(V_\si))-P V_\si)-\mu\int_\Om\frac{\iota^*(f_0(V_\si))(\iota^*(f_0(V_\si))-P V_\si)}{|x|^2}\\\nonumber
&=&\int_\Om f_0(V_\si)(\iota^*(f_0(V_\si))-P V_\si).
\end{eqnarray}
It also has
\[
\begin{cases}
-\De P V_\si=-\De V_\si=\mu\frac{V_\si}{|x|^2}+f_0(V_\si)
\quad& \text{in} ~\Om, \\\
P V_\si=0\quad& \text{on}~
\pa \Om.
\end{cases}
\]
Then
\begin{eqnarray}\label{equ2-Lemma A-(M17)}
&&\int_\Om \nabla P V_\si\nabla (\iota^*(f_0(V_\si))-P V_\si)-\mu\int_\Om\frac{V_\si(\iota^*(f_0(V_\si))-P V_\si)}{|x|^2}\\\nonumber
&=&\int_\Om f_0(V_\si)(\iota^*(f_0(V_\si))-P V_\si).
\end{eqnarray}
Combining with (\ref{equ1-Lemma A-(M17)}) and (\ref{equ2-Lemma A-(M17)}) it holds
\begin{eqnarray}\label{equ3-Lemma A-(M17)}
\int_\Om |\nabla(\iota^*(f_0(V_\si))-P V_\si)|^2=\mu\int_\Om\frac{(\iota^*(f_0(V_\si))-V_\si)(\iota^*(f_0(V_\si))-P V_\si)}{|x|^2}.
\end{eqnarray}
By \eqref{equ-add1-projection estimate},
\begin{eqnarray}\label{equ4-Lemma A-(M17)}
\|\iota^*(f_0(V_\si))-PV_\si\|_\mu=(\mu\int_\Om\frac{|(V_\si -P V_\si)(\iota^*(f_0(V_\si))-P V_\si)|}{|x|^2})^{\frac{1}{2}}\le O((\mu\si^{\frac{N-2}{2}})^{\frac{1}{2}}).
\end{eqnarray}

Similarly to (\ref{equ1-Lemma A-(M17)}), (\ref{equ2-Lemma A-(M17)}), we also have
\begin{eqnarray}\label{equ5-Lemma A-(M17)}
&&\int_\Om \nabla \iota^*(f_0(U_{\de_i,\xi_i}))\nabla (\iota^*(f_0(U_{\de_i,\xi_i}))-P U_{\de_i,\xi_i})\\\nonumber
&&-\mu\int_\Om\frac{\iota^*(f_0(U_{\de_i,\xi_i}))(\iota^*(f_0(U_{\de_i,\xi_i}))-P U_{\de_i,\xi_i})}{|x|^2}\\\nonumber
&=&\int_\Om f_0(U_{\de_i,\xi_i})(\iota^*(f_0(U_{\de_i,\xi_i}))-P U_{\de_i,\xi_i})
\end{eqnarray}
and
\begin{eqnarray}\label{equ6-Lemma A-(M17)}
&&\int_\Om \nabla P U_{\de_i,\xi_i}\nabla (\iota^*(f_0(U_{\de_i,\xi_i}))-PU_{\de_i,\xi_i})=\int_\Om f_0(U_{\de_i,\xi_i})(\iota^*(f_0(U_{\de_i,\xi_i}))-P U_{\de_i,\xi_i}).
\end{eqnarray}
Then
\begin{eqnarray}\label{equ7-Lemma A-(M17)}
\int_\Om |\nabla(\iota^*(f_0(U_{\de_i,\xi_i}))-P U_{\de_i,\xi_i})|^2=\mu\int_\Om\frac{\iota^*(f_0(U_{\de_i,\xi_i}))(\iota^*(f_0(U_{\de_i,\xi_i}))-P U_{\de_i,\xi_i})}{|x|^2}.
\end{eqnarray}
Therefore, by H\"{o}lder's inequality and Hardy's inequality,
\begin{eqnarray*}
\|\iota^*(f_0(U_{\de_i,\xi_i}))-PU_{\de_i,\xi_i}\|_\mu
&=&(\mu\int_\Om\frac{P
U_{\de_i,\xi_i}(\iota^*(f_0(U_{\de_i,\xi_i}))-P
U_{\de_i,\xi_i})}{|x|^2})^{\frac{1}{2}}\\\nonumber
&\le&(\mu(\int_\Om\frac{(P
U_{\de_i,\xi_i})^2}{|x|^2})^{\frac{1}{2}}
(\int_\Om\frac{(\iota^*(f_0(U_{\de_i,\xi_i}))-P
U_{\de_i,\xi_i})^2}{|x|^2})^{\frac{1}{2}})^{\frac{1}{2}}\\\nonumber
&\le&C(\mu\de_i
\|\iota^*(f_0(U_{\de_i,\xi_i}))-PU_{\de_i,\xi_i}\|_\mu)^{\frac{1}{2}},
\end{eqnarray*}
which implies
\begin{eqnarray}\label{equ8-Lemma A-(M17)}
\|\iota^*(f_0(U_{\de_i,\xi_i}))-PU_{\de_i,\xi_i}\|_\mu\le
O(\mu\de_i).
\end{eqnarray}

Hence, (\ref{equ-Lemma A-(M17)}) follows from (\ref{equ4-Lemma
A-(M17)}) and (\ref{equ8-Lemma A-(M17)}).
\end{proof}
\begin{lemma}\label{e53-e55}
\begin{eqnarray}\label{equ1-Lemma A-e53-e55}
\|(f'_\eps(V_{\eps,\la,\xi})-f'_0(V_{\eps,\la,\xi}))\phi\|_{2N/(N+2)}&=&C\eps\|\phi\|_\mu;\\\label{equ2-Lemma A-e53-e55}
\|f_\eps(V_{\eps,\la,\xi})-f_0(V_{\eps,\la,\xi})\|_{2N/(N+2)}&=&C\eps;\\\label{equ3-Lemma A-e53-e55}
\|f_0(V_{\eps,\la,\xi})-(-\sum\limits_{i=1}^{k}f_0(U_{\de_i,\xi_i})+f_0(V_\si))\|_{2N/(N+2)}&=& O(\si^{\frac{N+2}{2}})+\sum\limits_{i=1}^{k}O(\de_i^{\frac{N+2}{2}}).
\end{eqnarray}
\end{lemma}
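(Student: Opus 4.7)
The plan is to prove the three estimates by reducing each of them to standard pointwise inequalities for the power nonlinearity and then applying Hölder plus Sobolev.

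For the first estimate, I would begin by writing
$$f'_\eps(s)-f'_0(s)=(2^*-1)|s|^{2^*-2}(|s|^{-\eps}-1)-\eps|s|^{2^*-2-\eps},$$
and using the elementary inequality $\big||s|^{-\eps}-1\big|\le C\eps(1+|\ln|s||)|s|^{-\eps}$ (plus a uniform bound $|s|^{-\eps}\le C$ on a bounded set where $V_{\eps,\la,\xi}$ lives, with a separate argument in the blow-up region) to deduce the pointwise bound
$$\big|f'_\eps(V_{\eps,\la,\xi})-f'_0(V_{\eps,\la,\xi})\big|\le C\eps\, |V_{\eps,\la,\xi}|^{2^*-2}\bigl(1+|\ln|V_{\eps,\la,\xi}||\bigr).$$
Since $\frac{N+2}{2N}=\frac{2}{N}+\frac{N-2}{2N}$, Hölder gives
$$\|(f'_\eps(V)-f'_0(V))\phi\|_{2N/(N+2)}\le \|f'_\eps(V)-f'_0(V)\|_{N/2}\,\|\phi\|_{2N/(N-2)},$$
and the first factor is $O(\eps)$ because $(2^*-2)\cdot\frac{N}{2}=2^*$, so $\int|V|^{2^*}(1+|\ln|V||)^{N/2}$ is uniformly bounded on compact sets of $\cO_\eta$ (the logarithm is absorbed by the polynomial decay of the bubbles). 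Sobolev's embedding $\|\phi\|_{2N/(N-2)}\le C\|\phi\|_\mu$ then yields \eqref{equ1-Lemma A-e53-e55}.

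For \eqref{equ2-Lemma A-e53-e55} I would apply the same Taylor-in-$\eps$ device directly to $f_\eps(s)=|s|^{-\eps}f_0(s)$, which gives
$$|f_\eps(V_{\eps,\la,\xi})-f_0(V_{\eps,\la,\xi})|\le C\eps(1+|\ln|V_{\eps,\la,\xi}||)|V_{\eps,\la,\xi}|^{2^*-1}.$$
Because $(2^*-1)\cdot\frac{2N}{N+2}=2^*$, raising to the power $\frac{2N}{N+2}$ and integrating produces a uniformly bounded integral, hence the $L^{2N/(N+2)}$ norm is $O(\eps)$.

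The main obstacle is \eqref{equ3-Lemma A-e53-e55}. I would first rewrite $V_{\eps,\la,\xi}=-\sum_i U_{\de_i,\xi_i}+V_\si+r_\eps$, where $r_\eps=\sum_i\vphi_{\de_i,\xi_i}-\vphi_\si$ satisfies $|r_\eps|\le C\sum_i\de_i^{(N-2)/2}+C\si^{(N-2)/2}$ by \eqref{equ-add1-projection estimate} and \eqref{equ-3-projection estimate}. Since $N\ge 7$ gives $2^*-1\le 2$, I can use the elementary convexity inequality
$$\bigl|f_0(a+b)-f_0(a)-f_0(b)\bigr|\le C\bigl(|a|^{2^*-2}|b|+|a||b|^{2^*-2}\bigr)$$
iteratively on the decomposition $\Om=\bigcup_{l=0}^k\Om_l\cup\Om_\infty$, with $\Om_0=B(0,\eta/2)$, $\Om_l=B(\xi_l,\eta/2)$ for $l\ge 1$, and $\Om_\infty$ the exterior. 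On $\Om_l$ ($l\ge 1$) the term $U_{\de_l,\xi_l}$ dominates and the remaining bubbles plus $r_\eps$ are $O(\de_l^{(N-2)/2}\wedge\de_i^{(N-2)/2}\wedge\si^{(N-2)/2})$; integrating the interaction and using $(2^*-2)\cdot\frac{2N}{N+2}=\frac{4}{N-2}\cdot\frac{2N}{N+2}$ together with the explicit decay of the instantons produces the claimed $O(\de_l^{(N+2)/2})$ contribution. The region $\Om_0$ is treated analogously using the dominance of $V_\si$ and yields $O(\si^{(N+2)/2})$, while $\Om_\infty$ is harmless because all bubbles are uniformly small there. The bookkeeping of the interaction integrals (particularly the cross terms between distant bubbles) is the delicate point, but each one is handled by a standard change of variables centered at the concentration point.
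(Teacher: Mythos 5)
Your treatment of \eqref{equ3-Lemma A-e53-e55} is essentially the paper's. The paper splits $\Om$ into $B(0,\eta/2)\cup\bigcup_{i} B(\xi_i,\eta/2)$ and the complement, isolates the dominant bubble on each piece, bounds the remaining terms pointwise by $O(\de_i^{(N-2)/2})$ or $O(\si^{(N-2)/2})$ via Proposition~\ref{proposition-projection estimate} and \eqref{equ-3-projection estimate}, and applies the mean value theorem, which for $N\ge 7$ (so $2^*-1\le 2$) is exactly your elementary power inequality. The decisive computation is $\|U_{\de_l,\xi_l}^{2^*-2}\|_{L^{2N/(N+2)}(B(\xi_l,\eta/2))}=O(\de_l^{2})$, so that $\de_l^{2}\cdot\de^{(N-2)/2}=\de^{(N+2)/2}$; your "delicate bookkeeping" reduces to precisely this exponent count, and the cross-term $|a||b|^{2^*-2}$ contributes only the higher-order $\de^{(N+2)/2}$ as well.

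For \eqref{equ1-Lemma A-e53-e55}--\eqref{equ2-Lemma A-e53-e55} the paper gives no proof, citing \cite{BarMiPis06CVPDE}, and your sketch is the standard one, but one claim in it is not correct: $\int|V_{\eps,\la,\xi}|^{2^*}\bigl(1+|\ln|V_{\eps,\la,\xi}||\bigr)^{N/2}$ is \emph{not} uniformly bounded as $\eps\to 0$. Since $\sup|V_{\eps,\la,\xi}|\sim\de^{-(N-2)/2}\sim\eps^{-1/2}$ and the $L^{2^*}$ mass concentrates at the peaks, a rescaling $x=\de y+\xi_l$ shows this integral grows like $|\ln\eps|^{N/2}$; the polynomial decay of the bubbles controls the spatial tail of the integrand, not the size of $\ln|V|$ where $V$ is large. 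As you have set it up, the Taylor-in-$\eps$ argument therefore yields $O(\eps|\ln\eps|)$ rather than $O(\eps)$ for \eqref{equ1-Lemma A-e53-e55} and \eqref{equ2-Lemma A-e53-e55}. This is harmless for the applications, since $\eps|\ln\eps|=o\bigl(\eps^{\frac{N+2}{2(N-2)}}\bigr)$ for $N\ge 7$ and Proposition~\ref{proposition-estimate of error-1} goes through unchanged, but you should not assert the uniform boundedness of the log-weighted integral.
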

\begin{proof}
 (\ref{equ1-Lemma A-e53-e55}) and (\ref{equ2-Lemma A-e53-e55}) can be seen in \cite{BarMiPis06CVPDE}.

By (\ref{equ-1-projection estimate}), (\ref{equ-3-projection estimate}),
\begin{eqnarray*}
&&(\int_{B(0,\frac{\eta}{2})} |f_0(V_{\eps,\la,\xi})-(-\sum\limits_{i=1}^{k}f_0(U_{\de_i,\xi_i})+f_0(V_\si))|^{2N/(N+2)})^{(N+2)/2N}\\
&\le& C(\int_{B(0,\frac{\eta}{2})} |(P V_\si)^{2^*-1}-V_\si^{2^*-1}|^{2N/(N+2)})^{(N+2)/2N}+\sum\limits_{i=1}^{k}O(\de_i^{\frac{N+2}{2}})\\
&\le& C\si^{\frac{N-2}{2}}(\int_{B(0,\frac{\eta}{2})} |(V_\si)^{2^*-2}|^{2N/(N+2)})^{(N+2)/2N}+\sum\limits_{i=1}^{k}O(\de_i^{\frac{N+2}{2}})\\
&=&
O(\si^{\frac{N+2}{2}})+\sum\limits_{i=1}^{k}O(\de_i^{\frac{N+2}{2}}),
\end{eqnarray*}
\begin{eqnarray*}
&&(\int_{B(\xi_i,\frac{\eta}{2})} |f_0(V_{\eps,\la,\xi})-(-\sum\limits_{i=1}^{k}f_0(U_{\de_i,\xi_i})+f_0(V_\si))|^{2N/(N+2)})^{(N+2)/2N}\\
&\le& C(\int_{B(\xi_i,\frac{\eta}{2})} |(P U_{\de_i,\xi_i})^{2^*-1}-U_{\de_i,\xi_i}^{2^*-1}|^{2N/(N+2)})^{(N+2)/2N}+O(\si^{\frac{N+2}{2}})+\sum\limits_{j=1,j\neq i}^{k}O(\de_j^{\frac{N+2}{2}})\\
&=& O(\si^{\frac{N+2}{2}})+\sum\limits_{i=1}^{k}O(\de_i^{\frac{N+2}{2}}),
\end{eqnarray*}
\begin{eqnarray*}
&&(\int_{\Om\backslash(B(0,\frac{\eta}{2})\bigcup\limits_{i=1}^{k} B(\xi_i,\frac{\eta}{2}))} |f_0(V_{\eps,\la,\xi})-(-\sum\limits_{i=1}^{k}f_0(U_{\de_i,\xi_i})+f_0(V_\si))|^{2N/(N+2)})^{(N+2)/2N}\\
&=& O(\si^{\frac{N+2}{2}})+\sum\limits_{i=1}^{k}O(\de_i^{\frac{N+2}{2}}),
\end{eqnarray*}
then we deduce (\ref{equ3-Lemma A-e53-e55}).
\end{proof}

\begin{lemma}\label{e63-68}
For $i=1,2,\dots,k$, there hold
\begin{eqnarray}\label{equ1-Lemma A-e63-68}
\|\pa_{\la_i}P\Psi_i^j\|_\mu&=&O(\eps^{\frac{1}{N-2}}\de_i^{-2}),~j=1,2,\dots,N;\\\label{equ2-Lemma A-e63-68}
\|\pa_{(\xi_i)^j}P\Psi_i^j\|_\mu&=&O(\de_i^{-2}),~j=1,2,\dots,N;\\\label{equ3-Lemma A-e63-68}
\|\pa_{(\xi_i)^l}P\Psi_i^j\|_\mu&=&O(\de_i^{-2}),~j,l=1,2,\dots,N,j\neq l;\\\label{equ4-Lemma A-e63-68}
\|\pa_{\la_i}P\Psi_i^0\|_\mu&=&O(\eps^{\frac{1}{N-2}}\de_i^{-2});\\\label{equ5-Lemma A-e63-68}
\|\pa_{(\xi_i)^j}P\Psi_i^0\|_\mu&=&O(\de_i^{-2}),~j=1,2,\dots,N;\\\label{equ6-Lemma A-e63-68}
\|\pa_{\ov{\la}}P\ov{\Psi}\|_\mu&=&O(\eps^{\frac{1}{N-2}}\si^{-2}).
\end{eqnarray}
\end{lemma}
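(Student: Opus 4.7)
\textbf{Proof proposal for Lemma \ref{e63-68}.}

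The plan is to reduce all six estimates to a single mechanism: commute the parameter derivative through the projection $P$, recognize the resulting object as a projection of a (first or second) derivative of a bubble function, and then compute the relevant Sobolev norm by the usual dilation change of variables. Since $P:H^1(\R^N)\to H^1_0(\Om)$ is linear and acts only on the spatial variable, smooth dependence of $U_{\de_i,\xi_i}$ and $V_\si$ on parameters gives
\[
\pa_s (P\Psi) = P(\pa_s\Psi)
\]
for any parameter $s$. Combined with the chain rules $\pa_{\la_i}=\eps^{1/(N-2)}\pa_{\de_i}$ and $\pa_{\ov\la}=\eps^{1/(N-2)}\pa_\si$, we reduce the six quantities to the $\|\cdot\|_\mu$-norms of $P$ applied to a second derivative of $U_{\de_i,\xi_i}$ (for estimates \eqref{equ1-Lemma A-e63-68}--\eqref{equ5-Lemma A-e63-68}) or of $V_\si$ (for \eqref{equ6-Lemma A-e63-68}).

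Next, I would invoke the standard bound $\|P\Phi\|_\mu\le C\|\Phi\|_{D^{1,2}(\R^N)}$, which follows from the Dirichlet minimization characterizing $P$ together with the equivalence of $\|\cdot\|_\mu$ and $\|\cdot\|_0$ on $H^1_0(\Om)$ for $0\le\mu<\ov{\mu}$ (Hardy's inequality). For the bubble $U_{\de_i,\xi_i}(x)=\de_i^{-(N-2)/2}U_{1,0}((x-\xi_i)/\de_i)$ a direct change of variables $y=(x-\xi_i)/\de_i$ shows that each spatial derivative $\pa_{x^j}$ (equivalently $-\pa_{(\xi_i)^j}$) introduces a factor $\de_i^{-1}$ in the $D^{1,2}$ norm, and the same for $\pa_{\de_i}$. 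Hence every mixed second derivative of $U_{\de_i,\xi_i}$ in $(\de_i,(\xi_i)^1,\dots,(\xi_i)^N)$ has $D^{1,2}$-norm of order $\de_i^{-2}$, which yields \eqref{equ2-Lemma A-e63-68}, \eqref{equ3-Lemma A-e63-68}, \eqref{equ5-Lemma A-e63-68} immediately and, after multiplying by the $\eps^{1/(N-2)}$ factor from the chain rule, also \eqref{equ1-Lemma A-e63-68} and \eqref{equ4-Lemma A-e63-68}.

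For \eqref{equ6-Lemma A-e63-68} the argument is analogous but uses the Hardy-adapted bubble $V_\si(x)=C_\mu\sigma^{(N-2)/2}(\sigma^2|x|^{\be_1}+|x|^{\be_2})^{-(N-2)/2}$. Under the rescaling $x=\si^{\sqrt{\ov\mu}/\sqrt{\ov\mu-\mu}}y$, which is the natural one for this two-parameter profile, one checks that $V_\si$ and $\pa_\si V_\si$ are $\|\cdot\|_\mu$-bounded by $O(1)$ and $O(\si^{-1})$ respectively, and consequently $\pa_\si^2 V_\si$ has $\|\cdot\|_\mu$-norm of order $\si^{-2}$. The chain rule $\pa_{\ov\la}=\eps^{1/(N-2)}\pa_\si$ then gives \eqref{equ6-Lemma A-e63-68}.

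The mildly technical point, which I would treat with the usual care, is the second step: one must verify that the projection estimate $\|P\Phi\|_\mu\lesssim\|\Phi\|_{D^{1,2}}$ applies uniformly in the parameter regime $(\la,\xi)\in\cO_\eta$. This is automatic for $\mu=0$ by the definition of $P$ as a Dirichlet projection, and for our choice $\mu=\mu_0\eps^\al\to 0$ the Hardy constant bound gives $(1-\mu/\ov{\mu})^{1/2}\|Pu\|_0\le\|Pu\|_\mu\le\|Pu\|_0$, so the constant is uniformly controlled. No obstacle remains beyond bookkeeping of constants.
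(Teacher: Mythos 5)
Your proof is correct and lands in essentially the same place as the paper's. Both arguments start from $\pa_s(P\Psi)=P(\pa_s\Psi)$ together with the contraction property of the Dirichlet projection (which gives $\|P\Phi\|_\mu\le\|P\Phi\|_0\le\|\Phi\|_{D^{1,2}(\R^N)}$), and then reduce matters to sizing an unprojected second parameter-derivative of the profile. The small difference is in the final computation: you read the $\de_i^{-2}$ (resp.\ $\si^{-2}$) directly off the dilation structure of $U_{\de_i,\xi_i}$ and $V_\si$, whereas the paper integrates the eigenfunction equation $-\De\Psi_i^j=(2^*-1)U_{\de_i,\xi_i}^{2^*-2}\Psi_i^j$ (and the Hardy-modified one for $\ov\Psi$) by parts against $\pa_s P\Psi$ and then estimates the resulting products by H\"older — but the scaling behavior you invoke is exactly what makes those H\"older estimates close. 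Your version is a bit more transparent; the paper's keeps the Hardy term visible, which is mildly reassuring for $\pa_{\ov\la}P\ov{\Psi}$ where one should (as you do) check that $\pa_\si^2 V_\si\in D^{1,2}(\R^N)$ near the origin, but for $\mu$ small this is unproblematic. No gap.
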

\begin{proof} We only prove (\ref{equ1-Lemma A-e63-68}) and (\ref{equ6-Lemma A-e63-68}) here.
\begin{eqnarray*}
\|\pa_{\la_i}P\Psi_i^j\|_\mu^2&=&\eps^{\frac{2}{N-2}}\|\pa_{\de_i}P\Psi_i^j\|_\mu^2\\
&\le& C\eps^{\frac{2}{N-2}}\int_\Om\nabla\pa_{\de_i}\Psi_i^j\nabla\pa_{\de_i}P\Psi_i^j\\
&=& C\eps^{\frac{2}{N-2}}
\int_\Om((2^*-1)(2^*-2)U_{\de_i,\xi_i}^{2^*-3}\Psi_i^j\Psi_i^0+(2^*-1)U_{\de_i,\xi_i}^{2^*-2}\pa_{\de_i}\Psi_i^j)\pa_{\de_i}P\Psi_i^j\\
&=& O(\eps^{\frac{2}{N-2}}\de_i^{-4}),
\end{eqnarray*}
then (\ref{equ1-Lemma A-e63-68}) is obtained.
\begin{eqnarray*}
\|\pa_{\ov{\la}}P\ov{\Psi}\|_\mu^2&=&\eps^{\frac{2}{N-2}}\|\pa_{\si}P\ov{\Psi}\|_\mu^2\\
&=&\eps^{\frac{2}{N-2}}\int_\Om\nabla\pa_{\si}\ov{\Psi}\nabla\pa_{\si}P\ov{\Psi}-\mu\frac{\pa_{\si}\ov{\Psi}\pa_{\si}P\ov{\Psi}}{|x|^2}
+\mu\frac{\pa_{\si}P\ov{\Psi}(\pa_{\si}\ov{\Psi}-\pa_{\si}P\ov{\Psi})}{|x|^2}\\
&=&\eps^{\frac{2}{N-2}}
(\int_\Om((2^*-1)(2^*-2)V_\si^{2^*-3}(\ov{\Psi})^2+(2^*-1)V_\si^{2^*-2}\pa_{\si}\ov{\Psi})\pa_{\si}P\ov{\Psi}+o(1))\\
&=&O(\eps^{\frac{2}{N-2}}\si^{-4}),
\end{eqnarray*}
which yields  (\ref{equ6-Lemma A-e63-68}).
\end{proof}

\begin{lemma}\label{e12,e24,e30,e34}
As $\mu\to0^+$,
\begin{eqnarray}\label{equ1-Lemma A-e12,e24,e30,e34}
&&\int_{\Om}|\nabla
P V_\si|^2-\mu\frac{|P V_\si|^2}{|x|^2}\\\nonumber
&=&
S_\mu^{\frac{N}{2}}-C_0C_\mu^{2^*-1} H(0,0)
\si^{N-2}
\int_{\R^N} \frac{1}{({|z|^{\be_1}}+|z|^{\be_2})^{\frac{N+2}{2}}}+O(\mu\si^{N-2})+O(\si^N);\\\label{equ2-Lemma A-e12,e24,e30,e34}
&&\int_{\Om}\nabla
P V_\si\nabla PU_{\de_i,\xi_i}-\mu\frac{P V_\si PU_{\de_i,\xi_i}}{|x|^2}\\\nonumber
&=&C_0C_\mu^{2^*-1} \si^{\frac{N-2}{2}}\de_i^{\frac{N-2}{2}}\int_{\R^N}\frac{G(\xi_i,0)}{({|z|^{\be_1}}
+|z|^{\be_2})^{\frac{N+2}{2}}}+O(\mu\si^{\frac{N-2}{2}}\de_i^{\frac{N-2}{2}})+o(\si^{\frac{N-2}{2}}\de_i^{\frac{N-2}{2}});\\\label{equ3-Lemma A-e12,e24,e30,e34}
&&\mu\int_{\Om}\frac{|PU_{\de_i,\xi_i}|^2}{|x|^2}=\mu\frac{C_0^2\de_i^2}{|\xi_i|^2}\int_{\R^N}\frac{1}{(1+|z|^2)^{N-2}}+O(\mu\de_i^4);\\\label{equ4-Lemma A-e12,e24,e30,e34}
&&\mu\int_{\Om}\frac{PU_{\de_i,\xi_i}PU_{\de_j,\xi_j}}{|x|^2}
=O(\mu\de_i^{\frac{N-2}{2}}\de_j^{\frac{N-2}{2}});\\\label{equ--add1-Lemma A-e12,e24,e30,e34}
&&\int_{\Om}|\nabla P U_{\de_i,\xi_i}|^2=S_0^{\frac{N}{2}}-C_0^{2^*}H(\xi_i,\xi_i)\de_i^{N-2}\int_{\R^N} \frac{1}{(1+|z|^2)^{\frac{N+2}{2}}}+o(\de_i^{N-2});\\\label{equ--add2-Lemma A-e12,e24,e30,e34}
&&\int_{\Om}\nabla P U_{\de_i,\xi_i}\nabla P U_{\de_j,\xi_j}=C_0^{2^*}G(\xi_i,\xi_j)\de_i^{\frac{N-2}{2}}\de_j^{\frac{N-2}{2}}
\int_{\R^N}\frac{1}{(1+|z|^2)^{\frac{N+2}{2}}}+o(\de_i^{\frac{N-2}{2}}\de_j^{\frac{N-2}{2}}),
\end{eqnarray}
where $i,j=1,2,\dots,k,i\neq j$.
\end{lemma}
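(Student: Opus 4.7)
\medskip

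\noindent\textbf{Proof proposal for Lemma \ref{e12,e24,e30,e34}.} The plan is to treat the six estimates in three groups of increasing difficulty. Throughout, the key technical input is the decomposition $PV_\si = V_\si - \vphi_\si$ of Proposition~\ref{proposition-projection estimate}, its small-$\mu$ refinement \eqref{equ-1-projection estimate}, and the analogous decomposition $PU_{\de_i,\xi_i} = U_{\de_i,\xi_i} - \vphi_{\de_i,\xi_i}$ from \eqref{equ-3-projection estimate}. We also use the pointwise equations $-\De V_\si = \mu V_\si/|x|^2+V_\si^{2^*-1}$ in $\Om\setminus\{0\}$ and $-\De U_{\de_i,\xi_i}=U_{\de_i,\xi_i}^{2^*-1}$ in $\R^N$, which give $(-\De-\mu/|x|^2)PV_\si = V_\si^{2^*-1}+\mu\vphi_\si/|x|^2$ and $-\De PU_{\de_i,\xi_i}=U_{\de_i,\xi_i}^{2^*-1}$ in $\Om$.

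For \eqref{equ1-Lemma A-e12,e24,e30,e34} I would integrate by parts and write
\[
\int_\Om\left(|\nabla PV_\si|^2-\mu\frac{|PV_\si|^2}{|x|^2}\right)
 = \int_\Om V_\si^{2^*-1}PV_\si+\mu\int_\Om\frac{\vphi_\si PV_\si}{|x|^2}
 = \int_\Om V_\si^{2^*}-\int_\Om V_\si^{2^*-1}\vphi_\si+\mu\int_\Om\frac{\vphi_\si PV_\si}{|x|^2}.
\]
The first term gives $S_\mu^{N/2}+O(\si^N)$ after extending the integral to $\R^N$ and estimating the tail via the explicit form of $V_\si$. Inserting the refined expansion $\vphi_\si(x)=C_0 H(0,x)\si^{(N-2)/2}+O(\mu\si^{(N-2)/2})+O(\si^{(N+2)/2})$ into the second term, rescaling $x=\si^{\sqrt{\ov{\mu}}/\sqrt{\ov{\mu}-\mu}}z$, and using $H(0,x)=H(0,0)+O(|x|)$ produces the claimed leading term $C_0 C_\mu^{2^*-1}H(0,0)\si^{N-2}\int_{\R^N}(|z|^{\be_1}+|z|^{\be_2})^{-(N+2)/2}\,dz$. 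The residual Hardy term is controlled by Hardy's inequality and the bound $\vphi_\si=O(\si^{(N-2)/2})$, yielding $O(\mu\si^{N-2})$.

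For \eqref{equ2-Lemma A-e12,e24,e30,e34} I would apply the equation for $PV_\si$ and test against $PU_{\de_i,\xi_i}$:
\[
\int_\Om\!\left(\nabla PV_\si\nabla PU_{\de_i,\xi_i}-\mu\frac{PV_\si PU_{\de_i,\xi_i}}{|x|^2}\right)
 = \int_\Om V_\si^{2^*-1}PU_{\de_i,\xi_i}+\mu\int_\Om\frac{\vphi_\si PU_{\de_i,\xi_i}}{|x|^2}.
\]
Since $V_\si^{2^*-1}$ concentrates at the origin while $PU_{\de_i,\xi_i}$ is smooth there, Taylor expansion gives $PU_{\de_i,\xi_i}(x)=PU_{\de_i,\xi_i}(0)+O(|x|)$ with $PU_{\de_i,\xi_i}(0)=C_0\de_i^{(N-2)/2}G(\xi_i,0)+O(\de_i^{(N+2)/2})$ by \eqref{equ-3-projection estimate}; substituting and scaling as above produces the claimed main term. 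The Hardy contribution is $O(\mu\si^{(N-2)/2}\de_i^{(N-2)/2})$ by Cauchy--Schwarz together with Hardy's inequality applied to $\vphi_\si$ and $PU_{\de_i,\xi_i}$.

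For \eqref{equ3-Lemma A-e12,e24,e30,e34} and \eqref{equ4-Lemma A-e12,e24,e30,e34} I would exploit that $0\notin B(\xi_i,\eta/2)$, so $1/|x|^2$ is smooth in a neighbourhood of the bubbles. For \eqref{equ3-Lemma A-e12,e24,e30,e34} I would use $PU_{\de_i,\xi_i}=U_{\de_i,\xi_i}+O(\de_i^{(N-2)/2})$ and the rescaling $x=\xi_i+\de_i y$ to obtain
\[
\int_\Om\frac{U_{\de_i,\xi_i}^2}{|x|^2}
 = C_0^2\de_i^2\int_{\R^N}\frac{dy}{(1+|y|^2)^{N-2}|\xi_i+\de_i y|^2},
\]
then Taylor-expand $|\xi_i+\de_i y|^{-2}=|\xi_i|^{-2}+O(\de_i)$ to capture the leading $\de_i^2/|\xi_i|^2$ term with $O(\de_i^4)$ error after symmetry kills the odd $\de_i^3$ contribution. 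Estimate \eqref{equ4-Lemma A-e12,e24,e30,e34} follows from Cauchy--Schwarz together with the pointwise bound $U_{\de_i,\xi_i}U_{\de_j,\xi_j}\le C\de_i^{(N-2)/2}\de_j^{(N-2)/2}$ on the region separating the two concentration points, combined with the trivial bound on the overlapping regions. Finally \eqref{equ--add1-Lemma A-e12,e24,e30,e34} and \eqref{equ--add2-Lemma A-e12,e24,e30,e34} are the classical bubble-interaction estimates of Bianchi--Egnell and Rey and follow from the same integration by parts together with \eqref{equ-3-projection estimate}.

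The main obstacle is not the leading terms, which come from standard rescaling, but tracking the two competing error scales in \eqref{equ1-Lemma A-e12,e24,e30,e34}: the harmonic remainder $\hbar_\si=O(\si^{(N+2)/2})$ in the projection, which after pairing with $V_\si^{2^*-1}$ contributes $O(\si^N)$, and the Hardy term which is genuinely of order $\mu\si^{N-2}$ and is not absorbed by $O(\si^N)$ when $\al<1$. Keeping these two errors separated (rather than lumped into one $o$-term) is what makes the expansion sharp enough for the reduction argument in Section~\ref{Section 3}.
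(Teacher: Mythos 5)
Your plan reproduces the paper's argument almost line for line: the decomposition via $PV_\si = V_\si-\vphi_\si$ and $PU_{\de_i,\xi_i}=U_{\de_i,\xi_i}-\vphi_{\de_i,\xi_i}$, integration by parts through the PDE to reduce to $\int V_\si^{2^*-1}(\cdot)$ integrals plus Hardy remainders, and rescaling $x=\si^{\sqrt{\ov\mu}/\sqrt{\ov\mu-\mu}}z$ to extract the leading orders. Your treatment of \eqref{equ2-Lemma A-e12,e24,e30,e34} is a genuine (and cleaner) shortcut: by writing $\int V_\si^{2^*-1}PU_{\de_i,\xi_i}$ and Taylor-expanding $PU_{\de_i,\xi_i}$ at $0$ using $PU_{\de_i,\xi_i}(0)=C_0\de_i^{(N-2)/2}G(\xi_i,0)+O(\de_i^{(N+2)/2})$, the Green's function $G(\xi_i,0)$ appears in one step, whereas the paper splits into $\int V_\si^{2^*-1}U_{\de_i,\xi_i}$ and $-\int V_\si^{2^*-1}\vphi_{\de_i,\xi_i}$ and recombines $|\xi_i|^{2-N}-H(\xi_i,0)$ at the end; both give the same leading term and error $o(\si^{(N-2)/2}\de_i^{(N-2)/2})$.

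There is one step, however, where what you write does not actually deliver the stated estimate. For the Hardy remainder in \eqref{equ1-Lemma A-e12,e24,e30,e34} you say it is ``controlled by Hardy's inequality and the bound $\vphi_\si=O(\si^{(N-2)/2})$, yielding $O(\mu\si^{N-2})$.'' Hardy's inequality controls $\int_\Om u^2/|x|^2$ by $\|\nabla u\|_2^2$, so the natural route is Cauchy--Schwarz:
\[
\mu\int_\Om\frac{\vphi_\si PV_\si}{|x|^2}
\le \mu\left(\int_\Om\frac{\vphi_\si^2}{|x|^2}\right)^{1/2}\left(\int_\Om\frac{(PV_\si)^2}{|x|^2}\right)^{1/2}
= \mu\cdot O(\si^{(N-2)/2})\cdot O(1) = O(\mu\si^{(N-2)/2}),
\]
which is short of the claimed $O(\mu\si^{N-2})$ by a full factor $\si^{(N-2)/2}$. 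To reach $O(\mu\si^{N-2})$ the paper does not use Hardy's inequality at all for this term: it inserts the pointwise expansion $\vphi_\si = C_\mu\,\ov d^{\,\sqrt{\ov\mu}-\sqrt{\ov\mu-\mu}}H(0,\cdot)\si^{(N-2)/2}+\hbar_\si$ from Proposition~\ref{proposition-projection estimate} and estimates $\int_\Om V_\si/|x|^2$ directly by splitting the domain at the crossover scale $|x|\sim\si^{\sqrt{\ov\mu}/\sqrt{\ov\mu-\mu}}$ (see \eqref{equ7-Lemma A-e12,e24,e30,e34}--\eqref{equ9-Lemma A-e12,e24,e30,e34}). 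That computation shows $\int_\Om V_\si/|x|^2 = O(\si^{(N-2)/2})$ up to a possible logarithm, whence the extra power $\si^{(N-2)/2}$. So the conclusion is right and even the weaker bound suffices for the later application (since $\al>\tfrac{N-4}{N-2}>\tfrac12$), but the justification as written is not enough to prove the lemma as stated; you need the pointwise analysis rather than the Hardy functional bound.

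Minor point: \eqref{equ--add1-Lemma A-e12,e24,e30,e34}--\eqref{equ--add2-Lemma A-e12,e24,e30,e34} are from Bahri--Coron and Rey (the paper cites \cite{BaCor88CPAM}); Bianchi--Egnell is the nondegeneracy of the linearization, a different statement.
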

\begin{proof} The proofs of (\ref{equ--add1-Lemma A-e12,e24,e30,e34}) and (\ref{equ--add2-Lemma A-e12,e24,e30,e34}) are from \cite{BaCor88CPAM}. We prove the remaining.

(1). Proof of (\ref{equ1-Lemma A-e12,e24,e30,e34}).

Integration by parts yields
\begin{eqnarray*}
&&\int_{\Om}|\nabla
P V_\si|^2-\mu\frac{|P V_\si|^2}{|x|^2}=\int_{\Om}(-\De V_\si) P V_\si-\mu\frac{|P V_\si|^2}{|x|^2}\\
&=&\int_{\Om}V_\si^{2^*-1} P V_\si+\mu\frac{V_\si P V_\si-|P V_\si|^2}{|x|^2}\\
&=&\int_{\Om}V_\si^{2^*}-\int_\Om V_\si^{2^*-1}\vphi_\si+\mu\int_\Om\frac{\vphi_\si(V_\si-\vphi_\si)}{|x|^2}.
\end{eqnarray*}
As (B.19) in \cite{BaCor88CPAM}, to continue, let us first show the following.
\begin{eqnarray}\label{equ5-Lemma A-e12,e24,e30,e34}
&&\int_\Om V_\si^{2^*-1}H(0,x)\\\nonumber
&=&\int_{B(0,\frac{\eta}{2})} V_\si^{2^*-1}H(0,x)+O(\si^{\frac{N+2}{2}})\\\nonumber
&=&H(0,0)\int_{B(0,\frac{\eta}{2})} V_\si^{2^*-1}+O(\si^{\frac{N+2}{2}})\\\nonumber
&&(\text{expanding}~ H(0,x)~ \text{near}~ x=0)\\\nonumber
&=&H(0,0)C_\mu^{2^*-1}\int_{B(0,\frac{\eta}{2})} \frac{\si^{\frac{N+2}{2}}}{({\si^2|x|^{\be_1}}
+|x|^{\be_2})^{\frac{N+2}{2}}}+O(\si^{\frac{N+2}{2}})\\\nonumber
&=&H(0,0)C_\mu^{2^*-1}\int_{B(0,\frac{\eta}{2}\cdot\si^{-\frac{\sqrt{\ov{\mu}}}{\sqrt{\ov{\mu}-\mu}}})} \frac{\si^{\frac{\ov{\mu}}{\sqrt{\ov{\mu}-\mu}}}}{({|z|^{\be_1}}
+|z|^{\be_2})^{\frac{N+2}{2}}}+O(\si^{\frac{N+2}{2}})~~~~(\si^{-\frac{\sqrt{\ov{\mu}}}{\sqrt{\ov{\mu}-\mu}}}x=z)\\\nonumber
&=&H(0,0)C_\mu^{2^*-1}\si^{\frac{\ov{\mu}}{\sqrt{\ov{\mu}-\mu}}}\int_{\R^N} \frac{1}{({|z|^{\be_1}}
+|z|^{\be_2})^{\frac{N+2}{2}}}+O(\si^{\frac{N+2}{2}}).
\end{eqnarray}
So, by using (\ref{equ-1-projection estimate}),
\begin{eqnarray}\label{equ6-Lemma A-e12,e24,e30,e34}
\int_\Om V_\si^{2^*-1}\vphi_\si&=&C_0C_\mu^{2^*-1}H(0,0)
\si^{\frac{\sqrt{\ov{\mu}}(\sqrt{\ov{\mu}}+\sqrt{\ov{\mu}-\mu})}{\sqrt{\ov{\mu}-\mu}}}
\int_{\R^N} \frac{1}{({|z|^{\be_1}}+|z|^{\be_2})^{\frac{N+2}{2}}}\\\nonumber
&&+O(\mu\si^{\frac{\sqrt{\ov{\mu}}(\sqrt{\ov{\mu}}+\sqrt{\ov{\mu}-\mu})}{\sqrt{\ov{\mu}-\mu}}})+O(\si^N)\\\nonumber
&=&C_0C_\mu^{2^*-1}H(0,0)
\si^{N-2}
\int_{\R^N} \frac{1}{({|z|^{\be_1}}+|z|^{\be_2})^{\frac{N+2}{2}}}+O(\mu\si^{N-2})+O(\si^N).
\end{eqnarray}
On the other hand, since
\begin{eqnarray}\label{equ7-Lemma A-e12,e24,e30,e34}
&&|\int_\Om\frac{\ov{d}^{\sqrt{\ov{\mu}}-\sqrt{\ov{\mu}-\mu}}(x)H(0,x)}{|x|^2}(\frac{1}{({\si^2|x|^{\be_1}}
+|x|^{\be_2})^{\frac{N-2}{2}}}-\frac{1}{|x|^{\sqrt{\ov{\mu}}+\sqrt{\ov{\mu}-\mu}}})|\\\nonumber
&\le&C\int_{B(0,\si^{\frac{\sqrt{\ov{\mu}}}{\sqrt{\ov{\mu}-\mu}}})}
|\frac{1}{|x|^2}(\frac{|x|^{\sqrt{\ov{\mu}}+\sqrt{\ov{\mu}-\mu}}-({\si^2|x|^{\be_1}}
+|x|^{\be_2})^{\frac{N-2}{2}}}{({\si^2|x|^{\be_1}}
+|x|^{\be_2})^{\frac{N-2}{2}}|x|^{\sqrt{\ov{\mu}}+\sqrt{\ov{\mu}-\mu}}})|\\\nonumber
&&+C\int_{\Om\backslash B(0,\si^{\frac{\sqrt{\ov{\mu}}}{\sqrt{\ov{\mu}-\mu}}})}
|\frac{1}{|x|^2}(\frac{|x|^{\sqrt{\ov{\mu}}+\sqrt{\ov{\mu}-\mu}}-({\si^2|x|^{\be_1}}
+|x|^{\be_2})^{\frac{N-2}{2}}}{({\si^2|x|^{\be_1}}
+|x|^{\be_2})^{\frac{N-2}{2}}|x|^{\sqrt{\ov{\mu}}+\sqrt{\ov{\mu}-\mu}}})|\\\nonumber
&\le&C\int_{B(0,\si^{\frac{\sqrt{\ov{\mu}}}{\sqrt{\ov{\mu}-\mu}}})}
|\frac{1}{|x|^2}\frac{\si^{N-2}|x|^{\sqrt{\ov{\mu}}-\sqrt{\ov{\mu}-\mu}}}{\si^{N-2}|x|^{\sqrt{\ov{\mu}}-\sqrt{\ov{\mu}-\mu}}
|x|^{\sqrt{\ov{\mu}}+\sqrt{\ov{\mu}-\mu}}}|\\\nonumber
&&+C\int_{\Om\backslash B(0,\si^{\frac{\sqrt{\ov{\mu}}}{\sqrt{\ov{\mu}-\mu}}})}
|\frac{1}{|x|^2}\frac{|x|^{\sqrt{\ov{\mu}}+\sqrt{\ov{\mu}-\mu}}\si^2|x|^{\frac{-2\sqrt{\ov{\mu}-\mu}}{\sqrt{\ov{\mu}}}}}
{|x|^{\sqrt{\ov{\mu}}+\sqrt{\ov{\mu}-\mu}}
|x|^{\sqrt{\ov{\mu}}+\sqrt{\ov{\mu}-\mu}}}|\\\nonumber
&=&O(\si^{\frac{\sqrt{\ov{\mu}}(\sqrt{\ov{\mu}}-\sqrt{\ov{\mu}-\mu})}{\sqrt{\ov{\mu}-\mu}}})
+O(\si^{2}),
\end{eqnarray}
then
\begin{eqnarray}\label{equ8-Lemma A-e12,e24,e30,e34}
\mu\int_\Om\frac{\vphi_\si V_\si}{|x|^2}&=&(1+o(1))\mu C_\mu^2 \si^{N-2}\int_\Om\frac{\ov{d}^{\sqrt{\ov{\mu}}-\sqrt{\ov{\mu}-\mu}}(x)H(0,x)}{|x|^2|x|^{\sqrt{\ov{\mu}}+\sqrt{\ov{\mu}-\mu}}}\\\nonumber
&=&O(\mu\si^{N-2}).
\end{eqnarray}
It is also easy to see that
\begin{equation}\label{equ9-Lemma A-e12,e24,e30,e34}
\mu\int_{\Om}\frac{\vphi_\si^2}{|x|^2}=O(\mu\si^{N-2}),
\end{equation}
\begin{equation}\label{equ10-Lemma A-e12,e24,e30,e34}
\int_{\Om}V_\si^{2^*}=S_\mu^{\frac{N}{2}}+O(\si^N).
\end{equation}

Hence (\ref{equ6-Lemma A-e12,e24,e30,e34}), (\ref{equ8-Lemma A-e12,e24,e30,e34}), (\ref{equ9-Lemma A-e12,e24,e30,e34}), (\ref{equ10-Lemma A-e12,e24,e30,e34})
yield (\ref{equ1-Lemma A-e12,e24,e30,e34}).

\noindent(2). Proof of (\ref{equ2-Lemma A-e12,e24,e30,e34}).

By using (\ref{equ-3-projection estimate}), integration by parts yields
\begin{eqnarray*}
&&\int_{\Om}\nabla
P V_\si\nabla PU_{\de_i,\xi_i}-\mu\frac{P V_\si PU_{\de_i,\xi_i}}{|x|^2}\\
&=&\int_{\Om}V_\si^{2^*-1}U_{\de_i,\xi_i}-\int_\Om V_\si^{2^*-1}\vphi_{\de_i,\xi_i}+\mu\int_\Om\frac{\vphi_\si(U_{\de_i,\xi_i}-\vphi_{\de_i,\xi_i})}{|x|^2}.
\end{eqnarray*}

\begin{eqnarray}\label{equ11-Lemma A-e12,e24,e30,e34}
\int_{\Om}V_\si^{2^*-1}U_{\de_i,\xi_i}&=&(\int_{B(0,\frac{\eta}{4})}+\int_{B(\xi_i,\frac{\eta}{4})}+\int_{\Om\backslash B(0,\frac{\eta}{4})\cup B(\xi_i,\frac{\eta}{4})})V_\si^{2^*-1}U_{\de_i,\xi_i}\\\nonumber
&=&(\int_{B(0,\frac{\eta}{4})}+\int_{B(\xi_i,\frac{\eta}{4})})V_\si^{2^*-1}U_{\de_i,\xi_i}+O(\si^{\frac{N+2}{2}}\de_i^{\frac{N-2}{2}}).
\end{eqnarray}
As $\mu\to0^+$,
\begin{eqnarray*}
&&\int_{B(0,\frac{\eta}{4})}V_\si^{2^*-1}U_{\de_i,\xi_i}=C_0C_\mu^{2^*-1} \si^{\frac{N+2}{2}}\de_i^{\frac{N-2}{2}}\int_{B(0,\frac{\eta}{4})}\frac{1}{({\si^2|x|^{\be_1}}
+|x|^{\be_2})^{\frac{N+2}{2}}}\cdot\frac{1}{(\de_i^2+|x-\xi_i|^2)^{\frac{N-2}{2}}}\\
&=&C_0C_\mu^{2^*-1} \si^{\frac{N+2}{2}}\de_i^{\frac{N-2}{2}}\int_{B(0,\frac{\eta}{4})}\frac{1}{({\si^2|x|^{\be_1}}
+|x|^{\be_2})^{\frac{N+2}{2}}}(\frac{1}{(\de_i^2+|\xi_i|^2)^{\frac{N-2}{2}}}+O(|x|^2))\\
&=&C_0C_\mu^{2^*-1} \si^{\frac{\ov{\mu}}{\sqrt{\ov{\mu}-\mu}}}\de_i^{\frac{N-2}{2}}\int_{\R^N}\frac{1}{({|z|^{\be_1}}
+|z|^{\be_2})^{\frac{N+2}{2}}}\frac{1}{|\xi_i|^{N-2}}+o(\si^{\frac{\ov{\mu}}{\sqrt{\ov{\mu}-\mu}}}\de_i^{\frac{N-2}{2}}),
\end{eqnarray*}
\begin{eqnarray*}
&&\int_{B(\xi_i,\frac{\eta}{4})}V_\si^{2^*-1}U_{\de_i,\xi_i}=C_0C_\mu^{2^*-1} \si^{\frac{N+2}{2}}\de_i^{\frac{N-2}{2}}\int_{B(0,\frac{\eta}{4})}\frac{1}{({\si^2|x+\xi_i|^{\be_1}}
+|x+\xi_i|^{\be_2})^{\frac{N+2}{2}}}\cdot\frac{1}{(\de_i^2+|x|^2)^{\frac{N-2}{2}}}\\
&\le&O(\si^{\frac{N+2}{2}}\de_i^{\frac{N-2}{2}}),
\end{eqnarray*}
then
\begin{eqnarray*}
(\ref{equ11-Lemma A-e12,e24,e30,e34})&=&C_0C_\mu^{2^*-1} \si^{\frac{\ov{\mu}}{\sqrt{\ov{\mu}-\mu}}}\de_i^{\frac{N-2}{2}}\int_{\R^N}\frac{1}{({|z|^{\be_1}}
+|z|^{\be_2})^{\frac{N+2}{2}}}\frac{1}{|\xi_i|^{N-2}}+o(\si^{\frac{\ov{\mu}}{\sqrt{\ov{\mu}-\mu}}}\de_i^{\frac{N-2}{2}}).
\end{eqnarray*}
On the other hand,
\begin{eqnarray}\label{equ12-Lemma A-e12,e24,e30,e34}
&&\int_\Om V_\si^{2^*-1}\vphi_{\de_i,\xi_i}\\\nonumber
&=&\int_{B(0,\frac{\eta}{2})}V_\si^{2^*-1}\vphi_{\de_i,\xi_i}+O(\si^{\frac{N+2}{2}}\de_i^{\frac{N-2}{2}})\\\nonumber
&=&C_0C_\mu^{2^*-1} \si^{\frac{N+2}{2}}\de_i^{\frac{N-2}{2}}\int_{B(0,\frac{\eta}{2})}\frac{H(\xi_i,x)}{({\si^2|x|^{\be_1}}
+|x|^{\be_2})^{\frac{N+2}{2}}}+O(\si^{\frac{N+2}{2}}\de_i^{\frac{N-2}{2}})\\\nonumber
&=&C_0C_\mu^{2^*-1} \si^{\frac{N+2}{2}}\de_i^{\frac{N-2}{2}}\int_{B(0,\frac{\eta}{2})}\frac{H(\xi_i,0)}{({\si^2|x|^{\be_1}}
+|x|^{\be_2})^{\frac{N+2}{2}}}+O(\si^{\frac{N+2}{2}}\de_i^{\frac{N-2}{2}})\\\nonumber
&=&C_0C_\mu^{2^*-1} \si^{\frac{\ov{\mu}}{\sqrt{\ov{\mu}-\mu}}}\de_i^{\frac{N-2}{2}}\int_{\R^N}\frac{H(\xi_i,0)}{({|z|^{\be_1}}
+|z|^{\be_2})^{\frac{N+2}{2}}}+o(\si^{\frac{\ov{\mu}}{\sqrt{\ov{\mu}-\mu}}}\de_i^{\frac{N-2}{2}}).
\end{eqnarray}
Similarly to (\ref{equ7-Lemma A-e12,e24,e30,e34}), we have
\begin{eqnarray}\label{equ13-Lemma A-e12,e24,e30,e34}
|\int_\Om\frac{ \ov{d}^{\sqrt{\ov{\mu}}-\sqrt{\ov{\mu}-\mu}}(x)H(0,x)}{|x|^2}(\frac{1}{(\de_i^2+|x-\xi_i|^2)^{\frac{N-2}{2}}}-\frac{1}{(|x-\xi_i|^2)^{\frac{N-2}{2}}})\le O(\de_i^2),
\end{eqnarray}
which yields
\begin{eqnarray}\label{equ14-Lemma A-e12,e24,e30,e34}
&&\mu\int_\Om\frac{\vphi_\si U_{\de_i,\xi_i}}{|x|^2}\\\nonumber
&=&\mu C_\mu C_0\si^{\frac{N-2}{2}}\de_i^{\frac{N-2}{2}}
\int_\Om\frac{\ov{d}^{\sqrt{\ov{\mu}}-\sqrt{\ov{\mu}-\mu}}(x)H(0,x)}{|x|^2|x-\xi_i|^{N-2}}+o(\mu\si^{\frac{N-2}{2}}\de_i^{\frac{N-2}{2}})\\\nonumber
&=&O(\mu\si^{\frac{N-2}{2}}\de_i^{\frac{N-2}{2}}).
\end{eqnarray}
It is also easy to see that
\begin{equation}\label{equ15-Lemma A-e12,e24,e30,e34}
\mu\int_{\Om}\frac{\vphi_\si \vphi_{\de_i,\xi_i}}{|x|^2} =O(\mu\si^{\frac{N-2}{2}}\de_i^{\frac{N-2}{2}}).
\end{equation}

By (\ref{equ11-Lemma A-e12,e24,e30,e34}), (\ref{equ12-Lemma A-e12,e24,e30,e34}), (\ref{equ14-Lemma A-e12,e24,e30,e34}), (\ref{equ15-Lemma A-e12,e24,e30,e34}), we conclude (\ref{equ2-Lemma A-e12,e24,e30,e34}).

\noindent(3). Proof of (\ref{equ3-Lemma A-e12,e24,e30,e34}).
\begin{eqnarray}\label{equ16-Lemma A-e12,e24,e30,e34}
&&\int_{\Om}\frac{U_{\de_i,\xi_i}^2}{|x|^2}\\\nonumber
&=&C_0^2\de_i^{N-2}\int_{\Om}\frac{1}{|x|^2(\de_i^2+|x-\xi_i|^2)^{N-2}}=C_0^2\de_i^{N-2}(C+\int_{B(\xi_i,\frac{\eta}{4})}\frac{1}{|x|^2(\de_i^2+|x-\xi_i|^2)^{N-2}})\\\nonumber
&=&C_0^2\de_i^{N-2}(C+\int_{B(0,\frac{\eta}{4})}\frac{1}{|x+\xi_i|^2(\de_i^2+|x|^2)^{N-2}})
=C_0^2\de_i^{N-2}(C+\int_{B(0,\frac{\eta}{4})}\frac{1+O(|x|^2)}{|\xi_i|^2(\de_i^2+|x|^2)^{N-2}})\\\nonumber
&=&C_0^2\de_i^{N-2}(C+\frac{1}{|\xi_i|^2}(-\int_{\R^N\backslash B(0,\frac{\eta}{4\de_i})}+\int_{\R^N})\frac{\de_i^{4-N}}{(1+|z|^2)^{N-2}}
+\frac{1}{|\xi_i|^2}\int_{B(0,\frac{\eta}{4})}\frac{O(|x|^2)}{(\de_i^2+|x|^2)^{N-2}})\\\nonumber
&=&\frac{C_0^2}{|\xi_i|^2}\de_i^2\int_{\R^N}\frac{1}{(1+|z|^2)^{N-2}}+O(\de_i^4).
\end{eqnarray}
On the other hand,
\begin{eqnarray}\label{equ17-Lemma A-e12,e24,e30,e34}
\int_{\Om}\frac{\vphi_{\de_i,\xi_i}^2}{|x|^2}&=&O(\de_i^{N-2});\\\label{equ18-Lemma A-e12,e24,e30,e34}
\int_{\Om}\frac{\vphi_{\de_i,\xi_i}U_{\de_i,\xi_i}}{|x|^2}&=&O(\de_i^{N-2}).
\end{eqnarray}

Then (\ref{equ16-Lemma A-e12,e24,e30,e34}), (\ref{equ17-Lemma A-e12,e24,e30,e34}), (\ref{equ18-Lemma A-e12,e24,e30,e34}) yield (\ref{equ3-Lemma A-e12,e24,e30,e34}).

\noindent(4). Proof of (\ref{equ4-Lemma A-e12,e24,e30,e34}).

We omit it here since it is similarly to (\ref{equ3-Lemma A-e12,e24,e30,e34}).
\end{proof}

\begin{lemma}\label{e4,e41}
As $\mu\to0^+$,
\begin{eqnarray}\label{equ1-Lemma A-e4,e41}
\int_{\Om}|P V_\si|^{2^*}
&=&S_\mu^{\frac{N}{2}}
-2^*C_0C_\mu^{2^*-1}H(0,0)
\si^{N-2}
\int_{\R^N} \frac{1}{({|z|^{\be_1}}+|z|^{\be_2})^{\frac{N+2}{2}}}\\\nonumber
&&+O(\mu\si^{N-2})+O(\si^N);\\\label{equ1-add-Lemma A-e4,e41}
\int_{\Om}|P U_{\de_i,\xi_i}|^{2^*}
&=&S_0^{\frac{N}{2}}-2^*C_0^{2^*} H(\xi_i,\xi_i)\de_i^{\frac{N-2}{2}}
\int_{\R^N} \frac{1}{(1+|z|^2)^{\frac{N+2}{2}}}+O(\de_i^N);\\\label{equ2-Lemma A-e4,e41}
&&\int_{\Om}|-\sum\limits_{i=1}^{k}PU_{\de_i,\xi_i}+P V_\si|^{2^*}\\\nonumber
&=&S_\mu^{\frac{N}{2}}
-2^*C_0C_\mu^{2^*-1}H(0,0)
\si^{N-2}
\int_{\R^N} \frac{1}{({|z|^{\be_1}}+|z|^{\be_2})^{\frac{N+2}{2}}}\\\nonumber
&&-2^*\sum\limits_{i=1}^{k}C_0C_\mu^{2^*-1} \si^{\frac{N-2}{2}}\de_i^{\frac{N-2}{2}}\int_{\R^N}\frac{G(\xi_i,0)}{({|z|^{\be_1}}
+|z|^{\be_2})^{\frac{N+2}{2}}}\\\nonumber
&&+\sum\limits_{i=1}^{k}[S_0^{\frac{N}{2}}
-2^*C_0^{2^*} H(\xi_i,\xi_i)
\de_i^{N-2}
\int_{\R^N} \frac{1}{(1+|z|^2)^{\frac{N+2}{2}}}\\\nonumber
&&+2^*\sum\limits_{j=1,j\neq i}^{k}C_0^{2^*} \de_i^{\frac{N-2}{2}}\de_j^{\frac{N-2}{2}}\int_{\R^N}\frac{G(\xi_i,\xi_j)}{(1+|z|^2)^{\frac{N+2}{2}}}
\\\nonumber
&&-2^*C_\mu C_0^{2^*-1} \si^{\frac{N-2}{2}}\de_i^{\frac{N-2}{2}}\int_{\R^N}\frac{G(\xi_i,0)}{(1+|z|^2)^{\frac{N+2}{2}}}
]\\\nonumber
&&+\sum\limits_{i=1}^{k}(o(\si^{\frac{N-2}{2}}\de_i^{\frac{N-2}{2}})+\sum\limits_{j=1,j\neq i}^{k}o(\de_j^{\frac{N-2}{2}}\de_i^{\frac{N-2}{2}})+O(\de_i^N))
+O(\mu\si^{N-2})
+O(\si^N).
\end{eqnarray}
\end{lemma}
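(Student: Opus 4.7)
\textbf{Proof plan for Lemma \ref{e4,e41}.}

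The plan is to obtain all three expansions from a single mechanism: write the integrand $|W|^{2^*}$ as a Taylor expansion around the dominant bubble in each region of $\Om$, plug in the projection decompositions $PV_\si=V_\si-\varphi_\si$ and $PU_{\de_i,\xi_i}=U_{\de_i,\xi_i}-\varphi_{\de_i,\xi_i}$ supplied by Proposition~\ref{proposition-projection estimate} and \eqref{equ-3-projection estimate}, and then reuse the bilinear estimates already carried out in Lemma~\ref{e12,e24,e30,e34}. Since $2^*\in(2,3)$ for $N\ge 7$, the pointwise inequality
\[
\big|\,|a+b|^{2^*}-|a|^{2^*}-2^*|a|^{2^*-2}ab\,\big|\le C\big(|a|^{2^*-2}b^2+|b|^{2^*}\big)
\]
is the only algebraic tool needed.

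For \eqref{equ1-Lemma A-e4,e41} I would write $|PV_\si|^{2^*}=V_\si^{2^*}-2^*V_\si^{2^*-1}\varphi_\si+O(V_\si^{2^*-2}\varphi_\si^2+\varphi_\si^{2^*})$, so that $\int V_\si^{2^*}=S_\mu^{N/2}+O(\si^N)$ by \eqref{equ10-Lemma A-e12,e24,e30,e34}, the first correction is exactly \eqref{equ6-Lemma A-e12,e24,e30,e34}, and the remainder is controlled by the uniform bound $\varphi_\si\le C H(0,x)\si^{(N-2)/2}$ from Proposition~\ref{proposition-projection estimate} to yield $O(\mu\si^{N-2})+O(\si^N)$. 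Expansion \eqref{equ1-add-Lemma A-e4,e41} is the same computation with the classical estimate \eqref{equ-3-projection estimate} replacing Proposition~\ref{proposition-projection estimate} and is identical to the standard Bahri--Coron calculation.

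For \eqref{equ2-Lemma A-e4,e41} I would partition $\Om$ into the neighbourhood $B(0,\eta/4)$ of the origin, the $k$ disjoint balls $B(\xi_i,\eta/4)$, and the exterior $\Om_0:=\Om\setminus(B(0,\eta/4)\cup\bigcup_i B(\xi_i,\eta/4))$. On $\Om_0$ every $PU_{\de_j,\xi_j}$ and $PV_\si$ is $O(\de_j^{(N-2)/2})$ respectively $O(\si^{(N-2)/2})$, so the integral there is absorbed in the error. Near the origin $PV_\si$ dominates and I would use
\[
\left|-\sum_iPU_{\de_i,\xi_i}+PV_\si\right|^{2^*}
 = (PV_\si)^{2^*}-2^*(PV_\si)^{2^*-1}\sum_i PU_{\de_i,\xi_i}+O\left((PV_\si)^{2^*-2}\big(\textstyle\sum_i PU_{\de_i,\xi_i}\big)^2\right);
\]
the first piece is handled by \eqref{equ1-Lemma A-e4,e41} (restricted to this ball, the tail being $O(\si^N)$), while the linear interaction term, after reinserting the decomposition $PV_\si=V_\si-\varphi_\si$, matches \eqref{equ2-Lemma A-e12,e24,e30,e34} and produces the mixed term involving $G(\xi_i,0)$. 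Near each $\xi_i$ the analogous expansion places $-PU_{\de_i,\xi_i}$ in the dominant slot and yields, via the classical counterpart of Lemma~\ref{e12,e24,e30,e34} for two standard bubbles, the pure term $-2^*C_0^{2^*}H(\xi_i,\xi_i)\de_i^{N-2}\int(1+|z|^2)^{-(N+2)/2}$ together with the cross terms $G(\xi_i,\xi_j)$ and $G(\xi_i,0)$. Summing the contributions of the $k+1$ regions and collecting constants gives precisely \eqref{equ2-Lemma A-e4,e41}.

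The technical obstacle is the sharp control of the quadratic remainder $\int (PV_\si)^{2^*-2}(PU_{\de_i,\xi_i})^2$ and its analogues near each $\xi_i$: since $2^*-2=4/(N-2)$ is small and the bubbles are concentrated at distinct scales, one must split each remainder integral at radius $\sqrt{\de_i\si}$ or $\sqrt{\de_i\de_j}$, and use the change of variables that concentrates one of the two factors. In every case the resulting bound is $O(\si^{(N-2)/2}\de_i^{(N+2)/2})$ or $O(\de_i^{(N-2)/2}\de_j^{(N+2)/2})$, which is indeed $o(\si^{(N-2)/2}\de_i^{(N-2)/2})$ and $o(\de_i^{(N-2)/2}\de_j^{(N-2)/2})$; verifying this uniformly is the only place where some care is required. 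Everything else reduces to algebra and to the estimates already recorded in Lemma~\ref{e12,e24,e30,e34}.
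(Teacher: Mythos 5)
Your proposal is correct and follows essentially the same route as the paper: a pointwise Taylor expansion of $|W|^{2^*}$ around the dominant bubble, the same partition of $\Omega$ into $B(0,\cdot)$, the balls $B(\xi_i,\cdot)$, and the exterior, and reuse of the bilinear estimates of Lemma~\ref{e12,e24,e30,e34} (in particular \eqref{equ6-Lemma A-e12,e24,e30,e34}, \eqref{equ10-Lemma A-e12,e24,e30,e34}, and \eqref{equ2-Lemma A-e12,e24,e30,e34}), with \eqref{equ1-add-Lemma A-e4,e41} taken from Bahri--Coron.
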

\begin{proof}
(\ref{equ1-add-Lemma A-e4,e41}) is from \cite{BaCor88CPAM}.

By (\ref{equ6-Lemma A-e12,e24,e30,e34}),
\begin{eqnarray*}
&&\int_{\Om}|P V_\si|^{2^*}=\int_{\Om}V_\si^{2^*}-2^*\int_\Om V_\si^{2^*-1}\vphi_\si+O(\si^N)\\
&=&S_\mu^{\frac{N}{2}}
-2^*C_0C_\mu^{2^*-1}H(0,0)
\si^{\frac{\sqrt{\ov{\mu}}(\sqrt{\ov{\mu}}+\sqrt{\ov{\mu}-\mu})}{\sqrt{\ov{\mu}-\mu}}}
\int_{\R^N} \frac{1}{({|z|^{\be_1}}+|z|^{\be_2})^{\frac{N+2}{2}}}\\\nonumber
&&+O(\mu\si^{\frac{\sqrt{\ov{\mu}}(\sqrt{\ov{\mu}}+\sqrt{\ov{\mu}-\mu})}{\sqrt{\ov{\mu}-\mu}}})+O(\si^N).
\end{eqnarray*}

Now we turn to (\ref{equ2-Lemma A-e4,e41}). By  (\ref{equ-1-projection estimate}), (\ref{equ-3-projection estimate}), (\ref{equ11-Lemma A-e12,e24,e30,e34}), (\ref{equ12-Lemma A-e12,e24,e30,e34}),
\begin{eqnarray}\label{equ3-Lemma A-e4,e41}
&&\int_{B(0,\frac{\eta}{2})}(P V_\si)^{2^*-1}PU_{\de_i,\xi_i}=\int_{B(0,\frac{\eta}{2})}(V_\si^{2^*-1}+O(V_\si^{2^*-2}\vphi_\si))(U_{\de_i,\xi_i}-\vphi_{\de_i,\xi_i})\\\nonumber
&=&\int_{B(0,\frac{\eta}{2})}V_\si^{2^*-1}(U_{\de_i,\xi_i}-\vphi_{\de_i,\xi_i})+O(\si^{\frac{\sqrt{\ov{\mu}}(N-2+\sqrt{\ov{\mu}-\mu})}{\sqrt{\ov{\mu}-\mu}}}\de_i^{\frac{N-2}{2}})\\\nonumber
&=&C_0C_\mu^{2^*-1} \si^{\frac{N-2}{2}}\de_i^{\frac{N-2}{2}}\int_{\R^N}\frac{G(\xi_i,0)}{({|z|^{\be_1}}
+|z|^{\be_2})^{\frac{N+2}{2}}}+o(\si^{\frac{N-2}{2}}\de_i^{\frac{N-2}{2}}).
\end{eqnarray}
Then by (\ref{equ1-Lemma A-e4,e41}),
\begin{eqnarray}\label{equ4-Lemma A-e4,e41}
&&\int_{B(0,\frac{\eta}{2})}|-\sum\limits_{i=1}^{k}PU_{\de_i,\xi_i}+P V_\si|^{2^*}\\\nonumber
&=&\int_{B(0,\frac{\eta}{2})}(P V_\si)^{2^*}-2^*\sum\limits_{i=1}^{k}\int_{B(0,\frac{\eta}{2})}(P V_\si)^{2^*-1}PU_{\de_i,\xi_i}+\sum\limits_{i=1}^{k}O(\int_{B(0,\frac{\eta}{2})}(P V_\si)^{2^*-2}(PU_{\de_i,\xi_i})^2)\\\nonumber
&=&S_\mu^{\frac{N}{2}}
-2^*C_0C_\mu^{2^*-1}H(0,0)
\si^{N-2}\int_{\R^N} \frac{1}{({|z|^{\be_1}}+|z|^{\be_2})^{\frac{N+2}{2}}}\\\nonumber
&&-2^*\sum\limits_{i=1}^{k}C_0C_\mu^{2^*-1} \si^{\frac{N-2}{2}}\de_i^{\frac{N-2}{2}}\int_{\R^N}\frac{G(\xi_i,0)}{({|z|^{\be_1}}
+|z|^{\be_2})^{\frac{N+2}{2}}}\\\nonumber
&&+\sum\limits_{i=1}^{k}o(\si^{\frac{N-2}{2}}\de_i^{\frac{N-2}{2}})
+O(\mu\si^{N-2})+O(\si^N).
\end{eqnarray}
We also have
\begin{eqnarray}\label{equ5-Lemma A-e4,e41}
&&\int_{B(\xi_i,\frac{\eta}{2})}|-\sum\limits_{i=1}^{k}PU_{\de_i,\xi_i}+P
V_\si|^{2^*}\\\nonumber
&=&\int_{B(\xi_i,\frac{\eta}{2})}|PU_{\de_i,\xi_i}+\sum\limits_{j=1,j\neq
i}^{k}PU_{\de_j,\xi_j}-PV_\si|^{2^*}\\\nonumber
&=&\int_{B(\xi_i,\frac{\eta}{2})}(PU_{\de_i,\xi_i})^{2^*}+2^*\sum\limits_{j=1,j\neq
i}^{k}(PU_{\de_i,\xi_i})^{2^*-1}PU_{\de_j,\xi_j}-2^*(PU_{\de_i,\xi_i})^{2^*-1}P
V_\si\\\nonumber
&&+O(\int_{B(\xi_i,\frac{\eta}{2})}(PU_{\de_i,\xi_i})^{2^*-2}(-\sum\limits_{j=1,j\neq
i}^{k}PU_{\de_j,\xi_j}+P V_\si)^2)\\\nonumber
&=&S_0^{\frac{N}{2}} -2^*C_0^{2^*} H(\xi_i,\xi_i) \de_i^{N-2}
\int_{\R^N}
\frac{1}{(1+|z|^2)^{\frac{N+2}{2}}}+O(\de_i^N)\\\nonumber
&&+2^*\sum\limits_{j=1,j\neq i}^{k}C_0^{2^*}
\de_i^{\frac{N-2}{2}}\de_j^{\frac{N-2}{2}}\int_{\R^N}\frac{G(\xi_i,\xi_j)}{(1+|z|^2)^{\frac{N+2}{2}}}
+\sum\limits_{j=1,j\neq
i}^{k}o(\de_j^{\frac{N-2}{2}}\de_i^{\frac{N-2}{2}})\\\nonumber
&&-2^*C_\mu C_0^{2^*-1}
\si^{\frac{N-2}{2}}\de_i^{\frac{N-2}{2}}\int_{\R^N}\frac{G(\xi_i,0)}{(1+|z|^2)^{\frac{N+2}{2}}}
+o(\si^{\frac{N-2}{2}}\de_i^{\frac{N-2}{2}}),
\end{eqnarray}
where the last equality was obtained by the results in \cite{BaCor88CPAM} and
\begin{eqnarray}\label{equ6-Lemma A-e4,e41}
&&\int_{B(\xi_i,\frac{\eta}{2})}(PU_{\de_i,\xi_i})^{2^*-1}P V_\si\\\nonumber
&=&\int_{B(\xi_i,\frac{\eta}{2})}U_{\de_i,\xi_i}^{2^*-1}(V_\si-\vphi_\si)+o(\de_i^{\frac{N-2}{2}}\si^{\frac{N-2}{2}})\\\nonumber
&=&C_\mu C_0^{2^*-1} \si^{\frac{N-2}{2}}\de_i^{\frac{N-2}{2}}\int_{\R^N}\frac{1}{(1+|z|^2)^{\frac{N+2}{2}}}
(\frac{1}{|\xi_i|^{\sqrt{\ov{\mu}}+\sqrt{\ov{\mu}-\mu}}}-\ov{d}^{\sqrt{\ov{\mu}}-\sqrt{\ov{\mu}-\mu}}(x)H(0,\xi_i))
+o(\si^{\frac{N-2}{2}}\de_i^{\frac{N-2}{2}})\\\nonumber
&=&C_\mu C_0^{2^*-1} \si^{\frac{N-2}{2}}\de_i^{\frac{N-2}{2}}\int_{\R^N}\frac{G(\xi_i,0)}{(1+|z|^2)^{\frac{N+2}{2}}}
+o(\si^{\frac{N-2}{2}}\de_i^{\frac{N-2}{2}}).
\end{eqnarray}
At last,
\begin{eqnarray}\label{equ7-Lemma A-e4,e41}
\int_{\Om\backslash B(0,\frac{\eta}{2})\bigcup\bigcup\limits_{i=1}^{k} B(\xi_i,\frac{\eta}{2})}|-\sum\limits_{i=1}^{k}PU_{\de_i,\xi_i}+P V_\si|^{2^*}\le\sum\limits_{i=1}^{k}O(\de_i^N)+O(\si^N).
\end{eqnarray}

Then (\ref{equ4-Lemma A-e4,e41}),(\ref{equ5-Lemma A-e4,e41}),(\ref{equ7-Lemma A-e4,e41}) yield (\ref{equ2-Lemma A-e4,e41}).
\end{proof}

\begin{lemma}\label{e42}
\begin{eqnarray}\label{equ1-Lemma A-e42}
&&\int_{\Om}|-\sum\limits_{i=1}^{k}PU_{\de_i,\xi_i}+P
V_\si|^{2^*}\ln|-\sum\limits_{i=1}^{k}PU_{\de_i,\xi_i}+P
V_\si|\\\nonumber
&=&-\frac{N-2}{2}\ln\si\cdot\int_{\R^N}
V_1^{2^*}-\frac{N-2}{2}\ln(\de_1
\de_2\dots\de_{k})\cdot\int_{\R^N}
U_{1,0}^{2^*}\\\nonumber &&+\int_{\R^N} V_1^{2^*}\ln
V_1+k\int_{\R^N} U_{1,0}^{2^*}\ln U_{1,0}+o(1).
\end{eqnarray}
\end{lemma}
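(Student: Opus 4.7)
The approach is to localize the integral near each bubble location and use rescalings that reduce $V_\si$ to $V_1$ and $U_{\de_i,\xi_i}$ to $U_{1,0}$, exactly as in Lemma~\ref{e4,e41} but now also tracking the extra logarithm factor. First, I would split $\Om = B(0,\eta/2) \cup \bigcup_{i=1}^{k} B(\xi_i,\eta/2) \cup \Om_{\rm out}$, where $\Om_{\rm out}$ is the complement. On $\Om_{\rm out}$ every $PV_\si$ and $PU_{\de_i,\xi_i}$ is uniformly bounded by $O(\si^{(N-2)/2})+\sum_i O(\de_i^{(N-2)/2})$, so $|{-}\sum PU_{\de_i,\xi_i}+PV_\si|^{2^*}\,|\ln|\cdots||$ integrates to $o(1)$ on this set.

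Second, on $B(0,\eta/2)$ the dominant term is $PV_\si$, since $|x-\xi_i|\ge\eta/2$ forces $PU_{\de_i,\xi_i}(x)=O(\de_i^{(N-2)/2})$ uniformly. Writing $W := -\sum_i PU_{\de_i,\xi_i}+PV_\si$, a Taylor expansion of $t\mapsto|t|^{2^*}\ln|t|$ around $t=PV_\si$ gives
\[
|W|^{2^*}\ln|W|
 = (PV_\si)^{2^*}\ln(PV_\si) + O\!\left((PV_\si)^{2^*-1}\big(1+|\ln PV_\si|\big)\sum_{i=1}^k|PU_{\de_i,\xi_i}|\right).
\]
The error integrates to $\sum_i O(\de_i^{(N-2)/2}\,\si^{(N-2)/2}|\ln\si|)=o(1)$, thanks to $\int V_\si^{2^*-1}(1+|\ln V_\si|)\,dx = O(\si^{(N-2)/2}|\ln\si|)$ (seen by rescaling, as below). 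Replacing $PV_\si$ by $V_\si$ using $PV_\si=V_\si-\vphi_\si$ with $\vphi_\si=O(\si^{(N-2)/2})$ from Proposition~\ref{proposition-projection estimate} contributes another $o(1)$. Then the change of variables $x=\si^{\gamma}z$ with $\gamma:=\sqrt{\ov{\mu}}/\sqrt{\ov{\mu}-\mu}$ (chosen so that $V_\si(\si^\gamma z)=\si^{-\gamma(N-2)/2}V_1(z)$, as in \eqref{equ5-Lemma A-e12,e24,e30,e34}) transforms
\[
\int_{B(0,\eta/2)}V_\si^{2^*}\ln V_\si\,dx
 = \int_{B(0,\eta/(2\si^\gamma))}V_1^{2^*}(z)\left[-\tfrac{\gamma(N-2)}{2}\ln\si+\ln V_1(z)\right]dz.
\]
Since $\eta/\si^\gamma\to\infty$ and $(\gamma-1)\ln\si=O(\mu|\ln\si|)=O(\eps^\al|\ln\eps|)=o(1)$, the right hand side tends to $-\tfrac{N-2}{2}\ln\si\int_{\R^N}V_1^{2^*}+\int_{\R^N}V_1^{2^*}\ln V_1+o(1)$.

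Third, on each $B(\xi_i,\eta/2)$ the dominant term is $-PU_{\de_i,\xi_i}$. The analogous Taylor expansion reduces the integral to $\int (U_{\de_i,\xi_i})^{2^*}\ln U_{\de_i,\xi_i}\,dx$ plus $o(1)$; the substitution $x=\xi_i+\de_i y$ together with $U_{\de_i,\xi_i}(\xi_i+\de_i y)=\de_i^{-(N-2)/2}U_{1,0}(y)$ yields
\[
\int_{B(\xi_i,\eta/2)}U_{\de_i,\xi_i}^{2^*}\ln U_{\de_i,\xi_i}\,dx
 = -\tfrac{N-2}{2}\ln\de_i\int_{\R^N}U_{1,0}^{2^*}+\int_{\R^N}U_{1,0}^{2^*}\ln U_{1,0}+o(1).
\]
Summing in $i=1,\ldots,k$ and combining with the previous two steps yields the stated expansion.

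The main obstacle is verifying that all error terms are uniformly $o(1)$, since the logarithm factors introduce additional factors of order $|\ln\eps|$. These are absorbed by the algebraic smallness of $\de_i^{(N-2)/2}$, $\si^{(N-2)/2}$ and of $\mu=\mu_0\eps^\al$, just as in the corresponding estimates without logarithms in Lemmas~\ref{e12,e24,e30,e34} and \ref{e4,e41}; the only genuinely new point is that the discrepancy $\gamma-1$ between the exact rescaling exponent for $V_\si$ and the naive exponent $1$ produces a harmless $o(1)$ correction to the $\ln\si$ coefficient.
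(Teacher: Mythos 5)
Your proof is correct and takes essentially the same approach as the paper: decompose $\Omega$ into $B(0,\eta/2)$, the balls $B(\xi_i,\eta/2)$, and the outer region, reduce the integral on each small ball to a single bubble via a Taylor expansion of $t\mapsto|t|^{2^*}\ln|t|$, and then rescale (with the anisotropic exponent $\gamma=\sqrt{\ov\mu}/\sqrt{\ov\mu-\mu}$ at the origin, and the usual $\de_i$-rescaling at $\xi_i$). The paper's proof simply cites the corresponding computation from \cite{PiFelMu03CV}; you have spelled out the same steps, including the necessary observation that $(\gamma-1)\ln\si=O(\eps^\al\ln\eps)=o(1)$ so the $\gamma(N-2)/2$ coefficient can be replaced by $(N-2)/2$.
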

\begin{proof}
Similarly to \cite{PiFelMu03CV},
\begin{eqnarray}\label{equ2-Lemma A-e42}
&&\int_{B(0,\frac{\eta}{2})}|-\sum\limits_{i=1}^{k}PU_{\de_i,\xi_i}+P
V_\si|^{2^*}\ln|-\sum\limits_{i=1}^{k}PU_{\de_i,\xi_i}+P
V_\si|\\\nonumber
&=&-\frac{\ov{\mu}}{\sqrt{\ov{\mu}-\mu}}\ln\si\cdot\int_{\R^N}
V_1^{2^*}+\int_{\R^N} V_1^{2^*}\ln V_1
+o(1)\\\nonumber
 &=&-\frac{N-2}{2}\ln\si\cdot\int_{\R^N}
V_1^{2^*}+\int_{\R^N} V_1^{2^*}\ln V_1
+o(1),\\\label{equ3-Lemma A-e42}
&&\int_{B(\xi_i,\frac{\eta}{2})}|-\sum\limits_{i=1}^{k}PU_{\de_i,\xi_i}+P
V_\si|^{2^*}\ln|-\sum\limits_{i=1}^{k}PU_{\de_i,\xi_i}+P
V_\si|\\\nonumber &=&-\frac{N-2}{2}\ln\de_i\cdot\int_{\R^N}
U_{1,0}^{2^*}+\int_{\R^N} U_{1,0}^{2^*}\ln
U_{1,0}+o(1),\\\label{equ4-Lemma A-e42} &&\int_{\Om\backslash
B(0,\frac{\eta}{2})\bigcup\limits_{i=1}^{k}
B(\xi_i,\frac{\eta}{2})}|-\sum\limits_{i=1}^{k}PU_{\de_i,\xi_i}+P
V_\si|^{2^*}\ln|-\sum\limits_{i=1}^{k}PU_{\de_i,\xi_i}+P
V_\si|\\\nonumber
&=&o(1),
\end{eqnarray}
then we conclude (\ref{equ1-Lemma A-e42}).

\end{proof}

\begin{lemma}\label{e44-e48}
As $\mu\to 0^+$,
\begin{eqnarray}\label{equ3-Lemma A-e44-e48}
\int_{\R^N}V_{1}^{2^*-1}&=&\int_{\R^N}U_{1,0}^{2^*-1}+o(1);\\\label{equ4-Lemma
A-e44-e48}
\int_{\R^N}V_{1}^{2^*}&=&\int_{\R^N}U_{1,0}^{2^*}+o(1);\\\label{equ5-Lemma
A-e44-e48} \int_{\R^N} V_{1}^{2^*}\ln
V_{1}&=&\int_{\R^N} U_{1,0}^{2^*}\ln
U_{1,0}+o(1);\\\label{equ1-Lemma A-e44-e48}
C_\mu&=&C_0-\frac{C_0}{N-2}\mu+O(\mu^2);\\\label{equ2-Lemma
A-e44-e48} S_\mu&=&S_0-\ov{S}\mu+O(\mu^2),
\end{eqnarray}
for some positive constant $\ov{S}$ independent of $\mu$.
\end{lemma}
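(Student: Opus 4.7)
The plan is to treat the five assertions in order, relying almost entirely on explicit formulas plus dominated convergence. The key observation is that as $\mu\to 0^+$, we have $\be_1\to 0$, $\be_2\to 2$, and $C_\mu\to C_0$, so $V_1(x)$ converges pointwise to $U_{1,0}(x)$, and the integrand in each integral above admits a $\mu$-independent integrable majorant.

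First I would establish the expansion of $C_\mu$ by writing
\[
C_\mu=\left(\frac{4N\ov{\mu}}{N-2}\right)^{\frac{N-2}4}\left(1-\frac{\mu}{\ov{\mu}}\right)^{\frac{N-2}4}
=C_0\left(1-\frac{\mu}{\ov{\mu}}\right)^{\frac{N-2}4},
\]
since $4N\ov{\mu}/(N-2)=N(N-2)$. A Taylor expansion of $(1-x)^{(N-2)/4}$ together with $\ov{\mu}=(N-2)^2/4$ gives $C_\mu=C_0-\frac{C_0}{N-2}\mu+O(\mu^2)$, which is the fourth assertion.

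Next I would tackle the three integral asymptotics. For $|x|\le 1$ one has $|x|^{\be_1}+|x|^{\be_2}\ge|x|^{\be_1}$, hence $V_1(x)\le C_\mu|x|^{-\be_1(N-2)/2}$; and for $|x|\ge1$ one has $|x|^{\be_1}+|x|^{\be_2}\ge|x|^{\be_2}$, hence $V_1(x)\le C_\mu|x|^{-\be_2(N-2)/2}$. For $\mu$ small enough one may fix $0<\be_1<\tfrac{1}{N}$ and $\tfrac{3}{2}<\be_2<\tfrac{5}{2}$, which provides a $\mu$-independent dominating function in each of $L^{2^*-1}(\R^N)$, $L^{2^*}(\R^N)$ and for the logarithmic integrand $|V_1|^{2^*}|\ln V_1|$ (absorbing the logarithm by a small power). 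Combined with the pointwise convergence $V_1\to U_{1,0}$, Lebesgue's dominated convergence theorem yields the three $o(1)$ statements.

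For the expansion of $S_\mu$, I would use the identity $S_\mu^{N/2}=\int_{\R^N}V_1^{2^*}$ and compute this integral explicitly. In polar coordinates, with the substitution $t=r^{\be_2-\be_1}$, the identity $(1-\be_1)/(\be_2-\be_1)=\tfrac12$ reduces the radial integral to a Beta integral:
\[
\int_{\R^N}V_1^{2^*}\,dx
=\frac{\om_{N-1}C_\mu^{2^*}}{\be_2-\be_1}\int_0^\infty\frac{t^{N/2-1}}{(1+t)^N}\,dt
=\frac{\om_{N-1}C_\mu^{2^*}\,B(N/2,N/2)}{\be_2-\be_1}.
\]
Since $C_\mu^{2^*}=C_0^{2^*}(1-\mu/\ov{\mu})^{N/2}$ and $\be_2-\be_1=2(1-\mu/\ov{\mu})^{1/2}$, Taylor expansion gives $\int V_1^{2^*}=S_0^{N/2}\bigl(1-\tfrac{(N-1)\mu}{2\ov{\mu}}+O(\mu^2)\bigr)$; raising this to the $2/N$ power yields
\[
S_\mu=S_0-\frac{4(N-1)}{N(N-2)^2}S_0\,\mu+O(\mu^2),
\]
so the constant $\ov{S}=\frac{4(N-1)}{N(N-2)^2}S_0$ is positive and independent of $\mu$. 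The main (mild) obstacle is verifying the uniform-in-$\mu$ integrable majorant near $x=0$, where the exponent $\be_1$ itself depends on $\mu$; this is why one must restrict to $\be_1$ small enough (rather than just $\be_1<1$) to ensure the bound $|x|^{-\be_1 N}$ is integrable at the origin uniformly for $\mu$ in a neighborhood of $0$.
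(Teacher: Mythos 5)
Your approach is sound and fills in a computation the paper leaves as ``direct computations''; the treatment of $C_\mu$, the reduction of $\int_{\R^N}V_1^{2^*}$ to a Beta integral via $t=r^{\be_2-\be_1}$ and the observation $(1-\be_1)/(\be_2-\be_1)=\tfrac12$, and the resulting expansion $S_\mu=S_0-\frac{4(N-1)}{N(N-2)^2}S_0\,\mu+O(\mu^2)$ are all correct.

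There is however a small but real slip in the dominated-convergence step for $\int_{\R^N}V_1^{2^*-1}$. Near infinity you bound $V_1^{2^*-1}\le C_\mu^{2^*-1}|x|^{-\be_2(N+2)/2}$, and you propose using the $\mu$-uniform lower bound $\be_2>\tfrac32$ to get a fixed majorant. But integrability at infinity of $|x|^{-\be_2(N+2)/2}$ requires $\be_2>\frac{2N}{N+2}$, and for $N\ge7$ one has $\frac{2N}{N+2}\ge\frac{14}{9}>\frac32$, so the proposed threshold $\tfrac32$ does not yield an $L^1$ majorant. The fix is trivial: since $\be_2\to2>\frac{2N}{N+2}$, pick any fixed $\be_2^*\in\bigl(\tfrac{2N}{N+2},\,2\bigr)$ and restrict to $\mu$ small enough that $\be_2\ge\be_2^*$; then $C|x|^{-\be_2^*(N+2)/2}$ is a genuine integrable majorant near infinity. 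The bounds you give ($\be_1<1/N$ near the origin and any $\be_2>1$ near infinity) are already more than enough for the $L^{2^*}$ and $|V_1|^{2^*}\ln|V_1|$ integrals, so the issue is isolated to the $2^*-1$ power. With that one threshold adjusted, the argument is complete.
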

\begin{proof}
The equalities can be obtained by direct computations.
\end{proof}
\section{\textbf{Appendix}\label{Appendix B}}
The lemmas used in Section \ref{Section 4} are listed below.

As Lemma \ref{e56-e62}, we have the following.
\begin{lemma}\label{e56-e62-Lemma B}
For $i,l=1,2,\dots,k,$ and $j,h=0,1,\dots,N$, it holds
\begin{eqnarray}\label{equ1-Lemma B-e56-e62}
(P\ov{\Psi},P\ov{\Psi})&=&\wt{c}_0\frac{1}{\si^2}+o(\frac{1}{\si^2});\\\label{equ2-Lemma
B-e56-e62}
(P\ov{\Psi},P\Psi_i^j)&=&o(\frac{1}{\si^2})(\text{and}~o(\frac{1}{\de_i^2}));\\\label{equ3-Lemma
B-e56-e62}
(P\Psi_i^j,P\Psi_i^j)&=&\wt{c}_{i,j}\frac{1}{\de_i^2}+o(\frac{1}{\de_i^2});\\\label{equ4-Lemma
B-e56-e62}
(P\Psi_i^j,P\Psi_l^h)&=&o(\frac{1}{\de_i^2})(\text{and}~o(\frac{1}{\de_l^2}))~~~
\text{if}~i\neq l~ \text{or}~ j\neq h,
\end{eqnarray}
where $\wt{c}_0>0, \wt{c}_{i,j}>0$ are constants.
\end{lemma}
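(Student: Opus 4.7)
The strategy is to mirror the argument of Lemma~\ref{e56-e62} from Appendix~A, treating each of the four estimates separately, with the understanding that now all bubbles concentrate at the origin and the scale hierarchy $\sigma \ll \delta_k \ll \delta_{k-1}\ll\cdots\ll\delta_1$ (thanks to $\delta_i=\lambda_i\eps^{(2i-1)/(N-2)}$, $\sigma = \bar\lambda\eps^{(2(k+1)-1)/(N-2)}$) replaces the spatial separation used in Section~3. In particular, since each $\xi_i=\delta_i\zeta_i$ is contained in a ball of radius $O(\delta_i/\eta)$ around $0$, the supports of the relevant bubbles are nested rather than disjoint, and the decoupling must come from the scale gap $\delta_{i+1}/\delta_i = O(\eps^{2/(N-2)})\to 0$.

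For \eqref{equ1-Lemma B-e56-e62}, I would repeat verbatim the computation in Lemma~\ref{e56-e62}: since $\bar\Psi$ is the eigenfunction of $-\Delta-\mu|x|^{-2}$ for the eigenvalue $2^*-1$ against $V_\sigma^{2^*-2}$ (Proposition~\ref{proposition-eigenvalue}), integration by parts gives
\[
(P\bar\Psi,P\bar\Psi)=(2^*-1)\int_\Omega V_\sigma^{2^*-2}\bar\Psi^2 + O\bigl(\sigma^{(N-2)/2}\bigr),
\]
where the remainder uses Proposition~\ref{proposition-projection estimate} to control $\bar\Psi-P\bar\Psi$. Rescaling $x=\sigma^{\sqrt{\bar\mu}/\sqrt{\bar\mu-\mu}}y$ reduces the main term to $\tilde c_0/\sigma^2$ with $\tilde c_0>0$ independent of $\eps$. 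For \eqref{equ2-Lemma B-e56-e62} with $h\neq 0$, the same integration by parts produces $(2^*-1)\int V_\sigma^{2^*-2}\bar\Psi\,\Psi_i^j$ modulo the same small correction; this integral is now handled by the change of variables $x=\sigma^{\sqrt{\bar\mu}/\sqrt{\bar\mu-\mu}}y$, using that the rescaled $U_{\delta_i,\xi_i}$ factor is essentially constant and equal to $C_0\de_i^{(2-N)/2}$ at $y=O(1)$ because $\delta_i\gg\sigma$, giving a contribution of order $\sigma^{(N-2)/2}\delta_i^{-(N-2)/2}\cdot \delta_i^{-1}$, which is $o(\sigma^{-2})$ and $o(\delta_i^{-2})$ for $\eps$ small, again because of the scale hierarchy.

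The estimates \eqref{equ3-Lemma B-e56-e62} and \eqref{equ4-Lemma B-e56-e62} for the $U$--$U$ pairings are already established in \cite[Lemma~A.5]{MussoPis10JMPA} since the Hardy term plays no role there (any cross-term picks up $O(\mu\delta_i)=o(\delta_i^{-2})$ after the usual integration-by-parts step, which is absorbed). So the only new content versus Appendix~A is the $\bar\Psi$-involving estimates \eqref{equ1-Lemma B-e56-e62} and \eqref{equ2-Lemma B-e56-e62}.

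The main technical obstacle I anticipate is the careful bookkeeping in \eqref{equ2-Lemma B-e56-e62}: because in this section the $\xi_i$ themselves scale with $\delta_i$, one cannot simply split the integration domain into fixed balls around $\xi_i$ and around the origin as in Appendix~A. Instead, I would partition using the annuli $A_i^n$ from \eqref{shape of domain Ai-2}, where each bubble is essentially concentrated in its own annulus, and then estimate $\int_{A_i}V_\sigma^{2^*-2}|\bar\Psi||\Psi_i^j|$ using the explicit self-similar forms of $V_\sigma$ and $\Psi_i^j$. The gain of powers of $\sigma/\delta_i = O(\eps^{2(k+2-i)/(N-2)})$ that emerges from this rescaling provides the required $o(\sigma^{-2})$ and $o(\delta_i^{-2})$ decay in a unified way.
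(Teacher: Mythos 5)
Your proposal is essentially correct and matches the paper's intent: the paper gives no explicit proof of this lemma, only the remark ``As Lemma~\ref{e56-e62}, we have the following,'' so the reader is expected to repeat the integration-by-parts and rescaling argument of Appendix~A while citing \cite[Lemma~A.5]{MussoPis10JMPA} for the $U$--$U$ pairings. You correctly identify the one genuine adaptation needed in the bubble-tower setting — that the bubbles are concentrated in nested scales rather than at separated points, so the annular partition $\{A_i\}$ of \eqref{shape of domain Ai-2} replaces the fixed-ball splitting and the scale gap $\si/\de_i\to 0$ (together with $\de_{i+1}/\de_i\to 0$) supplies the required smallness in \eqref{equ2-Lemma B-e56-e62} and \eqref{equ4-Lemma B-e56-e62}.
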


\begin{lemma}\label{e83-85-Lemma B}
\begin{eqnarray}\label{equ-Lemma B-e83-e84}
\|(f'_0(\sum\limits_{i=1}^{k}(-1)^{i-1}PU_{\de_i,\xi_i}+(-1)^k PV_\si)-f'_0(U_{\de_l,\xi_l}))\Psi_l^h\|_{2N/(N+2)}&=&o(\frac{1}{\de_l^{\frac{2N}{N+2}}}),\\\label{equ-Lemma B-e85}
\|(f'_0(\sum\limits_{i=1}^{k}(-1)^{i-1}PU_{\de_i,\xi_i}+(-1)^k PV_\si)-f'_0(V_\si))\ov{\Psi}\|_{2N/(N+2)}&=&o(\frac{1}{\si^{\frac{2N}{N+2}}}).
\end{eqnarray}
\end{lemma}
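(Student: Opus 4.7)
The plan is to prove both estimates via the annular decomposition standard in bubble-tower analysis. Partition $\Om$ into the telescoping pieces $A_1,\ldots,A_{k+1}$ defined in \eqref{shape of domain Ai-2}, together with the outer region $\Om\setminus B(0,\rho)$. The crucial structural property, which follows from the scale separation $\de_{i+1}/\de_i=O(\eps^{2/(N-2)})\to 0$ and the location $\xi_i=\de_i\ze_i$, is that on $A_i$ with $i\le k$ the bubble $U_{\de_i,\xi_i}$ dominates pointwise all the other bubbles and $V_\si$, while on $A_{k+1}$ the bubble $V_\si$ dominates. Consequently, on $A_l$ one has the decomposition
\[
V_{\eps,\la,\xi}(x)=(-1)^{l-1}U_{\de_l,\xi_l}(x)+E_l(x),\qquad |E_l(x)|=o\big(U_{\de_l,\xi_l}(x)\big)\ \text{uniformly on }A_l,
\]
where $E_l$ packages both the projection corrections (controlled by Proposition~\ref{proposition-projection estimate}) and the tails of the bubbles living at different scales; an analogous decomposition holds on $A_{k+1}$ for the second inequality.

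To estimate the contribution of $A_l$ to the first inequality, note that $f'_0(s)=(2^*-1)|s|^{2^*-2}$ is even, so the sign $(-1)^{l-1}$ is irrelevant, and since $2^*-2=4/(N-2)\in(0,1)$ for $N\ge 7$, the power $|\cdot|^{2^*-2}$ is Hölder continuous with exponent $2^*-2$, yielding
\[
\big|f'_0(V_{\eps,\la,\xi})-f'_0(U_{\de_l,\xi_l})\big|\le C|E_l|^{2^*-2}\quad\text{on }A_l.
\]
Rescaling $x=\xi_l+\de_l y$ in the $L^{2N/(N+2)}$-integral against $\Psi_l^h$ produces the natural prefactor $\de_l^{-2N/(N+2)}$ in front of an integral whose integrand converges pointwise to $0$ and is dominated by a fixed integrable function (built from $U_{1,\ze_l}$ and the corresponding derivative of $U_{1,\ze_l}$ with respect to $\ze_l^h$ or the scale); dominated convergence then delivers the desired $o(\de_l^{-2N/(N+2)})$ bound.

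For an off-diagonal annulus $A_i$ with $i\ne l$, I would bound $f'_0(V_{\eps,\la,\xi})$ and $f'_0(U_{\de_l,\xi_l})$ separately by the dominant bubble on $A_i$, combined with the two-regime decay of $\Psi_l^h$: it behaves like $\de_l^{(N-2)/2}|x-\xi_l|^{-(N-1)}$ when $|x-\xi_l|\gg\de_l$ (relevant for $i<l$) and like $\de_l^{-(N+2)/2}|x-\xi_l|$ when $|x-\xi_l|\ll\de_l$ (relevant for $i>l$). After rescaling to the natural variable of $A_i$, the resulting $L^{2N/(N+2)}$-integral is $\de_l^{-2N/(N+2)}$ times a positive power of $\de_i/\de_l$ or $\de_l/\de_i$, each of which tends to zero by scale separation. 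The second estimate \eqref{equ-Lemma B-e85} is proved by exactly the same procedure, with $V_\si$ in the role of the dominant bubble on $A_{k+1}$ and $\ov{\Psi}=\pa V_\si/\pa\si$ in the role of $\Psi_l^h$; the Hardy weight $\mu/|x|^2$ does not enter the pointwise comparisons, only the sharp pointwise bounds on $V_\si$ and $\vphi_\si$ from Proposition~\ref{proposition-projection estimate}. The main obstacle will be the careful bookkeeping on the two annuli $A_{l\pm 1}$ adjacent to $A_l$, where bubbles of neighboring scales interact; this is resolved by extracting the explicit gain $(\de_{l+1}/\de_l)^{(N-2)/4}=O(\eps^{1/2})$ coming from the geometric choice of the annular boundaries at the radius $\sqrt{\de_i\de_{i\pm 1}}$.
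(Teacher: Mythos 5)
Your overall strategy --- decomposing $\Om$ into the annuli $A_1,\dots,A_{k+1}$ from \eqref{shape of domain Ai-2} plus the outer region, and exploiting that $U_{\de_l,\xi_l}$ is the dominant bubble on $A_l$ --- matches the paper exactly, and the observation that one can replace the paper's Taylor bound $|f'_0(a+b)-f'_0(a)|\le C|a|^{2^*-3}|b|$ by the unconditional H\"older-continuity bound $|f'_0(a+b)-f'_0(a)|\le C|b|^{2^*-2}$ (valid since $2^*-2=\tfrac{4}{N-2}\in(0,1)$ for $N\ge7$) is a perfectly sound variant. However, two claims in your write-up are not correct as stated and need repair.

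First, the uniform estimate ``$|E_l(x)|=o(U_{\de_l,\xi_l}(x))$ uniformly on $A_l$'' is \emph{false} at the boundary of $A_l$: the matching radius $\sqrt{\de_l\de_{l+1}}$ is chosen precisely so that there $U_{\de_l,\xi_l}$ and $U_{\de_{l+1},\xi_{l+1}}$ are of the \emph{same} order, namely $\de_{l+1}^{-(N-2)/2}$. So the neighboring bubble is not a small relative perturbation near $\pa A_l$. The decomposition $V_{\eps,\la,\xi}=(-1)^{l-1}U_{\de_l,\xi_l}+E_l$ is fine, but the claimed uniform smallness is not.

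Second, the ``dominated convergence then delivers the $o(\de_l^{-2N/(N+2)})$ bound'' step is where this really bites. After rescaling $x=\xi_l+\de_l y$, the rescaled $E_l(\xi_l+\de_l y)$ is \emph{not} bounded uniformly in $\eps$ near the inner boundary $|y|\sim\sqrt{\de_{l+1}/\de_l}$ (it can be as large as $\de_l^{-(N-2)/2}$), so there is no fixed integrable dominating function, and the desired conclusion is anyway not of the form ``rescaled integral $\to 0$'' but a quantitative $o(\de_l^{-\text{power}})$. The standard repair --- and what the paper actually does --- is to apply H\"older's inequality on $A_l$ with exponents $\tfrac{N+2}{4}$ and $\tfrac{N+2}{N-2}$: the factor involving $\Psi_l^h$ (alone, in your H\"older-continuity version; or $U_{\de_l,\xi_l}^{2^*-3}\Psi_l^h$, in the paper's Taylor version) produces exactly the scale $\de_l^{-2N/(N+2)}$ by direct computation, while the second factor $\bigl(\int_{A_l}U_{\de_i,\xi_i}^{2^*}\bigr)^{(N-2)/(N+2)}$ for $i\neq l$ (and the analogous factors with $V_\si$ and the projection corrections) is $o(1)$ because the critical mass of each wrong-scale bubble concentrates on its own annulus $A_i$. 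This H\"older split replaces, and is stronger than, your dominated-convergence step; with it, the H\"older-continuity route you propose does go through and gives the same estimate as the paper. The same remarks apply verbatim to \eqref{equ-Lemma B-e85} with $V_\si$ and $\ov\Psi$ in place of $U_{\de_l,\xi_l}$ and $\Psi_l^h$.
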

\begin{proof} We only prove (\ref{equ-Lemma B-e83-e84}) for $h\neq0$.
\begin{eqnarray}\label{equ1-e83-85-Lemma B}
&&\int_\Om |(f'_0(\sum\limits_{i=1}^{k}(-1)^{i-1}PU_{\de_i,\xi_i}+(-1)^k PV_\si)-f'_0(U_{\de_l,\xi_l}))\Psi_l^h|^{2N/(N+2)}\\\nonumber
&=&\bigcup\limits_{i=1}^{k+1}\int_{A_i} |(f'_0(\sum\limits_{i=1}^{k}(-1)^{i-1}PU_{\de_i,\xi_i}+(-1)^k PV_\si)-f'_0(U_{\de_l,\xi_l}))\Psi_l^h|^{2N/(N+2)}\\\nonumber
&&+\int_{\Om\backslash B(0,\rho)} |(f'_0(\sum\limits_{i=1}^{k}(-1)^{i-1}PU_{\de_i,\xi_i}+(-1)^k PV_\si)-f'_0(U_{\de_l,\xi_l}))\Psi_l^h|^{2N/(N+2)}.
\end{eqnarray}
As Lemma A.3 in \cite{MussoPis10JMPA}, by (\ref{equ-1-projection estimate}), (\ref{equ-3-projection estimate}),
\begin{eqnarray}\label{equ2-e83-85-Lemma B}
&&\int_{A_l} |(f'_0(\sum\limits_{i=1}^{k}(-1)^{i-1}PU_{\de_i,\xi_i}+(-1)^k PV_\si)-f'_0(U_{\de_l,\xi_l}))\Psi_l^h|^{2N/(N+2)}\\\nonumber
&\le&C\int_{A_l} |U_{\de_l,\xi_l}^{2^*-3}\vphi_{\de_l,\xi_l}\Psi_l^h|^{2N/(N+2)}+C\sum\limits_{i\neq l}\int_{A_l} |U_{\de_l,\xi_l}^{2^*-3}U_{\de_i,\xi_i}\Psi_l^h|^{2N/(N+2)}\\\nonumber
&&+C\int_{A_l} |U_{\de_l,\xi_l}^{2^*-3}V_\si\Psi_l^h|^{2N/(N+2)}\\\nonumber
&\le&o(\frac{1}{\de_l^{\frac{2N}{N+2}}}),
\end{eqnarray}
since
\begin{eqnarray*}
&&\int_{A_l} |U_{\de_l,\xi_l}^{2^*-3}\vphi_{\de_l,\xi_l}\Psi_l^h|^{2N/(N+2)}\\
&\le&C\int_{A_l} |\frac{\de_l^{\frac{N+2}{2}}(x^h-\xi_l^h)}{(\de_l^2+|x-\xi_l|^2)^3}|^{2N/(N+2)}=O(\de_l^{\frac{2N(N-3)}{N+2}}),
\end{eqnarray*}
for $i\neq l$,
\begin{eqnarray*}
&&\int_{A_l} |U_{\de_l,\xi_l}^{2^*-3}U_{\de_i,\xi_i}\Psi_l^h|^{2N/(N+2)}\\
&=&C\int_{A_l} |\frac{\de_l^2(x^h-\xi_l^h)}{(\de_l^2+|x-\xi_l|^2)^3}\frac{\de_i^{\frac{N-2}{2}}}{(\de_i^2+|x-\xi_i|^2)^{\frac{N-2}{2}}}|^{2N/(N+2)}\\
&\le&C(\int_{A_l} |\frac{\de_l^2(x^h-\xi_l^h)}{(\de_l^2+|x-\xi_l|^2)^3}|^{\frac{N}{2}})^{\frac{4}{N+2}}
(\int_{A_l}|\frac{\de_i^{\frac{N-2}{2}}}{(\de_i^2+|x-\xi_i|^2)^{\frac{N-2}{2}}}|^{2N/(N-2)})^{\frac{N-2}{N+2}}\\
&=&o(\frac{1}{\de_l^{\frac{2N}{N+2}}}),
\end{eqnarray*}
and similarly,
\begin{eqnarray*}
\int_{A_l} |U_{\de_l,\xi_l}^{2^*-3}V_\si\Psi_l^h|^{2N/(N+2)}=o(\frac{1}{\de_l^{\frac{2N}{N+2}}}).
\end{eqnarray*}
The same arguments as (\ref{equ2-e83-85-Lemma B}) give that, for $i\neq l$,
\begin{eqnarray}\label{equ3-e83-85-Lemma B}
\int_{A_i} |(f'_0(\sum\limits_{i=1}^{k}(-1)^{i-1}PU_{\de_i,\xi_i}+(-1)^k PV_\si)-f'_0(U_{\de_l,\xi_l}))\Psi_l^h|^{2N/(N+2)}
=o(\frac{1}{\de_l^{\frac{2N}{N+2}}}).
\end{eqnarray}
At last,
\begin{eqnarray}\label{equ4-e83-85-Lemma B}
&&\int_{\Om\backslash B(0,\rho)} |(f'_0(\sum\limits_{i=1}^{k}(-1)^{i-1}PU_{\de_i,\xi_i}+(-1)^k PV_\si)-f'_0(U_{\de_l,\xi_l}))\Psi_l^h|^{2N/(N+2)}\\\nonumber
&=&\begin{cases}
O(\de_l^{\frac{N(N-2)}{N+2}})(O(\si^{\frac{4N}{N+2}})+\sum\limits_{i=1}^{k}O(\de_i^{\frac{4N}{N+2}}))\quad& \text{if} ~~h=1,2,\dots,N,\\\nonumber
O(\de_l^{\frac{N(N-4)}{N+2}})(O(\si^{\frac{4N}{N+2}})+\sum\limits_{i=1}^{k}O(\de_i^{\frac{4N}{N+2}}))\quad& \text{if} ~~h=0.
\end{cases}
\end{eqnarray}
Then (\ref{equ-Lemma B-e83-e84}) follows.
\end{proof}

\begin{lemma}\label{(M17)-Lemma B}
\begin{equation}\label{equ-Lemma B-(M17)}
\|\iota^*(\sum\limits_{i=1}^{k}(-1)^{i-1}f_0(U_{\de_i,\xi_i})+(-1)^k
f_0(V_\si))-V_{\eps,\la,\xi}\|_\mu \le
\sum\limits_{i=1}^{k}O(\mu\de_i)+O((\mu\si^{\frac{N-2}{2}})^{\frac{1}{2}}).
\end{equation}
\end{lemma}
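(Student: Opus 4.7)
The proof plan is essentially to reduce the estimate to Lemma~\ref{(M17)-Lemma A} term by term via linearity of $\iota^*$ and the triangle inequality. By the definition of $V_{\eps,\la,\xi}$ in \eqref{shape of V-2} and the fact that $\iota^*$ is linear, I would first rewrite
\[
\iota^*\!\left(\sum_{i=1}^{k}(-1)^{i-1}f_0(U_{\de_i,\xi_i})+(-1)^k f_0(V_\si)\right)-V_{\eps,\la,\xi}
=\sum_{i=1}^{k}(-1)^{i-1}\bigl(\iota^*(f_0(U_{\de_i,\xi_i}))-PU_{\de_i,\xi_i}\bigr)+(-1)^k\bigl(\iota^*(f_0(V_\si))-PV_\si\bigr),
\]
so the triangle inequality in the $\|\cdot\|_\mu$ norm absorbs all the signs and reduces the claim to separate estimates on each summand.

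Next, for each of the resulting terms I would re-run the two computations that appear in the proof of Lemma~\ref{(M17)-Lemma A}. Namely, applying the defining identity \eqref{adjoint operator} of $\iota^*$ against the test function $\iota^*(f_0(V_\si))-PV_\si$ together with the PDE satisfied by $PV_\si$ gives
\[
\int_\Om\bigl|\nabla\bigl(\iota^*(f_0(V_\si))-PV_\si\bigr)\bigr|^2
=\mu\int_\Om\frac{(V_\si-PV_\si)\bigl(\iota^*(f_0(V_\si))-PV_\si\bigr)}{|x|^2},
\]
and then Proposition~\ref{proposition-projection estimate} (specifically $0\le V_\si-PV_\si\le V_\si$ together with \eqref{equ-add1-projection estimate}) plus Hardy's inequality yields $\|\iota^*(f_0(V_\si))-PV_\si\|_\mu=O\bigl((\mu\si^{(N-2)/2})^{1/2}\bigr)$. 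The analogous computation for each $U_{\de_i,\xi_i}$ gives
\[
\int_\Om\bigl|\nabla\bigl(\iota^*(f_0(U_{\de_i,\xi_i}))-PU_{\de_i,\xi_i}\bigr)\bigr|^2
=\mu\int_\Om\frac{PU_{\de_i,\xi_i}\bigl(\iota^*(f_0(U_{\de_i,\xi_i}))-PU_{\de_i,\xi_i}\bigr)}{|x|^2},
\]
and then H\"older together with Hardy's inequality (noting $\int_\Om|PU_{\de_i,\xi_i}|^2/|x|^2\le C\de_i^2$ since $\xi_i=\de_i\ze_i$ with $|\ze_i|$ bounded is \emph{not} the situation here—$\xi_i$ stays uniformly away from $0$ only in Section~3; in Section~4 one instead uses $|PU_{\de_i,\xi_i}|\le U_{\de_i,\xi_i}$ and rescales) produces $\|\iota^*(f_0(U_{\de_i,\xi_i}))-PU_{\de_i,\xi_i}\|_\mu=O(\mu\de_i)$ as in \eqref{equ8-Lemma A-(M17)}.

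Combining these $k+1$ bounds via the triangle inequality gives exactly the stated estimate
\[
\sum_{i=1}^{k}O(\mu\de_i)+O\!\left((\mu\si^{(N-2)/2})^{1/2}\right).
\]
The only conceptual point worth monitoring is that Lemma~\ref{(M17)-Lemma A} is stated in the Section~3 setting where $\xi_i$ is bounded away from the origin, whereas here $\xi_i=\de_i\ze_i\to 0$. However, the key inequality used there is $\int_\Om(PU_{\de_i,\xi_i})^2/|x|^2\le C\de_i\|\iota^*(f_0(U_{\de_i,\xi_i}))-PU_{\de_i,\xi_i}\|_\mu$, which in the bubble-tower setting still holds because $PU_{\de_i,\xi_i}\le U_{\de_i,\xi_i}$ and a direct scaling computation gives $\int_{\R^N} U_{\de_i,\xi_i}^2/|x|^2=O(\de_i^2)\cdot\int_{\R^N} U_{1,\ze_i}^2/|y|^2$, which is bounded since $|\ze_i|\le 1/\eta$ and $U_{1,\ze_i}\in L^2(\R^N;|y|^{-2}dy)$ (for $N\ge 5$). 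This rescaling step is the one mild technicality; everything else is verbatim from the proof of Lemma~\ref{(M17)-Lemma A}.
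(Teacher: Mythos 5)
Your overall strategy---split via linearity of $\iota^*$ and the triangle inequality, then re-run the two computations from Lemma~\ref{(M17)-Lemma A}---is exactly what the paper does (its proof is one line: ``similarly to Lemma~\ref{(M17)-Lemma A}''), and your treatment of the $V_\si$-term is correct. However, the rescaling step that you yourself flag as ``the one mild technicality'' is computed wrong. With $\xi_i=\de_i\ze_i$ in the bubble-tower regime, the substitution $x=\de_i y$ gives
\[
\int_{\R^N}\frac{U_{\de_i,\xi_i}^2(x)}{|x|^2}\,dx
= C_0^2\int_{\R^N}\frac{dy}{|y|^2\left(1+|y-\ze_i|^2\right)^{N-2}} = O(1),
\]
\emph{not} $O(\de_i^2)$: the exponent of $\de_i$ coming out of the change of variables is $-(N-2)-2+N=0$, reflecting the dilation invariance of $\int u^2/|x|^2$ under $u\mapsto\de^{-(N-2)/2}u(\cdot/\de)$ (the center $\xi_i=\de_i\ze_i$ rescales to the fixed $\ze_i$). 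The paper's own \eqref{equ3-Lemma B-e72-e78} confirms this $O(1)$ scaling. Consequently the inequality chain from Lemma~\ref{(M17)-Lemma A}, namely $\|\iota^*(f_0(U_{\de_i,\xi_i}))-PU_{\de_i,\xi_i}\|_\mu^2 \le \mu\left(\int_\Om\frac{(PU_{\de_i,\xi_i})^2}{|x|^2}\right)^{1/2}\left(\int_\Om\frac{(\iota^*(f_0(U_{\de_i,\xi_i}))-PU_{\de_i,\xi_i})^2}{|x|^2}\right)^{1/2}$ together with Hardy, yields only $\|\iota^*(f_0(U_{\de_i,\xi_i}))-PU_{\de_i,\xi_i}\|_\mu = O(\mu)$, not the stated $O(\mu\de_i)$.

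To be fair, this appears to be a defect in the lemma statement itself, carried over verbatim from Lemma~\ref{(M17)-Lemma A}, where $\xi_i$ stays bounded away from the origin and $\int_\Om(PU_{\de_i,\xi_i})^2/|x|^2 = O(\de_i^2)$ is genuine. The weaker bound $O(\mu)=O(\eps)$ is still sufficient for its only downstream use in Proposition~\ref{proposition-estimate of error-2}: with $N\ge7$ one has $\eps = o\left(\eps^{(N+2)/(2(N-2))}\right)$, so the final error estimate there is unchanged. But your write-up should not present $O(\de_i^2)$ as the outcome of the rescaling, because that is exactly the point where the Section~3 and Section~4 geometries diverge, and the rescaling gives $O(1)$.
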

\begin{proof}
It is similarly to Lemma \ref{(M17)-Lemma A}.
\end{proof}
\begin{lemma}\label{e53-e55-B}
\begin{eqnarray}\label{equ1-Lemma B-e53-e55}
\|(f'_\eps(V_{\eps,\la,\xi})-f'_0(V_{\eps,\la,\xi}))\phi\|_{2N/(N+2)}&=&C\eps\|\phi\|_\mu;\\\label{equ2-Lemma B-e53-e55}
\|f_\eps(V_{\eps,\la,\xi})-f_0(V_{\eps,\la,\xi})\|_{2N/(N+2)}&=&C\eps;\\\label{equ3-Lemma B-e53-e55}
\|f_0(V_{\eps,\la,\xi})-(\sum\limits_{i=1}^{k}(-1)^{i-1}f_0(U_{\de_i,\xi_i})+(-1)^k f_0(V_\si))\|_{2N/(N+2)}&=& O(\eps^\frac{N+2}{2(N-2)}).
\end{eqnarray}
\end{lemma}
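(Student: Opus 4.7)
The plan is to prove the three estimates separately, dispatching (1) and (2) by standard pointwise expansions of $f_\eps-f_0$ in $\eps$, and (3) by a careful decomposition over the annular regions $A_j$ defined in \eqref{shape of domain Ai-2}, where in each region a single bubble of the tower dominates.

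For (1) and (2), I would write $f_\eps(s)-f_0(s)=(|s|^{-\eps}-1)|s|^{2^*-2}s$ and use the expansion $|s|^{-\eps}-1=-\eps\ln|s|+O(\eps^2\ln^2|s|)$. Both bounds then reduce to showing that $\| |V_{\eps,\la,\xi}|^{2^*-1}\ln|V_{\eps,\la,\xi}|\|_{2N/(N+2)}$ and (for (1), after multiplying by $\phi$ and applying H\"older plus Sobolev) $\| |V_{\eps,\la,\xi}|^{2^*-2}\ln|V_{\eps,\la,\xi}|\|_{N/2}$ are uniformly bounded. On each annulus $A_j$ the logarithm is at worst $C|\ln\eps|$ in a neighborhood of the origin, which is absorbed by the $\eps$-prefactor, while the integrability of $U_{1,0}^{2^*-1}\ln U_{1,0}$ on $\R^N$ provides the remaining control after rescaling. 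This argument is a direct transcription of the one used for Lemma \ref{e53-e55} in Appendix~A.

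The main estimate is (3). On each annulus $A_j$ I write
\[
V_{\eps,\la,\xi} = (-1)^{j-1}PU_{\de_j,\xi_j} + R_j,
\qquad R_j := \sum_{i\neq j,\,i\le k}(-1)^{i-1}PU_{\de_i,\xi_i} + (-1)^k PV_\si,
\]
with the convention that the slot $j=k+1$ refers to $PV_\si$. Using the elementary inequality
\[
\bigl| f_0(a+b)-f_0(a)-f_0(b)\bigr| \le C\bigl(|a|^{2^*-2}|b|+|a|\,|b|^{2^*-2}\bigr),
\]
combined with \eqref{equ-add1-projection estimate} and \eqref{equ-3-projection estimate} to replace each $PU_{\de_i,\xi_i}$ by $U_{\de_i,\xi_i}$ up to harmonic corrections of size $O(\de_i^{(N-2)/2})$, I reduce (3) to estimating
\[
\int_{A_j} U_{\de_j,\xi_j}^{(2^*-2)\cdot 2N/(N+2)}|R_j|^{2N/(N+2)}
\quad\text{and}\quad
\int_{A_j} U_{\de_j,\xi_j}^{2N/(N+2)}|R_j|^{(2^*-2)\cdot 2N/(N+2)}.
\]
Because the neighbouring bubbles satisfy $\de_{j\pm1}/\de_j=O(\eps^{\pm 2/(N-2)})$, on $A_j$ one controls $U_{\de_i,\xi_i}$ for $i\neq j$ by the dominant value achieved at the boundary $\partial A_j$, producing explicit powers of $\eps$ after rescaling $x=\de_j y$.

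The principal obstacle will be bookkeeping the many cross terms in $R_j$ across all $k+1$ annuli and verifying that the slowest-decaying contribution is precisely of order $\eps^{(N+2)/(2(N-2))}$. This dominant overlap comes from two consecutive bubbles in the tower: after rescaling, it produces a factor $(\de_{j+1}/\de_j)^{(N-2)/2}$ (or its analogue with $\de_{j-1}$) times a bounded $\R^N$-integral, and its $L^{2N/(N+2)}$ norm works out to the claimed order. The leftover region $\Om\setminus B(0,\rho)$ is handled by crude pointwise bounds and gives an error of order $\sum_i O(\de_i^{(N+2)/2})+O(\si^{(N+2)/2})$, which is absorbed into $O(\eps^{(N+2)/(2(N-2))})$. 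Summing over $j$ by the triangle inequality yields (3).
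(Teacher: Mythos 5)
Your overall strategy for (3)—decomposing over the annuli $A_j$ from \eqref{shape of domain Ai-2} where one bubble dominates—is the right framework (and the one Musso--Pistoia use for the result the paper invokes), and parts (1)--(2) go through as you describe. However, there is a genuine gap in (3): the elementary inequality
\[
|f_0(a+b)-f_0(a)-f_0(b)| \le C\bigl(|a|^{2^*-2}|b|+|a|\,|b|^{2^*-2}\bigr)
\]
is true but \emph{too crude} when $N\ge 7$, because then $2^*-2=4/(N-2)<1$ and the two products on the right can differ by an unbounded factor. The crudeness is fatal: your second reduced quantity is too large. Indeed, on $A_j$ the dominant part of $R_j$ is the adjacent bubble, and after rescaling $x=\de_j y$ (so that $U_{\de_{j+1},\xi_{j+1}}(\de_j y)\approx C\de_{j+1}^{(N-2)/2}(\de_j|y|)^{-(N-2)}$ on $A_j/\de_j$) one computes
\[
\int_{A_j} U_{\de_j,\xi_j}^{\frac{2N}{N+2}}\,U_{\de_{j+1},\xi_{j+1}}^{\frac{8N}{(N-2)(N+2)}}\,dx
= O\Bigl(\bigl(\de_{j+1}/\de_j\bigr)^{\frac{4N}{N+2}}\Bigr)
= O\bigl(\eps^{\frac{8N}{(N-2)(N+2)}}\bigr),
\]
the rescaled $y$-integral being convergent for $N\ge7$. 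Raising to the power $(N+2)/(2N)$ gives a contribution of order $\eps^{4/(N-2)}$ to the $L^{2N/(N+2)}$ norm, and $\frac{4}{N-2}<\frac{N+2}{2(N-2)}$ exactly when $N>6$. So for every $N\ge7$ this term decays strictly slower than the claimed $O(\eps^{(N+2)/(2(N-2))})$, and your reduction cannot deliver the stated bound.

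The repair is to use the sharper inequality, valid whenever $0<2^*-2\le1$,
\[
|f_0(a+b)-f_0(a)-f_0(b)|
\le C\,\max(|a|,|b|)^{2^*-2}\min(|a|,|b|)
= C\,\min\bigl(|a|^{2^*-2}|b|,\ |a|\,|b|^{2^*-2}\bigr),
\]
which follows from the mean value theorem together with $|b|^{2^*-1}\le|a|^{2^*-2}|b|$ when $|b|\le|a|$. On $A_j$ the bubble $U_{\de_j,\xi_j}$ pointwise dominates every other summand in $V_{\eps,\la,\xi}$, so only your \emph{first} integral survives; that one does give
\[
\bigl\|\,U_{\de_j,\xi_j}^{2^*-2}\,U_{\de_{j+1},\xi_{j+1}}\bigr\|_{L^{2N/(N+2)}(A_j)}
= O\bigl((\de_{j+1}/\de_j)^{(N+2)/4}\bigr)
= O\bigl(\eps^{(N+2)/(2(N-2))}\bigr),
\]
which matches the claim, and matches the other contribution of the same order coming from the harmonic correction of the outermost bubble, $\|U_{\de_1,\xi_1}^{2^*-2}\vphi_{\de_1,\xi_1}\|_{2N/(N+2)}=O(\de_1^{(N+2)/2})=O(\eps^{(N+2)/(2(N-2))})$. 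Also note that, as a consequence, your intuitive attribution of the slowest decay to a factor $(\de_{j+1}/\de_j)^{(N-2)/2}=O(\eps)$ undercounts: the correct power is $(\de_{j+1}/\de_j)^{(N+2)/4}$, which is larger and exactly of the claimed order. The paper's proof simply cites the analogous estimate (4.5) of Musso--Pistoia, where this pointwise $\max$--$\min$ bound is the operative tool.
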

\begin{proof}
The first two, as Lemma \ref{e53-e55}, are from
\cite{BarMiPis06CVPDE}. The last one can be proved as (4.5) in
\cite{MussoPis10JMPA}.
\end{proof}

\begin{lemma}\label{e72-e78} Let $k\ge1$. Then
\begin{eqnarray}\label{equ1-Lemma B-e72-e78}
&&\int_{\Om}|\nabla
P V_\si|^2-\mu\frac{|P V_\si|^2}{|x|^2}=S_\mu^{\frac{N}{2}}+o(\eps);\\\label{equ2-Lemma B-e72-e78}
&&\int_{\Om}\nabla
P V_\si\nabla PU_{\de_i,\xi_i}-\mu\frac{P V_\si PU_{\de_i,\xi_i}}{|x|^2}\\\nonumber
&=&
\begin{cases}
C_0^{2^*}(\frac{\ov{\la}}{\la_k})^{\frac{N-2}{2}}\int_{\R^N}\frac{1}{(1+|y|^2)^{\frac{N+2}{2}}}\frac{1}{(1+|\ze_k|^2)^{\frac{N-2}{2}}}\cdot\eps+o(\eps)
\quad& \text{if} ~~i=k,\\\nonumber
o(\eps)\quad& \text{if} ~~i\neq k;
\end{cases}\\\label{equ3-Lemma B-e72-e78}
&&\mu\int_{\Om}\frac{|PU_{\de_i,\xi_i}|^2}{|x|^2}=\mu C_0^2\int_{\R^N}\frac{1}{|y|^2(1+|y-\ze_i|^2)^{N-2}}+o(\eps);\\\label{equ4-Lemma B-e72-e78}
&&\mu\int_{\Om}\frac{PU_{\de_i,\xi_i}PU_{\de_j,\xi_j}}{|x|^2}
=o(\eps),~~~i\neq j;\\\label{equ5-Lemma B-e72-e78}
&&\int_{\Om}|\nabla P U_{\de_i,\xi_i}|^2\\\nonumber
&=&
\begin{cases}
S_0^{\frac{N}{2}}-C_0^{2^*}H(0,0)\la_1^{N-2}\int_{\R^N} \frac{1}{(1+|z|^2)^{\frac{N+2}{2}}}\cdot\eps+o(\eps)
\quad& \text{if} ~~i=1,\\\nonumber
S_0^{\frac{N}{2}}+o(\eps)\quad& \text{if} ~~i\neq 1;
\end{cases}\\\label{equ6-Lemma B-e72-e78}
&&\int_{\Om}\nabla P U_{\de_i,\xi_i}\nabla P U_{\de_j,\xi_j}\\\nonumber
&=&
\begin{cases}
C_0^{2^*}(\frac{\la_{i+1}}{\la_i})^{\frac{N-2}{2}}\int_{\R^N}\frac{1}{(1+|y|^2)^{\frac{N+2}{2}}}\frac{1}{(1+|\ze_i|^2)^{\frac{N-2}{2}}}\cdot\eps+o(\eps)
\quad& \text{if} ~~j=i+1,\\\nonumber
o(\eps)\quad& \text{otherwise},
\end{cases}
\end{eqnarray}
where we assume, without loss of generality, $1\le i<j\le k$.
\end{lemma}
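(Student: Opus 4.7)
The plan is to follow the integration-by-parts strategy of Lemmas~\ref{e12,e24,e30,e34} and \ref{e4,e41} in Appendix~A, adapted to the bubble-tower scale ordering. Recall $\de_i=\la_i\eps^{(2i-1)/(N-2)}$, $\si=\ov{\la}\eps^{(2k+1)/(N-2)}$, $\xi_i=\de_i\ze_i$, and $\mu=\mu_0\eps$, so that $\de_1\gg\de_2\gg\cdots\gg\de_k\gg\si$ with all concentration points tending to the origin. The orders in $\eps$ that will drive the computation are $\de_1^{N-2}=\la_1^{N-2}\eps$, $\de_i^{N-2}=O(\eps^{2i-1})$ for $i\ge2$, $\si^{N-2}=O(\eps^{2k+1})$, and, crucially, $(\de_{i+1}/\de_i)^{(N-2)/2}=(\la_{i+1}/\la_i)^{(N-2)/2}\eps$ together with $(\si/\de_k)^{(N-2)/2}=(\ov{\la}/\la_k)^{(N-2)/2}\eps$. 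Thus only interactions between adjacent bubbles in the tower contribute at order $\eps$; all other contributions will be $o(\eps)$.

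For each identity I would integrate by parts using $-\De PV_\si=\mu V_\si/|x|^2+V_\si^{2^*-1}$ and $-\De PU_{\de_i,\xi_i}=U_{\de_i,\xi_i}^{2^*-1}$, reducing the left-hand side to integrals of products of bubbles, bubbles against the Hardy kernel, and the harmonic-extension corrections $\vphi_{\de_i,\xi_i}$, $\vphi_\si$. For (\ref{equ1-Lemma B-e72-e78}), the computation is identical to that of (\ref{equ1-Lemma A-e12,e24,e30,e34}), producing $S_\mu^{N/2}-C_0C_\mu^{2^*-1}H(0,0)\si^{N-2}\int(\cdot)+O(\mu\si^{N-2})+O(\si^N)=S_\mu^{N/2}+o(\eps)$ once $k\ge1$. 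For (\ref{equ5-Lemma B-e72-e78}), the standard Bahri--Coron expansion gives $S_0^{N/2}-C_0^{2^*}H(\xi_i,\xi_i)\de_i^{N-2}\int(1+|z|^2)^{-(N+2)/2}+o(\de_i^{N-2})$; since $\xi_i=\de_i\ze_i\to 0$ and $H$ is continuous, $H(\xi_i,\xi_i)=H(0,0)+o(1)$, so $\de_1^{N-2}=\la_1^{N-2}\eps$ yields the stated formula for $i=1$, while $\de_i^{N-2}=O(\eps^{2i-1})=o(\eps)$ disposes of $i\ge2$.

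The crucial interaction estimates are (\ref{equ2-Lemma B-e72-e78}) for $i=k$ and (\ref{equ6-Lemma B-e72-e78}) for $j=i+1$. Integration by parts reduces them to $\int_\Om V_\si^{2^*-1}PU_{\de_k,\xi_k}$ and $\int_\Om PU_{\de_i,\xi_i}U_{\de_{i+1},\xi_{i+1}}^{2^*-1}$ respectively, up to Hardy corrections $\mu\int\vphi_\si PU/|x|^2$ and harmonic-extension corrections $\int U^{2^*-1}\vphi$, all of which are $o(\eps)$ by Proposition~\ref{proposition-projection estimate} and the scale separation. In each of these two displayed integrals the ``inner'' bubble ($V_\si$ or $U_{\de_{i+1},\xi_{i+1}}$) is concentrated on a scale $\ll\de_i$ about the origin, so on its support the ``outer'' function $PU_{\de_i,\xi_i}$ is essentially constant, equal to $U_{\de_i,\xi_i}(0)=C_0\de_i^{-(N-2)/2}(1+|\ze_i|^2)^{-(N-2)/2}$ up to relative error $o(1)$. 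Combined with the elementary mass $\int U_{\de',0}^{2^*-1}=C_0^{2^*-1}(\de')^{(N-2)/2}\int(1+|y|^2)^{-(N+2)/2}dy$ and its $V$-analogue $\int V_\si^{2^*-1}=(1+o(1))C_0^{2^*-1}\si^{(N-2)/2}\int(1+|y|^2)^{-(N+2)/2}dy$ (which uses $C_\mu\to C_0$, $\be_1\to 0$, $\be_2\to 2$ as $\mu\to 0$, cf.~Lemma~\ref{e44-e48}), this gives the claimed $\eps$-leading formulas after noting $(\si/\de_k)^{(N-2)/2}=(\ov{\la}/\la_k)^{(N-2)/2}\eps$ and $(\de_{i+1}/\de_i)^{(N-2)/2}=(\la_{i+1}/\la_i)^{(N-2)/2}\eps$. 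For non-adjacent pairs an extra factor $(\de_j/\de_i)^{(N-2)/2}=O(\eps^{j-i})$ with $j-i\ge2$ appears, and the contribution is $o(\eps)$.

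The Hardy estimates (\ref{equ3-Lemma B-e72-e78}) and (\ref{equ4-Lemma B-e72-e78}) follow by the rescaling $x=\de_i y$: one obtains
\[
\mu\int_\Om\frac{U_{\de_i,\xi_i}^2}{|x|^2}dx
 =\mu C_0^2\int_{\Om/\de_i}\frac{dy}{|y|^2(1+|y-\ze_i|^2)^{N-2}}
 \to\mu C_0^2\int_{\R^N}\frac{dy}{|y|^2(1+|y-\ze_i|^2)^{N-2}},
\]
with the $\vphi_{\de_i,\xi_i}$-corrections giving a term $O(\mu\de_i^{N-2})=o(\eps)$, while the cross term (\ref{equ4-Lemma B-e72-e78}) is bounded via H\"older and Hardy by $O(\mu\de_i^{(N-2)/2}\de_j^{(N-2)/2})=O(\eps^{i+j})=o(\eps)$. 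The main obstacle is the careful bookkeeping of remainder terms: one has to show that every cross-product of $\vphi_{\de_i,\xi_i}$ or $\vphi_\si$ with the other bubbles produces a contribution of order strictly higher than $\eps$, which ultimately rests on the pointwise bounds of Proposition~\ref{proposition-projection estimate} together with the strict scale separation inherent in the tower ansatz.
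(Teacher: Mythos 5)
Your overall strategy matches the paper exactly: integrate by parts to convert the $H_\mu$-pairings into nonlinear interaction integrals, use Proposition~\ref{proposition-projection estimate} and the scale ordering $\de_1\gg\cdots\gg\de_k\gg\si$, and observe that the leading contribution comes from freezing the "outer" bubble at the origin while the mass of the "inner" bubble produces the factor $\int(1+|y|^2)^{-(N+2)/2}$. This is equivalent to the paper's annular decomposition over the regions $A_j$ followed by the substitution $x=\si^{\sqrt{\ov\mu}/\sqrt{\ov\mu-\mu}}y$ (respectively $x=\de_j y$), and the leading terms you compute for \eqref{equ1-Lemma B-e72-e78}, \eqref{equ2-Lemma B-e72-e78}, \eqref{equ3-Lemma B-e72-e78}, \eqref{equ5-Lemma B-e72-e78}, \eqref{equ6-Lemma B-e72-e78} all agree with the paper's.

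The one step that does not hold up is your estimate for \eqref{equ4-Lemma B-e72-e78}. You claim the cross Hardy term is bounded ``via H\"older and Hardy by $O(\mu\de_i^{(N-2)/2}\de_j^{(N-2)/2})$'', which appears to be borrowed from the separated-bubble estimate \eqref{equ4-Lemma A-e12,e24,e30,e34} in Appendix~A. That estimate relies on the concentration points $\xi_i,\xi_j$ being fixed and distinct (and away from $0$), so that on each concentration region one of the two bubbles has size $O(\de^{(N-2)/2})$ while $|x|^{-2}$ is bounded. In the tower setting $\xi_i=\de_i\ze_i\to0$ and $\xi_j=\de_j\ze_j\to0$, and this fails: at the scale of the inner bubble, $U_{\de_i}\approx\de_i^{-(N-2)/2}(1+|\ze_i|^2)^{-(N-2)/2}$ is large rather than $O(\de_i^{(N-2)/2})$, and $|x|^{-2}\approx\de_j^{-2}|y|^{-2}$ blows up. Moreover, the naive Cauchy--Schwarz bound via \eqref{equ3-Lemma B-e72-e78} only gives $\mu\bigl(\int|x|^{-2}(PU_i)^2\bigr)^{1/2}\bigl(\int|x|^{-2}(PU_j)^2\bigr)^{1/2}=O(\mu)=O(\eps)$, which is not $o(\eps)$. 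The correct bound, obtained by rescaling $x=\de_j y$ and freezing $U_{\de_i}$ at the origin, is
\[
\mu\int_\Om\frac{U_{\de_i}U_{\de_j}}{|x|^2}
 \;\lesssim\;\mu\Bigl(\frac{\de_j}{\de_i}\Bigr)^{\frac{N-2}{2}}\log\frac{\de_i}{\de_j}
 \;=\;O\bigl(\eps^{1+j-i}\log(1/\eps)\bigr),
\]
which is $o(\eps)$ since $j>i$, so the conclusion of the lemma survives; but the intermediate bound you wrote down is not correct (it is smaller than the true size by a factor $\de_i^{N-2}$ and does not follow from H\"older--Hardy). The same caveat applies to your discarding of the $\vphi$-corrections in \eqref{equ3-Lemma B-e72-e78}: they are $O(\mu\de_i^{N-2}\log(1/\de_i))$ rather than $O(\mu\de_i^{N-2})$, still $o(\eps)$ for $i\ge1$, but worth flagging because the dangerous borderline $\int|y|^{-N}dy$ integrals arise precisely because all bubbles collapse to the origin here, which is the essential difference from Appendix~A.
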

\begin{proof} (\ref{equ1-Lemma B-e72-e78}) and (\ref{equ5-Lemma B-e72-e78}) can be obtained by (\ref{equ1-Lemma A-e12,e24,e30,e34}) and (\ref{equ--add1-Lemma A-e12,e24,e30,e34}), respectively.

(1). Proof of (\ref{equ2-Lemma B-e72-e78}).

By using (\ref{equ-3-projection estimate}), integration by parts yields
\begin{eqnarray}\label{equ7-Lemma B-e72-e78}
\int_{\Om}\nabla P V_\si\nabla PU_{\de_i,\xi_i}-\mu\frac{P
V_\si PU_{\de_i,\xi_i}}{|x|^2}=
\int_{\Om}V_\si^{2^*-1}(U_{\de_i,\xi_i}-\vphi_{\de_i,\xi_i})+\mu\int_\Om\frac{\vphi_\si (U_{\de_i,\xi_i}-\vphi_{\de_i,\xi_i})}{|x|^2}.
\end{eqnarray}

It is easy to show, by using (\ref{equ-1-projection estimate}) and (\ref{equ-3-projection estimate}), that
\begin{eqnarray}\label{equ8-Lemma B-e72-e78}
\int_{\Om}V_\si^{2^*-1}\vphi_{\de_i,\xi_i}&\le&C\de_i^{\frac{N-2}{2}}\int_{\Om}
\frac{\si^{\frac{N+2}{2}}}{(\si^2|x|^{\be_1}+|x|^{\be_2})^{\frac{N+2}{2}}}\\\nonumber
&\le&C\si^{\frac{\ov{\mu}}{\sqrt{\ov{\mu}-\mu}}}\de_i^{\frac{N-2}{2}}\int_{\R^N}
\frac{1}{(|y|^{\be_1}+|y|^{\be_2})^{\frac{N+2}{2}}}=O(\si^{\frac{N-2}{2}}\de_i^{\frac{N-2}{2}}),
\end{eqnarray}
and
\begin{eqnarray}\label{equ9-Lemma B-e72-e78}
\mu\int_\Om\frac{\vphi_\si (U_{\de_i,\xi_i}-\vphi_{\de_i,\xi_i})}{|x|^2}&\le&\mu\int_\Om\frac{\vphi_\si U_{\de_i,\xi_i}}{|x|^2}\le O(\mu\si^{\frac{N-2}{2}}).
\end{eqnarray}

On the other hand,
\begin{eqnarray}\label{equ10-Lemma B-e72-e78}
\int_{\Om}V_\si^{2^*-1} U_{\de_i,\xi_i}=\bigcup\limits_{j=1}^{k+1}\int_{A_j}V_\si^{2^*-1} U_{\de_i,\xi_i}+O(\si^{\frac{N+2}{2}}\de_i^{\frac{N-2}{2}}).
\end{eqnarray}
If $i=k,j=k+1$,
\begin{eqnarray}\label{equ11-Lemma B-e72-e78}
&&\int_{A_{k+1}}V_\si^{2^*-1} U_{\de_k,\xi_k}\\\nonumber
&=&C_\mu^{2^*-1} C_0\si^{\frac{N+2}{2}}\de_k^{\frac{N-2}{2}}\int_{A_{k+1}}\frac{1}{({\si^2|x|^{\be_1}}
+|x|^{\be_2})^{\frac{N+2}{2}}}\frac{1}{(\de_k^2+|x-\xi_k|^2)^{\frac{N-2}{2}}}\\\nonumber
&=&C_\mu^{2^*-1} C_0\frac{\si^{\frac{\ov{\mu}}{\sqrt{\ov{\mu}-\mu}}}}{\de_k^{\frac{N-2}{2}}}
\int_{\frac{A_{k+1}}{\si^{\frac{\sqrt{\ov{\mu}}}{\sqrt{\ov{\mu}-\mu}}}}}\frac{1}{({|y|^{\be_1}}
+|y|^{\be_2})^{\frac{N+2}{2}}}\frac{1}{(1+|\frac{\si^{\frac{\sqrt{\ov{\mu}}}{\sqrt{\ov{\mu}-\mu}}}}{\de_k}y-\ze_k|^2)^{\frac{N-2}{2}}}\\\nonumber
&=& C_0^{2^*}(\frac{\ov{\la}}{\la_k})^{\frac{N-2}{2}}\int_{\R^N}\frac{1}{(1+|y|^2)^{\frac{N+2}{2}}}\frac{1}{(1+|\ze_k|^2)^{\frac{N-2}{2}}}\cdot\eps+o(\eps)
\end{eqnarray}

If $i\neq k$ or $j\neq k+1$, similar arguments as (\ref{equ11-Lemma B-e72-e78}) give
\begin{eqnarray}\label{equ12-Lemma B-e72-e78}
\int_{A_j}V_\si^{2^*-1} U_{\de_i,\xi_i}=o(\eps).
\end{eqnarray}

Then we conclude by (\ref{equ7-Lemma B-e72-e78})-(\ref{equ12-Lemma B-e72-e78}).

\noindent(2). Proof of (\ref{equ3-Lemma B-e72-e78}).
\begin{eqnarray*}
(\ref{equ3-Lemma B-e72-e78})&=&\mu\int_{\Om}\frac{|U_{\de_i,\xi_i}|^2}{|x|^2}+O(\mu\de_i^{N-2})\\
&=&\mu C_0^2\int_{\frac{\Om}{\de_i}}\frac{1}{|y|^2(1+|y-\ze_i|^2)^{N-2}}+O(\mu\de_i^{N-2})\\
&=&\mu
C_0^2\int_{\R^N}\frac{1}{|y|^2(1+|y-\ze_i|^2)^{N-2}}+o(\eps).
\end{eqnarray*}

Similar arguments give that $(\ref{equ4-Lemma B-e72-e78})=o(\eps)$

\noindent(3). Proof of (\ref{equ6-Lemma B-e72-e78}).

Without loss of generality, let $1\le i<j\le k$. Then as (\ref{equ10-Lemma B-e72-e78}),
\begin{eqnarray}\label{equ13-Lemma B-e72-e78}
(\ref{equ6-Lemma B-e72-e78})&=&\int_{\Om}U_{\de_j,\xi_j}^{2^*-1}U_{\de_i,\xi_i}+o(\eps)\\
&=&
\begin{cases}
C_0^{2^*}(\frac{\la_{i+1}}{\la_i})^{\frac{N-2}{2}}\int_{\R^N}\frac{1}{(1+|y|^2)^{\frac{N+2}{2}}}\frac{1}{(1+|\ze_i|^2)^{\frac{N-2}{2}}}\cdot\eps+o(\eps)
\quad& \text{if} ~~j=i+1,\\\nonumber
o(\eps)\quad& \text{otherwise}.
\end{cases}
\end{eqnarray}
\end{proof}

\begin{lemma}\label{e79}Let $k\ge1$. Then
\begin{eqnarray}\label{equ1-Lemma B-e79}
&&\int_{\Om}|\sum\limits_{i=1}^{k}(-1)^{i-1}PU_{\de_i,\xi_i}+(-1)^k
PV_\si|^{2^*}\\\nonumber &=&k
S_0^{\frac{N}{2}}+S_\mu^{\frac{N}{2}}-2^*C_0^{2^*}H(0,0)\la_1^{N-2}\int_{\R^N}
\frac{1}{(1+|z|^2)^{\frac{N+2}{2}}}\cdot\eps\\\nonumber
&&-2^*C_0^{2^*}\sum\limits_{i=1}^{k}(\frac{\la_{i+1}}{\la_i})^{\frac{N-2}{2}}\int_{\R^N}\frac{1}{|y|^{N-2}(1+|y-\ze_i|^2)^{\frac{N+2}{2}}}\cdot\eps\\\nonumber
&&-2^*C_0^{2^*}\sum\limits_{i=1}^{k}(\frac{\la_{i+1}}{\la_i})^{\frac{N-2}{2}}
\int_{\R^N}\frac{1}{(1+|y|^2)^{\frac{N+2}{2}}}\frac{1}{(1+|\ze_i|^2)^{\frac{N-2}{2}}}\cdot\eps+o(\eps),
\end{eqnarray}
where $\la_{k+1}=\ov{\la}$.
\end{lemma}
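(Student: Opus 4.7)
The plan is to split $\Om=\bigcup_{j=1}^{k+1}A_j\cup(\Om\setminus B(0,\rho))$ with the annuli $A_j$ from \eqref{shape of domain Ai-2}, and on each piece expand $|V_{\eps,\la,\xi}|^{2^*}$ around the bubble that dominates there. In each $A_j$ with $1\le j\le k$, set
\[
R_j:=\sum_{i\neq j}(-1)^{i-1}PU_{\de_i,\xi_i}+(-1)^k PV_\si,
\]
so that $V_{\eps,\la,\xi}=(-1)^{j-1}PU_{\de_j,\xi_j}+R_j$ with $|R_j|\ll PU_{\de_j,\xi_j}$ on $A_j$. This gives the pointwise expansion
\[
|V_{\eps,\la,\xi}|^{2^*}=(PU_{\de_j,\xi_j})^{2^*}+2^*(-1)^{j-1}(PU_{\de_j,\xi_j})^{2^*-1}R_j+O\bigl((PU_{\de_j,\xi_j})^{2^*-2}R_j^2\bigr),
\]
and analogously around $(-1)^kPV_\si$ on $A_{k+1}$. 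Integration of the quadratic remainder, together with non-adjacent cross terms and the contributions from the harmonic corrections $\vphi_{\de_i,\xi_i}$ and $\vphi_\si$ bounded by \eqref{equ-add1-projection estimate} and \eqref{equ-3-projection estimate}, will all be absorbed in $o(\eps)$.

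The diagonal contributions are handled by completing $\int_{A_j}(PU_{\de_j,\xi_j})^{2^*}$ to the full integral $\int_{\Om}(PU_{\de_j,\xi_j})^{2^*}$ (the tail outside $A_j$ being $o(\eps)$ by bubble decay) and invoking \eqref{equ1-add-Lemma A-e4,e41}. Since $\de_j^{N-2}=\la_j^{N-2}\eps^{2j-1}$ and $H(\xi_j,\xi_j)=H(0,0)+O(\de_j)$ (because $\xi_j=\de_j\ze_j\to 0$), only $j=1$ produces a genuine $\eps$-correction, namely $-2^*C_0^{2^*}H(0,0)\la_1^{N-2}\int_{\R^N}(1+|z|^2)^{-(N+2)/2}\cdot\eps$; for $j\ge 2$ the correction is $o(\eps)$. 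Likewise $\int_{A_{k+1}}(PV_\si)^{2^*}=S_\mu^{N/2}+o(\eps)$ by \eqref{equ1-Lemma A-e4,e41}, since $\si^{N-2}=\ov{\la}^{N-2}\eps^{2k+1}=o(\eps)$ for $k\ge 1$.

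The key computations are the cross terms, and only adjacent scales contribute at order $\eps$. In $A_j$, the interaction with the smaller neighbor $PU_{\de_{j+1},\xi_{j+1}}$ (or with $PV_\si$ when $j=k$) is evaluated by rescaling $x=\de_j y$ and using $PU_{\de_{j+1},\xi_{j+1}}(x)\approx C_0\de_{j+1}^{(N-2)/2}|x|^{-(N-2)}$ on the $\de_j$-scale (since $|\xi_{j+1}|=O(\de_{j+1})=o(\de_j)$), giving
\[
\int_{A_j}(PU_{\de_j,\xi_j})^{2^*-1}PU_{\de_{j+1},\xi_{j+1}}=C_0^{2^*}\Bigl(\frac{\la_{j+1}}{\la_j}\Bigr)^{\frac{N-2}{2}}\int_{\R^N}\frac{dy}{|y|^{N-2}(1+|y-\ze_j|^2)^{(N+2)/2}}\cdot\eps+o(\eps),
\]
which yields the second sum in the claim. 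The interaction with the larger neighbor $PU_{\de_{j-1},\xi_{j-1}}$ uses that this function is essentially constant on $A_j$ with value $C_0\de_{j-1}^{-(N-2)/2}(1+|\ze_{j-1}|^2)^{-(N-2)/2}$, yielding the contribution $C_0^{2^*}(\la_j/\la_{j-1})^{(N-2)/2}(1+|\ze_{j-1}|^2)^{-(N-2)/2}\int(1+|y|^2)^{-(N+2)/2}\cdot\eps$; after reindexing $i=j-1$ and including the analogous cross term in $A_{k+1}$ (where $V_\si$ may be replaced by $U_{\si,0}$ up to $O(\mu)=o(1)$ errors by Lemma \ref{e44-e48}), this produces the third sum. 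Every adjacent cross term carries the uniform sign $2^*(-1)^{j-1}(-1)^{i-1}=-2^*$, matching the minus signs in the statement.

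The main obstacle is the bookkeeping: the two interaction integrals at an interface $A_j\cup A_{j+1}$ are genuinely different (one is a smaller scale $L^{(N-2)/(N+2)}$-type convolution computed in $A_j$, the other is a sampling of the larger bubble at the concentration point in $A_{j+1}$), and one must verify that the non-adjacent cross terms, the projection errors $\vphi$, the outer integral $\int_{\Om\setminus B(0,\rho)}|V_{\eps,\la,\xi}|^{2^*}=\sum_iO(\de_i^N)+O(\si^N)$, and the Hardy-type deviation $V_\si-U_{\si,0}=O(\mu)$ all reduce to $o(\eps)$, so that only the three announced $\eps$-order corrections remain.
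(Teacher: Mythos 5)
Your proposal is correct and follows essentially the same strategy as the paper: decompose $\Om$ into the annuli $A_j$ plus the outer region, expand $|V_{\eps,\la,\xi}|^{2^*}$ around the dominant bubble on each $A_j$, retain only the diagonal Robin's-function correction from $j=1$ and the two types of adjacent cross-terms (smaller neighbor as a $|y|^{-(N-2)}$ tail in $A_j$, larger neighbor sampled at the concentration point), and absorb everything else in $o(\eps)$. The paper is terser, delegating the pure-$U$ bubble-tower bookkeeping to Lemma~6.2 of \cite{MussoPis10JMPA} and only spelling out the new $V_\si$ cross-terms explicitly, whereas you carry out the full expansion; the only minor imprecision is that replacing $V_\si$ by $U_{\si,0}$ costs a relative error of order $\mu|\ln\eps|$, not merely $O(\mu)$, which is still $o(\eps)$ after multiplying the already $O(\eps)$ cross-term, so the conclusion stands.
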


\begin{proof}
\begin{eqnarray}\label{equ2-Lemma B-e79}
(\ref{equ1-Lemma B-e79})=\bigcup\limits_{j=1}^{k+1}\int_{A_j}|\sum\limits_{i=1}^{k}(-1)^{i-1}PU_{\de_i,\xi_i}+(-1)^k PV_\si|^{2^*}
+O(\de_1^N).
\end{eqnarray}

First of all,
\begin{eqnarray}\label{equ3-Lemma B-e79}
&&\int_{A_k}V_\si U_{\de_k,\xi_k}^{2^*-1}\\\nonumber
&=&C_\mu C_0^{2^*-1}\si^{\frac{N-2}{2}}\de_k^{\frac{N+2}{2}}\int_{A_k}\frac{1}{({\si^2|x|^{\be_1}}
+|x|^{\be_2})^{\frac{N-2}{2}}}\frac{1}{(\de_k^2+|x-\xi_k|^2)^{\frac{N+2}{2}}}\\\nonumber
&=&C_\mu C_0^{2^*-1}\frac{\si^{\frac{N-2}{2}}}{\de_k^{\sqrt{\ov{\mu}-\mu}}}\int_{\frac{A_k}{\de_k}}\frac{1}{({(\frac{\si}{\de_k})^2|y|^{\be_1}}
+|y|^{\be_2})^{\frac{N-2}{2}}}\frac{1}{(1+|y-\ze_k|^2)^{\frac{N+2}{2}}}\\\nonumber
&=& C_0^{2^*}(\frac{\ov{\la}}{\la_k})^{\frac{N-2}{2}}\int_{\R^N}\frac{1}{|y|^{N-2}(1+|y-\ze_k|^2)^{\frac{N+2}{2}}}\cdot\eps+o(\eps),
\end{eqnarray}
and
\begin{eqnarray}\label{equ3add-Lemma B-e79}
&&\int_{A_j}V_\si U_{\de_i,\xi_i}^{2^*-1}=o(\eps), ~\text{if}~i\neq k,~\text{or}~j\neq k.
\end{eqnarray}

From \cite{MussoPis10JMPA}, we also have, for $1\le i<j\le k$,
\begin{eqnarray}\label{equ4-Lemma B-e79}
&&\int_{A_l}U_{\de_j,\xi_j}U_{\de_i,\xi_i}^{2^*-1}+o(\eps)\\
&=&\begin{cases}
C_0^{2^*}(\frac{\la_{i+1}}{\la_i})^{\frac{N-2}{2}}\int_{\R^N}\frac{1}{|y|^{N-2}(1+|y-\ze_i|^2)^{\frac{N+2}{2}}}\cdot\eps+o(\eps)
\quad& \text{if} ~~j=i+1,i=l,\\\nonumber
o(\eps)\quad& \text{otherwise}.
\end{cases}
\end{eqnarray}

Noticing (\ref{equ11-Lemma B-e72-e78}), (\ref{equ13-Lemma B-e72-e78}) and the above three equalities,   then the proof of (\ref{equ1-Lemma B-e79}) is actually involved by Lemma 6.2 in \cite{MussoPis10JMPA}.
\end{proof}

\begin{lemma}\label{e82}
\begin{eqnarray}\label{equ1-Lemma B-e82}
&&\int_{\Om}|\sum\limits_{i=1}^{k}(-1)^{i-1}PU_{\de_i,\xi_i}+(-1)^k PV_\si|^{2^*}\ln|\sum\limits_{i=1}^{k}(-1)^{i-1}PU_{\de_i,\xi_i}+(-1)^k PV_\si|\\\nonumber
&=&-\frac{N-2}{2}\ln\si\cdot\int_{\R^N} V_1^{2^*}-\frac{N-2}{2}\ln(\de_1 \de_2\dots\de_k)\cdot\int_{\R^N} U_{1,0}^{2^*}\\\nonumber
&&+\int_{\R^N} V_1^{2^*}\ln V_1+k\int_{\R^N} U_{1,0}^{2^*}\ln U_{1,0}+o(1).
\end{eqnarray}
\end{lemma}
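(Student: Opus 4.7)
The plan is to mimic the proof of Lemma \ref{e42}, but replace the partition of $\Om$ into balls around bubble centers by the annular decomposition from \eqref{shape of domain Ai-2}, since here the bubbles form a tower concentrating at the origin rather than sitting at separated points. Write $V:=V_{\eps,\la,\xi}=\sum_{i=1}^{k}(-1)^{i-1}PU_{\de_i,\xi_i}+(-1)^k PV_\si$ and split
\[
\int_{\Om}|V|^{2^*}\ln|V|
 = \sum_{i=1}^{k+1}\int_{A_i}|V|^{2^*}\ln|V| + \int_{\Om\setminus B(0,\rho)}|V|^{2^*}\ln|V|.
\]

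On each $A_i$ with $1\le i\le k$, the tower ordering $\de_1\gg\de_2\gg\cdots\gg\de_k\gg\si$ together with the projection estimate \eqref{equ-3-projection estimate} gives $|V|=U_{\de_i,\xi_i}(1+o(1))$ pointwise in the interior of $A_i$, so
\[
\int_{A_i}|V|^{2^*}\ln|V| = \int_{A_i}U_{\de_i,\xi_i}^{2^*}\ln U_{\de_i,\xi_i} + o(1).
\]
Applying the rescaling $x=\de_i y+\xi_i$ and using $U_{\de_i,\xi_i}(x)=\de_i^{-(N-2)/2}U_{1,0}(y)$ separates the logarithm into a $\ln\de_i$ factor plus an $\ln U_{1,0}$ piece; because the image $A_i/\de_i$ exhausts $\R^N$ as $\eps\to 0^+$, the two terms converge to $-\tfrac{N-2}{2}\ln\de_i\int_{\R^N}U_{1,0}^{2^*}$ and $\int_{\R^N}U_{1,0}^{2^*}\ln U_{1,0}$ respectively. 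Summing over $i=1,\dots,k$ produces the $k$ copies of the $U_{1,0}$ contributions.

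For $A_{k+1}=B(0,\sqrt{\si\de_k})$ the dominant piece is $(-1)^k PV_\si$, and after the change of variables $x=\si^{\sqrt{\ov{\mu}}/\sqrt{\ov{\mu}-\mu}}y$ (exactly as in the proof of \eqref{equ2-Lemma A-e42}) I obtain
\[
\int_{A_{k+1}}|V|^{2^*}\ln|V|
 = -\tfrac{\ov{\mu}}{\sqrt{\ov{\mu}-\mu}}\ln\si\int_{\R^N}V_1^{2^*}
   + \int_{\R^N}V_1^{2^*}\ln V_1 + o(1),
\]
which, using $\mu=\mu_0\eps\to 0$ and $\ov{\mu}=(N-2)^2/4$, becomes $-\tfrac{N-2}{2}\ln\si\int V_1^{2^*}+\int V_1^{2^*}\ln V_1+o(1)$. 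Finally on $\Om\setminus B(0,\rho)$ every projected bubble is of order $\de_1^{(N-2)/2}$, so that contribution is $O(\de_1^N|\ln\de_1|)=o(1)$. Combining these pieces yields \eqref{equ1-Lemma B-e82}.

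The main obstacle will be controlling the transition strips around the boundaries $|x|=\sqrt{\de_i\de_{i\pm1}}$: there $PU_{\de_i,\xi_i}$ and $PU_{\de_{i\pm1},\xi_{i\pm1}}$ have comparable sizes but opposite signs, so $V$ may vanish locally and $\ln|V|$ becomes singular. The saving feature is that $|V|^{2^*}$ vanishes to a positive power on its zero set while the logarithm is only mildly singular, so the integrand remains in $L^1$; the refined pointwise estimates on $|V|$ already used to handle \eqref{equ1-Lemma B-e79} (see Lemma 6.2 of \cite{MussoPis10JMPA}) can be adapted to bound the logarithmic contribution of these strips by $o(1)$.
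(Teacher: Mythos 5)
Your proposal is correct and follows essentially the same route as the paper. The paper's proof of Lemma~\ref{e82} consists of the single sentence ``The proof is similarly to Lemma~\ref{e42}'', and what ``similarly'' conceals is exactly what you spelled out: since the bubbles now form a tower concentrating at the origin rather than sitting at separated points, the decomposition of $\Om$ into small balls $B(0,\eta/2)\cup\bigcup_i B(\xi_i,\eta/2)$ used in Lemma~\ref{e42} must be replaced by the concentric annular decomposition $A_1,\dots,A_{k+1}$ from \eqref{shape of domain Ai-2}, which is the decomposition the paper already uses in Lemmas~\ref{e72-e78} and \ref{e79}. The $\de_i$-rescaling on each $A_i$ and the $\si^{\sqrt{\ov\mu}/\sqrt{\ov\mu-\mu}}$-rescaling on $A_{k+1}$ then reproduce, term by term, the $-\tfrac{N-2}{2}\ln\de_i\int U_{1,0}^{2^*}$, $\int U_{1,0}^{2^*}\ln U_{1,0}$, $-\tfrac{N-2}{2}\ln\si\int V_1^{2^*}$ and $\int V_1^{2^*}\ln V_1$ contributions, while the exterior $\Om\setminus B(0,\rho)$ gives $o(1)$. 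Your final paragraph on the transition strips, where consecutive projected bubbles are comparable with opposite signs so that $\ln|V|$ can be singular on the nodal set, identifies the only point that genuinely needs care; the observation that $|V|^{2^*}\ln|V|$ remains integrable there, combined with the refined pointwise control on $|V|$ near $\partial A_i$ already invoked for Lemma~\ref{e79} via \cite[Lemma 6.2]{MussoPis10JMPA}, is exactly the right way to close it.
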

\begin{proof}
The proof is similarly to Lemma \ref{e42}.
\end{proof}


\end{document}